\renewcommand{\eprint}[1]{\href{https://arxiv.org/abs/#1}{arXiv:#1}}
\newcommand{\pageafter}[1]{#1~pp.}
\setlist[itemize]{leftmargin=*}
\setlist[enumerate]{leftmargin=*,label=(\arabic*),ref=(\arabic*)}
\tikzset{cross/.style={cross out, draw=black, minimum size=2*(#1-\pgflinewidth), inner sep=0pt, outer sep=0pt},
cross/.default={1pt}}
\newdimen\LineSpace
\tikzset{
    line space/.code={\LineSpace=#1},
    line space=5pt
}
\definecolor{ffqqqq}{rgb}{1,0,0}
\definecolor{qqffqq}{rgb}{0,1,0}
\definecolor{ffffff}{rgb}{1,1,1}
\newtheorem{mainthm}{Theorem}
\newtheorem{thm}{Theorem}
\crefname{thm}{Theorem}{Theorems}
\newtheorem{cor}[thm]{Corollary}
\crefname{cor}{Corollary}{Corollaries}
\newtheorem{lem}[thm]{Lemma}
\crefname{lem}{Lemma}{Lemmas}
\newtheorem{prop}[thm]{Proposition}
\crefname{prop}{Proposition}{Propositions}
\crefname{conj}{Conjecture}{Conjectures}
\crefname{ques}{Question}{Questions}
\theoremstyle{definition}
\newtheorem{defn}[thm]{Definition}
\crefname{defn}{Definition}{Definitions}
\newtheorem{rem}[thm]{Remark}
\crefname{rem}{Remark}{Remarks}
\crefname{ex}{Example}{Examples}
\newtheorem{obs}[thm]{Observation}
\crefname{obs}{Observation}{Observations}
\crefname{claim}{Claim}{Claims}
\crefname{ass}{Assumption}{Assumptions}
\numberwithin{thm}{section}
\newcommand{\cA}{\ensuremath{\mathcal A}}
\newcommand{\cB}{\ensuremath{\mathcal B}}
\newcommand{\cC}{\ensuremath{\mathcal C}}
\newcommand{\cD}{\ensuremath{\mathcal D}}
\newcommand{\cE}{\ensuremath{\mathcal E}}
\newcommand{\cF}{\ensuremath{\mathcal F}}
\newcommand{\cG}{\ensuremath{\mathcal G}}
\newcommand{\cH}{\ensuremath{\mathcal H}}
\newcommand{\cI}{\ensuremath{\mathcal I}}
\newcommand{\cL}{\ensuremath{\mathcal L}}
\newcommand{\cS}{\ensuremath{\mathcal S}}
\newcommand{\cT}{\ensuremath{\mathcal T}}
\newcommand{\cU}{\ensuremath{\mathcal U}}
\newcommand{\cW}{\ensuremath{\mathcal W}}
\newcommand{\bbE}{{\ensuremath{\mathbb E}} }
\newcommand{\bbH}{{\ensuremath{\mathbb H}} }
\newcommand{\bbN}{{\ensuremath{\mathbb N}} }
\newcommand{\bbP}{{\ensuremath{\mathbb P}} }
\newcommand{\bbR}{{\ensuremath{\mathbb R}} }
\newcommand{\bbT}{{\ensuremath{\mathbb T}} }
\newcommand{\bbZ}{{\ensuremath{\mathbb Z}} }
\renewcommand{\a}{{\ensuremath{\alpha}}}
\let\oldd\d
\renewcommand{\d}{{\ensuremath{\delta}}}
\newcommand{\D}{{\ensuremath{\Delta}}}
\newcommand{\e}{{\ensuremath{\varepsilon}}}
\newcommand{\g}{{\ensuremath{\gamma}}}
\newcommand{\h}{{\ensuremath{\eta}}}
\let\oldk\k
\renewcommand{\k}{{\ensuremath{\kappa}}}
\let\oldl\l
\renewcommand{\l}{{\ensuremath{\lambda}}}
\let\oldL\L
\renewcommand{\L}{{\ensuremath{\Lambda}}}
\newcommand{\m}{{\ensuremath{\mu}}}
\newcommand{\n}{{\ensuremath{\nu}}}
\let\oldo\o
\renewcommand{\o}{{\ensuremath{\omega}}}
\let\oldO\O
\renewcommand{\O}{{\ensuremath{\Omega}}}
\newcommand{\p}{{\ensuremath{\pi}}}
\let\oldr\r
\renewcommand{\r}{{\ensuremath{\rho}}}
\let\oldt\t
\renewcommand{\t}{{\ensuremath{\tau}}}
\let\oldu\u
\renewcommand{\u}{{\ensuremath{\upsilon}}}
\newcommand{\x}{{\ensuremath{\xi}}}
\newcommand{\<}{\langle}
\renewcommand{\>}{\rangle}
\newcommand{\var}{\operatorname{Var}}
\newcommand{\1}{{\ensuremath{\mathbbm{1}}} }
\newcommand{\Et}{\ensuremath{\bbE_{\m}[\t_0]}}
\newcommand{\SG}{\cS\cG}
\newcommand{\bSG}{\overline{\SG}}
\newcommand{\ST}{\cS\cT}
\newcommand{\bcT}{\overline{\ST}}
\newcommand{\ue}{\ensuremath{\underline e}}
\newcommand{\ur}{\ensuremath{\underline r}}
\newcommand{\rme}{\ensuremath{r^{\mathrm{mes}}}}
\newcommand{\uri}{\underline r^{\mathrm{int}}}
\newcommand{\urm}{\underline r^{\mathrm{mes}}}
\newcommand{\sm}{\ensuremath{s^{\mathrm{mes}}}}
\newcommand{\si}{\ensuremath{s^{\mathrm{int}}}}
\newcommand{\us}{\ensuremath{\underline s}}
\newcommand{\usm}{\underline s^{\mathrm{mes}}}
\newcommand{\uv}{\ensuremath{\underline v}}
\newcommand{\uone}{\ensuremath{\underline 1}}
\newcommand{\hS}{\ensuremath{\widehat\cS}}
\newcommand{\Hb}{\ensuremath{\overline\bbH}}
\newcommand{\rd}{\ensuremath{\r_{\mathrm{D}}}}
\newcommand{\bone}{\ensuremath{\mathbf{1}}}
\newcommand{\bzero}{\ensuremath{\mathbf{0}}}
\newcommand{\li}{\ell^{\mathrm{int}}}
\newcommand{\lm}{\ell^{\mathrm{mes}}}
\newcommand{\lmp}{\ell^{\mathrm{mes}+}}
\newcommand{\lmm}{\ell^{\mathrm{mes}-}}
\newcommand{\lgl}{\ell^{\mathrm{gl}}}
\newcommand{\Lm}{\L^{\mathrm{mes}}}
\newcommand{\Lmp}{\L^{\mathrm{mes}+}}
\newcommand{\Lmm}{\L^{\mathrm{mes}-}}
\newcommand{\Nm}{N^{\mathrm{mes}}}
\newcommand{\Nmp}{N^{\mathrm{mes}+}}
\newcommand{\Nmm}{N^{\mathrm{mes}-}}
\newcommand{\Ni}{N^{\mathrm{int}}}
\newcommand{\Nc}{N^{\mathrm{cr}}}
\renewcommand{\le}{\leqslant}
\renewcommand{\ge}{\geqslant}
\renewcommand{\to}{\rightarrow}
\begin{document}
\title{Refined universality for critical KCM:\\upper bounds}
\author[,1]{Ivailo Hartarsky\thanks{\textsf{ivailo.hartarsky@tuwien.ac.at}}}
\affil{Technische Universit\"at Wien, Institut für Stochastik und Wirtschaftsmathematik, Wiedner Hauptstra\ss e 8-10, A-1040, Vienna, Austria}
\date{\vspace{-0.25cm}\today}
\maketitle
\vspace{-0.75cm}
\begin{abstract}
We study a general class of interacting particle systems called kinetically constrained models (KCM) in two dimensions. They are tightly linked to the monotone cellular automata called bootstrap percolation. Among the three classes of such models \cite{Bollobas15}, the critical ones are the most studied.

Together with the companion paper by Mar\^ech\'e and the author \cite{Hartarsky22univlower}, our work determines the logarithm of the infection time up to a constant factor for all critical KCM. This was previously known only up to logarithmic corrections \cites{Martinelli19a,Hartarsky21a,Hartarsky20}. We establish that on this level of precision critical KCM have to be classified into seven categories. This refines the two classes present in bootstrap percolation \cite{Bollobas23} and the two in previous rougher results \cites{Martinelli19a,Hartarsky21a,Hartarsky20}. In the present work we establish the upper bounds for the novel five categories and thus complete the universality program for equilibrium critical KCM. Our main innovations are the identification of the dominant relaxation mechanisms and a more sophisticated and robust version of techniques recently developed for the study of the Fredrickson-Andersen 2-spin facilitated model \cite{Hartarsky23FA}.
\end{abstract}

\noindent\textbf{MSC2020:} Primary 	60K35; Secondary 82C22, 60J27, 60C05
\\
\textbf{Keywords:} Kinetically constrained models, interacting particle systems, universality classes, Glauber dynamics, Poincar\'e inequality

\tableofcontents

\section{Introduction}
\label{sec:intro}
Kinetically constrained models (KCM) are interacting particle systems. They have challenging features including non-ergodicity, non-attractiveness, hard constraints, cooperative dynamics and dramatically diverging time scales. This prevents the use of conventional mathematical tools in the field. 

KCM originated in physics in the 1980s \cites{Fredrickson84,Fredrickson85} as toy models for the liquid-glass transition, which is still a hot and largely open topic for physicists \cite{Arceri21}. The idea behind them is that one can induce glassy behaviour without the intervention of static interactions, disordered or not, but rather with simple kinetic constraints. The latter translate the phenomenological observation that at high density particles in a super-cooled liquid become trapped by their neighbours and require a scarce bit of empty space in order to move at all. We direct the reader interested in the motivations of these models and their position in the landscape of glass transition theories to \cites{Arceri21,Garrahan11,Ritort03}.

Bootstrap percolation is the natural monotone deterministic counterpart of KCM (see \cites{Morris17} for an overview). Nevertheless, the two subjects arose for different reasons and remained fairly independent until the late 2000s. That is when the very first rigorous results for KCM came to be \cite{Cancrini08}, albeit much less satisfactory than their bootstrap percolation predecessors. The understanding of these two closely related fields did not truly unify until the recent series of works \cites{Martinelli19,Martinelli19a,Mareche20Duarte,Hartarsky20,Hartarsky22univlower,Hartarsky21a,Hartarsky23FA} elucidating the common points, as well as the serious additional difficulties in the non-monotone stochastic setting. It is the goal of this series that is accomplished by the present paper.

\subsection{Models}
\label{subsec:models}
Let us introduce the class of $\cU$-KCM introduced in \cite{Cancrini08}. In $d\ge 1$ dimensions an \emph{update family} is a nonempty finite collection of finite nonempty subsets of $\bbZ^d\setminus\{0\}$ called \emph{update rules}. The $\cU$-KCM is a continuous time Markov chain with state space $\O=\{0,1\}^{\bbZ^d}$. Given a configuration $\h\in\O$, we write $\h_x$ for the state of $x\in\bbZ^d$ in $\h$ and say that \emph{$x$ is infected} (in $\h$) if $\h_x=0$. We write $\h_A$ for the restriction of $\h$ to $A\subset\bbZ^d$ and $\bzero_A$ for the completely infected configuration with $A$ omitted when it is clear from the context. We say that \emph{the constraint at $x\in\bbZ^d$ is satisfied} if there exists an update rule $U\in\cU$ such that $x+U=\{x+y:y\in U\}$ is fully infected. We denote the corresponding indicator by
\begin{equation}
\label{eq:def:cx}
c_x(\h)=\1_{\exists U\in\cU,\h_{x+U}=\bzero}.
\end{equation}

The final parameter of the model is its \emph{equilibrium density of infections} $q\in[0,1]$. We denote by $\m$ the product measure such that $\m(\h_x=0)=q$ for all $x\in\bbZ^d$ and by $\var$ the corresponding variance. Given a finite set $A\subset\bbZ^d$ and real function $f:\O\to\bbR$, we write $\m_A(f)$ for the average $\m(f(\h)|\h_{\bbZ^d\setminus A})$ of $f$ over the variables in $A$. We write $\var_A(f)$ for the corresponding conditional variance, which is thus also a function from $\O_{\bbZ^d\setminus A}$ to $\bbR$, where $\O_B=\{0,1\}^B$ for $B\subset\bbZ^d$.

With this notation the $\cU$-KCM can be formally defined via its generator
\begin{equation}
\label{eq:def:generator}\cL(f)(\h)=\sum_{x\in\bbZ^d}c_x(\h)\cdot\left(\m_x(f)-f\right)(\h)\end{equation}
and its Dirichlet form reads
\[\cD(f)=\sum_{x\in\bbZ^d}\m\left(c_x\cdot\var_x(f)\right),\]
where $\m_x$ and $\var_x$ are shorthand for $\m_{\{x\}}$ and $\var_{\{x\}}$. Alternatively, the process can be defined via a graphical representation as follows (see \cite{Liggett05} for background). Each site $x\in\bbZ^d$ is endowed with a standard Poisson process called \emph{clock}. Whenever the clock at $x$ rings we assess whether its constraint is satisfied by the current configuration. If it is, we update $\h_x$ to an independent Bernoulli variable with parameter $1-q$ and call this a \emph{legal update}. If the constraint is not satisfied, the update is \emph{illegal}, so we discard it without modifying the configuration. It is then clear that $\m$ is a reversible measure for the process (there are others, e.g.\ the Dirac measure on the fully non-infected configuration $\bone$).

Our regime of interest is $q\to0$, corresponding to the low temperature limit relevant for glasses. A quantitative observable, measuring the speed of the dynamics, is the infection time of $0$
\[\t_0=\inf\left\{t\ge 0:\h_0(t)=0\right\},\]
where $(\h(t))_{t\ge 0}$ denotes the $\cU$-KCM process. More specifically, we are interested in its expectation for the stationary process $\Et$, namely the process with random initial condition distributed according to $\m$. This quantifies the equilibrium properties of the system and is closely related e.g.\ to the more analytic quantity called relaxation time (i.e.\ inverse of the spectral gap of the generator) that the reader may be familiar with.

$\cU$-bootstrap percolation is essentially the $q=1$ case of $\cU$-KCM started out of equilibrium, from a product measure with $q_0\to0$ density of infections. More conventionally, it is defined as a synchronous cellular automaton, which updates all sites of $\bbZ^d$ simultaneously at each discrete time step, by infecting sites whose constraint is satisfied and never removing infections. As the process is monotone, it may alternatively be viewed as a growing subset of the grid generated by its initial condition. We denote by $[A]_\cU$ the set of sites eventually infected by the $\cU$-bootstrap percolation process with initial condition $A\subset \bbZ^d$, that is, the sites which can become infected in the $\cU$-KCM in finite time starting from $\h(0)=(\1_{x\not\in A})_{x\in\bbZ^d}$. Strictly speaking, other than this notation, bootstrap percolation does not feature in our proofs, but its intuition and techniques are omnipresent. On the other hand, some of our intermediate results can translate directly to recover some bootstrap percolation results of \cites{Bollobas23,Bollobas15}. 

\subsection{Universality setting}
We direct the reader to the companion paper by Mar\^ech\'e and the author \cite{Hartarsky22univlower}, a monograph of Toninelli and the author \cite{HartarskybookKCM} and the author's PhD thesis \cite{Hartarsky22phd}*{Chap.~1}, for comprehensive background on the universality results for two-dimensional KCM and their history. Instead, we provide a minimalist presentation of the notions we need. The definitions in this section were progressively accumulated in \cites{Gravner99,Bollobas15,Bollobas23,Martinelli19a,Hartarsky21a,Hartarsky22univlower} and may differ in phrasing from the originals, but are usually equivalent thereto (see \cite{Hartarsky22univlower} for more details).

Henceforth, we restrict our attention to models in two dimensions. The Euclidean norm and scalar product are denoted by $\|\cdot\|$ and $\<\cdot,\cdot\>$, and distances are w.r.t.\ $\|\cdot\|$. Let $S^1=\{x\in\bbR^2:\|x\|=1\}$ be the unit circle consisting of \emph{directions}, which we occasionally identify with $\bbR/2\pi\bbZ$ in the standard way. We denote the open half plane with outer normal $u\in S^1$ and offset $l\in\bbR$ by 
\begin{equation}
\label{def:Hul}
\bbH_u(l)=\left\{x\in\bbR^2:\<x,u\>< l\right\}
\end{equation}
and omit $l$ if it is $0$. We further denote its closure by $\Hb_u(l)$, omitting zero offsets. We often refer to continuous sets such as $\bbH_u$, but whenever talking about infections or sites in them, we somewhat abusively identify them with their intersections with $\bbZ^2$ without further notice.

Fix an update family $\cU$. 
\begin{defn}[Stability]
\label{def:stable}
A direction $u\in S^1$ is \emph{unstable} if there exists $U\in\cU$ such that $U\subset\bbH_u$ and \emph{stable} otherwise.
\end{defn}
It is not hard to see that unstable directions form a finite union of finite open intervals in $S^1$ \cite{Bollobas15}*{Theorem 1.10}. We say that a stable direction is \emph{semi-isolated} (resp.\ \emph{isolated}) if it is the endpoint of a nontrivial (resp.\ trivial) interval of stable directions.

\begin{defn}[Criticality]
Let $\cC$ be the set of open semicircles of $S^1$. An update family is 
\begin{itemize}
\label{def:tripartition}
    \item \emph{supercritical} if there exists $C\in\cC$ such that all $u\in C$ are unstable;
    \item \emph{subcritical} if every semicircle contains infinitely many stable directions;
    \item \emph{critical} otherwise.
\end{itemize}
\end{defn}
The following notion measures ``how stable'' a stable direction is.
\begin{defn}[Difficulty]
\label{def:alpha}
For $u\in S^1$ the \emph{difficulty} $\a(u)$ of $u$ is 
\begin{itemize}
    \item $0$ if $u$ is unstable;
    \item $\infty$ if $u$ is stable, but not isolated;
    \item $\min\{n:\exists Z\subset\bbZ^2,|Z|=n,|[\bbH_u\cup Z]_\cU\setminus\bbH_u|=\infty\}$ otherwise.
\end{itemize}
The \emph{difficulty} of $\cU$ is
\[\a=\min_{C\in\cC}\max_{u\in C}\a(u).\]
We say that a direction $u\in S^1$ is \emph{hard} if $\a(u)>\a$.
\end{defn}
See \cref{fig:example} for an example update family with $\a=3$ and its difficulties. It can be shown that $\a(u)\in[1,\infty)$ for isolated stable directions \cite{Bollobas23}*{Lemma 2.8}. Consequently, a model is critical iff $0<\a<\infty$ and supercritical iff $\a=0$, so difficulty is tailored for critical models and refines \cref{def:tripartition}. Furthermore, for supercritical models the notions of stable and hard direction coincide. Finally, observe that the definition implies that for any critical or supercritical update family there exists an open semicircle with no hard direction.
\begin{defn}[Refined types]
A critical or supercritical update family is
\begin{itemize}
    \item \emph{rooted} if there exist two non-opposite hard directions;
    \item \emph{unrooted} if it is not rooted;
    \item \emph{unbalanced} if there exist two opposite hard directions;
    \item \emph{balanced} if it is not unbalanced, that is, there exists a \emph{closed} semicircle containing no hard direction.
\end{itemize}
We further partition balanced unrooted update families into
\begin{itemize}
    \item \emph{semi-directed} if there is exactly one hard direction;
    \item \emph{isotropic} if there are no hard directions.
\end{itemize}
\end{defn}
We further consider the distinction between models with finite and infinite number of stable directions. The latter being necessarily rooted, but possibly balanced or unbalanced, we end up with a partition of all (two-dimensional non-subcritical) families into the seven classes studied in detail below in the critical case. We invite the interested reader to consult \cite{Hartarsky22univlower}*{Fig. 1} for simple representatives of each class with rules contained in the the lattice axes and reaching distance at most 2 from the origin. Naturally, many more examples have been considered in the literature (also see \cref{fig:example}).

Let us remark that models in each class may have one axial symmetry, but non-subcritical models invariant under rotation by $\pi$ are necessarily either isotropic or unbalanced unrooted (thus with finite number of stable directions), while invariance by rotation by $\pi/2$ implies isotropy.

\subsection{Results}
Our result, summarised in \cref{tab:universality}, together with the companion paper by Mar\^ech\'e and the author \cite{Hartarsky22univlower}, is the following complete refined classification of two-dimensional critical KCM (for the classification of supercritical ones, which only features the rooted/unrooted distinction, see \cites{Martinelli19a,Mareche20Duarte,Mareche20combi}).
\begin{table}
    \centering
    \begin{tabular}{r|p{4.5cm}| p{2.25cm}|p{2.25cm}|}
         &\multicolumn{1}{c|}{\multirow{2}{*}{Infinite stable directions}}
         & \multicolumn{2}{c|}{Finite stable directions}\\\cline{3-4}
         & & \multicolumn{1}{c|}{Rooted} & \multicolumn{1}{c|}{Unrooted}\\
        \hline
       Unbalanced & \ref{log4} $2,4,0$ & \ref{log3} $1,3,0$ & \ref{log2} $1, 2,0$\\
        \hline
        Balanced & \ref{log0} $2,0,0$ & \ref{log1} $1,1,0$ & \diagbox[dir=NE,innerwidth=2.25cm,height=3\line]{\ref{loglog} $1,0,1$\\
        \footnotesize{S.-dir.}}{\footnotesize{Iso.}\\\ref{iso} $1,0,0$}\\\hline
    \end{tabular}
    \caption{Classification of critical $\cU$-KCM with difficulty $\alpha$. For each class $\Et=\exp\left(\Theta(1)\left(\frac{1}{q^\alpha}\right)^{\beta}\left(\log \frac{1}{q}\right)^\gamma \left(\log\log\frac{1}{q}\right)^\delta\right)$ as $q \to 0$. The label of the class and the exponents $\beta,\gamma,\delta$ are indicated in that order.}
    \label{tab:universality}
\end{table}

\begin{mainthm}[Universality classification of two-dimensional critical KCM]
\label{th:main}
Let $\cU$ be a two-dimensional critical update family with difficulty $\alpha$. We have the following exhaustive alternatives as $q\to0$ for the expected infection time of the origin under the stationary $\cU$-KCM.\footnote{\label{foot:asymptotic}We write $f=\Theta(g)$ if there exist $c,C>0$ such that $cg(q)<f(q)<Cg(q)$ for all $q$ small enough and use other standard asymptotic notation (see e.g.\ \cite{Hartarsky22univlower}*{Section 1.2}).} If $\cU$ is 
\begin{enumerate}[label=(\alph*), ref=(\alph*), series=th]
    \item\label{log4} unbalanced with infinite number of stable directions (so rooted), then
    \[\Et=\exp\left(\frac{\Theta\left(\left(\log (1/q)\right)^4\right)}{q^{2\alpha}}\right);\]
    \item\label{log0} balanced with infinite number of stable directions (so rooted), then
    \[\Et=\exp\left(\frac{\Theta(1)}{q^{2\alpha}}\right);\]
\item\label{log3} unbalanced rooted with finite number of stable directions, then 
\[\Et=\exp\left(\frac{\Theta\left(\left(\log (1/q)\right)^3\right)}{q^{\alpha}}\right);\]
\item\label{log2} unbalanced unrooted (so with finite number of stable directions), then
\[\Et=\exp\left(\frac{\Theta\left(\left(\log(1/q)\right)^2\right)}{q^{\alpha}}\right);\]
\item\label{log1} balanced rooted with finite number of stable directions, then\footnote{See \cref{rem:logloglog}.}
\[\Et=\exp\left(\frac{\Theta\left(\log (1/q)\right)}{q^{\alpha}}\right);\]
\item\label{loglog} semi-directed (so balanced unrooted with finite number of stable directions), then
\[\Et=\exp\left(\frac{\Theta\left(\log\log (1/q)\right)}{q^{\alpha}}\right);\]
\item\label{iso} isotropic (so balanced unrooted with finite number of stable directions), then
\[\Et=\exp\left(\frac{\Theta(1)}{q^{\alpha}}\right).\]
\end{enumerate}
\end{mainthm}
This theorem is the result of a tremendous amount of effort by a panel of authors. It would be utterly unfair to claim that it is due to the present paper and its companion \cite{Hartarsky22univlower} alone. Indeed, parts of the result (sharp upper or lower bounds for certain classes) were established by (subsets of) Mar\^ech\'e, Martinelli, Morris, Toninelli and the author \cites{Martinelli19,Martinelli19a,Hartarsky21a,Hartarsky20}. Moreover, particularly for the lower bounds, the classification of two-dimensional critical $\cU$-bootstrap percolation models by Bollob\'as, Duminil-Copin, Morris and Smith \cite{Bollobas23} (featuring only the balanced/unbalanced distinction) is heavily used, while upper bounds additionally use prerequisites from \cites{Hartarsky22CBSEP,Hartarsky23FA}. Thus, a fully self-contained proof of \cref{th:main} from common probabilistic background is currently contained only in all the above references combined and spans hundreds of pages. Our contribution is but the conclusive step.

More precisely, the lower bound for classes \ref{log2} and \ref{iso} was deduced from \cite{Bollobas23} in \cite{Martinelli19}; the lower bound for class \ref{log0} was established in \cite{Hartarsky20}, while the remaining four were proved in \cite{Hartarsky22univlower}. Turning to upper bounds, the one for class \ref{log4} was given in \cite{Martinelli19a} and the one for class \ref{log3} is due to \cite{Hartarsky21a}. The remaining five upper bounds are new and those are the subject of our work. The most novel and difficult ones concern classes \ref{log1} and \ref{loglog}, the latter remaining quite mysterious prior to our work. Indeed, \cite{Hartarsky21a}*{Conjecture 6.2} predicted the above result with the exception of this class, whose behaviour was unclear. We should note that this conjecture itself rectified previous ones from \cites{Martinelli19a,Morris17}, which were disproved by the unexpected result of \cite{Hartarsky21a}, and was new to physicists, as well as mathematicians.

\begin{rem}
It should be noted that universality results including \cref{th:main} apply to KCM more general than the ones defined in \cref{subsec:models}. Namely, we may replace $c_x$ in \cref{eq:def:generator} by a fixed linear combination of the constraints $c_x$ associated to any finite set of update families. For instance, we may update vertices at rate proportional to their number of infected neighbours. This and other models along these lines have been considered e.g.\ in \cites{Fredrickson84,Blanquicett21,AlvesNaN}. For such mixtures of families, the universality class is determined by the family obtained as their union. Indeed, upper bounds follow by direct comparison of the corresponding Dirichlet forms, while lower bounds (e.g.\ \cite{Hartarsky22univlower}) generally rely on deterministic bottlenecks, which remain valid.
\end{rem}

\begin{rem}
\label{rem:logloglog}
Let us note that for reasons of extremely technical nature, we do not provide a full proof of (the upper bound of) \cref{th:main}\ref{log1}. More precisely, we prove it as stated for models with rules contained in the axes of the lattice. We also prove a fully general upper bound of
\begin{equation}
\label{eq:rem:logloglog}
\exp\left(\frac{O(\log(1/q))\log\log\log(1/q)}{q^\a}\right).
\end{equation}
Furthermore, with very minor modifications (see \cref{rem:scales}), the error factor can be reduced from $\log\log\log$ to $\log_*$, where $\log_*$ denotes the number of iterations of the logarithm before the result becomes negative (the inverse of the tower function). Unfortunately, removing this minuscule error term requires further work, which we omit for the sake of concision. Instead, we provide a sketch of how to achieve this in \cref{app:logloglog}.
\end{rem}

\subsection{Organisation}
The paper is organised as follows. In \cref{sec:mechanisms} we begin by outlining all the relevant relaxation mechanisms used by critical KCM, providing detailed intuition for the proofs to come. This section is particularly intended for readers unfamiliar with the subject, as well as physicists, for whom it may be sufficiently convincing on its own. In \cref{sec:preliminaries} we gather various notation and simple preliminaries. 

In \cref{sec:extensions} we formally state the two fundamental techniques we use to move from one scale to the next, namely East-extensions and CBSEP-extensions, which import and generalise ideas of \cite{Hartarsky23FA}. They are used in various combinations throughout the rest of the paper. The proofs of the results about those extensions, including the microscopic dynamics treated by \cite{Hartarsky21b} are deferred to \cref{app:extensions}, since they are quite technical and do not require new ideas. The bounds arising from extensions feature certain conditional expectations. We provide technical tools for estimating them in \cref{subsec:conditional}. We leave the entirely new proofs of these general analogues of \cite{Hartarsky23FA}*{Appendix A} to \cref{app:proba}.

\Cref{sec:iso,sec:log2,sec:loglog,sec:log1,sec:log0} are the core of our work and use the extensions mentioned above to prove the upper bounds of \cref{th:main} for classes \ref{iso}, \ref{log2}, \ref{loglog}, \ref{log1}, \ref{log0} respectively. As we will discuss in further detail (see \cref{sec:mechanisms,tab:mechanisms}), some parts of the proofs are common to several of these classes, making the sections interdependent. Thus, they are intended for linear reading.

We conclude in \cref{app:logloglog} by explaining how to remove the corrective $\log\log\log(1/q)$ factor discussed in \cref{rem:logloglog} to recover the result of \cref{th:main}\ref{log1} as stated in full generality. Due to their technical nature, we delegate \cref{app:extensions,app:logloglog,app:proba} to the arXiv version of the present work.

Familiarity with the companion paper \cite{Hartarsky22univlower} or bootstrap percolation \cite{Bollobas23} is not needed. Inversely, familiarity with \cites{Hartarsky21a,Hartarsky23FA} is strongly recommended for going beyond \cref{sec:mechanisms} and achieving a complete view of the proof of the upper bounds of \cref{th:main}. Nevertheless, we systematically state the implications of intermediate results of those works for our setting in a self-contained fashion, without re-proving them.

\section{Mechanisms}
\label{sec:mechanisms}

In this section we attempt a heuristic explanation of \cref{th:main} from the viewpoint of mechanisms, which is mostly related to upper bound proofs. Yet, let us say a few words about the lower bounds. The proof of the lower bounds in the companion paper \cite{Hartarsky22univlower} has the advantage and disadvantage of being unified for all seven classes. This is undeniably practical and spotlights the fact that all scaling behaviours can be viewed through the lens of the same bottleneck (few energetically costly configurations through which the dynamics has to go to infect the origin) on a class-dependent length scale. However, the downside is that it provides little insight on the particularities of each class, which turn out to be quite significant. To prove upper bounds we need a clear vision of an efficient mechanism for infecting the origin in each class. Since we work with the stationary process, efficient means that it should avoid configurations which are too unlikely w.r.t.\ $\m$. However, while lower bounds only identify what cannot be avoided, they do not tell us how to avoid everything else, nor indeed how to reach the unavoidable bottleneck.

Instead of outlining the mechanism used by each class, we focus on techniques which are somewhat generic and then apply combinations thereof to each class. In figurative terms, we develop several computer hardware components (three processors, four RAMs, etc.), give a general scheme of how to compose a generic computer out of generic components and, finally, assemble seven concrete computer configurations, using the appropriate components for each, sometimes changing a single component from a machine to the other. Moreover, within each component type different instances are strictly comparable, so, at the assembly stage, we might simply choose the best possible component fitting with the requirements of model at hand. This enables us to highlight the robust tools developed and refined recently, which correspond to the components and how they are manufactured, as well as give a clean universal proof scheme into which they are plugged.

Our different components are called the \emph{microscopic, internal, mesoscopic and global dynamics} and correspond to progressively increasing length scales on which we are able to relax, given a suitable infection configuration. As the notion of ``suitable,'' which we call \emph{super good} (SG), depends on the class and lower scale mechanisms used, we mostly use it as a black box input extended progressively over scales in a recursive fashion.

\begin{table}
    \centering
\begin{subtable}{\textwidth}
\centering\scriptsize{ 
\begin{tabular}{c|c|c|c|c|c|c}
         \multicolumn{2}{c|}{\normalsize Global}
         & \multicolumn{2}{c|}{\normalsize Mesoscopic}
         &\multicolumn{3}{c}{\normalsize Internal}
         \\
        \hline
        \rule{0pt}{2ex}CBSEP&East&CBSEP&East, Stair&CBSEP&East&Unbal.\\\hline
        \rule{0pt}{3ex}$\rd^{-1+o(1)}$&$ \rd^{-O(\log(1/\rd))}$&$e^{q^{-o(1)}}$&$\rd^{-O(\log (1/q))}$&$e^{q^{-o(1)}}$&$\rd^{-O(\log\log (1/q))}$&$\rd^{-O(1)}$
    \end{tabular}
}
    \caption{\normalsize The relaxation time cost associated to each choice of dynamics mechanism on each scale in terms of the probability of a droplet $\rd$.\smallskip}
    \label{tab:costs}
\end{subtable}
\begin{subtable}{\textwidth}
    \centering
{
\begin{scriptsize}\begin{tabular}{c|c|c|c|c|c|c|c}
         &\normalsize\ref{log4}& \normalsize\ref{log0}&\normalsize\ref{log3}&\normalsize\ref{log2}&\normalsize\ref{log1}&\normalsize\ref{loglog}&\normalsize\ref{iso}\\
        \hline
        \normalsize \rule{0pt}{2ex}Global&East*&East*&CBSEP&CBSEP*&CBSEP&CBSEP&CBSEP*\\\hline
        \normalsize \rule{0pt}{2ex}Mesoscopic&Stair&East&East*&CBSEP&East*&CBSEP&CBSEP\\\hline
        \normalsize \rule{0pt}{2ex}Internal&---&East&Unbal.&Unbal.*&East&East*&CBSEP
    \end{tabular}
\end{scriptsize}
}
\caption{\normalsize The fastest mechanism available to each class of \cref{th:main} on each scale. The * indicates a leading contribution for the class (column).}
    \label{tab:mechanisms}
    \end{subtable}
\caption{\label{tab:summary}Summary of the mechanisms and their costs. The microscopic one common to all classes and with negligible cost is not shown (see \cref{subsec:micro}).}
\end{table}

In order to guide the reader through \cref{sec:mechanisms} and beyond, in \cref{tab:summary}, we summarise the optimal mechanisms for each universality class on each scale and its cost. While its full meaning will only become clear in \cref{subsec:assembly}, the reader may want to consult it regularly, as they progress through \cref{sec:mechanisms}.

The SG events concern certain convex polygonal geometric regions called \emph{droplets}. These events are designed so as to satisfy several conditions ensuring that the configuration of infections inside the droplet is sufficient to infect the entire droplet. The SG events defined by extensions from smaller to larger scales require the presence of a lower scale droplet inside the large one (see \cref{fig:extensions}) in addition to well-chosen more sparse infections called \emph{helping sets} in the remainder of the larger scale droplet (see \cref{fig:example}). Helping sets allow the smaller one to move inside the bigger one.

We say that a droplet \emph{relaxes} in a certain \emph{relaxation time} if the dynamics restricted to the SG event and to this region ``mixes'' in this much time. Formally, this translates to a constrained Poincar\'e inequality for the conditional measure, but this is unimportant for our discussion.

One should think of droplets as extremely unlikely objects, which are able to move within a slightly favourable environment. Indeed, at all stages of our treatment, we need to control the inverse probability of droplets being SG and their relaxation times, keeping them as small as feasible. Furthermore, due to their inductive definition, the favourable environment required for their movement should not be too costly. Indeed, that would result in the deterioration of the probability of larger scale droplets, as those incorporate the lower scale environment in their internal structure. Hence, we seek a balance between asking for many infections to make the movement efficient and asking for few in order to keep the probability of droplets high enough.

\subsection{Scales}
\subparagraph*{Microscopic dynamics} refers to modifying infections at the level of the lattice \emph{along the boundary of a droplet}, while respecting the KCM constraint.

\subparagraph*{Internal dynamics} refers to relaxation on scales from the lattice level to the \emph{internal scale} $\li= C^2\log (1/q)/q^\a$, where $C$ is a large constant depending on $\cU$. This is the most delicate and novel step. Up to $\li$ we account for the main contribution to the probability of droplets. That is, at all larger scales the probability of a droplet essentially saturates at a certain value $\rd$, because finding helping sets becomes likely. Thus, on smaller scales, it is important to only very occasionally ask for more than $\a$ infections to appear close to each other in order to get the right probability $\rd$. This means that up to the internal scale hard directions are practically impenetrable, since they require helping sets of more that $\a$ infections.

\subparagraph*{Mesoscopic dynamics} refers to relaxation on scales from $\li$ to the \emph{mesoscopic scale} $\lm= 1/q^C$. As our droplets grow to the mesoscopic scale and past it, it becomes possible to require larger helping sets, which we call \emph{$W$-helping sets}. These allow droplets to move also in hard directions of finite difficulty, while nonisolated stable directions are still blocking.

\subparagraph*{Global dynamics} refers to relaxation on scales from $\lm$ to infinity. The extension to infinity being fairly standard (and not hard), one should rather focus on scales up to the \emph{global scale} given by $\lgl=\exp(1/q^{3\a})$, which is notably much larger than all time scales we are aiming for, but otherwise rather arbitrary.

\bigskip
Roughly speaking, on each of the last three scales, one should decide how to move a droplet of the lower scale in a domain on the larger scale.

For simplicity, in the remainder of \cref{sec:mechanisms}, we assume that the only four relevant directions are the axis ones so that droplets have rectangular shape (see \cref{subsec:geometry}). We further assume that all directions in the left semicircle have difficulties at most $\a$, while the down direction is hard, unless there are no hard directions (isotropic class).

\subsection{Microscopic dynamics}
\label{subsec:micro}
The microscopic dynamics (see \cref{sec:micro}) is the only place where we actually deal with the KCM directly and is the same, regardless of the size of the droplet and the universality class. Roughly speaking, from the outside of the droplet, we may think of it as fully infected, since it is able to relax and, therefore, bring infections where they are needed. Thus, the outer boundary of the droplet behaves like a 1-dimensional KCM with update family reflecting that we view the droplet as infected. Hence, provided there are enough helping sets at the boundary to infect it, we can apply results on 1-dimensional KCM supplied for this purpose by the author \cite{Hartarsky21b}. 

This way we establish that one additional column can relax in time $\exp(O(\log(1/q))^2)$, similarly to the East model described in \cref{subsubsec:East:extension} below. Assuming we know how to relax on the droplet itself, this allows us to relax on a droplet with one column appended. However, applying this procedure recursively line by line is not efficient enough to be useful for extending droplets more significantly.

\subsection{One-directional extensions}
\label{subsec:extensions}
We next explain two fundamental techniques beyond the microscopic dynamics which we use to extend droplets on any scale in a single direction (see \cref{sec:extensions}).

As mentioned above, our droplets are polygonal regions with a SG event (presence of a suitable arrangement of infections in the droplet). An extension takes as input a droplet and produces another one. In terms of geometry, it contains the original one 
and is obtained by extending it, say, horizontally, either to the left or both left and right (see \cref{fig:extensions}). The extended droplet's SG event requires that the smaller one is SG and, additionally, certain helping sets appear in the remaining volume. The choice of where we position the smaller droplet (at the right end of the bigger one, or anywhere inside it) depends on the type of extension. The additional helping sets are required in such a way that, with their help, the smaller droplet can, in principle, completely infect the larger one and, therefore, make it relax (resample its configuration within its SG event).

Thus, an \emph{extension} is a procedure for iteratively defining SG events on larger and larger scales. For each of our two types of extensions we need to provide corresponding iterative bounds on the probability of the SG event and on the relaxation time of droplets on this event. The former is a matter of careful computation. For the latter task we intuitively use a large-scale version of an underlying one-dimensional spin model, which we describe first.

\subsubsection{CBSEP-extension}
\label{subsubsec:CBSEP:extension}
In the one-dimensional spin version of CBSEP \cites{Hartarsky23FA,Hartarsky22CBSEP} we work on $\{\uparrow,\downarrow\}^\bbZ$. At rate 1 we resample each pair of neighbouring spins, provided that at least one of them is $\uparrow$. Their state is resampled w.r.t.\ the reference product measure, which is reversible, conditioned to still have a $\uparrow$ in at least one of the two sites. In other words, $\uparrow$ can perform coalescence, branching and symmetric simple exclusion moves, hence the name. The relaxation time of this model on volume $V$ is roughly $\min(V,1/q)^{2}$ in one dimension and $\min(V,1/q)$ in two and more dimensions \cites{Hartarsky22CBSEP,Hartarsky23FA}, where $q$ is the equilibrium density of $\uparrow$, which we think of as being small.

For us $\uparrow$ represent SG droplets, which we would like to move within a larger volume. However, as we would like them to be able to move possibly by an amount smaller than the size of the droplet, we need to generalise the model a bit. We equip each site of a finite interval of $\bbZ$ with a state space corresponding to the state of a column of the height of our droplet of interest in the original lattice $\bbZ^2$. Then the event ``there is a SG droplet'' may occur on a group of $\ell$ consecutive sites (columns). The long range generalised CBSEP, which, abusing notation, we call CBSEP, is defined as follows. We fix some \emph{range} $R>\ell$ and resample at rate 1 each group of $R$ consecutive sites, if they contain a SG droplet. The resampling is performed conditionally on preserving the presence of a SG droplet in those $R$ sites. Thus, one move of this process essentially delocalises the droplet within the range.

It is important to note (and this was crucial in \cite{Hartarsky23FA}) that CBSEP does not \emph{have to} create an additional droplet in order to evolve. Since SG droplets are unlikely, it suffices to move an initially available SG droplet through our domain in order to relax. Since infection needs to be able to propagate both left and right from the SG droplets, we will define (see \cref{subsec:CBSEP:extension} and particularly \cref{def:extension:CBSEP,subfig:extension:CBSEP}) \emph{CBSEP-extension} by extending our domain horizontally and asking for the SG droplet anywhere inside with suitable ``rightwards-pointing'' helping sets on its right and ``leftwards-pointing'' on its left.

While we now know that droplets evolve according to CBSEP, it remains to see how one can reproduce one CBSEP move via the original dynamics. This is done inductively on $R$ by a bisection procedure, the trickiest part being the case $R=\ell+1$. We then dispose with a droplet plus one column---exactly the setting for microscopic dynamics. However, we not only want to resample the state of the additional column, but also allow the droplet to move by one lattice step. To achieve this, we have to look inside the structure of the SG droplet and require for its infections (which have no rigid structure and may therefore move around like the organelles of an amoeba) to be somewhat more on the side we want to move towards (see e.g.\ \cref{fig:iso} and also \cref{def:bSG:iso,def:bSG:loglog:0,def:bSG:loglog:1,def:bSG:log2}). Then, together with a suitable configuration on the additional column provided by the microscopic dynamics, we easily recover our SG event shifted by one step, since most of the structure was already provided by the version of the SG event ``contracted'' towards the new column. 

This amoeba-like motion (moving a droplet, by slightly rearranging its internal structure) leads to a very small relaxation time of the dynamics. Indeed, the time needed to move the droplet is the product of three contributions: the relaxation time of the 1-dimensional spin model; the relaxation time of the microscopic dynamics; the time needed to see a droplet contracting as explained above (see \cref{cor:CBSEP:reduction}). The first of these is a power of the volume (number of sites); the second is $\exp(O(\log(1/q)))^2)$; the third is also small, as we discuss in \cref{subsubsec:East:extension}. 

However, CBSEP-extensions can only be used for sufficiently symmetric update families. That is, the droplet needs to be able to move indifferently both left and right and its position should not be biased in one direction or the other. Specifically, if we are working on a scale that requires the use of helping sets of size $\alpha$, these have to exist both for the left and right directions, so the model needs to be unrooted (if instead we use larger helping sets, having a finite number of stable directions suffices). The reason is that otherwise the position of the SG droplet is biased in one direction instead of being approximately uniform. This would break the analogy with the original one-dimensional spin model, which is totally symmetric. When symmetry is not available, we recourse to the East-extension presented next, which may also be viewed as a totally asymmetric version of the CBSEP-extension.

\subsubsection{East-extension}
\label{subsubsec:East:extension}
The East model \cite{Jackle91} is the one-dimensional KCM with $\cU=\{\{1\}\}$. That is, we are only allowed to resample the left neighbour of an infection. An efficient recursive mechanism for its relaxation is the following \cite{Mauch99}. Assume we start with an infection at $0$. In order to bring an infection to $-2^n+1$, using at most $n$ infections at a time (excluding $0$), we first bring one to $-2^{n-1}+1$, using $n-1$ infections. We then place an infection at $-2^{n-1}$ and reverse the procedure to remove all infections except $0$ and $-2^{n-1}$. Finally, start over with $n-1$ infections, viewing $-2^{n-1}$ as the new origin, thus reaching $-2^{n}+1$. It is not hard to check that this is as far as one can get with $n$ infections \cite{Chung01}. Thus, a number of infections logarithmic in the desired distance is needed. This is to be contrasted with CBSEP, for which only one infection is ever needed, as it can be moved indefinitely by SEP moves. The relaxation time of East on a segment of length $L$ is $q^{-O(\log \min(L,1/q))}$ \cites{Aldous02,Cancrini08,Chleboun14}, where $q$ is the equilibrium density of infections. This corresponds to the cost of $n$ infections when $2^n\sim \min(L,1/q)$ is the typical distance to the nearest infection.

The long-range generalised version of the East model is defined similarly to that of CBSEP. The only difference is that now $R>\ell$ consecutive columns are resampled together if there is a SG droplet on their extreme right. It is clear that this does not allow \emph{moving} the droplet, but rather forces us to recreate a new droplet at a shifted position before we can progress. The associated \emph{East-extension} of a droplet corresponds to extending its geometry to the left (see \cref{subsec:East:extension} and particularly \cref{def:extension:East,subfig:extension:East}). The extended SG event requires that the original SG droplet is present in the rightmost position and ``leftwards-pointing'' helping sets are available in the rest of the extended droplet.

The generalised East process goes back to \cite{Martinelli19a}, while the long range version is implicitly used in \cite{Hartarsky21a}. However, both works used a brutal strategy consisting of creating the new droplet from scratch. Instead, in this work we have to be much more careful, particularly for semi-directed models. Indeed, take $\ell$ large and $R=\ell+5$. Then it is intuitively clear that the presence of the original rightmost droplet overlaps greatly with the occurrence of the shifted SG one we would like to craft. Hence, the idea is to take advantage of this and only pay the \emph{conditional} probability of the droplet we are creating, given the presence of the original one.

This is not as easy as it sounds for several reasons. Firstly, we should make the SG structure soft enough (in contrast with e.g.\ \cites{Martinelli19a,Hartarsky21a}) so that small shifts do not change it much. Secondly, we need to actually have a quantitative estimate of the conditional probability of a complicated multi-scale event, given its translated version, which necessarily does not quite respect the same multi-scale geometry. To make matters worse, we do not have at our disposal a very sharp estimate of the probability of SG events (unlike in \cite{Hartarsky23FA}), so directly computing the ratio of two rough estimates would yield a very poor bound on the conditional probability. In fact, this problem is also present when contracting a droplet in the CBSEP-extension---we need to evaluate the probability of a contracted version of the droplet, conditionally on the original droplet being present. 

We deal with these issues in \cref{subsec:conditional} (see also \cref{app:proba}). We establish that, as intuition may suggest, to create a droplet shifted by $R-\ell$, given the original one, we roughly only need to pay the probability of a droplet on scale $R-\ell$ rather than $\ell$, which provides a substantial gain. Hence, the time necessary for an East-extension of a droplet to relax is essentially the product of the inverse probabilities of a droplet on scales of the form $2^m$ up to the extension length (see \cref{cor:East:reduction}).

\subsection{Internal dynamics}
The internal dynamics (see \cref{subsec:log1:internal,subsec:loglog:internal,subsec:iso,subsubsec:unbal:internal}) is where most of our work goes. This is not surprising, as the probability of SG events saturates at its final value $\rd$ at the internal scale. The value of $\rd$ is given by $\exp(-O(1)/q^\a)$ for balanced models and $\exp(-O(\log(1/q))^2/q^\a)$ for unbalanced ones, as in bootstrap percolation \cite{Bollobas23}. However, relaxation times for some classes keep growing past the internal scale, so the internal dynamics does not necessarily give the final answer in \cref{th:main} (see \cref{tab:mechanisms}).

\subsubsection{Unbalanced internal dynamics}
\label{subsubsec:unbal:int:outline}
Let us begin with the simplest case of unbalanced models. If $\cU$ is unbalanced with infinite number of stable directions (class \ref{log4}), droplets in \cite{Martinelli19a} on the internal scale consist of several infected consecutive columns, so that no relaxation is needed (the SG event is a singleton). The columns have size $\li$, which justifies the value of $\rd=q^{-O(\li)}=\exp(-O(\log(1/q))^2/q^\a)$.

Assume $\cU$ is unbalanced with finite number of stable directions (classes \ref{log3} and \ref{log2}, see \cref{subsubsec:unbal:internal}). Then droplets on the internal scale are fully infected square frames of thickness $O(1)$ and size $\li$. That is, the $\ell^\infty$ ball of radius $\li$ minus the one of radius $\li-O(1)$ (see \cite{Hartarsky21a}*{Figs. 2-4} or \cref{fig:bSG:log2} for more general geometry). This frame is infected with probability $\rd=q^{-O(\li)}$. In order to relax inside the frame, one can divide its interior into groups of $O(1)$ consecutive columns (see \cite{Hartarsky21a}*{Fig. 8}). We can then view them as performing a CBSEP dynamics with $\uparrow$ corresponding to a fully infected group of columns. This is possible, because with the help of the frame each completely infected group is able to completely infect the neighbouring ones. Here we are using that there are finitely many stable directions to ensure both the left and right directions have finite difficulty, so finite-sized helping sets, as provided by the frame, are sufficient to propagate our group of columns. This was already done in \cites{Hartarsky21a} and the time necessary for this relaxation is easily seen to be $\rd^{-O(1)}$ (the cost for creating a group of infected columns)---see \cref{prop:unbal:internal}.

\subsubsection{CBSEP internal dynamics}
If $\cU$ is isotropic (class \ref{iso}, see \cref{subsec:iso}), up to the conditioning problems of \cref{subsec:conditional} described above, we need only minor adaptations of the strategy of \cite{Hartarsky23FA} for the paradigmatic isotropic model called FA-2f. Droplets on the internal scale have an internal structure as obtained by iterating \cref{subfig:iso:SG} (see also \cite{Hartarsky23FA}*{Fig. 2}). Our droplets are extended little by little alternating between the horizontal and vertical directions, so that their size is multiplied essentially by a constant at each extension. Thus, roughly $\log (1/q)$ extensions are required to reach $\li$. As isotropic models do not have any hard directions, we can move in all directions and thus the symmetry required for CBSEP-extensions is granted. Hence, this mechanism leads to a very fast relaxation of droplets in time $\exp(q^{-o(1)})$---see \cref{th:isotropic}.\footnote{Note that in \cite{Hartarsky23FA}*{Proposition 4.7} a much larger internal relaxation time of order $\exp(q^{-1/2+o(1)})$ was obtained, but the cost $\rd^{-1}$ of SG droplets was much smaller than the one in the present work, so our treatment here is by no means as sharp for FA-2f as \cite{Hartarsky23FA}.} 

\begin{rem}
\label{rem:symmetrisation}
Note that for CBSEP-extensions to be used, we need a very strong symmetry. Namely, leftwards and rightwards pointing helping sets should be the same up to rotation by $\pi$. Yet, for a general isotropic model we only know that there are no hard directions, so helping sets have the same size (equal to the difficulty $\alpha$ of the model), but not necessarily the same shape. We circumvent this issue by artificially symmetrising our droplets and events. Namely, whenever we require helping sets in one direction, we also require the helping sets for the opposite direction rotated by $\pi$ (see \cref{def:ahelping,def:traversability,def:extension:CBSEP}). Although these are totally useless for the dynamics, they are important to ensure that the positions of droplets are indeed uniform rather than suffering from a drift towards an ``easier'' non-hard direction (see \cref{lem:T:ratio}).
\end{rem}

\subsubsection{East internal dynamics}
\label{subsubsec:East:internal:overview}
The most challenging case is the balanced non-isotropic one (classes \ref{log0}, \ref{log1} and \ref{loglog}). It is treated in \cref{subsec:loglog:internal,subsec:log1:internal}, but for the purposes of the present section only \cref{subsec:loglog:internal} is relevant. This is because we assume that only the four axis directions are relevant and our droplets are rectangular. The treatment of the general case for balanced rooted families is left to \cref{subsec:log1:internal,app:logloglog} (recall \cref{rem:logloglog}). 

For the internal dynamics the downwards hard direction prevents us from using CBSEP-extensions. To be precise, for semi-directed models (class \ref{loglog}) it is possible to perform CBSEP-extensions horizontally (and not vertically), but the gain is insignificant, so we treat all balanced non-isotropic models identically up to the internal scale as follows.

We still extend droplets, starting from a microscopic one, by a constant factor alternating between the horizontal and vertical directions (see \cref{fig:East:internal:loglog}). However, in contrast with the isotropic case (see \cref{subfig:iso:SG}), extensions are done in an oriented fashion, so that the original microscopic droplet remains anchored at the corner of larger ones. Thus, we may apply East-extensions on each step and obtain that the cost is given by the product of conditional probabilities from \cref{subsubsec:East:extension} over all scales and shifts of the form $2^n$:
\begin{equation}
\label{eq:product:subtle}\prod_{n=1}^{\log_2(\li)}\prod_{m=0}^{n}a_m^{(n)},\end{equation}
where $a_m^{(n)}$ is the inverse of the conditional probability of a SG droplet of size $2^n$ being present at position $2^m$, given that a SG droplet of size $2^n$ is present at position 0. It is crucial that \cref{eq:product:subtle} is \emph{not} the straightforward bound $\prod_{n}(\rd^{(n)})^{-n}$, with $\rd^{(n)}$ the probability of a droplet of scale $n$, that one would get by direct analogy with the East model (recall from \cref{subsubsec:East:extension} that the relaxation time of East on a small volume $L$ is $q^{-O(\log L)}$), as that would completely devastate all our results. Indeed, as mentioned in \cref{subsubsec:East:extension}, the term $a_m^{(n)}$ in \cref{eq:product:subtle} is approximately equal to $(\rd^{(m)})^{-1}$, rather than $(\rd^{(n)})^{-1}$. This is perhaps one of the most important points to our treatment.

Hence, \cref{eq:product:subtle} transforms into \[\prod_{n=1}^{\log_2(\li)}\prod_{m=0}^{n}\left(\rd^{(m)}\right)^{-1}.\] In other words, a droplet of size $2^m$ needs to be paid for once per scale larger than $2^m$ (see \cref{eq:gLni:decomposition:loglog}). A careful computation shows that only droplets larger than $q^{-\a}$ provide the dominant contribution and those all have probability essentially $\rd^{(m)}=\rd=\exp(-O(1)/q^\a)$ (see \cref{eq:mu:SG:bound:loglog}). Thus, the total cost would be 
\begin{equation}
\label{eq:product:subtle2}\prod_{n=\log_2(1/q^\alpha)}^{\log_2(\li)}\prod_{m=\log_2(1/q^\alpha)}^{n}\rd^{-1}=\rd^{-O(\log\log(1/q))^2}=e^{O(\log\log(1/q))^2/q^\alpha},\end{equation}
since there are $O(\log\log(1/q))$ scales from $q^{-\a}$ to $\li$, as they increase exponentially. 

\Cref{eq:product:subtle2} is unfortunately a bit too rough for the semi-directed class, overshooting \cref{th:main}\ref{loglog}. However, the solution is simple. It suffices to introduce scales growing double-exponentially above $q^{-\a}$ instead of exponentially (see \cref{eq:def:ln:internal:East}), so that the product over scales $n$ in \cref{eq:product:subtle2} becomes dominated by its last term, corresponding to droplet size $\li$. This gives the optimal final cost 
\[\rd^{-\log_2(q^\alpha\li)}=\rd^{-O(\log\log(1/q))}=e^{O(\log\log(1/q))/q^\a}\]
(see \cref{th:internal:loglog}).

\subsection{Mesoscopic dynamics}
For the mesoscopic dynamics (see \cref{subsec:iso,subsec:log0:meso,subsec:log2:meso,subsec:loglog:meso}) we are given as input a SG event for droplets on scale $\li=C^2\log(1/q)/q^\alpha$ and a bound on their relaxation time and occurrence probability $\rd$. We seek to output the same on scale $\lm=q^{-C}$. Taking $C\gg W$, once our droplets have size $\lm$, we are able to find $W$-helping sets (sets of $W$ consecutive infections, where $W$ is large enough).
\subsubsection{CBSEP mesoscopic dynamics}
\label{subsubsec:CBSEP:meso}
If $\cU$ is unrooted (classes \ref{log2}, \ref{loglog} and \ref{iso}, see \cref{subsec:log2:meso,subsec:loglog:meso}), recall that the hard directions (if any) are vertical. Then we can perform a horizontal CBSEP-extension directly from $\li$ to $\lm$, since $\li=C^2\log(1/q)/q^\a$ makes it likely for helping sets (of size $\alpha$) to appear along all segments of length $\li$ until we reach scale $\lm= q^{-C}$. The resulting droplet is very wide, but short (see \cref{subfig:bSG:log2}). However, this is enough for us to be able to perform a vertical CBSEP-extension (see \cref{subfig:bSG:log2:2}), requiring $W$-helping sets, since they are now likely to be found. Again, CBSEP dynamics being very efficient, its cost is negligible. Note that, in order to perform the vertical extension, we are using that there are no nonisolated stable directions, so that $W$ is larger than the difficulty of the up and down directions, making $W$-helping sets sufficient to induce growth in those directions. Thus, morally, there are no hard directions beyond scale $\lm$ for unrooted models.

\subsubsection{East mesoscopic dynamics}
If $\cU$ is rooted (classes \ref{log4}-\ref{log3} and \ref{log1}, see \cref{subsec:log0:meso}), CBSEP-extensions are still inaccessible. We may instead East-extend horizontally from $\li$ to $\lm$ in a single step. If the model is balanced or has a finite number of stable directions (classes \ref{log0}, \ref{log3} and \ref{log1}), we may proceed similarly in the vertical direction, reaching a droplet of size $\lm$ in time $\rd^{-O(\log(1/q))}$ (here we use the basic bound $q^{-O(\log L)}$ for East dynamics recalled in \cref{subsubsec:East:extension}, which is fairly tight in this case, since droplets are small compared to the volume: $\log \lm\approx \log (\lm/\li)$). For the unbalanced case (class \ref{log3}) here we require $W$-helping sets along the long side of the droplet like in  \cref{subsubsec:CBSEP:meso}. Another way of viewing this is simply as extending the procedure used for the East internal dynamics all the way up to the mesoscopic scale $\lm$ (see \cref{subsec:log0:meso}). 

It should be noted that a version of this mechanism, which coincides with the above for models with rectangular droplets, but differs in general, was introduced in \cite{Hartarsky21a}. Though their \emph{snail mesoscopic dynamics} can be replaced by our East one, for the sake of concision in \cref{subsec:global:FA} we directly import the results of \cite{Hartarsky21a} based on the snail mechanism.

\subsubsection{Stair mesoscopic dynamics}
For unbalanced families with infinite number of stable directions (class \ref{log4}) the following \emph{stair mesoscopic dynamics} was introduced in \cite{Martinelli19a}. Recall from \cref{subsubsec:unbal:int:outline} that for unbalanced models the internal droplet is simply a fully infected frame or group of consecutive columns. While moving the droplet left via an East motion, we pick up $W$-helping sets above or below the droplet. These sets allow us to make all droplets to their left shifted up or down by one row. Hence, we manage to create a copy of the droplet far to its left but also slightly shifted up or down (see \cite{Martinelli19a}*{Fig. 6}). Repeating this (with many steps in our staircase) in a two-dimensional East-like motion, we can now relax on a mesoscopic droplet with horizontal dimension much larger than $\lm$ but still polynomial in $1/q$ and vertical dimension $\lm$ in time $\rd^{-O(\log(1/q))}$. Here, one should again intuitively imagine we are using the bound $q^{-O(\log L)}$ but this time for the relaxation time of the 2-dimensional East model.

\subsection{Global dynamics}
The global dynamics (see \cref{subsec:global:FA,subsec:global:CBSEP,subsec:log2:global,subsec:loglog:global,subsec:global:East}) receives as input a SG event for a droplet on scale $\lm$ with probability roughly $\rd$ and a bound on its relaxation time, as provided by the mesoscopic dynamics. Its goal is to move such a droplet efficiently to the origin from its typical initial position at distance roughly $\rd^{-1/2}$.

\subsubsection{CBSEP global dynamics}
\label{subsubsec:global:CBSEP}
If $\cU$ has a finite number of stable directions (classes \ref{log3}-\ref{iso}) the mesoscopic droplet can perform a CBSEP motion in a typical environment. Indeed, the droplet is large enough for CBSEP-extensions with $W$-helping sets to be possible in all directions. Therefore, the cost of this mechanism is given by the relaxation time of CBSEP on a box of size $\lgl=\exp(1/q^{3\alpha})$ with density of $\uparrow$ given by $\rd$. Performing this strategy carefully and using the 2-dimensional CBSEP, this yields a relaxation time $\min((\lgl)^2,1/\rd)=1/\rd$ (recall \cref{subsubsec:CBSEP:extension} and see \cref{subsec:global:CBSEP}).

\subsubsection{East global dynamics}
If $\cU$ has infinite number of stable directions (classes \ref{log4} and \ref{log0}), the strategy is identical to the CBSEP global dynamics, but employs an East dynamics. Now the cost becomes the relaxation time of an East model with density of infections $\rd$, which yields a relaxation time of $\rd^{-O(\log \min(\lgl,1/\rd))}=\rd^{-O(\log(1/\rd))}$ (recall \cref{subsubsec:East:extension} and see \cref{subsec:global:East}).

\subsection{Assembling the components}
\label{subsec:assembly}
To conclude, let us return to the summary provided in \cref{tab:summary}. In \cref{tab:costs} we collect the mechanisms for each scale and their cost to the relaxation time. The results are expressed in terms of the probability of a droplet $\rd$, which equals $\exp(-O(\log(1/q))^2/q^\a)$ for unbalanced models and $\exp(-O(1)/q^\a)$ for balanced ones. The final bound on $\Et$ for each class then corresponds to the product of the costs of the mechanism employed at each scale. To complement this, in \cref{tab:mechanisms} we indicate the fastest mechanism available for each class on each scale. We further indicate which one gives the dominant contribution to the final result appearing in \cref{th:main}, once the bill is footed. 

Finally, let us alert the reader that, for the sake of concision, the proof below does not systematically implement the optimal strategy for each class as indicated in \cref{tab:mechanisms} if that does not deteriorate the final result. Similarly, when that is unimportant, we may give weaker bounds than the ones in \cref{tab:costs}. In \cref{subsec:global:FA} we tacitly import a weaker precursor of the CBSEP global mechanism from \cite{Hartarsky21a} not listed above.

\section{Preliminaries}
\label{sec:preliminaries}
\subsection{Harris inequality}
\label{subsec:correlation}
Let us recall a well-known correlation inequality due to Harris \cite{Harris60}. It is used throughout and we state some particular formulations that are useful to us.

For \cref{subsec:correlation} we fix a finite $\L\subset \bbZ^2$. We say that an event $\cA\subset \O_\L$ is \emph{decreasing} if adding infections does not destroy its occurrence.
\begin{prop}[Harris inequality]
Let $\cA,\cB\subset\O_\L$ be decreasing. Then
\begin{equation}
\label{eq:Harris:1}
    \m(\cA\cap \cB)\ge \m(\cA)\m(\cB).
\end{equation}
\end{prop}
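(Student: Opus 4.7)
The plan is to prove the Harris inequality by induction on $|\L|$, reducing the multi-variable correlation statement to a one-variable computation that can be checked by hand.

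First, I would dispatch the base case $|\L|=1$. Here $\O_\L=\{0,1\}$ carries the Bernoulli measure assigning mass $q$ to $0$ (infected) and $1-q$ to $1$, and any decreasing indicator $f=\1_\cA$ is a nonincreasing function of the single coordinate, so $f(0)\ge f(1)$ and likewise $g(0)\ge g(1)$ for $g=\1_\cB$. A direct expansion gives
\[\m(fg)-\m(f)\m(g)=q(1-q)(f(0)-f(1))(g(0)-g(1))\ge 0,\]
which is exactly \cref{eq:Harris:1} in this case. I would keep this one-site identity in a separate sub-lemma, since it gets reused in the inductive step.

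Next, for $|\L|\ge 2$, I would pick a site $x\in\L$, write $\L'=\L\setminus\{x\}$, and condition on $\h_x$:
\[\m(\cA\cap\cB)=\sum_{\h_x\in\{0,1\}}\m(\h_x)\,\m_{\L'}(\cA\cap\cB\mid\h_x).\]
For each fixed value of $\h_x$, the sections $\cA_{\h_x}=\{\h_{\L'}:(\h_{\L'},\h_x)\in\cA\}$ and $\cB_{\h_x}$ are decreasing events in $\O_{\L'}$ under the induced product measure, so the induction hypothesis yields $\m_{\L'}(\cA\cap\cB\mid\h_x)\ge \m_{\L'}(\cA\mid\h_x)\,\m_{\L'}(\cB\mid\h_x)$. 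It then remains to apply the base case on the single coordinate $\h_x$ to the two maps $\h_x\mapsto \m_{\L'}(\cA\mid\h_x)$ and $\h_x\mapsto \m_{\L'}(\cB\mid\h_x)$, each of which is nonincreasing in $\h_x$ because flipping $\h_x$ from $1$ to $0$ corresponds to adding an infection, which only enlarges a decreasing event: $\cA_1\subset\cA_0$ and $\cB_1\subset\cB_0$.

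The only conceptual pitfall I anticipate is the sign bookkeeping forced by the convention that infected sites carry value $0$, so ``decreasing'' in the paper corresponds to nonincreasing in the usual order on $\{0,1\}^\L$. Once this is set correctly, the two places where monotonicity is invoked---the base case and the monotonicity of the conditional probabilities as functions of $\h_x$---are immediate, and the induction closes with no further probabilistic input. Variants of \cref{eq:Harris:1} (for instance, the product form $\m(fg)\ge\m(f)\m(g)$ for general nonnegative decreasing $f,g$, or the complementary inequality for one increasing and one decreasing event) follow by routine approximation by simple functions and by taking complements, respectively.
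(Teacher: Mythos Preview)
Your proof is correct and is the standard induction-on-sites argument for the Harris/FKG inequality on product measures. The paper, however, does not give any proof of this proposition: it states the result and attributes it to Harris \cite{Harris60}, then only proves the subsequent corollary (\cref{eq:Harris:2}--\cref{eq:Harris:3}) by elementary manipulation assuming \cref{eq:Harris:1}. So there is nothing to compare against beyond noting that your argument is the classical one the citation points to.
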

\begin{cor}
Let $\cA,\cB,\cC,\cD\subset\O_\L$ be nonempty and decreasing events such that $\cB$ and $\cD$ are independent, then
\begin{align}
\label{eq:Harris:2}\m(\cA|\cB\cap \cD)&{}\ge \m(\cA|\cB)\ge\m(\cA),\\
\label{eq:Harris:3}\m(\cA\cap \cC|\cB\cap \cD)&{}\ge \m(\cA|\cB)\m(\cC|\cD).
\end{align}
\end{cor}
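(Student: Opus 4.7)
The plan is to derive both \eqref{eq:Harris:2} and \eqref{eq:Harris:3} as direct consequences of the Harris inequality \eqref{eq:Harris:1}, invoking the independence of $\cB$ and $\cD$ only to replace $\m(\cB \cap \cD)$ by $\m(\cB)\m(\cD)$ in the denominators. The key observation is that intersections of decreasing events remain decreasing, so \eqref{eq:Harris:1} applies to composite events such as $\cA \cap \cB$ and $\cC \cap \cD$. Note also that any nonempty decreasing event contains the completely infected configuration $\bzero$, hence has $\m$-measure at least $q^{|\L|}>0$, so every conditional probability below is well-defined.

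For the chain \eqref{eq:Harris:2}, the rightmost bound $\m(\cA|\cB) \ge \m(\cA)$ follows immediately by applying \eqref{eq:Harris:1} to $\cA$ and $\cB$ and dividing by $\m(\cB)$. For the first step, I would expand
\[
\m(\cA|\cB\cap\cD) = \frac{\m(\cA\cap\cB\cap\cD)}{\m(\cB\cap\cD)} = \frac{\m(\cA\cap\cB\cap\cD)}{\m(\cB)\m(\cD)},
\]
using the independence of $\cB$ and $\cD$ in the second equality. Applying \eqref{eq:Harris:1} to the decreasing events $\cA\cap\cB$ and $\cD$ gives $\m(\cA\cap\cB\cap\cD) \ge \m(\cA\cap\cB)\m(\cD)$, and dividing through recovers $\m(\cA|\cB)$ on the right.

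For \eqref{eq:Harris:3}, the same template works: independence of $\cB$ and $\cD$ rewrites the left-hand side as
\[
\m(\cA\cap\cC|\cB\cap\cD) = \frac{\m(\cA\cap\cB\cap\cC\cap\cD)}{\m(\cB)\m(\cD)},
\]
and \eqref{eq:Harris:1} applied to the decreasing events $\cA\cap\cB$ and $\cC\cap\cD$ yields the lower bound $\m(\cA\cap\cB)\m(\cC\cap\cD)/(\m(\cB)\m(\cD))$, which is exactly $\m(\cA|\cB)\m(\cC|\cD)$.

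There is no genuine obstacle here: both inequalities amount to bookkeeping around \eqref{eq:Harris:1}, and the sole role of the independence hypothesis is to convert $\m(\cB\cap\cD)$---which Harris itself would bound in the wrong direction---into the factorised form $\m(\cB)\m(\cD)$ needed to split the conditional probabilities.
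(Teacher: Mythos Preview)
Your proof is correct and follows essentially the same route as the paper: apply \eqref{eq:Harris:1} to the intersections $\cA\cap\cB$ and $\cC\cap\cD$ in the numerator, and use independence of $\cB$ and $\cD$ to factorise the denominator. The only cosmetic difference is that the paper observes the first inequality of \eqref{eq:Harris:2} as the special case $\cC=\Omega_\Lambda$ of \eqref{eq:Harris:3}, whereas you prove it directly---but that direct argument is the same computation.
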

\begin{proof}
The first inequality in \cref{eq:Harris:2} is \cref{eq:Harris:3} for $\cC=\Omega_\Lambda$, the second follows from \cref{eq:Harris:1} and $\m(\cA|\cB)=\m(\cA\cap\cB)/\m(\cB)$, while \cref{eq:Harris:3} is
\[\m(\cA\cap \cC|\cB\cap \cD)=\frac{\m(\cA\cap\cC\cap\cB\cap\cD)}{\m(\cB\cap\cD)}\ge\frac{\m(\cA\cap\cB)\m(\cC\cap\cD)}{\m(\cB)\m(\cD)}=\m(\cA|\cB)\m(\cC|\cD),\]
using \cref{eq:Harris:1} in the numerator and independence in the denominator.
\end{proof}
We collectively refer to \cref{eq:Harris:1,eq:Harris:2,eq:Harris:3} as \emph{Harris inequality}.

\subsection{Directions}
\label{subsec:directions}
Throughout this work we fix a critical update family $\cU$ with difficulty $\a$. We call a direction $u\in S^1$ \emph{rational} if $u\bbR\cap\bbZ^2\neq\{0\}$. It follows from \cref{def:stable} that isolated and semi-isolated stable directions are rational \cite{Bollobas15}*{Theorem 1.10}. Therefore, by \cref{def:alpha} there exists an open semicircle with rational midpoint $u_0$ such that all directions in the semicircle have difficulty at most $
\a$. We may assume without loss of generality that the direction $u_0+\pi/2$ is hard unless $\cU$ is isotropic. It is not difficult to show (see e.g.\ \cite{Bollobas15}*{Lemma 5.3}) that one can find a nonempty set $\cS'$ of rational directions such that:
\begin{itemize}
    \item all isolated and semi-isolated stable directions are in $\cS'$;
    \item $u_0\in\cS'$;
    \item for every two consecutive directions $u,v$ in $\cS'$ either there exists a rule $X\in\cU$ such that $X\subset\Hb_{u}\cap\Hb_v$ or all directions between $u$ and $v$ are stable.
\end{itemize}
We further consider the set $\hS=\cS'+\{0,\pi/2,\pi,3\pi/2\}$ obtained by making $\cS'$ invariant by rotation by $\pi/2$. It is not hard to verify that the three conditions above remain valid when we add directions, so they are still valid for $\hat \cS$ instead of $\cS'$. We refer to the elements of $\hS$ as \emph{quasi-stable directions} or simply \emph{directions}, as they are the only ones of interest to us. We label the elements of $\hS=(u_i)_{i\in[4k]}$ clockwise and consider their indices modulo $4k$ (we write $[n]$ for $\{0,\dots,n-1\}$), so that $u_{i+2k}=-u_{i}$ (the inverse being taken in $\bbR^2$ and not w.r.t.\ the angle) is perpendicular to $u_{i+k}$. In figures we take $\hS=\frac\pi4(\bbZ/8\bbZ)$ and $u_0=(-1,0)$. Further observe that if all $U\in\cU$ are contained in the axes of $\bbZ^2$, then we may set $\hS=\frac\pi2(\bbZ/4\bbZ)$.

For $i\in[4k]$ we introduce $\r_i=\min\{\r>0:\exists x\in\bbZ^2,\<x,u_i\>=\r\}$ and $\l_i=\min\{\l>0:\l u_i\in\bbZ^2\}$, which are both well-defined, as the directions are rational (in fact $\r_i\l_i=1$, but we use both notations for transparency).

\subsection{Droplets}
\label{subsec:geometry}
We next define the geometry of the droplets we use. Recall half planes from \cref{def:Hul}.
\begin{defn}[Droplet]
\label{def:droplet}
A \emph{droplet} is a nonempty closed convex polygon of the form
\[\L(\ur)=\bigcap_{i\in [4k]}\Hb_{u_i}(r_i)\]
for some \emph{radii} $\ur\in\bbR^{[4k]}$ (see the black regions in \cref{fig:extensions}). For a sequence of radii $\ur$ we define the \emph{side lengths} $\us=(s_i)_{i\in[4k]}$ with $s_i$ the length of the side of $\L(\ur)$ with outer normal $u_i$.

We say that a droplet is \emph{symmetric} if it is of the form $x+\L(\ur)$ with $2x\in\bbZ^2$ and $r_i=r_{i+2k}$ for all $i\in[2k]$. If this is the case, we call $x$ the \emph{center} of the droplet.
\end{defn}
Note that if all $U\in\cU$ are contained in the axes of $\bbZ^2$, then droplets are simply rectangles with sides parallel to the axes.

We write $(\ue_i)_{i\in[4k]}$ for the canonical basis of $\bbR^{[4k]}$ and we write $\uone=\sum_{i\in[4k]}\ue_i$, so that $\L(r\uone)$ is a polygon with inscribed circle of radius $r$ and sides perpendicular to $\hS$. It is often more convenient to parametrise dimensions of droplets differently. For $i\in [4k]$ we set
\begin{equation}
\label{eq:def:uvi}\uv_i=\sum_{j=i-k+1}^{i+k-1}\<u_i,u_j\>\ue_j.\end{equation}
This way $\L(\ur+\uv_i)$ is obtained from $\L(\ur)$ by extending the two sides parallel to $u_i$ by $1$ in direction $u_i$ and leaving all other side lengths unchanged (see \cref{subfig:extension:East}). Note that if $\L(\ur)$ is symmetric, then so is $\L(\ur+\l_i\uv_i)$ for $i\in[4k]$.

\begin{defn}[Tube]
\label{def:tube}
Given $i\in[4k]$, $\ur$ and $l>0$, we define the \emph{tube of length $l$, direction $i$ and radii $\ur$} (see the thickened regions in \cref{fig:extensions})
\[T(\ur,l,i)=\L(\ur+l\uv_i)\setminus\L(\ur).\]
\end{defn}
We often need to consider boundary conditions for our events on droplets and tubes. Given two disjoint finite regions $A,B\subset \bbZ^2$ and two configurations $\h\in\O_A$ and $\o\in\O_{B}$, we define $\h\cdot\o\in\O_{A\cup B}$ as
\begin{equation}
\label{eq:def:boundary:condition}(\h\cdot\o)_x=\begin{cases}
\h_x&x\in A,\\
\o_x&x\in B.
\end{cases}\end{equation}

\subsection{Scales}
\label{subsec:scales}
Throughout the work we consider the positive integer constants
\[1/\e\gg1/\d\gg C\gg W.\]
Each one is assumed to be large enough depending on $\cU$ and, therefore, $\hS$ and $\a$ (e.g.\ $W>\a$), and much larger than any explicit function of the next (e.g.\ $e^W<C$). These constants are not allowed to depend on $q$. Whenever asymptotic notation is used, its implicit constants are not allowed to depend on the above ones, but only on $\cU$. Also recall \cref{foot:asymptotic}.

The following are our main scales corresponding to the mesoscopic and internal dynamics:
\begin{align*}
    \lmp&{}=q^{-C}/\sqrt{\d},&
    \lm&{}=q^{-C},\\
    \lmm&{}=q^{-C}\cdot\sqrt{\d},&
    \li&{}=C^2\log(1/q)/q^\a.
\end{align*}

\subsection{Helping sets}
\label{subsec:helping:sets}
We next introduce various constant-sized sets of infections sufficient to induce growth. As the definitions are quite technical in general, in \cref{fig:example} we introduce a deliberately complicated example, on which to illustrate them.

\begin{figure}
		\centering
		\begin{subfigure}{0.51\textwidth}
		\centering
		\begin{subfigure}{0.48\textwidth}
		\centering
		\begin{tikzpicture}[line cap=round,line join=round,x=0.15\textwidth,y=0.15\textwidth]
				\clip (-6.3,-0.5) rectangle (0.3,4.3);
				\draw [color=gray, xstep=1,ystep=1] (-6,0) grid (0,4);
				\fill (0,1) circle (2pt);
				\fill (0,2) circle (2pt);
				\fill (0,3) circle (2pt);
				\fill (-2,0) circle (2pt);
				\fill (-4,0) circle (2pt);
				\fill (-6,0) circle (2pt);
				\draw (0,0) node[cross=3pt,rotate=0] {};
		\end{tikzpicture}
		\end{subfigure}
		\begin{subfigure}{0.48\textwidth}
		\centering
		\begin{tikzpicture}[line cap=round,line join=round,x=0.15\textwidth,y=0.15\textwidth]
				\clip (-0.3,-0.5) rectangle (4.3,4.3);
				\draw [color=gray, xstep=1,ystep=1] (0,0) grid (4,4);
				\fill (0,1) circle (2pt);
				\fill (0,2) circle (2pt);
				\fill (0,3) circle (2pt);
				\fill (0,4) circle (2pt);
				\fill (1,0) circle (2pt);
				\fill (2,0) circle (2pt);
				\fill (3,0) circle (2pt);
				\fill (4,0) circle (2pt);
				\draw (0,0) node[cross=3pt,rotate=0] {};
		\end{tikzpicture}
		\end{subfigure}
		\begin{subfigure}{0.48\textwidth}
		\centering
		\begin{tikzpicture}[line cap=round,line join=round,x=0.15\textwidth,y=0.15\textwidth]
				\clip (-6.3,-4.5) rectangle (0.3,0.3);
				\draw [color=gray, xstep=1,ystep=1] (-6,-4) grid (0,0);
				\fill (0,-1) circle (2pt);
				\fill (0,-2) circle (2pt);
				\fill (0,-3) circle (2pt);
				\fill (0,-4) circle (2pt);
				\fill (-1,0) circle (2pt);
				\fill (-2,0) circle (2pt);
				\fill (-3,0) circle (2pt);
				\draw (0,0) node[cross=3pt,rotate=0] {};
		\end{tikzpicture}
		\end{subfigure}
		\begin{subfigure}{0.48\textwidth}
		\centering
		\begin{tikzpicture}[line cap=round,line join=round,x=0.15\textwidth,y=0.15\textwidth]
				\clip (-0.3,-4.5) rectangle (4.3,0.3);
				\draw [color=gray, xstep=1,ystep=1] (0,-4) grid (4,0);
				\fill (1,0) circle (2pt);
				\fill (2,0) circle (2pt);
				\fill (3,0) circle (2pt);
				\fill (0,-1) circle (2pt);
				\fill (0,-2) circle (2pt);
				\fill (0,-3) circle (2pt);
				\fill (0,-4) circle (2pt);
				\draw (0,0) node[cross=3pt,rotate=0] {};
		\end{tikzpicture}
		\end{subfigure}
		\phantom{\begin{subfigure}{0.48\textwidth}
		\centering
		\begin{tikzpicture}[line cap=round,line join=round,x=0.15\textwidth,y=0.15\textwidth]
				\clip (-6.3,-1.3) rectangle (0.3,1.3);
				\draw [color=gray, xstep=1,ystep=1] (-6,-3) grid (0,0);
				\fill (0,-1) circle (2pt);
				\fill (-1,0) circle (2pt);
				\fill (-2,0) circle (2pt);
				\draw (0,0) node[cross=3pt,rotate=0] {};
		\end{tikzpicture}
		\end{subfigure}}
		\begin{subfigure}{0.48\textwidth}
		\centering
		\begin{tikzpicture}[line cap=round,line join=round,x=0.15\textwidth,y=0.15\textwidth]
				\clip (-0.3,-1.5) rectangle (4.3,1.3);
				\draw [color=gray, xstep=1,ystep=1] (0,-1) grid (4,1);
				\fill (2,0) circle (2pt);
				\fill (0,-1) circle (2pt);
				\fill (0,1) circle (2pt);
				\draw (0,0) node[cross=3pt,rotate=0] {};
		\end{tikzpicture}
		\end{subfigure}
		\caption{The five update rules $U\in\cU$ given as dots. The cross marks the origin.}
		\end{subfigure}
		\quad
		\begin{subfigure}{0.45\textwidth}
		\centering
		\begin{tikzpicture}[line cap=round,>=triangle 45,line join=round,x=0.41\textwidth,y=0.41\textwidth]
				\draw(0,0) circle (0.41\textwidth);
				\fill [color=ffqqqq] (0,1) circle (3pt) node[anchor=south,black] {$3$};
			    \fill [color=ffqqqq] (1,0) circle (3pt) node[anchor=west,black] {$3$};
				\fill [color=ffqqqq] (-1,0) circle (3pt) node[anchor=east,black] {$3$};
				\fill [color=ffqqqq] (0,-1) circle (3pt) node[anchor=north,black] {$3$};
				\draw[->] (0,0)-- (1,0) node [below,midway] {$u_2$};
				\draw[->] (0,0)-- (0,1) node [right,midway] {$u_1$};
				\draw[->] (0,0)-- (-1,0) node [above,midway] {$u_0$};
				\draw[->] (0,0)-- (0,-1) node [left,midway] {$u_3$};
	    \end{tikzpicture}
	    \caption{The four stable directions, which coincide with $\hS$, and their difficulties.}
		\end{subfigure}
		\\
		\begin{subfigure}{\textwidth}
		\centering
		\begin{subfigure}{0.32\textwidth}
		\centering
		\begin{tikzpicture}[line cap=round,>=triangle 45,line join=round,x=0.15\textwidth,y=0.15\textwidth]
				\clip (-1.3,-0.3) rectangle (1.3,4.3);
				\draw [color=gray, xstep=1,ystep=1] (-1.2,0.8) grid (-0.8,5);
				\draw[->] (-0.3,0)-- (-1.3,0) node[midway,above] {$u_0$};
				\fill (-1,1) circle (2pt);
				\fill (-1,3) circle (2pt);
				\fill (-1,4) circle (2pt);
				\fill[pattern=my north east lines] (-0.3,-5) -- (2,-0.3) -- (2,5) -- (-0.3,5) -- cycle;
		\end{tikzpicture}
		\end{subfigure}
		\begin{subfigure}{0.32\textwidth}
		\centering
		\begin{tikzpicture}[line cap=round,>=triangle 45,line join=round,x=0.15\textwidth,y=0.15\textwidth]
				\clip (-0.3,1.3) rectangle (3.3,-1.3);
				\draw [color=gray, xstep=1,ystep=1] (0.8,1.2) grid (5,0.8);
				\draw[->] (0,0.3)-- (0,1.3) node[midway,right] {$u_1$};
				\fill (1,1) circle (2pt);
				\fill (2,1) circle (2pt);
				\fill (3,1) circle (2pt);
				\fill[pattern=my north east lines] (-5,0.3) -- (-0.3,-2) -- (5,-2) -- (5,0.3) -- cycle;
		\end{tikzpicture}
		\end{subfigure}
		\begin{subfigure}{0.32\textwidth}
		\centering
		\begin{tikzpicture}[line cap=round,>=triangle 45,line join=round,x=0.15\textwidth,y=0.15\textwidth]
				\clip (3.3,0.3) rectangle (-1.3,-3.3);
				\draw [color=gray, xstep=1,ystep=1] (3.2,-0.8) grid (0.8,-5);
				\draw[->] (0.3,0)-- (1.3,0) node[midway,below] {$u_2$};
				\fill (1,-1) circle (2pt);
				\fill (3,-2) circle (2pt);
				\fill (1,-3) circle (2pt);
				\fill[pattern=my north east lines] (0.3,5) -- (-2,0.3) -- (-2,-5) -- (0.3,-5) -- cycle;
		\end{tikzpicture}
		\end{subfigure}\vspace{0.3cm}
		\begin{subfigure}{0.96\textwidth}
		\centering
		\begin{tikzpicture}[line cap=round,line join=round,>=triangle 45,x=0.05\textwidth,y=0.05\textwidth]
				\clip (0.3,-3.3) rectangle (-15.3,1.3);
				\draw [color=gray, xstep=1,ystep=1] (-0.8,-0.8) grid (-20,-2.2);
				\fill[fill=white,opacity=1] (-9.2,-5) -- (-10.8,-5) -- (-10.8,-0.5) -- (-9.2,-0.5) -- cycle;
				\draw[->] (-9,-2.5)-- (-7,-2.5) node[midway,below] {$x_3$};
				\draw[->] (0,-0.3)-- (0,-1.3) node[midway,left] {$u_3$};
				\fill (-5,-1) circle (2pt);
				\fill (-1,-1) circle (2pt);
				\fill (-3,-2) circle (2pt);
			    \draw[decorate,decoration={brace,amplitude=5pt}] (-0.8,-2.2) -- (-5.2,-2.2) node [midway,below,yshift=-5pt] {$Z_3+k_1x_3+\l_0u_0$};
				\fill (-11,-1) circle (2pt);
				\fill (-13,-2) circle (2pt);
				\fill (-15,-1) circle (2pt);
				 \draw[decorate,decoration={brace,amplitude=5pt}] (-10.8,-2.2) -- (-15.2,-2.2) node [midway,below,yshift=-5pt] {$Z_3$};
				\fill[pattern=my north east lines] (5,-0.3) -- (-20,-0.3) -- (-20,5) -- (5,5) -- cycle;
				\draw (-10,-1.5) node {$\dots$};
		\end{tikzpicture}
		\end{subfigure}
		\caption{Possible choice of $u_i$-helping sets. The hatched region represents $\bbH_{u_i}\cap\bbZ^2$.}
		\end{subfigure}\vspace{0.3cm}
  \caption{An intricate isotropic  example.}
    \label{fig:example}
\end{figure}
\subsubsection{Helping sets for a line}
\label{subsubsec:helping:sets:line}
Recall $(u_i)_{i\in[4k]}$ and $(\lambda_i)_{i\in[4k]}$ from \cref{subsec:directions} and that for $i\in[4k]$, the direction $u_{i+k}$ is obtained by rotating $u_i$ clockwise by $\pi/2$.
\begin{defn}[$W$-helping set in direction $u_i$]
Let $i\in[4k]$. A $W$-helping set in direction $u_i$ is any set of $W$ consecutive infected sites in $\Hb_{u_i}\setminus\bbH_{u_i}$, that is, a set of the form $x+[W]\l_{i+k}u_{i+k}$ for some $x\in\Hb_{u_i}\setminus\bbH_{u_i}$.
\end{defn}
The relevance of $W$-helping sets in direction $u_i$ is that, since $W$ is large enough, $[Z\cup\bbH_{u_i}]_\cU=\Hb_{u_i}$ for any direction $u_i$ such that $\alpha(u_i)<\infty$ and $Z$ a $W$-helping set in direction $u_i$ (see \cite{Bollobas15}*{Lemma 5.2}).

We next define some smaller sets which are sufficient to induce such growth but have the annoying feature that they are not necessarily contained in $\Hb_{u_i}$ and do not necessarily induce growth in a simple sequential way like $W$-helping sets in direction $u_i$. Let us note that except in \cref{sec:micro} the reader will not lose anything conceptual by thinking that the sets $Z_i$, $u_i$-helping sets and $\a$-helping sets in direction $u_i$ defined below are simply single infected sites in $\Hb_{u_i}\setminus\bbH_{u_i}$ and the period $Q$ is 1.

In words, the set $Z_i$ provided by the following lemma together with $\bbH_{u_i}$ can infect a semi-sublattice of the first line outside $\bbH_{u_i}$ and only a finite number of other sites.
\begin{lem}
\label{lem:helping:sets}
Let $i\in[4k]$ be such that $0<\a(u_i)\le \a$. Then there exists a set $Z_i\subset\bbZ^2\setminus\bbH_{u_i}$ and $x_i\in\bbZ^2\setminus\{0\}$ such that \begin{align*}
\<x_i,u_i\>&{}=0,&|Z_i|&{}=\a,&\left|\left[Z_i\cup\bbH_{u_i}\right]_\cU\setminus\Hb_{u_i}\right|&{}<\infty,&\left[Z_i\cup\bbH_{u_i}\right]_\cU&{}\supset x_i\bbN,\end{align*} where $\bbN=\{0,1,\dots\}$.
\end{lem}
\begin{proof}
\Cref{def:alpha} supplies a set $Z\subset \bbZ^2\setminus\bbH_{u_i}$ such that $\overline Z=[\bbH_{u_i}\cup Z]_\cU\setminus\bbH_{u_i}$ is infinite and $|Z|=\alpha(u_i)$. Among all possible such $Z$, choose $Z$ to minimise $l=\max\{\<z,u_i\>:z\in Z\}$. Yet, $u_i$ is stable, since $\alpha(u_i)\neq 0$ (recall \cref{def:alpha}). Therefore, 
$\overline Z\subset \Hb_{u_i}(l)\setminus\bbH_{u_i}$, because $Z\cup\bbH_{u_i}\subset \Hb_{u_i}(l)$ (recall \cref{def:stable} and observe that it implies that $[\Hb_{u_i}(l)]_\cU=\Hb_{u_i}(l)$). 

Then \cite{Bollobas23}*{Lemma 3.3} asserts that $\overline Z\cap\Hb_{u_i}$ is either finite or contains $x_i\bbN$ for some $x_i\in\Hb_{u_i}\setminus (\bbH_{u_i}\cup\{0\})$. Assume that $|\overline Z\setminus\Hb_{u_i}|<\infty$, so that $|\overline Z\cap\Hb_{u_i}|=\infty$, since $|\overline Z|=\infty$. Then we conclude by setting $Z_i$ equal to the union of $Z$ with $\alpha-\alpha(u_i)$ arbitrarily chosen elements of $\overline Z\setminus Z$, so that $\overline{Z_i}=\overline Z$.

Assume for a contradiction that, on the contrary, $|\overline Z\setminus\Hb_{u_i}|=\infty$. Set $Z'=(Z-\rho_iu_i)\setminus\bbH_u$ (i.e.\ shift $Z$ one line closer to $\bbH_{u_i}$) and observe that $\overline{Z'}\supset (\overline Z\setminus\Hb_{u_i}-\rho_iu_i)$ is still infinite. Therefore, by \cref{def:alpha} $\alpha(u_i)\le |Z'|\le |Z|=\alpha(u_i)$. This contradicts our choice of $Z$ minimising $l$.
\end{proof}
In the example of \cref{fig:example} the $u_3$ direction admits a set $Z_3$ of cardinality 3 such that $[Z_3\cup\bbH_{u_3}]_\cU$ only contains every second site of the line $\Hb_{u_i}\setminus\bbH_{u_i}$, while at least $4$ sites are needed to infect the entire line. Thus, in order to efficiently infect $\Hb_{u_3}\setminus\bbH_{u_3}$, assuming $\bbH_{u_3}$ is infected, we may use two translates of $Z_3$ with different parity. This technicality is reflected in the next definition.
\begin{defn}[$u_i$-helping set]
\label{def:uihelping}
For all $i\in[4k]$ such that $0<\a(u_i)\le \a$ fix a choice of $Z_i$ and $x_i$ as in \cref{lem:helping:sets} in such a way that the \emph{period}
\[Q=\frac{\|x_i\|}{\l_{i+k}}\]
is independent of $i$ and sufficiently large so that the diameter of $\{0\}\cup Z_i$ is much smaller than $Q$. A \emph{$u_i$-helping set} is a set of the form
\begin{equation}
\label{eq:helping:decomposition}
\bigcup_{j\in\left[Q\right]}\left(Z_i+j\l_{i+k}u_{i+k}+k_jx_i\right),\end{equation}
for some integers $k_j$. For $i\in[4k]$ with $\a(u_i)=0$, we define $u_i$-helping sets to be empty. For $i\in[4k]$ with $\alpha(u_i)>\alpha$ there are no $u_i$-helping sets.
\end{defn}
Note that by \cref{lem:helping:sets} a $u_i$-helping set $Z$ is sufficient to infect a half-line, but since that contains a $W$-helping set in direction $u_i$, we have $[Z\cup\bbH_{u_i}]_\cU\supset\Hb_{u_i}$. 

We further incorporate the artificial symmetrisation alluded to in \cref{rem:symmetrisation} in the next definition.
\begin{defn}[$\a$-helping set in direction $u_i$]
\label{def:ahelping}Let $i\in[4k]$.
\begin{itemize}
    \item If $\a(u_i)\le \a$ and $\a(u_{i+2k})\le\a$, then a \emph{$\a$-helping set in direction $u_i$} is a set of the form $H\cup H'$ with $H$ a $u_i$-helping set and $-H'=\{-h:h\in H'\}$ a $u_{i+2k}$-helping set.
    \item If $\a(u_i)\le \a$ and $\a(u_{i+2k})>\a$, then a \emph{$\a$-helping set in direction $u_i$} is a $u_i$-helping set.
    \item If $\a<\a(u_i)\le \infty$, there are no \emph{$\a$-helping sets in direction $u_i$}.
\end{itemize}
If $\a(u_i)<\infty$, any set which is either a $W$-helping set  in direction $u_i$ or a $\a$-helping set in direction $u_i$ is called \emph{helping set in direction $u_i$}. If $\a(u_i)=\infty$, there are no \emph{helping sets in direction $u_i$}.
\end{defn}
In the example of \cref{fig:example} $u_0$ and $u_2$ are both of difficulty $\a=3$, so $\a$-helping sets in direction $u_0$ correspond to $(z_1+\{(0,0),(2,0),(3,0)\})\cup(z_2+\{(0,0),(-2,1),(0,2)\})$ for some $(z_1,z_2)\in(\{0\}\times\bbZ)^2$. The set $z_2+\{(0,0),(-2,1),(0,2)\}$ is not a $u_0$-helping set, but we include it in $\a$-helping sets in direction $u_0$. We do so, in order for $\a$-helping sets in direction $u_0$ and $u_2$ to be symmetric. Namely, they satisfy that $Z$ is a $\alpha$-helping set in direction $u_0$ if and only if $-Z$ is a $\alpha$-helping set in direction $u_2$.

\subsubsection{Helping sets for a segment}
\label{subsubsec:segment:helping}
For this section we fix a direction $u_{i}\in\hS$ with $\a(u_i)<\infty$ and a discrete segment $S$ perpendicular to $u_i$ of the form 
\begin{equation}
\label{eq:S:form}
\left\{x\in\bbZ^2: \<x,u_i\>=0,\<x,u_{i+k}\>/\l_{i+k}\in[0,a]\right\}\end{equation}
for some integer $a\ge W$. The direction $u_i$ is kept implicit in the notation, so it may be useful to view $S$ as having an orientation.
\begin{defn}
\label{def:HW}
For $d\ge 0$, we denote by $\cH^W_d(S)$ the event that there is an infected $W$-helping set in direction $u_i$ in $S$ at distance at least $d$ from its endpoints:
\begin{multline*}
\cH^W_d(S)=\big\{\eta\in\Omega:\exists x\in\bbZ\cap [d/\lambda_{i+k},a-(W-1)-d/\lambda_{i+k}],\\
\eta_{(x+[W])\lambda_{i+k}u_{i+k}}=\bzero\big\}.
\end{multline*}
We write $\cH^W(S)=\cH^W_0(S)$.
\end{defn}

For helping sets the definition is more technical, since they are not included in $S$. We therefore require that they are close to $S$ and at some distance from its endpoints.
\begin{defn}
\label{def:HdS}
For $d\ge 0$, we denote by $\cH_{d}(S)\subset \Omega$ the event such that $\eta\in\cH_d(S)$ if there exists $Z$ a helping set in direction $u_i$ such that for all $z\in Z$, we have $\eta_z=0$, 
\begin{align}
\label{eq:helping:set:domain}
\<z,u_i\>&{}\in[0,Q],&\<z,u_{i+k}\>&{}\in\left[d,a\l_{i+k}-d\right].
\end{align}
Given a domain $\L\supset S$ and a boundary condition $\o\in\O_{\bbZ^2\setminus\L}$ we define $\cH^{\o}_d(S)=\{\h\in\O_{\L}:\o\cdot\h\in\cH_d(S)\}$. We write $\cH^\o(S)=\cH^\o_0(S)$ and $\cH(S)=\cH_0(S)$.
\end{defn}
Note that in view of \cref{def:ahelping}, if $\alpha(u_i)<\infty$, then $\cH^\o(S)\supset\cH^W(S)$ for any $\o$ with equality if $\alpha(u_i)>\alpha$. The next observation bounds the probability of the above events.
\begin{obs}[Helping set probability]
\label{obs:mu:helping}
For any $\Lambda\supset S$ and $\o\in\Omega_{\bbZ^2\setminus\Lambda}$ we have: if $\a(u_i)<\infty$, then
\[\m\left(\cH^\o(S)\right)\ge \m\left(\cH^W(S)\right)\ge 1-\left(1-q^W\right)^{\lfloor|S|/W\rfloor}\ge \max\left(q^W,1-e^{-q^{2W}|S|}\right);\]
if $\a(u_i)\le\a$, then
\[\m(\cH(S))\ge \left(1-(1-q^\a)^{\O(|S|)}\right)^{O(1)}\ge \left(1-e^{-q^\a|S|/O(1)}\right)^{O(1)}.\]
\end{obs}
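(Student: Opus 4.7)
The observation breaks into two independent chains of inequalities, corresponding to $W$-helping sets and $\a$-helping sets. Both are proved by extracting many disjoint potential locations for a helping set inside $S$, so that the probability that none is realised is exponentially small in $|S|$.

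For the chain involving $\cH^W(S)$ and $\cH^\o(S)$, the first inequality $\m(\cH^\o(S))\ge\m(\cH^W(S))$ is immediate from the containment $\cH^W(S)\subset\cH^\o(S)$ noted just before the observation: a $W$-helping set is a helping set by \cref{def:ahelping} and lies inside $S$, so the boundary condition $\o$ is irrelevant. For the second inequality, I would partition $S$ into $\lfloor|S|/W\rfloor$ disjoint blocks of $W$ consecutive sites; the events ``block is entirely infected'' are independent with probability $q^W$ each, yielding $\m(\cH^W(S))\ge 1-(1-q^W)^{\lfloor|S|/W\rfloor}$ by complementation. The final bounds are elementary: the lower bound $q^W$ comes from $\lfloor|S|/W\rfloor\ge 1$ (ensured by the standing assumption $|S|\ge W$), while $1-e^{-q^{2W}|S|}$ follows from $1-x\le e^{-x}$ combined with $\lfloor|S|/W\rfloor\ge q^W|S|$, which holds for $q$ small enough since $W$ is a constant.

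For the $\a$-helping set bound the plan is to exploit the parametric freedom in \cref{def:uihelping}: a $u_i$-helping set is a union over $j\in[Q]$ of translates $Z_i+j\l_{i+k}u_{i+k}+k_jx_i$, with $Q$ independent integer parameters $k_j$. I would read this as selecting, for each $j$ separately, one infected copy of $Z_i$ from among $\Omega(|S|/Q)=\Omega(|S|)$ candidate positions lying in a ``strip'' of $S$ determined by $j$. Because the diameter of $\{0\}\cup Z_i$ was chosen much smaller than $Q$ in \cref{def:uihelping}, the strips for different $j$ can be arranged pairwise disjoint, so the $Q$ corresponding events ``a copy of $Z_i$ is infected in strip $j$'' are independent. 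Each has probability at least $1-(1-q^\a)^{\Omega(|S|)}$ by the same independent-blocks argument as above, now applied with $\a$-site copies of $Z_i$ in place of $W$-site runs. Multiplying the $Q=O(1)$ independent contributions yields the claimed $(1-(1-q^\a)^{\Omega(|S|)})^{O(1)}$, and the final inequality again uses $1-x\le e^{-x}$.

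The only (mild) obstacle is bookkeeping: verifying that for different $j$ the candidate regions for $Z_i$ can indeed be arranged pairwise disjoint so that the $Q$ auxiliary events are genuinely independent and may be multiplied. This follows directly from the choice of $Q$ sufficiently large relative to the diameter of $Z_i$ built into \cref{def:uihelping}, and no new idea is required --- consistent with the fact that the observation is flagged as following easily from the definitions and is cited to \cite{Bollobas14}*{Lemma 4.2}.
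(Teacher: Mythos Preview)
Your approach matches the standard argument the paper has in mind (it gives no proof beyond the pointer to \cite{Bollobas14}*{Lemma 4.2}), and the $W$-helping set chain is fine.

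There is one gap in the $\a$-helping set part. You assert that the strips for different $j\in[Q]$ can be arranged disjoint because $\operatorname{diam}(\{0\}\cup Z_i)\ll Q$. That controls disjointness of copies within a fixed $j$ (the shift is $\|x_i\|=Q\l_{i+k}\ge Q$), but between consecutive $j$'s the shift is only $\l_{i+k}$, which may well be smaller than $\operatorname{diam}(Z_i)$; nothing in \cref{def:uihelping} prevents $Z_i$ from containing two sites differing by $\l_{i+k}u_{i+k}$, in which case the strips for $j$ and $j+1$ share sites and the $Q$ events are not independent. The fix is immediate: the events ``some admissible translate of $Z_i$ in residue class $j$ is infected'' are decreasing, so the Harris inequality \cref{eq:Harris:1} replaces the product bound you wanted from independence. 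You should also note that when $\a(u_{i+2k})\le\a$ an $\a$-helping set additionally requires the $-Z_{i+2k}$ part (\cref{def:ahelping}), giving another $Q$ decreasing events handled the same way; the total exponent $2Q$ is still $O(1)$.
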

\begin{proof}
Assume $\alpha(u_i)<\infty$. As already observed, by \cref{def:ahelping,def:HW,def:HdS}, $\cH^\o(S)\supset\cH^W(S)$, as $W$-helping sets in direction $u_i$ are helping sets in direction $u_i$. For the second inequality follows by dividing $S$ into disjoint groups of $W$ consecutive sites (each of which is a $W$-helping set in direction $u_i$). The final inequality follows since $|S|\ge W$ and $(1-q^W)^{1/W}\le e^{-q^W/(2W)}\le e^{-q^{2W}}$.

The case $\alpha(u_i)\le \alpha$ is treated similarly. Indeed, in order for $\cH(S)$ to occur, we need to find each of the $Q=O(1)$ pieces of a $u_i$-helping set in \cref{eq:helping:decomposition}, each of which has cardinality $\alpha$. We direct the reader to \cite{Bollobas23}*{Lemma 4.2} for more details.
\end{proof}

\subsection{Constrained Poincar\'e inequalities}
\label{subsec:poincare}
We next define the (constrained) Poincar\'e constants of various regions. For $\L\subset\bbZ^2$, $\eta,\omega\in\O$ (or possibly $\eta$ defined on a set including $\Lambda$ and $\omega$ on a set including $\bbZ^2\setminus\Lambda$) and $x\in\bbZ^2$, we denote by $c_x^{\L,\o}(\h)=c_x(\h_\L\cdot\o_{\bbZ^2\setminus\L})$ (recall \cref{eq:def:cx,eq:def:boundary:condition}) the constraint at $x$ in $\Lambda$ with boundary condition $\omega$. Given a finite $\L\subset\bbZ^2$ and a nonempty event $\SG^\bone(\L)\subset\O_\L$, let $\g(\L)$ be the smallest constant $\g\in[1,\infty]$ such that the inequality
\begin{equation}
\label{eq:def:gamma}\var_{\L}\left(f|\SG^\bone(\L)\right)\le \g\sum_{x\in\L}\m_\L\left(c_x^{\Lambda,\bone}\var_x(f)\right)
\end{equation}
holds for all $f:\O\to\bbR$. Here we recall from \cref{subsec:models} that $\m$ denotes both the product Bernoulli probability distribution with parameter $q$ and the expectation with respect to it. Moreover, for any function $\phi:\O\to\bbR$, $\m_\L(\phi)=\m(\phi(\eta)|\eta_{\bbZ^2\setminus\L})$ is the average on the configuration $\eta$ of law $\m$ in $\L$, conditionally on its state in $\bbZ^2\setminus \L$. Thus, $\m_\L(\phi)$ is a function on $\O_{\bbZ^2\setminus\L}$. Similarly, $\var_x(f)=\m(f^2(\eta)|\eta_{\bbZ^2\setminus \{x\}})-\m^2(f(\eta)|\eta_{\bbZ^2\setminus \{x\}})$ and \begin{multline*}\var_\L\left(f|\SG^\bone(\L)\right)=\m\left(\left.f^2(\eta)\right|\h_\L\in\SG^{\bone}(\Lambda),\eta_{\bbZ^2\setminus\L}\right)\\-\m^2\left(f(\eta)|\h_\L\in\SG^{\bone}(\Lambda),\eta_{\bbZ^2\setminus\L}\right).\end{multline*}

\begin{rem}
\label{rem:deconditioning}
It is important to note that in the r.h.s.\ of \cref{eq:def:gamma} we average w.r.t.\ $\m_\L$ and not $\m_\L(\cdot|\SG^\bone(\L))$ (the latter would correspond to the usual definition of Poincar\'e constant, from which we deviate). In this respect \cref{eq:def:gamma} follows \cite{Hartarsky21a}*{Eq.\ (12)} and differs from \cite{Hartarsky23FA}*{Eq.\ (4.5)}. Although this nuance is not important most of the time, this choice is crucial for the proof of \cref{th:internal:East} below.
\end{rem}

\subsection{Boundary conditions, translation invariance, monotonicity}
Let us make a few conventions in order to lighten notation throughout the paper. As we already witnessed in \cref{subsec:helping:sets}, it is often the case that much of the boundary condition is actually irrelevant for the occurrence of the event. For instance, in \cref{def:HdS}, $\cH^\o(S)$ only depends on the restriction of $\o$ to a finite-range neighbourhood of the segment $S$. Moreover, even the state in $\o$ of sites close to $S$, but in $\bbH_{u_i}$ is of no importance. Such occasions arise frequently, so, by abuse, we allow ourselves to specify a boundary condition on any region containing the sites whose state actually matters for the occurrence of the event.

We also need the following natural notion of translation invariance.
\begin{defn}[Translation invariance]
\label{def:translation}
Let $A\subset \bbR^2$. Consider a collection of events $\cE^\o(A+x)$ for $x\in\bbZ^2$ and $\o\in\O_{\bbZ^2\setminus(A+x)}$. We say that $\cE(A)$ is \emph{translation invariant}, if for all $\eta\in\O_A$, $\o\in\O_{\bbZ^2\setminus A}$ and $x\in\bbZ^2$ we have 
\[\eta\in\cE^\o(A)\Leftrightarrow \eta_{\cdot-x}\in\cE^{\o_{\cdot -x}}(A+x).\]
Similarly, we say that $\cE^\o(A)$ is \emph{translation invariant}, if the above holds for a fixed $\o\in\O_{\bbZ^2\setminus A}$.
\end{defn}
We extend the events $\cH_d(S)$, $\cH_d^\o(S)$, $\cH_d^W(S)$ from \cref{def:HW,def:HdS} in a translation invariant way. Similarly, $\cT$ and $\ST$ events for tubes defined in \cref{subsec:traversability} below and $\SG$ events for droplets defined throughout the paper are translation invariant. Therefore, we sometimes only define them for a fixed region, as we did in \cref{subsubsec:segment:helping}, but systematically extended them in a translation invariant way to all translates of this region.

We also use the occasion to point out that, just like the event $\cH_d^\o(S)$, all our $\cT$, $\ST$ and $\SG$ events are decreasing in both the configuration and the boundary condition, so that we are able to apply \cref{subsec:correlation} as needed.

\section{One-directional extensions}
\label{sec:extensions}
In this section we define our crucial one-directional CBSEP-extension and East-extension techniques (recall \cref{subsec:extensions}).

\subsection{Traversability}
\label{subsec:traversability}
We first need the following traversability $\cT$ and symmetric traversability $\ST$ events for tubes (recall \cref{def:tube}) requiring infected helping sets (recall \cref{subsubsec:segment:helping}) to appear for each of the segments composing the tube. The definition is illustrated in \cref{fig:extensions}. Recall the constant $C$ from \cref{subsec:scales}
\begin{defn}[Traversability]
\label{def:traversability}
Fix a tube $T=T(\ur,l,i)$. Assume that $i\in[4k]$ is such that $\a(u_j)<\infty$ for all $j\in(i-k,i+k)$. For $m\ge 0$ and $j\in(i-k,i+k)$ write $S_{j,m}=\bbZ^2\cap\L(\ur+m\uv_i+\r_j\ue_j)\setminus\L(\ur+m\uv_i)$. Note that $S_{j,m}$ is a discrete line segment perpendicular to $u_j$ of length $s_j-O(1)$ (recall from \cref{def:droplet} that $\us$ is the sequence of side lengths of $\Lambda(\ur)$). For $\o\in\O_{\bbZ^2\setminus \L(\ur+l\uv_i)}$ we denote by
\[\cT^\o_d(T)=\bigcap_{j,m:\varnothing\neq S_{j,m}\subset T}\cH^\o_{C^2+d}\left(S_{j,m}\right)\]
the event that $T$ is $(\o,d)$-\emph{traversable}. We set $\cT^\o(T)=\cT^\o_0(T)$.

If moreover $\alpha(u_i)<\infty$ for all $i\in[4k]$, that is, $\cU$ has a finite number of stable directions, we denote by 
\[\ST^\o_d(T)=\cT_d^\o(T)\cap\bigcap_{j:\a(u_j)\le \alpha<\a(u_{j+2k}))}\bigcap_{m:\varnothing\neq S_{j,m}\subset T}\cH^W_{C^2+d}\left(S_{j,m}\right)\]
the event that $T$ is $(\o,d)$-\emph{symmetrically traversable}.
\end{defn}
Thus, if all side lengths of $\L(\ur)$ are larger than $C^2+d$ by a large enough constant, the event $\cT^\o_d(T(\ur,s,i))$ decomposes each of the hatched parallelograms in \cref{subfig:extension:East} into line segments parallel to its side that is not parallel to $u_i$. A helping set is required for each of these segments in the direction perpendicular to them which has positive scalar product with $u_i$. The last boundedly many segments may also use the boundary condition $\o$, but it is irrelevant for the remaining ones, since it is far enough from them.

For symmetric traversability, we rather require $W$-helping sets for opposites of hard directions (recall from \cref{def:ahelping} that if the direction itself is hard, helping sets are simply $W$-helping sets). In particular, if none of the directions $u_j$ for $j\in[4k]\setminus\{i+k,i-k\}$ is hard (implying that $\cU$ is unrooted), we have $\ST^\o_d(T(\ur,l,i))=\cT^\o_d(T(\ur,l,i))$. The reason for the name ``symmetric traversability'' is that if $\cU$ has a finite number of stable directions and $\L(\ur)$ is a symmetric droplet (recall \cref{subsec:geometry}), then, for any $l>0$, $i\in[4k]$, $\o\in\O_{\bbZ^2\setminus T(\ur,l,i)}$ and $\h\in\O_{T(\ur,l,i)}$, we have
\begin{equation}
\label{eq:symmetry:tubes}
\eta\in \ST^\o_d(T(\ur,l,i))\Leftrightarrow \eta'\in \ST^{\o'}_d(T(\ur,l,i+2k)),\end{equation}
denoting by $\o'\in\O_{\bbZ^2\setminus T(\ur,l,i+2k)}$ the boundary condition obtained by rotating $\o$ by $\pi$ around the center of $\L(\ur)$ and similarly for $\eta'$. To see this, recall from \cref{subsubsec:segment:helping} that $\cH^\o(S)\supset \cH^W(S)$ with equality when $\alpha(u_i)>\alpha$ and note that the same symmetry as in \cref{eq:symmetry:tubes} holds at the level of the segment $S_{j,m}$ and its symmetric one, $S'_{j+2k,m}=\bbZ^2\cap\L(\ur+m\uv_{i+2k}+\r_{j+2k}\ue_{j+2k})\setminus\L(\ur+m\uv_{i+2k})$:
\begin{equation*}
\eta\in\begin{cases}\cH^\o_{C^2+d}(S_{j,m})&\alpha(u_{j+2k})\le\alpha\\
\cH^W_{C^2+d}(S_{j,m})&\alpha(u_{j+2k})>\alpha\end{cases}\Leftrightarrow\eta'\in\begin{cases}\cH^{\o'}_{C^2+d}(S'_{j+2k,m})& \alpha(u_j)\le\alpha\\\cH^W_{C^2+d}(S'_{j+2k,m})&\alpha(u_j)>\alpha,\end{cases}\end{equation*}
all four cases following directly from \cref{def:ahelping,def:HdS,def:HW}.

We next state a simple observation which is used frequently to modify boundary conditions as we like at little cost.
\begin{lem}[Changing boundary conditions]
\label{lem:traversability:boundary}
Let $\L(\ur)$ be a droplet, $l>0$ be a multiple of $\l_i$ and $i\in[4k]$. Assume that for any $j\in[4k]\setminus\{i-k,i+k\}$ the side length $s_j$ of $\L(\ur)$ satisfies $s_j\ge C^3$. Set $T=T(\ur,l,i)$. Then there exists a decreasing event $\cW(T)\subset\O_T$ such that $\m(\cW(T))\ge q^{O(W)}$ for any $\o\in\O_{\bbZ^2\setminus T}$ and $\eta\in \cW(T)$ we have
\[\h\in \cT^\o(T)\Leftrightarrow\h\in\cT^\bone(T).\]
Moreover, $\m(\cT^\o(T))=q^{-O(W)}\m(\cT^\bone(T))$ for all $\o\in\O_{\bbZ^2\setminus T}$. The same holds with $\ST$ instead of $\cT$.
\end{lem}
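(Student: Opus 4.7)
The plan is to exploit the fact that in the product decomposition
\[
\cT^\o(T) \;=\; \bigcap_{j,m:\varnothing\neq S_{j,m}\subset T} \cH^\o_{C^2+d}(S_{j,m}),
\]
only finitely many of the helping-set events genuinely see $\o$. Indeed, an $\a$-helping set witnessing $\cH^\o_{C^2+d}(S_{j,m})$ is supported in the $Q$-thick strip in direction $u_j$ just beyond $S_{j,m}$, and by the $C^2+d$ setback along $u_{j+k}$ it is well inside $S_{j,m}$'s lateral extent. Hence such a helping set can leave $T$ only by exiting $\L(\ur+l\uv_i)$ in direction $u_j$, which happens only for $m$ within additive $O(Q)$ of $l$. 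Since $Q=O(1)$ and there are $2k-1=O(1)$ admissible directions $j\in(i-k,i+k)$, the collection $\cB$ of ``boundary-dependent'' pairs $(j,m)$ has cardinality $N=O(1)$, and for every $(j,m)\notin\cB$ the event $\cH^\o_{C^2+d}(S_{j,m})$ is literally independent of~$\o$.

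I would then take
\[
\cW(T) \;=\; \bigcap_{(j,m)\in\cB} \cH^W_{C^2+d}(S_{j,m}),
\]
a finite intersection of decreasing events in $\O_T$. Because a $W$-helping set is a subset of the line of $S_{j,m}$ itself, it lives inside $S_{j,m}\subset T$; thus $\cH^W_{C^2+d}(S_{j,m})\subset\cH^{\o'}_{C^2+d}(S_{j,m})$ for every boundary condition $\o'$, so on $\cW(T)$ the helping-set condition for problematic segments is certified regardless of~$\o$. Using the length identity $|S_{j,m}|=s_j-O(1)$ recorded in \cref{def:traversability} together with the hypothesis $s_j\ge C^3$, each problematic segment has length at least $C^3-O(1)\ge 2(C^2+d)+W$, so \cref{obs:mu:helping} gives $\m(\cH^W_{C^2+d}(S_{j,m}))\ge q^W$. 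With $N=O(1)$ such events, Harris yields $\m(\cW(T))\ge q^{O(W)}$.

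The equivalence $\cT^\o(T)\cap\cW(T)=\cT^\bone(T)\cap\cW(T)$ then follows factor by factor: for $(j,m)\notin\cB$ the event is $\o$-independent, while for $(j,m)\in\cB$ the event $\cW(T)$ already certifies $\cH^{\o}_{C^2+d}(S_{j,m})\cap\cH^\bone_{C^2+d}(S_{j,m})$. Combined with the monotonicity $\cT^\bone(T)\subset\cT^\o(T)$ in the boundary condition and with Harris applied to the decreasing events $\cT^\o(T)$ and $\cW(T)$, one gets
\[
\m\bigl(\cT^\bone(T)\bigr)\ge\m\bigl(\cT^\bone(T)\cap\cW(T)\bigr)=\m\bigl(\cT^\o(T)\cap\cW(T)\bigr)\ge\m\bigl(\cT^\o(T)\bigr)\,\m\bigl(\cW(T)\bigr),
\]
so that $\m(\cT^\o(T))\le q^{-O(W)}\m(\cT^\bone(T))$, and the matching lower bound is immediate.

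The $\ST$ version reuses exactly the same $\cW(T)$: the additional constraints in $\ST$ are $\cH^W_{C^2+d}$ events which never depend on $\o$ in the first place, so the boundary dependence of $\ST^\o$ comes entirely from its $\cT^\o$ part and is repaired verbatim. The only step requiring genuine care is the geometric bookkeeping in the first paragraph---the precise enumeration of $\cB$ and the verification that each problematic segment still has room for a $W$-helping set with setback $C^2+d$ after the extension---which is routine under $s_j\ge C^3$ but must be handled with attention to the general $(4k)$-sided polygonal geometry.
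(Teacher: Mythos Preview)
Your proposal is correct and follows essentially the same approach as the paper: both identify the $O(1)$ boundary-dependent segments (the last few values of $m$ for each $j\in(i-k,i+k)$), define $\cW(T)$ as the intersection of the corresponding $\cH^W_{C^2}$ events, and deduce the probability comparison via Harris applied to $\cT^\o(T)\cap\cW(T)=\cT^\bone(T)\cap\cW(T)$. Your write-up is more detailed on the geometric bookkeeping (the paper simply says ``the largest sufficiently large but fixed number of values of $m$''), and your observation that the extra $\cH^W$ constraints in $\ST$ are already $\o$-independent is exactly why the same $\cW(T)$ works for $\ST$; note that since the lemma concerns $\cT^\o(T)=\cT^\o_0(T)$, your $d$ should be set to~$0$.
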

\begin{proof}
Recall the segments $S_{j,m}$ from \cref{def:traversability}. Let $\cW(T)$ be the intersection of $\cH^W_{C^2}(S_{j,m})$ for the largest sufficiently large but fixed number of values of $m$ for each $j\in(i-k,i+k)$, such that $\varnothing\neq S_{j,m}\subset T$. By \cref{obs:mu:helping} $\m(\cW(T))\ge q^{O(W)}$. Moreover, the boundary condition is irrelevant for the remaining segments, so $\cW(T)$ is indeed as desired. Finally, by \cref{eq:Harris:1} we have
\begin{align*}
\m\left(\cT^\bone(T)\right)&{}\le \m\left(\cT^\o(T)\right)\le \frac{\m(\cW(T)\cap\cT^\o(T))}{\m(\cW(T))}\\
&{}\le q^{-O(W)}\m\left(\cW(T)\cap\cT^\bone(T)\right)\le q^{-O(W)}\m\left(\cT^\bone(T)\right).\qedhere\end{align*}
\end{proof}

Another convenient property allowing us to decompose a long tube into smaller ones is the following.
\begin{lem}[Decomposing tubes]
\label{lem:tube:decomposition}
Let $T=T(\ur,l,i)$ be a tube, $\o\in\O_{\bbZ^2\setminus T}$ be a boundary condition and $s\in[0,l]$ be a multiple of $\l_i$. Set $T_1=T(\ur,s,i)$ and $T_2=su_i+T(\ur,l-s,i)$. Then
\[\h\in\cT^\o(T(\ur,l,i))\Leftrightarrow\left(\h_{T_2}\in\cT^\o(T_2)\text{ and }\h_{T_1}\in\cT^{\h_{T_2}\cdot\o}(T_1)\right)\]
and the same holds for $\ST$ instead of $\cT$.
\end{lem}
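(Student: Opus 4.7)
The plan is to prove the equivalence by unpacking the definition of $\cT^\o(T)$ as an intersection of helping-set events $\cH_{C^2}^\o(S_{j,m})$ over segments, and then partitioning these segments according to whether they lie in $T_1$ or in $T_2$. Because $s$ is a multiple of $\l_i$, the droplet $\L(\ur+s\uv_i)$ is an honest sub-droplet, so each segment $S_{j,m}$ of $T=T(\ur,l,i)$ with $\varnothing\neq S_{j,m}\subset T$ is either contained in $T_1$ (for small $m$, or for $j=i$ with $m<s/\l_i$) or contained in $T_2$ (for large $m$), and after identifying $T_2$ with the translate of $T(\ur,l-s,i)$ by $su_i$, the segments in the second group are exactly the segments entering the definition of $\cT^{\o}(T_2)$ (using the translation invariance discussed in Section 2.6). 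Thus as a collection of indices,
\[\bigl\{(j,m):\varnothing\neq S_{j,m}\subset T\bigr\}\;=\;\bigl\{(j,m):\varnothing\neq S_{j,m}\subset T_1\bigr\}\sqcup\bigl\{(j,m):\varnothing\neq S_{j,m}\subset T_2\bigr\}.\]

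The second step is to track where the helping sets themselves live. Fix a segment $S_{j,m}$ with $j\in(i-k,i+k)$. By \cref{def:HdS}, any helping set $Z$ for $S_{j,m}$ satisfies $\<z,u_j\>\in[0,Q]$ for $z\in Z$, so $Z$ is contained in a slab of width $Q$ that lies on the \emph{outer} side of $S_{j,m}$ in direction $u_j$. Since $\<u_j,u_i\>\ge 0$ for all $j\in(i-k,i+k)$, this slab lies on the $u_i$-forward side of the segment. Consequently, for $S_{j,m}\subset T_2$ the helping set is contained in $T_2\cup(\bbZ^2\setminus T)$, so $\cH^\o_{C^2}(S_{j,m})$ depends only on $\h_{T_2}$ and on $\o$, and coincides with the corresponding event entering $\cT^\o(T_2)$. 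For $S_{j,m}\subset T_1$, the helping set may protrude into $T_2$ (this can happen for $j=i$ and $m$ within $Q/\l_i$ of $s/\l_i$), but by \cref{def:HdS} it never reaches into $\bbZ^2\setminus T$ on the $T_1$ side; hence $\cH^\o_{C^2}(S_{j,m})$ depends on $\h_{T_1}$ and on the configuration $\h_{T_2}\cdot\o$, and coincides with the event entering $\cT^{\h_{T_2}\cdot\o}(T_1)$.

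Taking the intersection over both groups of indices and using the partition established above yields the equivalence $\h\in\cT^\o(T)\Leftrightarrow(\h_{T_2}\in\cT^\o(T_2)\text{ and }\h_{T_1}\in\cT^{\h_{T_2}\cdot\o}(T_1))$. For the $\ST$ version, recall that $\ST^\o_d$ is obtained from $\cT^\o_d$ by adding, for each direction $j$ such that $\a(u_j)\le\a<\a(u_{j+2k})$, extra events $\cH^W_{C^2+d}(S_{j,m})$; these $W$-helping sets are contained in the segment itself, so the same index partition applies without even having to inspect boundary conditions, and the claimed decomposition transfers verbatim.

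The only delicate point is the geometric bookkeeping in the second paragraph: one has to be sure that no helping set for a segment of $T_2$ can ever reach back into $T_1$ (which would spoil the asymmetric choice of boundary conditions in the statement) and that no helping set for a segment of $T_1$ can ever reach past $T_2$ into $\bbZ^2\setminus T$ on the $+u_i$ side (so that passing to the boundary condition $\h_{T_2}\cdot\o$ is actually enough and does not require additional data). Both properties follow from $\<u_j,u_i\>\ge 0$ for $j\in(i-k,i+k)$ together with the bound $\<z,u_j\>\in[0,Q]$ and the assumption that $s$ (and $l-s$) is a positive multiple of $\l_i$, but verifying this carefully is the substantive content of the proof.
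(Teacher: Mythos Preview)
Your proof is correct and follows the same approach as the paper's own argument, namely the observation that every segment $S_{j,m}$ appearing in \cref{def:traversability} lies entirely in $T_1$ or entirely in $T_2$. The paper's proof is a single sentence stating exactly this fact; you go further and explicitly justify why the boundary conditions split asymmetrically as $\o$ for $T_2$ versus $\h_{T_2}\cdot\o$ for $T_1$, which the paper leaves implicit under its standing convention that boundary conditions need only be specified where they matter.
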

\begin{proof}This follows immediately from \cref{def:traversability}, since for each of the segments $S_{j,m}$ in \cref{def:traversability} either $S_{j,m}\subset T_1$ or $S_{j,m}\cap T_1=\varnothing$ and similarly for $T_2$ (see \cref{subfig:extension:East}).
\end{proof}
\begin{figure}
    \centering
\subcaptionbox{East-extension. The thickened tube is traversable ($\cT$).\label{subfig:extension:East}}{\centering
\begin{tikzpicture}[line cap=round,line join=round,>=triangle 45,x=1.08cm,y=1.08cm]
\fill[fill=black,fill opacity=1.0] (1,0.41) -- (0.41,1) -- (-0.41,1) -- (-1,0.41) -- (-1,-0.41) -- (-0.41,-1) -- (0.41,-1) -- (1,-0.41) -- cycle;
\fill[pattern=my north east lines] (-0.52,0.89) -- (-3.02,0.89) -- (-3.39,0.52) -- (-0.89,0.52) -- cycle;
\fill[pattern=my vertical lines] (-3.5,0.26) -- (-3.5,-0.26) -- (-1,-0.26) -- (-1,0.26) -- cycle;
\fill[pattern=my north west lines] (-3.39,-0.52) -- (-3.02,-0.89) -- (-0.52,-0.89) -- (-0.89,-0.52) -- cycle;
\draw (1,0.41)-- (0.41,1);
\draw (0.41,1)-- (-0.41,1);
\draw (-0.41,1)-- (-1,0.41);
\draw (-1,0.41)-- (-1,-0.41);
\draw (-1,-0.41)-- (-0.41,-1);
\draw (-0.41,-1)-- (0.41,-1);
\draw (0.41,-1)-- (1,-0.41);
\draw (1,-0.41)-- (1,0.41);
\draw [very thick] (-2.91,1)-- (-3.5,0.41);
\draw [very thick] (-3.5,0.41)-- (-3.5,-0.41);
\draw [very thick] (-3.5,-0.41)-- (-2.91,-1);
\draw [very thick] (-2.91,-1)-- (-0.41,-1);
\draw [very thick] (-0.41,-1)-- (-1,-0.41);
\draw [very thick] (-1,-0.41)-- (-1,0.41);
\draw [very thick] (-1,0.41)-- (-0.41,1);
\draw [very thick] (-0.41,1)-- (-2.91,1);
\draw (-0.52,0.89)-- (-3.02,0.89);
\draw (-3.02,0.89)-- (-3.39,0.52);
\draw (-3.39,0.52)-- (-0.89,0.52);
\draw (-0.89,0.52)-- (-0.52,0.89);
\draw (-3.5,0.26)-- (-3.5,-0.26);
\draw (-3.5,-0.26)-- (-1,-0.26);
\draw (-1,-0.26)-- (-1,0.26);
\draw (-1,0.26)-- (-3.5,0.26);
\draw (-3.39,-0.52)-- (-3.02,-0.89);
\draw (-3.02,-0.89)-- (-0.52,-0.89);
\draw (-0.52,-0.89)-- (-0.89,-0.52);
\draw (-0.89,-0.52)-- (-3.39,-0.52);
\begin{footnotesize}\draw (-3.5,-0.8) node {$\o$};
\draw[->] (-3.5,0)-- (-4,0);
\draw (-3.75,0.2) node {$u_i$};
\draw [decorate,decoration={brace,amplitude=5pt}] (-0.41,-1) -- (-2.91,-1) node [midway,yshift=-0.3cm] {$s$};
\draw (0,0)--(0,-1.2) node [below] {$\L(\ur)$};
\draw (-2,1)--(-2,1.2) node [above] {$T(\ur,s,i)$};
\phantom{
\draw[->] (1,0)-- (1.5,0);
\draw (1.4,0.2) node {$u_{i+2k}$};
\draw (-1,0)--(-1,-1.2) node [below] {$\L(\ur)+xu_i$};
\draw (-2.75,1)--(-2.75,1.2) node [above] {$T(\ur,s-x,i)+xu_i$};
\draw (0.25,1)--(0.25,1.2) node [above] {$T(\ur,x,i+2k)+xu_i$};}
\end{footnotesize}
\end{tikzpicture}}\quad
\subcaptionbox{CBSEP-extension. Thickened tubes are symmetrically traversable ($\ST$).\label{subfig:extension:CBSEP}}{\centering
\begin{tikzpicture}[line cap=round,line join=round,>=triangle 45,x=1.08cm,y=1.08cm]
\fill[fill=black,fill opacity=1.0] (1,0.41) -- (0.41,1) -- (-0.41,1) -- (-1,0.41) -- (-1,-0.41) -- (-0.41,-1) -- (0.41,-1) -- (1,-0.41) -- cycle;
\fill[pattern=my north east lines] (-0.52,0.89) -- (-2.02,0.89) -- (-2.39,0.52) -- (-0.89,0.52) -- cycle;
\fill[pattern=my vertical lines] (-2.5,0.26) -- (-2.5,-0.26) -- (-1,-0.26) -- (-1,0.26) -- cycle;
\fill[pattern=my north west lines] (-2.39,-0.52) -- (-2.02,-0.89) -- (-0.52,-0.89) -- (-0.89,-0.52) -- cycle;
\fill[pattern=my north west lines] (0.52,0.89) -- (1.52,0.89) -- (1.89,0.52) -- (0.89,0.52) -- cycle;
\fill[pattern=my vertical lines] (1,-0.26) -- (1,0.26) -- (2,0.26) -- (2,-0.26) -- cycle;
\fill[pattern=my north east lines] (1.89,-0.52) -- (1.52,-0.89) -- (0.52,-0.89) -- (0.89,-0.52) -- cycle;
\draw (1,0.41)-- (0.41,1);
\draw (0.41,1)-- (-0.41,1);
\draw (-0.41,1)-- (-1,0.41);
\draw (-1,0.41)-- (-1,-0.41);
\draw (-1,-0.41)-- (-0.41,-1);
\draw (-0.41,-1)-- (0.41,-1);
\draw (0.41,-1)-- (1,-0.41);
\draw (1,-0.41)-- (1,0.41);
\draw [very thick] (-1.91,1)-- (-2.5,0.41);
\draw [very thick] (-2.5,0.41)-- (-2.5,-0.41);
\draw [very thick] (-2.5,-0.41)-- (-1.91,-1);
\draw [very thick] (-1.91,-1)-- (-0.41,-1);
\draw [very thick] (-0.41,-1)-- (-1,-0.41);
\draw [very thick] (-1,-0.41)-- (-1,0.41);
\draw [very thick] (-1,0.41)-- (-0.41,1);
\draw [very thick] (-0.41,1)-- (-1.91,1);
\draw (-0.52,0.89)-- (-2.02,0.89);
\draw (-2.02,0.89)-- (-2.39,0.52);
\draw (-2.39,0.52)-- (-0.89,0.52);
\draw (-0.89,0.52)-- (-0.52,0.89);
\draw (-2.5,0.26)-- (-2.5,-0.26);
\draw (-2.5,-0.26)-- (-1,-0.26);
\draw (-1,-0.26)-- (-1,0.26);
\draw (-1,0.26)-- (-2.5,0.26);
\draw (-2.39,-0.52)-- (-2.02,-0.89);
\draw (-2.02,-0.89)-- (-0.52,-0.89);
\draw (-0.52,-0.89)-- (-0.89,-0.52);
\draw (-0.89,-0.52)-- (-2.39,-0.52);
\draw [very thick] (0.41,1)-- (1.41,1);
\draw [very thick] (1.41,1)-- (2,0.41);
\draw [very thick] (2,0.41)-- (2,-0.41);
\draw [very thick] (2,-0.41)-- (1.41,-1);
\draw [very thick] (1.41,-1)-- (0.41,-1);
\draw [very thick] (0.41,-1)-- (1,-0.41);
\draw [very thick] (1,-0.41)-- (1,0.41);
\draw [very thick] (1,0.41)-- (0.41,1);
\draw (0.52,0.89)-- (1.52,0.89);
\draw (1.52,0.89)-- (1.89,0.52);
\draw (1.89,0.52)-- (0.89,0.52);
\draw (0.89,0.52)-- (0.52,0.89);
\draw (1,-0.26)-- (1,0.26);
\draw (1,0.26)-- (2,0.26);
\draw (2,0.26)-- (2,-0.26);
\draw (2,-0.26)-- (1,-0.26);
\draw (1.89,-0.52)-- (1.52,-0.89);
\draw (1.52,-0.89)-- (0.52,-0.89);
\draw (0.52,-0.89)-- (0.89,-0.52);
\draw (0.89,-0.52)-- (1.89,-0.52);
\begin{footnotesize}
\draw (-2.5,-0.8) node {$\o$};
\draw (2,-0.8) node {$\o$};
\draw[->] (-2.5,0)-- (-3,0);
\draw (-2.75,0.2) node {$u_i$};
\draw[->] (2,0)-- (2.5,0);
\draw (2.4,0.2) node {$u_{i+2k}$};
\draw [decorate,decoration={brace,amplitude=5pt}] (-0.41,-1) -- (-1.91,-1) node [midway,yshift=-0.3cm] {$s-x$};
\draw [decorate,decoration={brace,amplitude=5pt}] (1.41,-1) -- (0.41,-1) node [midway,yshift=-0.3cm] {$x$};
\draw (0,0)--(0,-1.2) node [below] {$\L(\ur)+xu_i$};
\draw (-1.75,1)--(-1.75,1.2) node [above] {$T(\ur,s-x,i)+xu_i$};
\draw (1.25,1)--(1.25,1.2) node [above] {$T(\ur,x,i+2k)+xu_i$};
\end{footnotesize}
\end{tikzpicture}
}
\caption{One-directional extensions. The black droplet is SG. Helping sets appear on each line of the hatched parallelograms as indicated by the hatching direction. The white strips have width $\Theta(C^2)$.\label{fig:extensions}}
\end{figure}
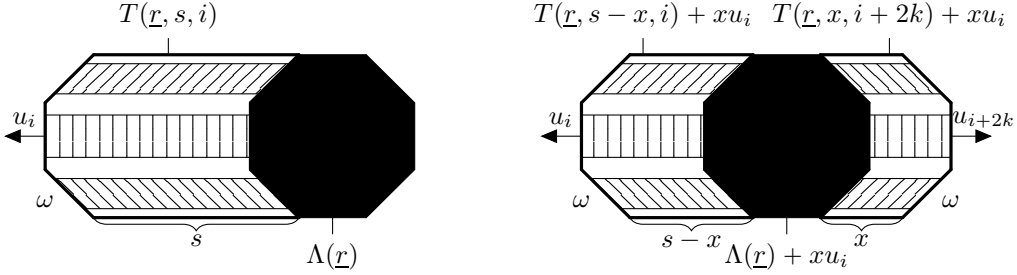

\subsection{East-extension}
\label{subsec:East:extension}
We start with the East-extension (see \cref{subfig:extension:East}), which is simpler to state. 
\begin{defn}[East-extension]
\label{def:extension:East}
Fix $i\in[4k]$, a droplet $\L(\ur)$, a multiple $l>0$ of $\l_i$ and an event $\SG^\bone(\L(\ur))\subset\O_{\L(\ur)}$. Assume that $\a(u_j)<\infty$ for all $j\in(i-k,i+k)$. We use the expression ``\emph{we East-extend $\L(\ur)$ by $l$ in direction $u_i$}'' to state that, for all $s\in(0,l]$ multiple of $\l_i$ and $\o\in\O_{\bbZ^2\setminus\L(\ur+s\uv_i)}$, we define the event $\SG^\o(\L(\ur+s\uv_i))\subset \O_{\L(\ur+s\uv_i)}$ to occur for $\h\in\O_{\L(\ur+s\uv_i)}$ if
\[\h_{\L(\ur)}\in\SG^\bone(\L(\ur))\quad\text{and}\quad
\h_{T(\ur,s,i)}\in\cT^{\o}(T(\ur,s,i)).
\]
\end{defn}
In other words, given the event $\SG^\bone$ for the droplet $\L(\ur)$, we define the event $\SG^\o$ (in particular for $\o=\bone$, but not only) for the larger droplet $\L(\ur+l\uv_i)=\L(\ur)\sqcup T(\ur,l,i)$. The event obtained on the larger droplet requires for the smaller one to be $\bone$-super good (SG) and for the remaining tube to be $\o$-traversable (recall \cref{def:traversability}). Note that these two events are independent. 
Further observe that if $\SG^\bone(\L(\ur))$ is translation invariant (recall \cref{def:translation}), then so is $\SG(\L(\ur+s\uv_i))$ for any $s\in(0,l]$ multiple of $\l_i$, defined by East-extending $\L(\ur)$ by $l$ in direction $u_i$. To get a grasp on \cref{def:extension:East}, let us note the following fact, even though it is not used directly in the proof of \cref{th:main}.
\begin{lem}[East-extension ergodicity]
\label{lem:closure}
Let $i\in[4k]$, $\L(\ur)$ be a droplet, $l$ be a multiple of $\l_i$ and $\SG^\bone(\L(\ur))\subset\O_{\L(\ur)}$ be an event. Assume that $\a(u_j)<\infty$ for all $j\in(i-k,i+k)$. Further assume that $\eta\in\SG^\bone(\L(\ur))$ implies that the $\cU$-KCM with initial condition $\h\cdot\bone_{\bbZ^2\setminus\L(\ur)}$ can entirely infect $\L(\ur)$. If we East-extend $\L(\ur)$ by $l$ in direction $u_i$, then for any $\o\in\O_{\bbZ^2\setminus\L(\ur+l\uv_i)}$ and $\eta\in\SG^\o(\L(\ur+l\uv_i))$ the $\cU$-KCM with initial condition $\o\cdot\eta$ can entirely infect $\L(\ur+l\uv_i)$.
\end{lem}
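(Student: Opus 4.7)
The plan is to reduce the KCM ergodicity claim to a $\cU$-bootstrap-percolation inclusion. Setting $A:=\{x:(\o\cdot\eta)_x=0\}$ for the set of initial infections, the $\cU$-KCM can drive every site of $\L(\ur+l\uv_i)$ to the infected state via a sequence of legal updates exactly when $\L(\ur+l\uv_i)\subseteq[A]_\cU$, since a legal update at $x$ can always set $\eta_x$ to $0$ once the constraint at $x$ is satisfied, and satisfaction of the constraint is precisely the trigger for adding $x$ to the $\cU$-closure. Thus it suffices to establish the inclusion.

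By \cref{def:extension:East}, the hypothesis $\eta\in\SG^\o(\L(\ur+l\uv_i))$ decomposes as $\eta_{\L(\ur)}\in\SG^\bone(\L(\ur))$ together with $\eta_{T(\ur,l,i)}\in\cT^\o(T(\ur,l,i))$. Applying the standing assumption on $\SG^\bone$ yields $\L(\ur)\subseteq[A\cup(\bbZ^2\setminus\L(\ur))]_\cU$. I would then induct on $m=0,1,\dots,l/\l_i$ to establish the invariant $\L(\ur+m\uv_i)\subseteq[A\cup(\bbZ^2\setminus\L(\ur+m\uv_i))]_\cU$. For the step from $m$ to $m+1$, each helping set $Z$ on a face $S_{j,m}$ (with $j\in(i-k,i+k)$) supplied by $\cT^\o(T)$, combined with the already-infected sub-droplet $\L(\ur+m\uv_i)$ acting as a translate of $\bbH_{u_j}$, yields $[Z\cup\bbH_{u_j}]_\cU\supseteq\Hb_{u_j}$ by \cref{lem:helping:sets}, so the segment $S_{j,m}$ enters the closure. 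Ranging over all $j\in(i-k,i+k)$ completes the step, and at $m=l/\l_i$ the invariant reads $\L(\ur+l\uv_i)\subseteq[A\cup(\bbZ^2\setminus\L(\ur+l\uv_i))]_\cU$.

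The main obstacle is to replace the idealised ``fully infected outside of $\L(\ur+l\uv_i)$'' with the actual boundary $\o$, which may contain almost no infections. I plan to handle this by coordinating the tube growth with the $\SG^\bone$-driven growth of $\L(\ur)$: the helping sets of $\cT^\o(T)$ are designed to use $\o$-values only in a bounded strip just outside $\L(\ur+l\uv_i)$, and by running the induction more carefully one sees that the outermost faces of $\L(\ur+l\uv_i)$ can be infected directly from $\o$-supplied helping sets together with already-closed inner layers. This peeling, together with the $\SG^\bone$ application inside $\L(\ur)$, fits into the single closure $[A]_\cU$ by the monotonicity, translation invariance and idempotence of $[\cdot]_\cU$, so that no true $\bone$-boundary is ever needed.
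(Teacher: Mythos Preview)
Your overall strategy—reduce to bootstrap closure, infect $\L(\ur)$ via the $\SG^\bone$ hypothesis, then induct outward using the helping sets supplied by traversability—is exactly the paper's. Two points separate your sketch from a proof.

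First, the artificial boundary is a red herring. In this paper $\bone$ denotes the fully \emph{un}infected configuration, so the hypothesis already says $\L(\ur)\subseteq[\{x\in\L(\ur):\eta_x=0\}]_\cU\subseteq[A]_\cU$ with no outside help whatsoever; the correct invariant is simply $\L(\ur+s\uv_i)\subseteq[A]_\cU$, and the entire third paragraph can be deleted. (As written, your invariant also varies the auxiliary set $\bbZ^2\setminus\L(\ur+m\uv_i)$ at each step, which makes the induction formally ill-posed: the step-$m$ conclusion grants more artificial infection than step $m{+}1$ permits.)

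Second, and this is the genuine gap, your inductive step appeals to the infinite-volume fact $[Z\cup\bbH_{u_j}]_\cU\supseteq\Hb_{u_j}$, treating the finite droplet $\L(\ur+s\uv_i)$ as if it were a half-plane. In finite volume the helping set only propagates infection along $S_{j,m}$ to within a bounded distance of the droplet's corners (this is \cite{Bollobas14}*{Lemma~3.4}). The paper closes this gap via the defining property of $\hS$ from \cref{subsec:directions}: for each consecutive pair $u_j,u_{j+1}$ there is a rule $X\in\cU$ with $X\subset\Hb_{u_j}\cap\Hb_{u_{j+1}}$, and this rule fills in the remaining corner sites (see \cite{Bollobas15}*{Lemma~5.5 and Fig.~6}). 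Without this corner-filling step the induction does not go through, and no amount of ``peeling'' from outside repairs it, since the problematic corners lie in the interior of $\L(\ur+l\uv_i)$.
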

\begin{proof}
The proof is rather standard, so we only sketch the reasoning. Let $\eta\in\SG^\o(\L(\ur+l\uv_i))$. Since $\eta_{\L(\ur)}\in\SG^\bone(\L(\ur))$ by \cref{def:extension:East}, by hypothesis we can completely infect $\L(\ur)$, starting from $\o\cdot\h$. We next proceed by induction on $s\in[0,l]$ to show that we can infect $\L(\ur+s\uv_i)$. When a new site in $\bbZ^2$ is added to this set, as we increase $s$, we actually add to it an entire segment $S_{j,m}$ as in \cref{def:traversability} (at most one $m$ for each $j\in(i-k,i+k)$). Since $T(\ur,l,i)$ is $(\o,0)$-traversable, by \cref{def:HdS,def:traversability}, there is a helping set (in direction $u_j$) for this segment. As noted in \cref{subsubsec:helping:sets:line}, helping sets in direction $u_j$ together with the half-plane $\bbH_{u_j}$ infect the entire line $\Hb_{u_j}\setminus\bbH_{u_j}$ on the boundary of the half-plane. Since the helping set in our setting is only next to a finite fully infected droplet $\L(\ur+s\uv_i)$, infection spreads along its edge until it reaches a bounded distance from the corners (see \cite{Bollobas23}*{Lemma~3.4}). However, by our choice of $\hat\cS$ (recall \cref{subsec:directions}), for each $j\in[4k]$ there is a rule $X\in\cU$ such that $X\subset\Hb_{u_j}\cap\Hb_{u_{j+1}}$. Using this rule, we can infect even the remaining sites to fill up the corner between directions $u_j$ and $u_{j+1}$ of the droplet $\L(\ur+s'\uv_i)$ with $s'>s$ minimal such that $\L(\ur+s'\uv_i)\setminus\L(\ur+s\uv_i)\neq\varnothing$ (see \cite{Bollobas15}*{Lemma 5.5 and Fig. 6}).
\end{proof}

We next state a recursive bound on the Poincar\'e constant $\g$ from \cref{subsec:poincare} reflecting the recursive definition of SG events in an East-extension. In rough terms, it states that in order to relax on the larger volume, we need to be able to relax on the smaller one and additionally pay the cost of creating logarithmically many copies of it shifted by exponentially growing offsets, conditionally on the presence of the original droplet. We further need to account for the cost of microscopic dynamics (see the $e^{\log^2(1/q)}$ term below), but its contribution is unimportant. Recall $\lmp$ from \cref{subsec:scales}.
\begin{prop}[East-extension relaxation]
\label{cor:East:reduction}
Let $i\in[4k]$ be such that for all $j\in(i-k,i+k)$ we have $\a(u_j)<\infty$. Let $\L(\ur)$ be a droplet with $\ur=q^{-O(C)}$  and side lengths at least $C^3$. Let $l\in(0,\lmp]$ be a multiple of $\l_i$. Define $d_m=\l_i\lfloor(3/2)^{m}\rfloor$ for $m\in[1,M)$ and $M=\min\{m:\l_i(3/2)^m\ge l\}$. Let $d_M=l$, $\L^m=\L(\ur+d_m\uv_i)$ and $s_{m-1}=d_{m}-d_{m-1}$ for $m\in[2,M]$.

Let $\SG^\bone(\L(\ur))$ be a nonempty translation invariant decreasing event. Assume that we East-extend $\L(\ur)$ by $l$ in direction $u_i$. Then $\SG^\bone(\L(\ur+l\uv_i))$ is also nonempty, translation invariant, decreasing and satisfies
\[\g(\L(\ur+l\uv_i))\le \max\left(\g(\L(\ur)),\m^{-1}\left(\SG^\bone(\L(\ur))\right)\right)e^{O(C^2)\log^2(1/q)}\prod_{m=1}^{M-1}a_m,\]
with
\begin{equation}
\label{eq:def:am:East}
a_{m}=\m^{-1}\left(\left.\SG^\bone\left(\L^m+s_mu_i\right)\right|\SG^\bone(\L^m)\right).
\end{equation}
\end{prop}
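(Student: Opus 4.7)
My plan is to prove the proposition by induction on $m\in\{1,\ldots,M\}$, establishing a per-scale recursion of the form $\g(\L^{m+1})\le q^{-O(1)}\cdot a_m\cdot\g(\L^m)$ which, iterated from a base case provided by the microscopic dynamics, yields the claim. The three structural properties of $\SG^\bone(\L(\ur+l\uv_i))$ are immediate from \cref{def:extension:East}: the extended SG event is the intersection of $\SG^\bone(\L(\ur))$, which has non-emptiness, translation invariance and monotonicity by assumption, with the traversability event $\cT^\bone(T(\ur,l,i))$, which inherits the same three properties from \cref{def:HdS,def:traversability,obs:mu:helping}. For the base case $m=1$, we have $d_1=\l_i$, so that $\L^1$ is $\L(\ur)$ extended by a single lattice line in direction $u_i$; this falls entirely within the microscopic dynamics regime of \cref{subsec:micro}, yielding $\g(\L^1)\le\max(\g(\L(\ur)),\m^{-1}(\SG^\bone(\L(\ur))))\cdot e^{O(C^2)\log^2(1/q)}$ via the one-dimensional KCM bound of \cite{Hartarsky21b}.

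For the inductive step $\L^m\to\L^{m+1}$, I would decompose $\L^{m+1}=\L^m\sqcup B_m$, where $B_m:=T(\ur+d_m\uv_i,s_m,i)$ is the outer collar. Combining \cref{lem:tube:decomposition} with \cref{def:extension:East} gives $\SG^\bone(\L^{m+1})=\SG^{\h_{B_m}}(\L^m)\cap\cT^\bone(B_m)$, and the boundary-condition swap from $\h_{B_m}$ to $\bone$ costs only a polynomial factor by \cref{lem:traversability:boundary}. A two-block constrained Poincar\'e analysis with blocks $\L^m$ and $B_m$ then splits the variance on $\L^{m+1}$ into (i) a conditional variance on $\L^m$ bounded by $\g(\L^m)$ through the induction hypothesis, and (ii) a conditional variance on $B_m$ which is resolved by the key East-like \emph{shift} move: the SG sub-droplet currently anchored at $\L^m$ is replaced by its translate at $\L^m+s_m u_i$ (both sit inside $\L^{m+1}$), the collar's infections acting as the pivot that legalises the exchange. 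The Dirichlet-form cost of this shift move is exactly $a_m$ of \cref{eq:def:am:East}. Iterating the recursion from $m=1$ up to $m=M$ and noting that the accumulated per-step polynomial factor $q^{-O(M)}=e^{O(\log^2(1/q))}$ is absorbed into the base-case $e^{O(C^2)\log^2(1/q)}$ (using $M=O(\log(1/q))$) telescopes into the stated bound.

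The main obstacle is rigorously realising the ``shift by $s_m$'' move within the underlying KCM Dirichlet form at the conditional cost $a_m$, rather than the far larger $\m^{-1}(\SG^\bone(\L^m))$ that a naive creation-from-scratch would pay. The whole value of the multi-scale East-extension lies precisely in exploiting the large overlap between $\SG^\bone(\L^m)$ and $\SG^\bone(\L^m+s_mu_i)$: since $s_m\approx d_m/2$, only the non-overlapping collar of scale $s_m$ should be ``paid for''. Converting this intuition into a valid Dirichlet-form bound requires a careful canonical-paths / auxiliary-dynamics argument adapting the constrained Poincar\'e strategy of \cite{Hartarsky20FA}*{Proposition~4.7} to our generalised multi-scale geometric setting, with the microscopic cost at every shift's interface reabsorbed into the base-case factor (in particular crucially relying on the choice \cref{rem:deconditioning} of averaging w.r.t.\ $\m_\L$ rather than $\m_\L(\cdot|\SG^\bone(\L))$). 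Sharp estimates for $a_m$ itself (essentially identifying it with the probability of a droplet of scale $s_m$ rather than $d_m$) come from the tools developed in \cref{subsec:conditional}, and these are what ultimately make the product $\prod_m a_m$ tractable in the applications that follow.
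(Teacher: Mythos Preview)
Your outline is correct and follows the same route as the paper: induction on the scale $m$ via a two-block constrained dynamics, with the microscopic step of \cref{subsec:micro} (realised through \cite{Hartarsky21b}) providing the base $\g(\L^1)\le\max(\g(\L(\ur)),\m^{-1}(\SG^\bone(\L(\ur))))\,e^{O(C^2)\log^2(1/q)}$, and each inductive step costing $a_m$ up to a $q^{-O(W)}$ factor absorbed in the end.

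Two points of implementation that the paper makes explicit and that your sketch leaves implicit deserve mention. First, the induction is not carried for $\g(\L^m)$ itself but for the boundary-conditioned quantity $\g_I^\o(\L^m)$ (see \cref{eq:def:gamma:boundary}): when you resample your collar $B_m$ the boundary condition seen by $\L^m$ changes, so one must allow an arbitrary $\o$ and take the max, only collapsing back to $\g$ at $m=M$ where $\o=\bone$. Second, the ``shift'' is realised through a three-piece partition $\L^{m+1}=V_1\sqcup V_2\sqcup V_3$ with $V_1\cup V_2=\L^m$ and $V_2\cup V_3=\L^m+s_mu_i$; the facilitating event in the two-block lemma is not merely $\SG^\bone(\L^m)$ but $\cF=\SG^\bone(\L^m)\cap\SG^\bone(V_2)$, and it is the second factor whose conditional probability (together with $\m(\cT^\bone(V_3))$) yields $q^{O(W)}/a_m$. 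Once $\cF$ holds, the variance on $V_3$ is not resolved directly but enlarged to the variance on the shifted droplet $V_2\cup V_3$ conditioned on $\SG^\o(\L^m+s_mu_i)$, which by translation invariance is again bounded by $\max_{\o'}\g_I^{\o'}(\L^m)$. Your phrase ``the collar's infections acting as the pivot that legalises the exchange'' is the right intuition, but the rigorous mechanism is this enlarge-then-translate step combined with \cref{eq:variance:conditioning:enlarge} and \cref{lem:traversability:boundary}.
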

The proof is left to \cref{app:subsec:extensions}.

\subsection{CBSEP-extension}
\label{subsec:CBSEP:extension}
We next turn our attention to CBSEP-extensions (see \cref{subfig:extension:CBSEP}). The definition differs from \cref{def:extension:East} (cf.\ \cref{subfig:extension:East}) in three ways. Firstly, we allow the smaller SG droplet to be anywhere inside the larger one (the exact position is specified by the offset below). Secondly, we ask for traversability on both sides of the smaller droplet in the direction away from it (so that infection can spread, starting from it), rather than just on one side. Thirdly, we require our tubes to be symmetrically traversable, instead of traversable. This makes the position of the small SG droplet roughly uniform.
\begin{defn}[CBSEP-extension]
\label{def:extension:CBSEP}
Assume that $\cU$ has a finite number of stable directions (equivalently, $\alpha(u_j)<\infty$ for all $j\in[4k]$). Fix $i\in[4k]$, a droplet $\L(\ur)$ and a multiple $l$ of $\l_i$. Let $\SG^\bone(\L(\ur))$ be a translation invariant event. We use the expression ``\emph{we CBSEP-extend $\L(\ur)$ by $l$ in direction $u_i$}'' to state that, for all $s\in(0,l]$ multiple of $\l_i$ and $\o\in\O_{\bbZ^2\setminus\L(\ur+s\uv_i)}$, we define the event $\SG^\o(\L(\ur+s\uv_i))\subset\O_{\L(\ur+s\uv_i)}$ as follows. 

For \emph{offsets} $x\in[0,s]$ divisible by $\l_i$ we define $\h\in\SG_x^\o(\L(\ur+s\uv_i))$ if the following all hold:
\begin{align*}
\eta_{T(\ur,s-x,i)+xu_i}&{}\in\ST^{\o}(T(\ur,s-x,i)+xu_i);\\
\h_{\L(\ur)+xu_i}&{}\in \SG^\bone(\L(\ur)+xu_i);\\
\eta_{T(\ur,x,i+2k)+xu_i}&{}\in\ST^{\o}(T(\ur,x,i+2k)+xu_i).
\end{align*}
We then set $\SG^\o(\L(\ur+s\uv_i))=\bigcup_x\SG^\o_x(\L(\ur+s\uv_i))$.
\end{defn}

Note that CBSEP-extending in direction $u_i$ gives the same result as CBSEP-extending in direction $u_{i+2k}$. We further reassure the reader that, in applications \cref{def:extension:East,def:extension:CBSEP}, are not used simultaneously for the same droplet $\L(\ur)$, so no ambiguity arises as to whether $\SG^\o(\L(\ur+l\uv_i))$ is obtained by CBSEP-extension or East-extension. However, as it is clear from \cref{tab:mechanisms}, it is sometimes necessary to CBSEP-extend a droplet itself obtained by East-extending an even smaller one. But for the time being, let us focus on a single CBSEP-extension.

The following analogue of \cref{lem:closure} holds for CBSEP-extension, which is also not used directly in the proof of \cref{th:main}. 
\begin{lem}[CBSEP-extension ergodicity]
\label{lem:closure:CBSEP}
Assume that $\cU$ has a finite number of stable directions. Let $i\in[4k]$, $\L(\ur)$ be a droplet and $l$ be a multiple of $\l_i$. Let $\SG^\bone(\L(\ur))\subset\O_{\L(\ur)}$ be translation invariant. Further assume that $\eta\in\SG^\bone(\L(\ur))$ implies that the $\cU$-KCM with initial condition $\h\cdot\bone_{\bbZ^2\setminus\L(\ur)}$ can entirely infect $\L(\ur)$. If we CBSEP-extend $\L(\ur)$ by $l$ in direction $u_i$, then for any $\o\in\O_{\bbZ^2\setminus\L(\ur+l\uv_i)}$ and $\eta\in\SG^\o(\L(\ur+l\uv_i))$ the $\cU$-KCM with initial condition $\o\cdot\eta$ can entirely infect $\L(\ur+l\uv_i)$.
\end{lem}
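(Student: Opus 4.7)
The plan is to reduce to the statement of \cref{lem:closure}, after first using the hypothesis to infect the anchored inner droplet. By \cref{def:extension:CBSEP}, any $\eta\in\SG^\o(\L(\ur+l\uv_i))$ lies in $\SG^\o_x(\L(\ur+l\uv_i))$ for some offset $x\in[0,l]$ divisible by $\l_i$. I would then write the partition
\[\L(\ur+l\uv_i)=\L_0\sqcup T_-\sqcup T_+,\]
where $\L_0=\L(\ur)+xu_i$ is the translated anchored droplet, $T_-=T(\ur,l-x,i)+xu_i$ is the tube opening in direction $u_i$, and $T_+=T(\ur,x,i+2k)+xu_i$ is the tube opening in direction $u_{i+2k}$.

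The first step is to infect $\L_0$. Since $\SG^\bone$ is translation invariant and $\eta_{\L_0}\in\SG^\bone(\L_0)$, the hypothesis of the lemma gives that the $\cU$-KCM with initial condition $\eta_{\L_0}\cdot\bone_{\bbZ^2\setminus\L_0}$ can entirely infect $\L_0$. Monotonicity of legal updates (a larger infected set only enlarges the set of satisfied constraints) then lets us transfer this to the true initial condition $\o\cdot\eta$: every update localised inside $\L_0$ that was legal before remains legal now, regardless of what sits in $T_\pm$ or the exterior boundary condition $\o$.

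Once $\L_0$ is fully infected, the second step is to propagate infection into each tube, which is exactly what the inductive argument in the proof of \cref{lem:closure} does. By definition of $\SG^\o_x$ we have $\eta_{T_-}\in\ST^\o(T_-)\subset\cT^\o(T_-)$ and $\eta_{T_+}\in\ST^\o(T_+)\subset\cT^\o(T_+)$, and by hypothesis $\a(u_j)<\infty$ for every $j\in[4k]$. Applying \cref{lem:closure} separately to $T_+$ (extending $\L_0$ in direction $u_{i+2k}$ by $x$) and to $T_-$ (extending the resulting droplet further in direction $u_i$ by $l-x$) infects the entire $\L(\ur+l\uv_i)$. The only mild subtlety to verify is that these successive applications are compatible with the actual outer boundary condition $\o$, and that the anchored droplet of the second extension is indeed fully infected when we start it; both follow from the first step and the fact that \cref{lem:closure} only uses the traversability of its tube with respect to the true exterior, which is $\o$ in both invocations. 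No new ideas are needed beyond this bookkeeping, so I do not anticipate a serious obstacle.
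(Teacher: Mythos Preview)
Your proposal is correct and follows essentially the same approach as the paper: pick an offset $x$ with $\eta\in\SG^\o_x$, use $\ST^\o\subset\cT^\o$, and invoke \cref{lem:closure} in each of the two directions. The only cosmetic difference is that the paper applies \cref{lem:closure} in parallel to the two overlapping droplets $\L(\ur+x\uv_i)$ and $xu_i+\L(\ur+(l-x)\uv_i)$, each viewed as an East-extension of $\L_0$; this sidesteps the bookkeeping you flag about matching the second tube to the enlarged base droplet.
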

\begin{proof}
By \cref{def:extension:CBSEP}, it suffices to prove that for each offset $x\in[0,s]$ the conclusion holds for $\eta\in\SG^\o_x(\L(\ur+l\uv_i))$. By \cref{def:extension:CBSEP}, this implies that the events
$\SG^\bone(xu_i+\L(\ur))\cap\ST^\o(xu_i+T(\ur,s-x,i))$ and $\SG^\bone(xu_i+\L(\ur))\cap\ST^\o(xu_i+T(\ur,x,i+2k))$ hold. Moreover, by \cref{def:traversability}, $\ST^{\o'}(T)\subset \cT^{\o'}(T)$ for any tube $T$ and boundary condition $\o'$. Therefore, we may apply \cref{lem:closure} to each of the droplets $\L(\ur+x\uv_i)$ and $xu_i+\L(\ur+(s-x)\uv_i)$ (in directions $u_i$ and $u_{i+2k}$ respectively) to obtain the desired conclusion.
\end{proof}

We next state the CBSEP analogue of \cref{cor:East:reduction}, which is more involved, but also more efficient. Roughly speaking, we show that the time needed in order to relax on a CBSEP-extended droplet, is the product of four contributions: the Poincar\'e constant of the smaller droplet; the inverse probability of the symmetric traversability events in \cref{def:extension:CBSEP}; the cost of microscopic dynamics; the conditional probability of suitable contracted versions of the super good and symmetric traversability events, given the original ones (recall \cref{subsubsec:CBSEP:extension}). The last two contributions turn out to be negligible, but the last one requires some care and make the statement somewhat technical.

\begin{prop}[CBSEP-extension relaxation]
\label{cor:CBSEP:reduction}
Assume that $\cU$ has a finite number of stable directions. Let $i\in[4k]$. Let $\L(\ur)$ be a droplet with $\ur=q^{-O(C)}$ and side lengths at least $C^3$. Let  $l\in(0,\lmp]$ be a multiple of $\l_i$. Let $\SG^\bone(\L(\ur))$ be a nonempty translation invariant decreasing event.

Denote $\L_1=T(\ur,\l_i,i+2k)$, $\L_2=\L(\ur-\l_i\uv_i)$ and $\L_3=T(\ur-\l_i\uv_i,\l_i,i)$, so that $\L(\ur+\l_i\uv_i)-\l_iu_i=\L_1\sqcup\L_2\sqcup\L_3$ and $\L_2\cup\L_3=\L(\ur)=(\L_1\cup\L_2)+\l_iu_i$. Consider some nonempty decreasing events\footnote{We use a bar to denote ``contracted'' versions of events (recall \cref{subsubsec:CBSEP:extension}).} $\bSG(\L_2)\subset\O_{\L_2}$, $\bcT_{\h_2}(\L_1)\subset\O_{\L_1}$ and $\bcT_{\h_2}(\L_3)\subset\O_{\L_3}$ for all $\h_2\in\bSG(\L_2)$. Assume that
\begin{multline}
\label{eq:cor:CBSEP:reduction:condition}\left\{\h:\h_{\L_1}\in\bcT_{\h_{\L_2}}(\L_1),\h_{\L_2}\in \bSG(\L_2),\h_{\L_3}\in\bcT_{\h_{\L_2}}(\L_3)\right\}\\
\subset \SG^\bone(\L_1\cup\L_2)\cap\SG^\bone(\L_2\cup\L_3).
\end{multline}
Set $\bSG(\L_1\cup\L_2)=\{\h:\h_{\L_2}\in\bSG(\L_2),\h_{\L_1}\in\bcT_{\h_{\L_2}}(\L_1)\}$. 

If we CBSEP-extend $\L(\ur)$ by $l$ in direction $u_i$, then $\SG(\L(\ur+l\uv_i))$ is nonempty, translation invariant, decreasing and satisfies
\begin{multline*}\g(\L(\ur+l\uv_i))
\le \frac{\m(\SG^\bone(\L(\ur)))}{\m(\SG^\bone(\L(\ur+l\uv_i)))}\times\max\left(\m^{-1}\left(\SG^\bone(\L(\ur))\right),\g(\L(\ur))\right)
\\
\times\frac{e^{O(C^2)\log^2(1/q)}}{\m(\bSG(\L_1\cup\L_2)|\SG^\bone(\L_1\cup\L_2))\min_{\h_2\in\bSG(\L_2)}\m(\bcT_{\h_2}(\L_3)|\ST^{\bzero}(\L_3))}.\end{multline*}
\end{prop}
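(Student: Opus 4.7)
The plan is to view the possible offsets of the anchored smaller droplet inside $\L(\ur+l\uv_i)$ as the states of a one-dimensional long-range CBSEP chain, following the strategy developed for FA-2f in \cite{Hartarsky20FA}. For each offset $x\in[0,l]$ divisible by $\l_i$, the event $\SG^\bone_x(\L(\ur+l\uv_i))$ from \cref{def:extension:CBSEP} plays the role of a single up-spin at position $x/\l_i$, while $\SG^\bone(\L(\ur+l\uv_i))=\bigcup_x \SG^\bone_x(\L(\ur+l\uv_i))$ corresponds to ``at least one up-spin''. First I would carry out a block Poincar\'e decomposition: after partitioning $\SG^\bone(\L(\ur+l\uv_i))$ according to a canonical offset (e.g.\ the smallest $x$ with $\SG^\bone_x$), the conditional variance $\var_\L(f\tc \SG^\bone(\L(\ur+l\uv_i)))$ splits into a ``chain variance'' between blocks plus an in-block variance. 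Each in-block variance is bounded, via translation invariance and the hypothesis on $\L(\ur)$, by $\max(\m^{-1}(\SG^\bone(\L(\ur))),\g(\L(\ur)))$, up to Harris-type adjustments (\cref{subsec:correlation}) for the independent traversability events in the two tubes flanking the anchored droplet.

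For the chain variance I would apply the generalised long-range CBSEP spectral gap bound of \cite{Hartarsky22CBSEP} in the symmetric form used in \cite{Hartarsky20FA}, combined with a bisection over the geometric scales $d_m=\l_i\lfloor(3/2)^m\rfloor$ up to $l$. Each long-range exchange between offsets differing by $d_m$ is decomposed into two of range $\sim d_{m-1}$ by insertion of an intermediate up-spin. Thanks to the symmetry built into $\ST$ (recall \cref{eq:symmetry:tubes} and \cref{rem:symmetrisation}), the chain is unbiased, and the bisection losses telescope into the single symmetric density factor $\m(\SG^\bone(\L(\ur)))/\m(\SG^\bone(\L(\ur+l\uv_i)))$, rather than producing a product over scales as in \cref{cor:East:reduction}. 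This reduces the problem to controlling individual unit CBSEP moves, namely those shifting the SG droplet from offset $x$ to offset $x\pm\l_i$.

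The base case is the technical heart. Using \cref{lem:tube:decomposition} and translation invariance I reduce to the following building block on $\L_1\sqcup\L_2\sqcup\L_3$: starting from any $\eta\in\SG^\bone(\L_2\cup\L_3)$, resample within $\L_1\cup\L_3$ to reach $\SG^\bone(\L_1\cup\L_2)$, with $\L_2$ playing the role of a shared core. I would first force the contracted events: at conditional cost $1/\m(\bSG(\L_1\cup\L_2)\tc\SG^\bone(\L_1\cup\L_2))$ impose the contracted SG configuration on $\L_1\cup\L_2$, and at cost $\min_{\h_2\in\bSG(\L_2)}1/\m(\bcT_{\h_2}(\L_3)\tc\ST^\bzero(\L_3))$ impose the contracted traversability on $\L_3$. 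By hypothesis \eqref{eq:cor:CBSEP:reduction:condition} the resulting configuration then simultaneously satisfies $\SG^\bone(\L_1\cup\L_2)$ and $\SG^\bone(\L_2\cup\L_3)$, so the CBSEP rules are respected and we may interpolate between the two via a path argument. The actual realisation under the $\cU$-KCM amounts to microscopic dynamics on a single extra column, which by \cite{Hartarsky21b} costs $e^{O(C^2)\log^2(1/q)}$.

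The main obstacle is the bookkeeping needed to avoid double counting when assembling these ingredients. A naive accounting would pay $\m^{-1}(\SG^\bone(\L(\ur)))$ per unit shift, which would be catastrophic on the scale $l\le\lmp$. The key point, which is precisely the reason for using the unconditional $\m_\L$ in \cref{eq:def:gamma} highlighted in \cref{rem:deconditioning}, is that the unconditional cost of the SG event is paid once as the density prefactor and only the conditional contraction costs $\m(\bSG\tc\SG^\bone)^{-1}$ and $\m(\bcT\tc\ST^\bzero)^{-1}$ appear per base-case move. Compared to \cref{cor:East:reduction}, the symmetry of $\ST$ is also essential: without it the chain would be biased and the bisection losses would fail to telescope, forcing a product structure as in the East bound. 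The technical details are deferred to \cref{app:subsec:extensions}.
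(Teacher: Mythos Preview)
Your outline is essentially the paper's approach: a bisection recursion over the geometric scales $d_m$ in the spirit of \cite{Hartarsky20FA}*{Eq.~(4.10)}, reducing $\g(\L^M)$ to $\max_\o\g_J^\o(\L^1)$, followed by a base-case analysis on $\L_1\sqcup\L_2\sqcup\L_3$ using the contracted events and the microscopic dynamics of \cref{cor:single:line}. You correctly identify that the symmetry of $\ST$ (equivalently \cref{lem:T:ratio}) keeps the per-scale bisection losses $b_m\le q^{-O(C)}$ bounded rather than growing, and that the deconditioning afforded by \cref{rem:deconditioning} makes the ratios $\m(\SG^\bone(\L^m))/\m(\SG^\bone(\L^{m+1}))$ telescope to the single density factor; this is exactly how the paper organises the argument in \cref{prop:CBSEP:reduction} and the paragraph following it.

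Two small points of phrasing. First, the paper does not invoke a spectral-gap bound from \cite{Hartarsky22CBSEP} here; the bisection from \cite{Hartarsky20FA} is applied directly at the level of the Poincar\'e inequality, and the CBSEP gap of \cite{Hartarsky22CBSEP} is reserved for the global dynamics. Second, the contracted-event costs $\m(\bSG\mid\SG^\bone)^{-1}$ and $\min_{\h_2}\m(\bcT_{\h_2}\mid\ST^\bzero)^{-1}$ enter exactly once, at the base scale $\L^1$ (they are the cost of relating $\g_J^\o(\L^1)$ to $\g(\L(\ur))$ as in \cite{Hartarsky20FA}*{Lemma~4.10}), not ``per base-case move''; your final paragraph seems to understand this, but the wording is ambiguous.
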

\Cref{cor:CBSEP:reduction} is proved in \cref{app:subsec:extensions} based on \cite{Hartarsky23FA}. We referring the reader to \cite{Hartarsky23FA}*{Section 
4.3} for the principles behind \cref{cor:CBSEP:reduction} in a less technical framework, but let us briefly discuss the contracted events. 

\Cref{eq:cor:CBSEP:reduction:condition} should be understood as follows. In the middle droplet $\L_2$, which has the shape of $\L(\ur)$, but contracted in direction $u_i$ by $O(1)$, we require an event $\bSG(\L_2)$. This event provides simultaneously as much of the structure of $\SG^\bone(\L_1\cup\L_2)$ and $\SG^\bone(\L_2\cup\L_3)$ (these regions both have the shape of $\L(\ur)$), as one can hope for, given that we are missing a tube of length $O(1)$ of these regions. Once such a favourable configuration $\h_2\in\bSG(\L_2)$ is fixed, the events $\bcT_{\h_{\L_2}}(\L_1)$ and $\bcT_{\h_{\L_2}}(\L_3)$ provide exactly the missing part of $\SG^\bone(\L_1\cup\L_2)$ and $\SG^\bone(\L_2\cup\L_3)$ respectively. In applications, these events necessarily need to be defined, taking into account the structure of $\SG^\bone(\L(\ur))$, on which we have made no assumptions at this point.

\subsection{Conditional probability tools}
\label{subsec:conditional}
In both \cref{cor:East:reduction,cor:CBSEP:reduction} our bounds feature certain conditional probabilities of SG events. We now provide two tools for bounding them.

The next result generalises \cite{Hartarsky23FA}*{Corollary A.3}, which relied on explicit computations unavailable in our setting. It shows that the offset of the core of a CBSEP-extended droplet (see \cref{subfig:extension:CBSEP} and recall the notation $\SG_x^\o$ from \cref{def:extension:CBSEP}) is roughly uniform. This result is the reason for the somewhat artificial \cref{def:ahelping} of helping sets and \cref{def:traversability} of $\ST$ (also see \cref{rem:symmetrisation}).
\begin{lem}[Uniform core position]
\label{lem:T:ratio}
Assume that $\cU$ has a finite number of stable directions. Fix $i\in[4k]$ and a symmetric droplet $\L=\L(\ur+l\uv_i)$ obtained by CBSEP-extension by $l$ in direction $u_i$. Assume that $l\le\lmp$ is divisible by $\l_i$ and that the side lengths of $\L(\ur)$ are at least $C^3$. Then for all $s\in[0,l]$ divisible by $\l_i$ and $\o,\o'\in\O_{\bbZ^2\setminus\L}$
\[\m\left(\left.\SG^\o_s(\L)\right|\SG^{\o'}(\L)\right)\ge q^{O(C)}.\]
\end{lem}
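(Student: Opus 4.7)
The plan is to show that the probability $\mu(\SG_{s'}^\bone(\Lambda))$ depends only mildly on the offset $s'$, so that by union bound $\mu(\SG^\bone(\Lambda))$ is comparable to $(l/\l_i+1)\,\mu(\SG_s^\bone(\Lambda))$, the polynomial factor $l/\l_i+1 \le \lmp/\l_i+1$ being absorbed into $q^{O(C)}$. First, I will reduce to $\o=\o'=\bone$: each of the (at most four) tubes appearing in $\SG_s^\o(\Lambda)$ and in the events $\SG_{s'}^{\o'}(\Lambda)$ of the union $\SG^{\o'}(\Lambda)=\bigcup_{s'}\SG_{s'}^{\o'}(\Lambda)$ can have its boundary condition switched to $\bone$ via \cref{lem:traversability:boundary}, at multiplicative cost $q^{\pm O(W)}$. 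Next, by \cref{def:extension:CBSEP}, $\SG_{s'}^\bone(\Lambda)$ is the intersection of three events on pairwise disjoint regions; combining translation invariance of $\SG^\bone(\Lambda(\ur))$ with the symmetry \cref{eq:symmetry:tubes} (applicable since $\Lambda(\ur)$ is symmetric and $\cU$ has finitely many stable directions) gives the factorisation
\[
\mu\bigl(\SG_{s'}^\bone(\Lambda)\bigr) = p_0 \cdot p(l-s') \cdot p(s'),
\]
where $p_0:=\mu(\SG^\bone(\Lambda(\ur)))$ and $p(k):=\mu(\ST^\bone(T(\ur,k,i)))$.

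The crux of the argument is the approximate multiplicativity
\[
q^{O(W)}\,p(a)\,p(b) \;\le\; p(a+b) \;\le\; q^{-O(W)}\,p(a)\,p(b), \qquad 0\le a+b\le l,
\]
which I will derive from \cref{lem:tube:decomposition}. Splitting $T(\ur,a+b,i)=T_1\sqcup T_2$ with $T_1=T(\ur,a,i)$ and $T_2=au_i+T(\ur,b,i)$, the lemma rewrites $\ST^\bone$ on the long tube as the conjunction $\ST^\bone(T_2)\wedge \ST^{\eta_{T_2}\cdot\bone}(T_1)$ on disjoint variables, so
\[
p(a+b) = \m\Bigl[\1_{\ST^\bone(T_2)}\cdot\mu\bigl(\ST^{\eta_{T_2}\cdot\bone}(T_1)\bigr)\Bigr].
\]
Two-sided application of \cref{lem:traversability:boundary} replaces the random boundary condition by $\bone$ at cost $q^{\pm O(W)}$, and translation invariance identifies $\mu(\ST^\bone(T_2))=p(b)$ and $\mu(\ST^\bone(T_1))=p(a)$.

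The conclusion is then immediate. The multiplicativity yields $p(l-s')p(s')\le q^{-O(W)}p(l)$ uniformly in $s'$, so the union bound gives $\sum_{s'}\mu(\SG_{s'}^\bone(\Lambda))\le (l/\l_i+1)\,p_0\,q^{-O(W)}p(l)$. Applying multiplicativity once more to the specific offset $s$ produces $p(l-s)p(s)\ge q^{O(W)}p(l)$, whence
\[
\frac{\mu(\SG_s^\bone(\Lambda))}{\sum_{s'}\mu(\SG_{s'}^\bone(\Lambda))} \;\ge\; \frac{q^{O(W)}}{l/\l_i+1} \;\ge\; q^{O(C)},
\]
using $l\le\lmp=q^{-C}/\sqrt{\d}$ and $W\ll C$. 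Combined with Step 1 this proves the claim.

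The step I expect to be most delicate is the approximate multiplicativity of $p$. The set-theoretic decomposition of \cref{lem:tube:decomposition} is clean, but its conversion to a probability estimate hinges on both directions of \cref{lem:traversability:boundary} to strip off the random boundary condition $\eta_{T_2}$ imposes on $T_1$. Conceptually, this is also where the artificial symmetrisation built into $\ST$ and the $\alpha$-helping sets (\cref{def:ahelping}, \cref{rem:symmetrisation}) pays off: without it, $p(k)$ in directions $u_i$ and $u_{i+2k}$ could differ substantially, biasing the position of the core and breaking the uniformity of the conditional distribution.
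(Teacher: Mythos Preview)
Your proof is correct and follows essentially the same approach as the paper: both arguments factorise $\mu(\SG_{s'}^\o(\L))$ via translation invariance and the symmetry \cref{eq:symmetry:tubes}, then use \cref{lem:tube:decomposition} together with \cref{lem:traversability:boundary} to show the resulting tube probabilities are approximately multiplicative (the paper phrases this as $\mu(\SG_s^\o)/\mu(\SG_0^\bone)=q^{O(W)}$ rather than isolating $p(a+b)\approx p(a)p(b)$, but the content is identical), and conclude by the union bound combined with $l\le\lmp=q^{-O(C)}$.
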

The proofs of \cref{lem:T:ratio,cor:perturbation} are left to \cref{app:proba}. The latter vastly generalises \cite{Hartarsky23FA}*{Lemma A.4} and is proved by different means. It is illustrated in \cref{fig:perturbation}. In words, \cref{cor:perturbation} states in a quantitative way that the conditional probability of a tube of ``critical'' size, $q^{-\a+o(1)}$, being traversable, given that a slightly perturbed version of it (shifted spatially, with different boundary condition, width of the white strips in \cref{subfig:extension:East}, radii and length) is traversable, is not very low. We note that sizes other than the critical one are not important, so cruder bounds suffice.
\begin{figure}
    \centering
    \begin{tikzpicture}[line cap=round,line join=round,>=triangle 45,x=3.0cm,y=3.0cm]
\fill[pattern=my north west lines] (-2.48,0.93) -- (-0.48,0.93) -- (-0.93,0.48) -- (-2.93,0.48) -- cycle;
\fill[pattern=my north west lines] (-3,0.31) -- (-1,0.31) -- (-1,-0.31) -- (-3,-0.31) -- cycle;
\fill[pattern=my north west lines] (-0.93,-0.48) -- (-0.48,-0.93) -- (-2.48,-0.93) -- (-2.93,-0.48) -- cycle;
\fill[pattern=my north east lines] (-2.63,0.85) -- (-0.73,0.85) -- (-0.89,0.69) -- (-2.79,0.69) -- cycle;
\fill[pattern=my north east lines] (-2.92,0.38) -- (-1.02,0.38) -- (-1.02,-0.16) -- (-2.92,-0.16) -- cycle;
\fill[pattern=my north east lines] (-2.79,-0.46) -- (-2.52,-0.73) -- (-0.62,-0.73) -- (-0.89,-0.46) -- cycle;
\fill[fill=black,fill opacity=0.5] (-2.92,0.38) -- (-2.92,0.31) -- (-1.02,0.31) -- (-1.02,0.38) -- cycle;
\fill[fill=black,fill opacity=0.5] (-2.79,-0.46) -- (-2.77,-0.48) -- (-0.93,-0.48) -- (-0.95,-0.46) -- cycle;
\draw [very thick] (-2.41,1)-- (-3,0.41);
\draw [very thick] (-3,0.41)-- (-3,-0.41);
\draw [very thick] (-3,-0.41)-- (-2.41,-1);
\draw [very thick] (-2.41,-1)-- (-0.41,-1);
\draw [very thick] (-0.41,-1)-- (-1,-0.41);
\draw [very thick] (-1,-0.41)-- (-1,0.41);
\draw [very thick] (-1,0.41)-- (-0.41,1);
\draw [very thick] (-0.41,1)-- (-2.41,1);
\draw [very thick] (-2.5,0.98)-- (-2.92,0.56);
\draw [very thick] (-2.92,0.56)-- (-2.92,-0.34);
\draw [very thick] (-2.92,-0.34)-- (-2.4,-0.86);
\draw [very thick] (-2.4,-0.86)-- (-0.5,-0.86);
\draw [very thick] (-0.5,-0.86)-- (-1.02,-0.34);
\draw [very thick] (-1.02,-0.34)-- (-1.02,0.56);
\draw [very thick] (-1.02,0.56)-- (-0.6,0.98);
\draw [very thick] (-0.6,0.98)-- (-2.5,0.98);
\draw (-2.48,0.93)-- (-0.48,0.93);
\draw (-0.48,0.93)-- (-0.93,0.48);
\draw (-0.93,0.48)-- (-2.93,0.48);
\draw (-2.93,0.48)-- (-2.48,0.93);
\draw (-3,0.31)-- (-1,0.31);
\draw (-1,0.31)-- (-1,-0.31);
\draw (-1,-0.31)-- (-3,-0.31);
\draw (-3,-0.31)-- (-3,0.31);
\draw (-0.93,-0.48)-- (-0.48,-0.93);
\draw (-0.48,-0.93)-- (-2.48,-0.93);
\draw (-2.48,-0.93)-- (-2.93,-0.48);
\draw (-2.93,-0.48)-- (-0.93,-0.48);
\draw (-2.63,0.85)-- (-0.73,0.85);
\draw (-0.73,0.85)-- (-0.89,0.69);
\draw (-0.89,0.69)-- (-2.79,0.69);
\draw (-2.79,0.69)-- (-2.63,0.85);
\draw (-2.92,0.38)-- (-1.02,0.38);
\draw (-1.02,0.38)-- (-1.02,-0.16);
\draw (-1.02,-0.16)-- (-2.92,-0.16);
\draw (-2.92,-0.16)-- (-2.92,0.38);
\draw (-2.79,-0.46)-- (-2.52,-0.73);
\draw (-2.52,-0.73)-- (-0.62,-0.73);
\draw (-0.62,-0.73)-- (-0.89,-0.46);
\draw (-0.89,-0.46)-- (-2.79,-0.46);

\draw [decorate,decoration={brace,amplitude=5pt}] (-0.41,-1) -- (-2.41,-1) node [midway,yshift= -0.4cm] {$l$};
\draw [decorate,decoration={brace,amplitude=5pt}] (-2.5,1) -- (-0.6,1) node [midway,yshift= 0.4cm] {$l'$};
\draw (-0.56,-0.8)--(-0.26,-0.5) node [above right] {$T'$};
\draw (-0.45,0.96)--(-0.26,0.77) node [below right] {$T$};
\draw[->] (-3.1,0)-- (-3.3,0);
\draw (-3.2,0) node [above] {$u_i$};
\draw[->] (-2.8,0.86)-- (-2.94,1);
\draw (-2.87,0.93) node [right] {$u_j$};
\end{tikzpicture}
    \caption{Illustration of the perturbation of \cref{cor:perturbation}. The two thickened tubes are $T$ and $T'$. The regions concerned by their traversability are hatched in different directions.}
    \label{fig:perturbation}
\end{figure}
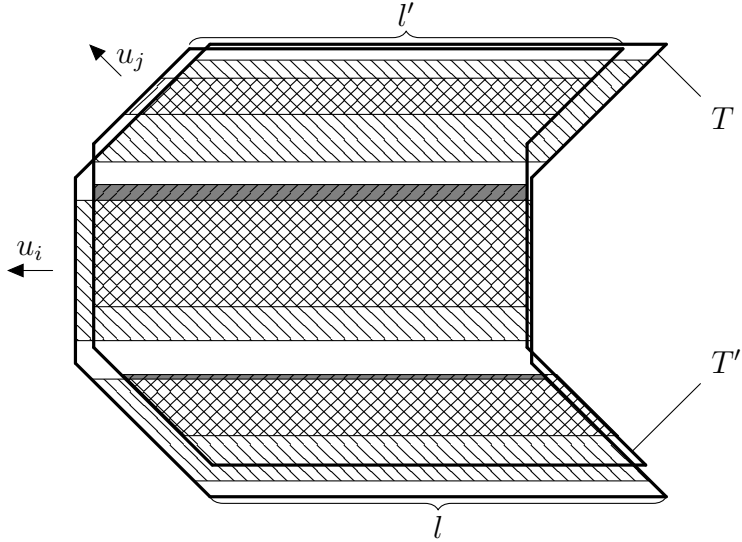
\begin{lem}[Perturbing a tube]
\label{cor:perturbation}
Let $i\in[4k]$ such that $\a(u_j)\le \a$ for all $j\in(i-k,i+k)$. Let $\L(\ur)$ be a droplet with side lengths $\us$ and let $T=T(\ur,l,i)$ be a tube. Assume that $l\in[\O(1),e^{q^{-o(1)}}]$, $s=\min_{i-k<j<i+k}s_j=q^{-\a+o(1)}$ and $\max_{i-k<j<i+k}s_j=q^{-\a+o(1)}$. For some $\D\in[C^2,s/W^2]$, let $\ur'$ and $l'$ be such that $0\le s_j-s_j'\le O(\D)$ for all $j\in(i-k,i+k)$ and $0\le l-l'\le O(\D)$, where $\us'$ are the side lengths of the droplet $\L(\ur')$. Further let $x\in\bbR^2$ be such that $\|x\|=O(\D)$ and $d,d'\in[0,O(\D)]$ with $d\le d'$. Denoting $T'=T(\ur',l',i)+x$, for any boundary conditions $\o\in\O_{\bbZ^2\setminus T}$ and $\o'\in\O_{\bbZ^2\setminus T'}$, we have
\begin{multline*}
\m\left(\left.\cT^{\o'}_{d'}(T')\right|\cT^\o_{d}(T)\right)\ge q^{O(W)}\left(1-(1-q^\a)^{\O(s)}\right)^{O(\D)}\\
\times\left(1-W\D/s-q^{1-o(1)}\right)^{O(l)}.
\end{multline*}
\end{lem}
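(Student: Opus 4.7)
The plan is to decompose $\cT^\o_d(T)$ and $\cT^{\o'}_{d'}(T')$ into their constituent helping set events on parallel segments (as in \cref{def:traversability}), pair segments of $T'$ with segments of $T$, and then transfer the helping sets from $T$ to $T'$ one pair at a time via Harris inequality. First I would use \cref{lem:traversability:boundary} to replace both $\o$ and $\o'$ with a common boundary condition, paying at most $q^{O(W)}$ since only $O(1)$ segments feel the boundary; this cost is absorbed into the prefactor $q^{O(W)}$ in the claimed bound.

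Next I would partition the segments $\{S'_{j,m'}\}$ composing $\cT_{d'}(T')$ into \emph{matched} ones paired with segments $\{S_{j,m}\}$ composing $\cT_{d}(T)$ in the same direction---those within $O(\D)$ translation distance of each other---and \emph{unmatched} boundary segments of $T'$ coming from the $O(\D)$-thick slab where $T'$ extends outside the shifted copy $T+x$ (or where their lengths or radii differ). There are $O(l)$ matched pairs and $O(\D)$ unmatched segments. For each unmatched $S'$, I would extract its helping set event from the conditioning using \cref{eq:Harris:2} and lower-bound it independently by \cref{obs:mu:helping}, producing the factor $(1-(1-q^\a)^{\O(s)})^{O(\D)}$.

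The heart of the argument is the per-pair bound
\[\m\bigl(\cH_{C^2+d'}(S')\bigm|\cH_{C^2+d}(S)\bigr) \ge 1 - O(W\D/s) - q^{1-o(1)}\]
for each matched pair $(S,S')$ with $S' \approx S + \xi$, $\|\xi\| = O(\D)$. The intuition is that $\cH_{C^2+d}(S)$ asserts the existence of a helping set whose reference position along $S$ lies in an admissible window of length $\Theta(s)$; after the shift by $\xi$, the image of this position along $S'$ lies in the admissible window for $\cH_{C^2+d'}(S')$ unless it falls into one of two bad strips of combined length $O(\D + |d' - d|) = O(\D)$ near the endpoints. Since helping set positions are essentially quantised on scale $W$ and, conditionally on at least one existing, their distribution in the interior of $S$ is approximately uniform, the bad strips are hit with conditional probability at most $O(W\D/s)$; the residual $q^{1-o(1)}$ correction absorbs the deviation from uniformity caused by the conditioning. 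Making this quantitative without access to a closed-form expression for $\m(\cH_{C^2+d}(S))$---which was available in \cite{Hartarsky20FA}*{Appendix A} for FA-2f but not here---constitutes the principal obstacle and is where the new techniques of \cref{app:proba} are needed.

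Finally, applying \cref{eq:Harris:3} iteratively over the $O(l)$ matched pairs (after enlarging segments by the constant $Q$ so that $\a$-helping sets spanning multiple rows decouple between consecutive pairs) yields the factor $(1 - W\D/s - q^{1-o(1)})^{O(l)}$. Multiplying the three contributions---$q^{O(W)}$ for boundary conditions, $(1-(1-q^\a)^{\O(s)})^{O(\D)}$ for unmatched boundary segments, and $(1 - W\D/s - q^{1-o(1)})^{O(l)}$ for matched interior pairs---gives the stated lower bound on $\m(\cT^{\o'}_{d'}(T') \mid \cT^{\o}_d(T))$.
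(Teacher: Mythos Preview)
Your decomposition into three contributions---boundary conditions via \cref{lem:traversability:boundary}, unmatched boundary segments via \cref{obs:mu:helping}, and matched interior segments---matches the paper, as does the identification of the three factors. The gap is in how you combine the matched pairs. You invoke \cref{eq:Harris:3} iteratively, but that inequality requires the two conditioning events to be independent, and the events $\cH_{C^2+d}(S_{j,m})$ for consecutive $m$ in the same direction $j$ are not: by \cref{def:HdS} an $\a$-helping set for $S_{j,m}$ may occupy any of $Q$ consecutive lines perpendicular to $u_j$, while consecutive segments are only $\rho_j=O(1)\ll Q$ apart, so adjacent helping-set events share sites. Your parenthetical about ``enlarging segments by $Q$'' does not repair this (thinning to every $Q$-th segment changes the event; grouping into blocks still leaves adjacent blocks dependent), and there is no purely Harris route either, since the conditional measure $\m(\cdot\mid\cT^\o_d(T))$ is not positively associated.

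The paper's resolution is to bound the ratio $\m(\cT(R'_j))/\m(\cT(R_j))$ for each of the $2k-1$ directions $j$ separately (these parallelograms \emph{are} independent, so \cref{eq:Harris:3} applies across directions). Harris lower-bounds the numerator as a product over segments; the matching \emph{upper} bound on the denominator comes from the van den Berg--Kesten inequality (\cref{prop:BK}). Concretely, \cref{lem:shrinking:ratio} bisects $R_j$ recursively: at each cut the two halves are either traversed disjointly (so BK applies) or $\a+1$ infections cluster near the cut (probability $q^{1-o(1)}$), giving $\m(\cT(R_j))\le(1+q^{1-o(1)})^{O(l)}\prod_m\m(\cH_m)$. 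The ratio then reduces to a product of per-segment ratios handled by \cref{lem:shrinking:ratio:line}, which uses BK a second time to split the $\a$-helping set into its $Q$ residue classes, after which exchangeability of the i.i.d.\ indicators makes your ``approximately uniform'' heuristic rigorous. The missing ingredient is thus the BK inequality, deployed at two nested levels.
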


\section{Isotropic models}
\label{sec:iso}
For this section we assume $\cU$ to be isotropic (class \ref{iso}). In this case the reasoning closely follows and generalises \cite{Hartarsky23FA}. We treat internal and mesoscopic dynamics simultaneously, since for this class there is no difference between the two.
\subsection{Isotropic internal and mesoscopic dynamics}
\label{subsec:iso}
We start by defining the geometry of our droplets and the corresponding length scales. They are all symmetric and every $2k$-th droplet is twice larger. Each such dilation is decomposed into $2k$ steps, so that their geometry fits the setting of our CBSEP-extensions from \cref{subsec:CBSEP:extension} (see \cref{subfig:iso:SG} and recall \cref{subfig:extension:CBSEP}). 

Recall \cref{subsec:geometry} and the constant $\e$ from \cref{subsec:scales}. Let $\ur^{(0)}$ be a sequence of radii with $r_i^{(0)}=r_{i+2k}^{(0)}$ for all $i\in[2k]$, such that for all $i\in[4k]$, $r_i^{(0)}=\Theta(1/\e)$ and the corresponding side length $s_i^{(0)}=\Theta(1/\e)$ is a multiple of $2\l_{i+k}$. For any integer $m\ge 0$, $i\in[2k]$ and $n=2km+r$ with $r\in[2k]$ we define
\begin{equation}
\label{eq:def:sin:iso}
s_{i}^{(n)}=s_{i+2k}^{(n)}=s^{(0)}_i2^{m}\times\begin{cases}2&k\le i< k+r\\
1&\text{otherwise}\end{cases}\end{equation}
and $\L^{(n)}=\L(\ur^{(n)})$ with $\ur^{(n)}$ the sequence of radii associated to $\us^{(n)}$ satisfying $r_{i}^{(n)}=r_{i+2k}^{(n)}$ for all $i\in[2k]$. Further set $\Nmp=2k\lceil \log (\e\lmp)/\log 2\rceil$ (recall $\lmp$ from \cref{subsec:scales}). 

Note that, as claimed, $\L^{(n)}$ are nested symmetric droplets extended in one direction at each step satisfying $\L^{(2km)}=2^m\L^{(0)}$. Moreover, they are nested so that we can define their SG events by extension (recall \cref{def:extension:CBSEP,subfig:extension:CBSEP} for CBSEP-extensions).
\begin{defn}[Isotropic SG]
\label{def:SG:iso}
Let $\cU$ be isotropic. We say that $\L^{(0)}$ is \emph{SG} ($\SG^\bone(\L^{(0)})$ occurs), if all sites in $\L^{(0)}$ are infected. We then recursively define $\SG^\bone(\L^{(n+1)})$ for $n\in[\Nmp]$ by CBSEP-extending $\L^{(n)}$ in direction $u_n$ by $l^{(n)}=s_{n+k}^{(n)}=\Theta(2^{n/2k}/\e)$ (recall from \cref{subsec:directions} that indices of directions and sequences are considered modulo $4k$ as needed and see \cref{subfig:iso:SG}).
\end{defn}
\begin{figure}
    \centering
\subcaptionbox{A generic realisation of $\SG^\bone(\L^{(n)})$ depicting the SG translates of $\L^{(n)},\dots,\L^{(n-2k)}$ involved in progressive shades of grey. Each extension is as in \cref{subfig:extension:CBSEP}.\label{subfig:iso:SG}}{\centering
    \begin{tikzpicture}[line cap=round,line join=round,>=triangle 45,x=1.5cm,y=1.5cm]
\fill[fill=black,fill opacity=1] (0.41,1) -- (-0.41,1) -- (-1,0.41) -- (-1,-0.41) -- (-0.41,-1) -- (0.41,-1) -- (1,-0.41) -- (1,0.41) -- cycle;
\fill[fill=black,fill opacity=0.2] (0.66,1) -- (1.25,0.41) -- (1.25,-0.41) -- (0.66,-1) -- (-0.99,-1) -- (-1.58,-0.41) -- (-1.58,0.41) -- (-0.99,1) -- cycle;
\fill[fill=black,fill opacity=0.2] (1.69,-0.03) -- (1.69,-0.85) -- (1.1,-1.44) -- (-0.55,-1.44) -- (-1.73,-0.27) -- (-1.73,0.56) -- (-1.14,1.15) -- (0.52,1.15) -- cycle;
\fill[fill=black,fill opacity=0.2] (0.52,1.81) -- (-1.14,1.81) -- (-1.73,1.22) -- (-1.73,-0.43) -- (-0.55,-1.61) -- (1.1,-1.61) -- (1.69,-1.02) -- (1.69,0.64) -- cycle;
\fill[fill=black,fill opacity=0.2] (1.69,0.64) -- (1.69,-1.02) -- (0.52,-2.19) -- (-1.14,-2.19) -- (-2.31,-1.02) -- (-2.31,0.64) -- (-1.14,1.81) -- (0.52,1.81) -- cycle;
\draw (0.41,1)-- (-0.41,1);
\draw (-0.41,1)-- (-1,0.41);
\draw (-1,0.41)-- (-1,-0.41);
\draw (-1,-0.41)-- (-0.41,-1);
\draw (-0.41,-1)-- (0.41,-1);
\draw (0.41,-1)-- (1,-0.41);
\draw (1,-0.41)-- (1,0.41);
\draw (1,0.41)-- (0.41,1);
\draw (0.66,1)-- (1.25,0.41);
\draw (1.25,0.41)-- (1.25,-0.41);
\draw (1.25,-0.41)-- (0.66,-1);
\draw (0.66,-1)-- (-0.99,-1);
\draw (-0.99,-1)-- (-1.58,-0.41);
\draw (-1.58,-0.41)-- (-1.58,0.41);
\draw (-1.58,0.41)-- (-0.99,1);
\draw (-0.99,1)-- (0.66,1);
\draw (1.69,-0.03)-- (1.69,-0.85);
\draw (1.69,-0.85)-- (1.1,-1.44);
\draw (1.1,-1.44)-- (-0.55,-1.44);
\draw (-0.55,-1.44)-- (-1.73,-0.27);
\draw (-1.73,-0.27)-- (-1.73,0.56);
\draw (-1.73,0.56)-- (-1.14,1.15);
\draw (-1.14,1.15)-- (0.52,1.15);
\draw (0.52,1.15)-- (1.69,-0.03);
\draw (0.52,1.81)-- (-1.14,1.81);
\draw (-1.14,1.81)-- (-1.73,1.22);
\draw (-1.73,1.22)-- (-1.73,-0.43);
\draw (-1.73,-0.43)-- (-0.55,-1.61);
\draw (-0.55,-1.61)-- (1.1,-1.61);
\draw (1.1,-1.61)-- (1.69,-1.02);
\draw (1.69,-1.02)-- (1.69,0.64);
\draw (1.69,0.64)-- (0.52,1.81);
\draw (1.69,0.64)-- (1.69,-1.02);
\draw (1.69,-1.02)-- (0.52,-2.19);
\draw (0.52,-2.19)-- (-1.14,-2.19);
\draw (-1.14,-2.19)-- (-2.31,-1.02);
\draw (-2.31,-1.02)-- (-2.31,0.64);
\draw (-2.31,0.64)-- (-1.14,1.81);
\draw (-1.14,1.81)-- (0.52,1.81);
\draw (0.52,1.81)-- (1.69,0.64);
\end{tikzpicture}
}\quad
\subcaptionbox{The setting of \cref{def:bSG:iso}. The tubes $\L_1^{(n)}$ and $\L_3^{(n)}$ of length $\l_r$ are hatched, $\L_2^{(n)}=\L^{(n)}\setminus\L_3^{(n)}$ is thickened, while the symmetrically traversable tubes are in progressive shades of grey.
\label{subfig:iso:bSG}}{\centering
\begin{tikzpicture}[line cap=round,line join=round,>=triangle 45,x=-1.5cm,y=-1.5cm]
\fill[fill=black,fill opacity=1.0] (-0.41,1) -- (0.41,1) -- (1,0.41) -- (1,-0.41) -- (0.41,-1) -- (-0.41,-1) -- (-1,-0.41) -- (-1,0.41) -- cycle;
\fill[fill=black,fill opacity=0.2] (0.66,1) -- (1.25,0.41) -- (1.25,-0.41) -- (0.66,-1) -- (-0.66,-1) -- (-1.25,-0.41) -- (-1.25,0.41) -- (-0.66,1) -- cycle;
\fill[fill=black,fill opacity=0.2] (1.54,0.12) -- (1.54,-0.71) -- (0.96,-1.29) -- (-0.37,-1.29) -- (-1.54,-0.12) -- (-1.54,0.71) -- (-0.96,1.29) -- (0.37,1.29) -- cycle;
\fill[fill=black,fill opacity=0.2] (0.37,1.71) -- (-0.96,1.71) -- (-1.54,1.12) -- (-1.54,-0.54) -- (-0.37,-1.71) -- (0.96,-1.71) -- (1.54,-1.12) -- (1.54,0.54) -- cycle;
\fill[fill=black,fill opacity=0.2] (1.83,0.83) -- (1.83,-0.83) -- (0.66,-2) -- (-0.66,-2) -- (-1.83,-0.83) -- (-1.83,0.83) -- (-0.66,2) -- (0.66,2) -- cycle;
\fill[pattern=my horizontal lines] (-0.99,2) -- (-0.83,2) -- (-2,0.83) -- (-2,-0.83) -- (-0.83,-2) -- (-0.99,-2) -- (-2.17,-0.83) -- (-2.17,0.83) -- cycle;
\fill[pattern=my horizontal lines] (0.66,2) -- (0.83,2) -- (2,0.83) -- (2,-0.83) -- (0.83,-2) -- (0.66,-2) -- (1.83,-0.83) -- (1.83,0.83) -- cycle;
\draw (-0.41,1)-- (0.41,1);
\draw (0.41,1)-- (1,0.41);
\draw (1,0.41)-- (1,-0.41);
\draw (1,-0.41)-- (0.41,-1);
\draw (0.41,-1)-- (-0.41,-1);
\draw (-0.41,-1)-- (-1,-0.41);
\draw (-1,-0.41)-- (-1,0.41);
\draw (-1,0.41)-- (-0.41,1);
\draw (0.66,1)-- (1.25,0.41);
\draw (1.25,0.41)-- (1.25,-0.41);
\draw (1.25,-0.41)-- (0.66,-1);
\draw (0.66,-1)-- (-0.66,-1);
\draw (-0.66,-1)-- (-1.25,-0.41);
\draw (-1.25,-0.41)-- (-1.25,0.41);
\draw (-1.25,0.41)-- (-0.66,1);
\draw (-0.66,1)-- (0.66,1);
\draw (1.54,0.12)-- (1.54,-0.71);
\draw (1.54,-0.71)-- (0.96,-1.29);
\draw (0.96,-1.29)-- (-0.37,-1.29);
\draw (-0.37,-1.29)-- (-1.54,-0.12);
\draw (-1.54,-0.12)-- (-1.54,0.71);
\draw (-1.54,0.71)-- (-0.96,1.29);
\draw (-0.96,1.29)-- (0.37,1.29);
\draw (0.37,1.29)-- (1.54,0.12);
\draw (0.37,1.71)-- (-0.96,1.71);
\draw (-0.96,1.71)-- (-1.54,1.12);
\draw (-1.54,1.12)-- (-1.54,-0.54);
\draw (-1.54,-0.54)-- (-0.37,-1.71);
\draw (-0.37,-1.71)-- (0.96,-1.71);
\draw (0.96,-1.71)-- (1.54,-1.12);
\draw (1.54,-1.12)-- (1.54,0.54);
\draw (1.54,0.54)-- (0.37,1.71);
\draw (1.83,0.83)-- (1.83,-0.83);
\draw (1.83,-0.83)-- (0.66,-2);
\draw (0.66,-2)-- (-0.66,-2);
\draw (-0.66,-2)-- (-1.83,-0.83);
\draw (-1.83,-0.83)-- (-1.83,0.83);
\draw (-1.83,0.83)-- (-0.66,2);
\draw (-0.66,2)-- (0.66,2);
\draw (0.66,2)-- (1.83,0.83);
\draw (-0.99,2)-- (-0.83,2);
\draw (-0.83,2)-- (-2,0.83);
\draw (-2,0.83)-- (-2,-0.83);
\draw (-2,-0.83)-- (-0.83,-2);
\draw (-0.83,-2)-- (-0.99,-2);
\draw (-0.99,-2)-- (-2.17,-0.83);
\draw (-2.17,-0.83)-- (-2.17,0.83);
\draw (-2.17,0.83)-- (-0.99,2);
\draw (0.66,2)-- (0.83,2);
\draw (0.83,2)-- (2,0.83);
\draw (2,0.83)-- (2,-0.83);
\draw (2,-0.83)-- (0.83,-2);
\draw (0.83,-2)-- (0.66,-2);
\draw (0.66,-2)-- (1.83,-0.83);
\draw (1.83,-0.83)-- (1.83,0.83);
\draw (1.83,0.83)-- (0.66,2);
\draw [very thick] (-0.83,2)-- (-2,0.83);
\draw [very thick] (-2,0.83)-- (-2,-0.83);
\draw [very thick] (-2,-0.83)-- (-0.83,-2);
\draw [very thick] (-0.83,-2)-- (0.66,-2);
\draw [very thick] (0.66,-2)-- (1.83,-0.83);
\draw [very thick] (1.83,-0.83)-- (1.83,0.83);
\draw [very thick] (1.83,0.83)-- (0.66,2);
\draw [very thick] (0.66,2)-- (-0.83,2);
\draw (-1.5,1.5)--(-1.67,1.5) node [right] {$\L_1^{(n)}$};
\draw [very thick] (-1.33,-1.5)--(-1.75,-1.5) node [right] {$\L_2^{(n)}$};
\draw (1.33,1.5)--(1.5,1.5) node [left] {$\L_3^{(n)}$};
\end{tikzpicture}
}
\caption{Geometry of isotropic $\SG$ and $\bSG$ events.\label{fig:iso}}
\end{figure}
Recall from \cref{subsec:poincare} that once $\SG^\bone(\L^{(n)})$ is defined, so is $\g(\L^{(n)})$. We next prove a bound on $\g(\L^{(n)})$.
\begin{thm}
\label{th:isotropic}
Let $\cU$ be isotropic (class \ref{iso}). Then for all $n\le\Nmp$
\begin{align*}
\g\left(\L^{(\Nmp)}\right)&{}\le \frac{\exp(1/(\log^{C/2}(1/q)q^\a))}{\m(\SG^\bone(\L^{(\Nmp)}))},&\m\left(\SG^\bone\left(\L^{(n)}\right)\right)&{}\ge \exp\left(\frac{-1}{q^\a\e^2}\right).
\end{align*}
\end{thm}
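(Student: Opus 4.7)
The plan is to establish both bounds by a single induction on $n \in \{0,\dots,\Nmp\}$, iterating Proposition~\ref{cor:CBSEP:reduction}. The base case $n=0$ is immediate: by Definition~\ref{def:SG:iso}, $\SG^\bone(\L^{(0)})$ is the singleton $\{\bzero_{\L^{(0)}}\}$, so $\m(\SG^\bone(\L^{(0)})) = q^{|\L^{(0)}|} = q^{\Theta(1/\e^2)}$, while the conditional variance in \cref{eq:def:gamma} vanishes on a singleton, yielding $\g(\L^{(0)}) = 1$.

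For the probability lower bound at scale $n+1$, Harris inequality applied to Definition~\ref{def:extension:CBSEP} with offset $x=0$ gives
\[
\m(\SG^\bone(\L^{(n+1)})) \ge \m(\SG^\bone(\L^{(n)})) \cdot \m(\ST^\bone(T(\ur^{(n)}, l^{(n)}, n))).
\]
Isotropy ensures $\ST = \cT$ (no hard directions), so the traversable tube requires only $\alpha$-helping sets on its $O(l^{(n)})$ constituent lines of length $\Theta(s_j^{(n)})$. Observation~\ref{obs:mu:helping} provides a lower bound $1 - \exp(-q^\alpha s_j^{(n)}/O(1))$ per segment. Multiplying over $n$, the cumulative cost is dominated by scales where $s_j^{(n)} \sim q^{-\alpha}$, each contributing $\exp(-O(1)/q^\alpha)$. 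Summing the resulting geometric series in $n$ and accounting for the $\Theta(1/\e^2)$ initial infections of $\L^{(0)}$ yields the lower bound (with the exponent interpreted as negative).

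For the Poincaré constant, Proposition~\ref{cor:CBSEP:reduction} yields
\[
\g(\L^{(n+1)}) \le \frac{\m(\SG^\bone(\L^{(n)}))}{\m(\SG^\bone(\L^{(n+1)}))} \max\!\left(\m^{-1}(\SG^\bone(\L^{(n)})), \g(\L^{(n)})\right) \cdot \frac{e^{O(C^2) \log^2(1/q)}}{D_n},
\]
where $D_n$ is the product of the two conditional probabilities involving the contracted events $\bSG(\L_1^{(n)} \cup \L_2^{(n)})$ and $\bcT_{\h_2}(\L_3^{(n)})$. Setting $\tilde\g_n := \m(\SG^\bone(\L^{(n)})) \g(\L^{(n)})$ collapses the recursion to $\tilde\g_{n+1} \le \max(1, \tilde\g_n) \cdot e^{O(C^2)\log^2(1/q)}/D_n$. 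Since $\tilde\g_0 = q^{\Theta(1/\e^2)} \le 1$, iteration telescopes into $\tilde\g_{\Nmp} \le \prod_{n=0}^{\Nmp-1} e^{O(C^2)\log^2(1/q)}/D_n$, which is exactly the form of the claimed bound.

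The crux, and the main obstacle, is the construction of the contracted events $\bSG(\L_2^{(n)})$ and $\bcT_{\h_2}(\L_j^{(n)})$ so as to verify condition~\cref{eq:cor:CBSEP:reduction:condition} while keeping $D_n$ controllable. Following the isotropic framework of \cite{Hartarsky20FA}, we define $\bSG(\L_2^{(n)})$ by inheriting the CBSEP-extensions of scales $0, \dots, n-1$ compatibly with the contraction $\L_2^{(n)}$ of $\L^{(n)}$ by $\l_n$ in direction $u_n$; Lemma~\ref{lem:T:ratio} then delivers $\m(\bSG(\L_1^{(n)}\cup\L_2^{(n)}) \mid \SG^\bone(\L_1^{(n)}\cup\L_2^{(n)})) \ge q^{O(C)}/l^{(n)}$ via uniform positioning of the core droplet. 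For $\bcT_{\h_2}(\L_j^{(n)})$, which must supply the $O(1)$-wide symmetric-traversability sliver missing from $\SG^\bone(\L_2^{(n)} \cup \L_3^{(n)})$ after the contraction, we invoke Lemma~\ref{cor:perturbation} with $\D = O(1)$, obtaining $\min_{\h_2} \m(\bcT_{\h_2}(\L_3^{(n)}) \mid \ST^\bzero(\L_3^{(n)})) \ge q^{O(W)}(1 - q^{1-o(1)})^{O(l^{(n)})}$. Combining these with the microscopic factor $e^{O(C^2)\log^2(1/q)}$ and multiplying over $n \le \Nmp = O(\log(1/q))$, the hierarchy $1/\e \gg 1/\d \gg C \gg W$ absorbs the polynomial corrections, leaving $\exp(q^{-\alpha}/\log^{C/2}(1/q))$ as the dominant factor and closing the induction.
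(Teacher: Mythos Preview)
Your overall strategy matches the paper's: iterate Proposition~\ref{cor:CBSEP:reduction} from the trivial base $\g(\L^{(0)})=1$, and the telescoping substitution $\tilde\g_n=\m(\SG^\bone(\L^{(n)}))\g(\L^{(n)})$ is exactly how the ratio $\m(\SG^\bone(\L^{(n)}))/\m(\SG^\bone(\L^{(n+1)}))$ in \cref{eq:gamma:iso:step} collapses. The probability bound via \cref{obs:mu:helping} is also the paper's argument (\cref{lem:mu:SG:bound:iso}).

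However, your treatment of the denominator $D_n$ has genuine gaps. First, you never define $\bSG(\L_2^{(n)})$ and $\bcT(\L_j^{(n)})$; the paper gives an explicit construction (Definition~\ref{def:bSG:iso}) with slightly shrunk $\ST^\bone_W$-tubes plus $W$-helping sets near all boundaries, and verifies \cref{eq:cor:CBSEP:reduction:condition} in \cref{lem:SG:condition:iso}. Without this, Proposition~\ref{cor:CBSEP:reduction} cannot be invoked. Second, your appeal to \cref{cor:perturbation} for $\bcT(\L_3^{(n)})$ is illegitimate: that lemma requires side lengths $s=q^{-\alpha+o(1)}$, but the sides of $\L^{(n)}$ range from $\Theta(1/\e)$ up to $\Theta(\lmp)=q^{-C+o(1)}$ with $C\gg\alpha$. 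The paper instead takes $\bcT(\L_3^{(n)})$ to be a simple $W$-helping-set event (independent of $\h_2$) and bounds it by $q^{W^{O(1)}}$ via Harris (\cref{eq:iso:1}). Third, \cref{lem:T:ratio} alone does not bound $\m(\bSG\mid\SG^\bone)$: it only pins the offset of the core. The paper applies it $2k$ times to centre the innermost $\L^{(n-2k)}$ (see \cref{eq:mu:bSG:iso:bullet}), and then still has to compare the $4k$ perturbed tube events to their shifted counterparts.

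The point you miss entirely is the three-regime case split of \cref{lem:iso:contraction:rate}: for $2^m\le q^{-\alpha}/\log^C(1/q)$ one uses the unconditional $\m(\bSG)$ (Harris suffices, \cref{eq:mu:bSG:iso}); for $2^m\ge q^{-\alpha}\log^C(1/q)$ the tubes are so long that $\ST^\bone_W$ costs only $q^{W^{O(1)}}$ (\cref{eq:mu:bSG:iso:cond}); only in the narrow critical window $2^m=q^{-\alpha}\log^{O(C)}(1/q)$ is \cref{cor:perturbation} both applicable and needed, yielding the bound $\exp(-2^m q^{1-o(1)})$. It is this splitting, together with \cref{lem:mu:SG:bound:iso}, that keeps $\prod_n D_n^{-1}$ below $\exp(q^{-\alpha}/\log^{C/2}(1/q))$. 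A single uniform bound of the form you propose cannot close the argument.
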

The rest of \cref{subsec:iso} is devoted to the proof of \cref{th:isotropic}. The bound on $\m(\SG^\bone(\L^{(n)}))$ is fairly standard in bootstrap percolation and could essentially be attributed to \cite{Bollobas23}, but we prove it in \cref{lem:mu:SG:bound:iso}, since we also need some better bounds on intermediate scales. Bounding $\g(\L^{(\Nmp)})$ is more demanding and is done by iteratively applying \cref{cor:CBSEP:reduction}, as suggested by \cref{def:SG:iso}.

Note that $\g(\L^{(0)})=1$, since \cref{eq:def:gamma} is trivial, because $\SG^\bone(\L^{(0)})$ is a singleton. We seek to apply \cref{cor:CBSEP:reduction}, in order to recursively upper bound $\g(\L^{(n)})$ for all $n\le \Nmp$. To that end, we need the following definition of contracted events. Since, in the language of \cref{cor:CBSEP:reduction}, the events $\bcT_{\h_2}$ we define do not depend on $\h_2$, we directly omit it from the notation.
\begin{defn}[Contracted isotropic events]
\label{def:bSG:iso}
For $n=2km+r\in[\Nmp+1]$ with $r\in[2k]$, as in \cref{cor:CBSEP:reduction} with $\ur=\ur^{(n)}$, $l=l^{(n)}$ and $i=r$, let 
\begin{align}
\nonumber\L^{(n)}_1&{}=T\left(\ur^{(n)},\l_r,n+2k\right)\\
\L^{(n)}_2&{}=\L\left(\ur^{(n)}-\l_r\uv_r\right)\label{def:L123}\\
\nonumber\L^{(n)}_3&{}=T\left(\ur^{(n)}-\l_r\uv_r,\l_r,r\right).
\end{align}

If $n<2k$, we define $\bcT(\L_1^{(n)})$, $\bSG(\L_2^{(n)})$ and $\bcT(\L_3^{(n)})$ to occur if $\L_1^{(n)}$, $\L_2^{(n)}$ and $\L_3^{(n)}$ is fully infected respectively.

For $n\ge2k$, we define $\bcT(\L_1^{(n)})\subset\O_{\L_1^{(n)}}$ (resp.\ $\bcT(\L_3^{(n)})\subset\O_{\L_3^{(n)}}$) to be the event that for every segment $S\subset\L_1^{(n)}$ (resp.\ $\L_3^{(n)}$) perpendicular to some $u_j$ with $j\neq r\pm k$ of length $2^m/(W\e)$ the event $\cH^W(S)$ occurs (recall \cref{def:HW}). Finally, for $n\ge2k$, we define $\bSG(\L_2^{(n)})$ as the intersection of the following events (see \cref{subfig:iso:bSG}):\footnote{\label{foot:STW}Recall from \cref{def:traversability} that $\ST_W$ refers to symmetric traversability with parallelograms in \cref{subfig:extension:East} shrunken by $W$, but not necessarily requiring $W$-helping sets. Further recall from \cref{subsec:traversability} that for isotropic models $\cT$ and $\ST$ events are the same.}
\begin{itemize}
    \item $\SG^\bone(\L^{(n-2k)})$;
    \item $\ST^\bone(T(\ur^{(n-2k)},l^{(n-2k)}/2-\l_r,r))\cap\ST^\bone(T(\ur^{(n-2k)},l^{(n-2k)}/2-\l_r,r+2k))$;
    \item for all $i\in(0,2k)$ \begin{multline*}\ST^\bone_W\left(T\left(\ur^{(n-2k+i)}-\l_r(\uv_{r}+\uv_{r+2k}),l^{(n-2k+i)}/2,r+i\right)\right)\\
    \cap\ST^\bone_W\left(T\left(\ur^{(n-2k+i)}-\l_r(\uv_{r}+\uv_{r+2k}),l^{(n-2k+i)}/2,r+i+2k\right)\right).
    \end{multline*}
    \item for every $i\in[2k]$, $j\in[4k]$ and segment $S\subset \L_2^{(n)}$, perpendicular to $u_j$ of length $2^m/(W\e)$ at distance at most $W$ from the $u_j$-side (parallel to $S$) of $\L^{(n-2k+i)}$, the event $\cH^W(S)$ holds.
\end{itemize}
\end{defn}
In words, $\bSG(\L_2^{(n)})$ is close to being the event that the central copy of $\L^{(n-2k)}$ in $\L_2^{(n)}$ is SG and several tubes are symmetrically traversable. Namely, for each $i\in[2k]$, the two tubes of equal length around $\L^{(n-2k+i)}$ corresponding to a CBSEP-extension by $l^{(n-2k+i)}$ in direction $u_{r}$, finally reaching $\L^{(n)}$ after $2k$ extensions. However, we have modified this event in the following ways. Firstly, the first extension is shortened by $2\l_r$, so that the final result after the $2k$ extensions fits inside $\L_2^{(n)}$ and actually only its $u_{r+k}$ and $u_{r-k}$-sides are shorter than those of $\L_2^{(n)}$ by $\l_r$ (see \cref{subfig:iso:bSG}). Secondly, the symmetric traversability events for tubes are required to occur with segments shortened by $W$ (recall \cref{def:traversability}) on each side. Finally, we roughly require $W$ helping sets for the last $O(W)$ lines of each tube, as well as the first $O(W)$ outside the tube (without going out of $\L^{(n)}_2$).

\begin{lem}[CBSEP-extension relaxation condition]
\label{lem:SG:condition:iso}
For all $n\in[\Nmp]$ we have $\bSG(\L_2^{(n)})\times\bcT(\L_3^{(n)})\subset\SG^\bone(\L_2^{(n)}\cup\L_3^{(n)})$ and similarly for $\L_1^{(n)}$ instead of $\L_3^{(n)}$.
\end{lem}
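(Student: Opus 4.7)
The plan is to proceed by induction on $n$, treating the $\L_3^{(n)}$ and $\L_1^{(n)}$ inclusions in parallel. The two cases differ only in which side of the contracted droplet $\L_2^{(n)}$ the missing slab of width $\l_r$ sits on; they are handled by symmetric choices of offsets, since the definition of $\bSG(\L_2^{(n)})$ treats its tube requirements in directions $u_{r+j}$ and $u_{r+j+2k}$ identically.

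For the base case $n<2k$, both $\bSG(\L_2^{(n)})$ and $\bcT(\L_3^{(n)})$ force full infection of their domains, so the left-hand side is the singleton $\{\bone_{\L^{(n)}}\}$ where $\L^{(n)}=\L_2^{(n)}\cup \L_3^{(n)}$. To see $\bone_{\L^{(n)}}\in\SG^\bone(\L^{(n)})$, I would unfold the iterated CBSEP-extension definition of $\SG^\bone(\L^{(n)})$ back to $\L^{(0)}$: at any choice of offsets the inner $\L^{(0)}$ is trivially SG (fully infected), and every tube is symmetrically traversable since every helping set is automatically present.

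For the inductive step $n\ge 2k$, write $n=2km+r$ with $r\in[2k]$. I would unfold $\SG^\bone(\L^{(n)})$ via $2k$ nested CBSEP-extensions from $\L^{(n-2k)}$ through $\L^{(n-2k+1)},\dots,\L^{(n-1)}$ up to $\L^{(n)}$, expressing it as a union over sequences of offsets $(x_j)_{j=0}^{2k-1}$, with $x_j$ the offset of the $j$th extension (from $\L^{(n-2k+j)}$ to $\L^{(n-2k+j+1)}$). The plan is to exhibit a specific such sequence matching $\bSG(\L_2^{(n)})$: for each $j\in(0,2k)$, take the balanced offset $x_j=l^{(n-2k+j)}/2$, so the two half-tubes on either side of $\L^{(n-2k+j)}$ inside $\L^{(n-2k+j+1)}$ match (up to the $W$-margin) the $\ST^\bone_W$ events in the $j$th bullet of $\bSG(\L_2^{(n)})$; for $j=0$, take the offset placing $\L^{(n-2k)}$ at the position provided by the $\SG^\bone(\L^{(n-2k)})$ event in $\bSG(\L_2^{(n)})$, which for the $\L_3^{(n)}$ (resp.\ $\L_1^{(n)}$) inclusion is shifted by $\l_r$ toward the $\L_1^{(n)}$ (resp.\ $\L_3^{(n)}$) side of $\L^{(n)}$.

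With these offsets, the inner $\SG^\bone(\L^{(n-2k)})$ event and the intermediate tube events are supplied directly by $\bSG(\L_2^{(n)})$; for the $j=0$ extension, one of the two tubes (of length $l^{(n-2k)}/2-\l_r$) lies in $\bSG(\L_2^{(n)})$, while the other (of length $l^{(n-2k)}/2+\l_r$) decomposes via \cref{lem:tube:decomposition} as the corresponding $\bSG$-supplied portion concatenated with $\L_3^{(n)}$ (resp.\ $\L_1^{(n)}$), whose required $W$-helping sets come from $\bcT(\L_3^{(n)})$ (resp.\ $\bcT(\L_1^{(n)})$). The $W$-wide near-boundary margin lost by using $\ST^\bone_W$ rather than $\ST^\bone$ in $\bSG$ is recovered from the final bullet of \cref{def:bSG:iso}, which supplies $W$-helping sets on short segments near each intermediate droplet's sides. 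The main obstacle will be the geometric bookkeeping: verifying that the tubes in $\bSG(\L_2^{(n)})$, defined with shifted radii $\ur^{(n-2k+j)}-\l_r(\uv_r+\uv_{r+2k})$, together with $\L_3^{(n)}$ (or $\L_1^{(n)}$), precisely cover every segment requiring a helping set in the unfolded $\SG^\bone(\L^{(n)})$ at the chosen offsets, and that all $W$-margin corrections work out uniformly across all $2k$ extension levels.
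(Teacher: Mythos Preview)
Your approach is essentially the same as the paper's: both handle $n<2k$ trivially via full infection, and for $n\ge 2k$ both unfold the $2k$ nested CBSEP-extensions from $\L^{(n-2k)}$ up to $\L^{(n)}$, choosing the balanced offset $l^{(n-2k+j)}/2$ at each level, then matching the $\ST^\bone_W$ tubes from $\bSG(\L_2^{(n)})$ to the required $\ST^\bone$ tubes and recovering the $W$-margin via the fourth item of \cref{def:bSG:iso} together with $\bcT(\L_3^{(n)})$ for the slab at the boundary. The paper phrases this as an induction on $i\in[n-2k,n]$ showing $\SG^\bone(\L^{(i)})$, but that is the same content.

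One point where your description is slightly off: for the $\L_3^{(n)}$ inclusion the $j=0$ offset is exactly $l^{(n-2k)}/2$ (no shift), so both level-$0$ tubes have length $l^{(n-2k)}/2$; each is covered by the $\bSG$-supplied tube of length $l^{(n-2k)}/2-\l_r$ plus a final $\l_r$ piece, the latter being filled by the $W$-helping sets near the $\L^{(n-2k+1)}$ boundary (on the $u_{r+2k}$ side this lies in $\L_2^{(n)}$, on the $u_r$ side it may reach into $\L_3^{(n)}$). The $\l_r$ modification of the $j=0$ offset that you describe arises only in the $\L_1^{(n)}$ case, where the target droplet is $\L^{(n)}-\l_r u_r$ but $\L^{(n-2k)}$ sits at the centre of $\L^{(n)}$; the paper notes exactly this at the end of its proof.
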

\begin{proof}
If $n<2k$, this follows directly from \cref{def:bSG:iso}, since $\bSG(\L_2^{(n)})\times\bcT(\L_3^{(n)})$ is only the fully infected configuration and similarly for $\L_1^{(n)}$. We therefore assume that $n\ge 2k$ and set $n=2km+r$ with $r\in[2k]$.

We start with the first claim. Note that $\L_2^{(n)}\cup\L_3^{(n)}=\L^{(n)}$. Let $\h\in\bSG(\L_2^{(n)})\times\bcT(\L_3^{(n)})$. We proceed by induction on $i$ to show that $\h_{\L^{(i)}}\in\SG^\bone(\L^{(i)})$ for $i\in[n-2k,n]$. 

The base is part of \cref{def:bSG:iso}. Assume $\h\in\SG^\bone(\L^{(i)})$ for some $i\in[n-2k,n)$. Then by \cref{def:extension:CBSEP}, it suffices to check that
\begin{equation}
\label{eq:iso:ST:intersection}\h\in\ST^\bone\left(T\left(\ur^{(i)},l^{(i)}/2,i\right)\right)\cap\ST^\bone\left(T\left(\ur^{(i)},l^{(i)}/2,i+2k\right)\right),
\end{equation}
since then $\h\in\SG^\bone_{l^{(i)}/2}(\L^{(i+1)})\subset\SG^\bone(\L^{(i+1)})$.

Let us first consider the case $i=n-2k$ and assume for concreteness that $m$ is even (so that $u_i=u_r$). Then 
\[\eta\in\bSG\left(\L_2^{(n)}\right)\subset\ST^\bone\left(T\left(\ur^{(i)},l^{(i)}/2-\l_r,r\right)\right),\]
so by \cref{lem:tube:decomposition} it suffices to check that $\h\in\ST^\bone(u_i(l^{(i)}/2-\l_r)+T(\ur^{(i)},\l_r,i))$, in order for the first symmetric traversability event in \cref{eq:iso:ST:intersection} to occur. We claim that this follows from $\h\in\bcT(\L_3^{(n)})$ and the fourth condition in \cref{def:bSG:iso}. To see this, notice that for each $j\in[4k]$ the $u_j$-side length of $\L(\ur^{(i)})$ satisfies $s_j^{(i)}=\Theta(s_j^{(0)}2^m)\gg 2^m/(W\e)$ by \cref{eq:def:sin:iso}. Further recall from \cref{subsec:helping:sets} that $\cH^W(S)\subset\cH^\o(S)$ for any segment $S$ of length at least $C$ and boundary condition $\o$. Thus, for each of the segments in \cref{def:traversability} for the tube $u_i(l^{(i)}/2-\l_r)+T(\ur^{(i)},\l_r,i)\subset\L^{(n)}$, we have supplied not only a helping set, but in fact several $W$-helping sets. For directions $u_j$  with $j\in(r-k,r+k)\setminus\{r\}$, they are in $\L^{(n)}_2$, while for $j=r$ they are found in $\L^{(n)}_3$, if $k=1$ and $m$ is even, and in $\L^{(n)}_2$ otherwise (see \cref{subfig:iso:bSG}). Hence, the claim is established. For the second event in \cref{eq:iso:ST:intersection} the reasoning is the same except that when $k>1$ or $m$ is even, the tube $T(\ur^{(i)},l^{(i)}/2,i+2k)$ is entirely contained in $\L_2^{(n)}$, so only $\bSG(\L_2^{(n)})$ is needed.

We next turn to the case $i\in(n-2k,n)$, which is treated similarly. Indeed,
\[\h\in\bSG\left(\L_2^{(n)}\right)\subset\ST_W^\bone\left(T\left(\ur^{(i)}-\l_r(\uv_r+\uv_{r+2k}),l^{(i)}/2,i\right)\right).\]
Comparing this tube to the desired one in \cref{eq:iso:ST:intersection}, $T(\ur^{(i)},l^{(i)}/2,i)$, we notice that the lengths and positions of their sides differ by $O(1)$ (see \cref{fig:perturbation}). However, recalling \cref{def:traversability,subfig:extension:East}, decreasing the width of each parallelogram there by $\O(W)\gg O(1)$ (using the event $\ST_W^\bone$ rather than $\ST^\bone$) is enough to compensate for this discrepancy (the shaded zones in \cref{fig:perturbation} are empty in this case). It remains to ensure that the first and last $O(1)$ segments in \cref{def:traversability} also have helping sets. But this is guaranteed by the fourth condition in \cref{def:bSG:iso} and (depending on the values of $k$, $i$ and $m$) $\bcT(\L_3^{(n)})$ exactly as in the case $i=n-2k$.

Finally, the statement for $\L_1^{(n)}$ is also proved analogously (with the offset for $i=n-2k$ modified by $\l_r$ in \cref{eq:iso:ST:intersection}), so the proof is complete.
\end{proof}

By \cref{lem:SG:condition:iso}, \cref{eq:cor:CBSEP:reduction:condition} holds, so we may apply \cref{cor:CBSEP:reduction}. This gives
\begin{align}
\nonumber\g\left(\L^{(n+1)}\right)\le{}& \max\left(\m^{-1}\left(\SG^\bone\left(\L^{(n)}\right)\right),\g\left(\L^{(n)}\right)\right)e^{O(C^2)\log^2(1/q)}\\
\label{eq:gamma:iso:step}&\times\frac{\m(\SG^\bone(\L^{(n)}))}{\m(\SG^\bone(\L^{(n+1)}))}\m^{-1}\left(\left.\bcT\left(\L_3^{(n)}\right)\right|\ST^\bzero\left(\L_3^{(n)}\right)\right)\\
\nonumber&\times\m^{-1}\left(\left.\bcT\left(\L_1^{(n)}\right)\cap\bSG\left(\L_2^{(n)}\right)\right|\SG^\bone\left(\L_1^{(n)}\cup\L_2^{(n)}\right)\right)
\end{align}
for $n\ge 2k$ and $\g(\L^{(n)})\le e^{O(C^2)\log^2(1/q)}$ for $n<2k$. We therefore assume that $n\ge 2k$. Recalling \cref{def:bSG:iso}, note that both $\bcT(\L_1^{(n)})$ and $\bcT(\L_3^{(n)})$ can be guaranteed by the presence of $O(W^2)$ well chosen infected $W$-helping sets, since only $O(W)$ disjoint segments of length $2^m/(W\e)$ perpendicular to $u_j$ for a given $j\in(r-k,r+k)$ can be fit in $\L_1^{(n)}$ or $\L^{(n)}_3$ (see \cref{subfig:iso:bSG}), so it suffices to have a $W$-helping set at each end of those. This and the Harris inequality, \cref{eq:Harris:2,eq:Harris:3}, give
\begin{gather}
\label{eq:iso:1}\m\left(\left.\bcT\left(\L_3^{(n)}\right)\right|\ST^\bzero\left(\L_3^{(n)}\right)\right)\ge \m\left(\bcT\left(\L_3^{(n)}\right)\right)\ge q^{W^{O(1)}},\\
\label{eq:iso:2}
\begin{multlined}\m\left(\left.\bcT\left(\L_1^{(n)}\right)\cap\bSG\left(\L_2^{(n)}\right)\right|\SG^\bone\left(\L_1^{(n)}\cup\L_2^{(n)}\right)\right)\\\ge q^{W^{O(1)}}\m\left(\left.\bSG\left(\L_2^{(n)}\right)\right|\SG^\bone\left(\L_1^{(n)}\cup\L_2^{(n)}\right)\right).
\end{multlined}
\end{gather}
To deal with the last term we prove the following.
\begin{lem}[Contraction rate]
\label{lem:iso:contraction:rate}
Setting $m=\lfloor n/(2k)\rfloor\ge 1$, we have
\begin{multline}
\label{eq:iso:3}\m\left(\left.\bSG\left(\L_2^{(n)}\right)\right|\SG^\bone\left(\L_1^{(n)}\cup\L_2^{(n)}\right)\right)\\\ge \begin{cases}
\m\left(\bSG\left(\L_2^{(n)}\right)\right)&2^m\le 1/\left(\log^C(1/q)q^\a\right),\\
q^{O(C)}\frac{\m(\bSG(\L_2^{(n)}))}{\m\left(\SG^\bone\left(\L^{(n-2k)}\right)\right)}&2^m\ge\log^C(1/q)/q^\a,\\
\exp\left(-2^m q^{1-o(1)}\right)&\text{otherwise}.
\end{cases}
\end{multline}
\end{lem}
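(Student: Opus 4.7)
The plan is to split according to the three regimes of $2^m$ appearing in the statement and treat each with different tools. Regime~1 is handled directly by the Harris inequality, while Regimes~2 and~3 share a common framework based on unwinding the nested CBSEP-extension structure of the conditioning event and invoking Lemmas~\ref{lem:T:ratio} and~\ref{cor:perturbation}.

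In Regime~1 ($2^m \le 1/(\log^C(1/q)q^\alpha)$), both $\bSG(\L_2^{(n)})$ and $\SG^\bone(\L_1^{(n)}\cup\L_2^{(n)})$ are decreasing events (the former as an intersection of $\SG^\bone$, $\ST^\bone$, $\ST^\bone_W$ and $\cH^W$ events, each of which is decreasing by construction), so Harris~\eqref{eq:Harris:2} yields $\m(\bSG(\L_2^{(n)})\mid\SG^\bone(\L_1^{(n)}\cup\L_2^{(n)}))\ge\m(\bSG(\L_2^{(n)}))$ directly.

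For Regimes~2 and~3, I would iteratively unfold Definition~\ref{def:extension:CBSEP}: the event $\SG^\bone(\L_1^{(n)}\cup\L_2^{(n)})=\SG^\bone(\L(\ur^{(n)})-\l_ru_r)$ decomposes as a union over offset vectors $(x_0,\dots,x_{2k-1})$ of configurations consisting of an innermost $\SG^\bone(\L^{(n-2k)}+\bar x)$ together with $2k$ pairs of $\ST^\bone$-traversability events for tubes at scales $n-2k,\dots,n-1$. Iterating Lemma~\ref{lem:T:ratio} at each of the $2k$ levels fixes these offsets to a canonical value placing $\L^{(n-2k)}$ at the centre of $\L_2^{(n)}$ demanded by Definition~\ref{def:bSG:iso}, at total cost $q^{O(C)}$. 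On the restricted event, the only differences with $\bSG(\L_2^{(n)})$ are a shortening of the first-level tubes by $\l_r$ (perturbation $\Delta=O(1)$), a $W$-shrinking of each subsequent tube (passing from $\ST^\bone$ to $\ST^\bone_W$, perturbation $\Delta=O(W)$), and a handful of additional $W$-helping sets near corners which contribute $q^{O(W^2)}$ by Harris. For each of the $2k$ tubes Lemma~\ref{cor:perturbation} then provides a lower bound $q^{O(W)}(1-(1-q^\alpha)^{\O(s)})^{O(\Delta)}(1-W\Delta/s-q^{1-o(1)})^{O(l)}$ on the required conditional probability, with $s,l$ the side length and tube length at that scale. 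In Regime~2 all these factors are close to~$1$ and their product telescopes, up to $q^{O(C)}$, into $\m(\bSG(\L_2^{(n)}))/\m(\SG^\bone(\L^{(n-2k)}))$, using that $\bSG$ factorises across the independent innermost-SG event and outer traversability/helping events. In Regime~3 the dominant contribution is the third factor at the largest scale with $l=\Theta(2^m)$ and $s=q^{-\alpha+o(1)}$, producing $\exp(-O(2^m)q^{1-o(1)})$ as claimed.

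The main obstacle lies in the bookkeeping of the $2k$-level iteration: at each level one must verify that Lemma~\ref{lem:T:ratio} applies with the boundary condition inherited from conditioning on the outer levels, and that Lemma~\ref{cor:perturbation}'s hypotheses $s=q^{-\alpha+o(1)}$ and $l\le e^{q^{-o(1)}}$ hold at the relevant scales (or, for scales outside the critical range, that the corresponding factors contribute harmlessly to the product). A secondary difficulty is identifying the canonical centre of $\bSG(\L_2^{(n)})$ within the allowed offset ranges of each CBSEP-extension, which follows from the symmetric shape of $\L(\ur^{(n)})$ but requires care in matching the directions $u_r,u_{r+2k}$ across the successive levels.
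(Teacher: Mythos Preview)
Your proposal is essentially the paper's own argument: Harris for the first regime, then iterating Lemma~\ref{lem:T:ratio} through the $2k$ CBSEP levels to pin the core at the centre, followed by Harris~\eqref{eq:Harris:3} to decouple the resulting traversability factors and Lemma~\ref{cor:perturbation} for the critical-scale regime. The paper formalises your ``pin the offsets'' step via the events $\SG^\bullet_i$ and arrives at \eqref{eq:mu:bSG:iso:bullet}, which is exactly your $q^{O(C)}$ cost.

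One point where your write-up is slightly off is Regime~2. You invoke Lemma~\ref{cor:perturbation} and then say its factors are ``close to~1'', but that lemma carries the hypothesis $s=q^{-\alpha+o(1)}$, which fails once $2^m\ge\log^C(1/q)/q^\alpha$ (and certainly once $2^m$ reaches $\lmp$). The paper sidesteps this: after \eqref{eq:mu:bSG:iso:bullet} it simply uses that $\SG^\bullet_{2k}$ is $\SG^\bone(\L^{(n-2k)})$ intersected with independent tube events, applies Harris~\eqref{eq:Harris:2} to drop the tubes, and then uses the inclusion $\bSG(\L_2^{(n)})\subset\SG^\bone(\L^{(n-2k)})$ to write the conditional probability as the ratio $\m(\bSG(\L_2^{(n)}))/\m(\SG^\bone(\L^{(n-2k)}))$. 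Your factorisation remark at the end of the Regime~2 sentence is exactly this argument, so the content is there; just drop the appeal to Lemma~\ref{cor:perturbation} in that regime.
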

\begin{proof}
The first case follows from the Harris inequality \cref{eq:Harris:2}. 

For the other two cases we start by noting that $\L^{(n)}_1\cup\L^{(n)}_2=\L^{(n)}-\l_ru_r$ may be viewed as a $2k$-fold CBSEP-extension of $\L^{(n-2k)}$. Recalling the offset in \cref{def:extension:CBSEP}, set 
\begin{align*}
\SG^\bullet_0&{}=\SG^\bone\left(\L^{(n)}-\l_ru_r\right),\\
\SG^\bullet_{i}&{}=\bigcap_{j=1}^{i}\SG^\bone_{l^{(n-j)}/2}\left(\L^{(n-j+1)}-\l_ru_r\right)\qquad i{}\in[1,2k-1],\\
\SG^\bullet_{2k}&{}=\SG^\bullet_{2k-1}\cap\SG^\bone_{l^{(n-2k)}/2-\l_r}\left(\L^{(n-2k+1)}-\l_ru_r\right),
\end{align*}
so that $\SG^\bullet_i$ corresponds to fixing the position of the core, which is a translate of $\L^{(n-i)}$, inside $\L^{(n)}-\l_ru_r$, but leaving its internal offsets unconstraint (see \cref{subfig:iso:bSG}). Thus, \cref{lem:T:ratio} applied $2k$ times gives
\[\m\left(\left.\SG^\bullet_{2k}\right|\SG^\bone\left(\L^{(n)}_1\cup\L^{(n)}_2\right)\right)=\prod_{i=1}^{2k}\m(\SG^\bullet_{i}|\SG^\bullet_{i-1})\ge q^{O(C)}.\]
Expanding the definition of $\SG^\bullet_{2k}$ via \cref{def:extension:CBSEP}, we see that this event is the intersection of $\SG^\bone(\L^{(n-2k)})$ with some increasing events (symmetrically traversable tubes) independent of the latter. Thus, the Harris inequality \cref{eq:Harris:2} gives
\begin{align}\m\left(\left.\bSG\left(\L_2^{(n)}\right)\right|\SG^\bone\left(\L_1^{(n)}\cup\L_2^{(n)}\right)\right)&{}\ge q^{O(C)}\m\left(\left.\bSG\left(\L_2^{(n)}\right)\right|\SG^\bullet_{2k}\right)\label{eq:mu:bSG:iso:bullet}\\
&{}\ge q^{O(C)}\m\left(\left.\bSG\left(\L_2^{(n)}\right)\right|\SG^\bone\left(\L^{(n-2k)}\right)\right).\nonumber\end{align}
Taking into account that $\bSG(\L^{(n)}_2)\subset\SG^\bone(\L^{(n-2k)})$ by \cref{def:bSG:iso}, this concludes the proof of the second case of \cref{eq:iso:3}.

For the third case, our starting point is again \cref{eq:mu:bSG:iso:bullet}. This time we observe that $\SG^\bullet_{2k}$ can be written as the intersection of $\SG^\bone(\L^{(n-2k)})$ with $4k$ symmetric traversability events, each of which is a perturbed version (in the sense of \cref{cor:perturbation,fig:perturbation}) of the ones appearing in \cref{def:bSG:iso} of $\bSG(\L_2^{(n)})$. Thus, the Harris inequality \cref{eq:Harris:3} allows us to lower bound $\m(\bSG(\L_2^{(n)})|\SG^\bullet_{2k})$ by 
\begin{align*}
\m(\cW)\times\m&\left(\ST^\bone\left(T\left(\ur^{(n-2k)},l^{(n-2k)}/2-\l_r,r\right)\right)\right.\\
&\,\left|\ST^\bone\left(T\left(\ur^{(n-2k)},l^{(n-2k)}/2+\l_r,r\right)-\l_ru_r\right)\right)
\\\times\m&\left(\ST^\bone\left(T\left(\ur^{(n-2k)},l^{(n-2k)}/2-\l_r,r+2k\right)\right)\right.\\
&\,\left|\ST^\bone\left(T\left(\ur^{(n-2k)},l^{(n-2k)}/2-\l_r,r+2k\right)-\l_ru_r\right)\right)
\\
\times\prod_{i=1}^{2k-1}\prod_{\xi=0}^1\m&\left(\ST_W^\bone\left(T\left(\ur^{(n-2k+i)}-\l_r\left(\uv_r+\uv_{r+2k}\right),l^{(n-2k+i)}/2,r+i+2k\xi\right)\right)\right.\\
&\,\left|\ST^\bone\left(T\left(\ur^{(n-2k+i)},l^{(n-2k+i)}/2,r+i+2k\xi\right)-\l_ru_r\right)\right),\end{align*}
where $\cW$ is the event appearing in the last item of \cref{def:bSG:iso}.

Firstly, each of the above conditional probabilities is bounded by 
\[q^{O(W)}\log^{-C^{O(1)}}(1/q)\left(1-q^{1-o(1)}\right)^{O(2^m/\e)}\ge\exp\left(-2^mq^{1-o(1)}\right),\]
using \cref{cor:perturbation} with $\D=C^2$ and recalling that $2^m=q^{-\a}\log^{O(C)}(1/q)$ and $\a\ge 1$. Secondly, $\m(\cW)\ge q^{W^{O(1)}}$ as in \cref{eq:iso:1}, concluding the proof of \cref{eq:iso:3}. We direct the reader to \cite{Hartarsky23FA}*{Appendix A} for the details of an analogous argument in a simpler setting.
\end{proof}

Iterating \cref{eq:gamma:iso:step} and plugging \cref{eq:iso:1,eq:iso:2} gives that $\g(\L^{(\Nmp)})$ is at most
\[\frac{e^{O(C^2)\Nmp\log^2(1/q)}q^{2\Nmp W^{O(1)}}}{\m(\SG^\bone(\L^{(\Nmp)}))}\prod_{n=2k}^{\Nmp-1}\m^{-1}\left(\bSG\left(\left.\L_2^{(n)}\right|\SG^\bone\left(\L^{(n)}_1\cup\L_2^{(n)}\right)\right)\right).\]
Further recalling that $\Nmp=O(\log(\lmp))=O(C\log(1/q))$ and inserting \cref{eq:iso:3}, we obtain
\begin{multline}\g\left(\L^{(\Nmp)}\right)\le \frac{e^{q^{-\a+1-o(1)}}}{\m(\SG^\bone(\L^{(\Nmp)}))}\prod_{n=2k}^{2^m\le1/(\log^C(1/q)q^\a) }\m^{-1}\left(\bSG\left(\L_2^{(n)}\right)\right)\\
\times\prod_{n:2^m\ge\log^C(1/q)/q^\a}^{\Nmp-1}\frac{\m(\SG^\bone(\L^{(n-2k)}))}{\m(\bSG(\L_2^{(n)}))}.
\label{eq:gLnmp:iso}
\end{multline}
with $m=\lfloor n/(2k)\rfloor$.
The final ingredient are the following probability bounds.

\begin{lem}[Probability of super good droplets]
\label{lem:mu:SG:bound:iso}
For $n\in[2k,\Nmp]$ and $m=\lfloor n/(2k)\rfloor$, the following bounds hold:
\begin{align}
\label{eq:mu:bSG:iso}\m\left(\bSG\left(\L_2^{(n)}\right)\right)&{}\ge \exp\left(\frac{-1}{\log^{C-3}(1/q)q^\a}\right)&\text{if }&2^m\le \frac{1}{\log^C(1/q)q^\a},\\
\label{eq:mu:bSG:iso:cond}\frac{\m(\bSG(\L_2^{(n)}))}{\m(\SG^\bone(\L^{(n-2k)}))}&{}\ge q^{W^{O(1)}}&\text{if }&2^m\ge\frac{\log^C(1/q)}{q^\a},\\
\m\left(\SG^\bone\left(\L^{(n)}\right)\right)&{}\ge \exp\left(\frac{-1}{q^\a\e^2}\right).\label{eq:mu:SG:LNm:iso}
\end{align}
\end{lem}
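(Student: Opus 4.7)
The plan is to prove all three bounds by applying the Harris inequality from \cref{subsec:correlation} to decompose the events $\SG^\bone(\L^{(n)})$ and $\bSG(\L_2^{(n)})$ into intersections of decreasing events supported on essentially disjoint regions, then estimating each factor using \cref{obs:mu:helping}. All relevant events ($\SG$, $\bSG$, $\cT$, $\ST$, $\cH^W$) are decreasing, so the Harris framework applies cleanly.

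For \cref{eq:mu:SG:LNm:iso}, I would exploit that taking offset $x=0$ in \cref{def:extension:CBSEP} yields
\[\SG^\bone(\L^{(n+1)})\supset\SG^\bone(\L^{(n)})\cap\ST^\bone\left(T(\ur^{(n)},l^{(n)},n)\right),\]
and the two events on the right are on disjoint regions, hence independent. Iterating from $n=0$, where $\SG^\bone(\L^{(0)})$ is the fully infected event of probability $q^{|\L^{(0)}|}=q^{O(1/\e^2)}$, gives
\[\m(\SG^\bone(\L^{(n)}))\ge q^{O(1/\e^2)}\prod_{j=0}^{n-1}\m\left(\ST^\bone(T(\ur^{(j)},l^{(j)},j))\right).\]
Each tube factor decomposes further into a product of $\cH^\bone$ probabilities over the $O(l^{(j)})$ segments of \cref{def:traversability}. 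By \cref{obs:mu:helping}, a segment of length $s=\Theta(2^{m_j}/\e)$ contributes $\log\m(\cH^\bone)\ge O(1)\log(q^\a s\wedge 1)$ in the small-$s$ regime and $-e^{-q^\a s/O(1)}$ in the large-$s$ regime. Summing the per-tube costs across scales with the substitution $u=q^\a 2^{m_j}/\e$ gives a convergent geometric series whose total is controlled by the correct power of $1/(q^\a\e^2)$ provided the constants in \cref{obs:mu:helping} are tracked carefully.

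For \cref{eq:mu:bSG:iso}, I would expand $\bSG(\L_2^{(n)})$ per \cref{def:bSG:iso} as the intersection of $\SG^\bone(\L^{(n-2k)})$, several $\ST^\bone$ and $\ST^\bone_W$ events on tubes corresponding to the $2k$ intermediate scales, and segment-level $\cH^W$ events. Applying \cref{eq:Harris:1} and taking logarithms yields
\[\log\m(\bSG(\L_2^{(n)}))\ge \log\m(\SG^\bone(\L^{(n-2k)}))+\sum_{i=0}^{2k-1}\log\m(\ST^\bone(T^{(i)}))-O(W)\log(1/q),\]
where the last term bounds the cost of all $\cH^W$ requirements. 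In the hypothesized regime $2^m\le 1/(\log^C(1/q)q^\a)$, each segment $S$ in the added tubes satisfies $q^\a|S|\le 1/\log^C(1/q)$, so \cref{obs:mu:helping} gives $-\log\m(\cH^\bone(S))=O(\log\log^C(1/q))=O(\log\log(1/q))$. Multiplying by the $O(2^m/\e)$ segments per tube, summing over tubes at subscales $n-2k+i$, and using the $\log^{C-3}$ slack absorbs the $\log\log$ and $W$ factors; combined with the recursive bound on $\m(\SG^\bone(\L^{(n-2k)}))$ from \cref{eq:mu:SG:LNm:iso}, this gives the claimed lower bound.

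For \cref{eq:mu:bSG:iso:cond}, in the complementary regime $2^m\ge\log^C(1/q)/q^\a$, the segments in the tubes added beyond $\SG^\bone(\L^{(n-2k)})$ are long enough that $q^\a|S|\ge\log^C(1/q)$, whence by \cref{obs:mu:helping} every $\cH^\bone$ or $\cH^W$ event occurs with probability $\ge 1-e^{-\log^C(1/q)/O(1)}\ge 1-q^{\log^{C-1}(1/q)}$. Therefore the $\ST$ factors added beyond $\SG^\bone(\L^{(n-2k)})$ contribute a multiplicative factor $\ge 1-o(1)$ even after union-bounding over the $O((2^m/\e)^2)$ segments in $O(1)$ tubes. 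The only significant loss comes from the fixed number of $\cH^W$ events in the last bullet of \cref{def:bSG:iso}, each contributing $q^W$, giving the claimed $q^{W^{O(1)}}$ ratio. The main obstacle is \cref{eq:mu:SG:LNm:iso}: one must retain the precise $\e$-dependence in \cref{obs:mu:helping} rather than absorbing it into an $O(\cdot)$, since the difference between $O(1/q^\a)$ and $1/(q^\a\e^2)$ is what makes the bound strong enough to be reused recursively in the mesoscopic stages.
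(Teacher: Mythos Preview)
Your overall strategy---factorising via Harris and the offset-$0$ inclusion from \cref{def:extension:CBSEP}, then estimating tube costs segment by segment with \cref{obs:mu:helping}---is exactly what the paper does, and your treatment of \cref{eq:mu:SG:LNm:iso} and \cref{eq:mu:bSG:iso:cond} is essentially correct.

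There is, however, a genuine gap in your argument for \cref{eq:mu:bSG:iso}. You propose to bound $\m(\SG^\bone(\L^{(n-2k)}))$ using \cref{eq:mu:SG:LNm:iso}, which only gives $\exp(-1/(q^\a\e^2))$. But the target \cref{eq:mu:bSG:iso} asks for $\exp(-1/(\log^{C-3}(1/q)q^\a))$, and since $\e$ is a fixed constant while $\log^{C-3}(1/q)\to\infty$, the former bound is far too weak to yield the latter. What is actually needed---and what the paper proves as the intermediate estimate \cref{eq:mu:SG:iso}---is a \emph{two-case} bound on $\m(\SG^\bone(\L^{(n)}))$: in the regime $2^m\le 1/(\log^C(1/q)q^\a)$ one gets the much stronger $\exp(-1/(\log^{C-2}(1/q)q^\a))$, because when one sums the per-tube costs $(2^{m_j}/\e)\log(1/(q^\a 2^{m_j}))$ over scales $j\le n$ that all lie below the critical scale, the geometric sum is dominated by its last term, not by the full-range bound $1/(q^\a\e^2)$. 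You have the machinery to prove this refined case, but you must state and use it, rather than invoking the coarse \cref{eq:mu:SG:LNm:iso}.

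A smaller issue: your per-segment estimate $-\log\m(\cH^\bone(S))=O(\log\log(1/q))$ is not correct as written. The upper bound $q^\a|S|\le 1/\log^C(1/q)$ gives a \emph{lower} bound on $-\log\m(\cH(S))$, not an upper one; the correct estimate is $-\log\m(\cH(S))=O(\log(1/(q^\a 2^m)))$, which for small $m$ is $O(\log(1/q))$. Fortunately the total tube cost $(2^m/\e)\log(1/(q^\a 2^m))$ is still bounded by $O(1/(q^\a\log^{C-1}(1/q)))$ after maximising over the allowed range of $2^m$, so this does not break the argument once the refined $\SG^\bone$ bound is in place.
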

\begin{proof}
Let us first bound $\m(\SG^\bone(\L^{(n)}))$ for $n\le \Nmp$ by induction, starting with the trivial bound 
\begin{equation}
\label{eq:SG:bound:small:scales}\m\left(\SG^\bone\left(\L^{(2k)}\right)\right)\ge q^{|\L^{(2k)}|}\ge q^{O(1/\e)}.\end{equation}
From \cref{def:extension:CBSEP}, translation invariance and \cref{eq:symmetry:tubes}, for $n\in[2k,\Nmp-1]$ we have
\begin{align}
\nonumber\m\left(\SG^\bone\left(\L^{(n+1)}\right)\right)&{}\ge \m\left(\SG_0^\bone\left(\L^{(n+1)}\right)\right)\\
\label{eq:mu:ST:bound:iso}
&{}=\m\left(\SG^\bone\left(\L^{(n)}\right)\right)\m\left(\ST^\bone\left(T\left(\ur^{(n)},l^{(n)},n\right)\right)\right)\\
&{}\ge q^{O(1/\e)}\prod_{i=2k}^n\m\left(\ST^\bone\left(T\left(\ur^{(i)},l^{(i)},i\right)\right)\right),\nonumber
\end{align}
so we need to bound the last term. Applying \cref{def:traversability}, \cref{lem:traversability:boundary} and the Harris inequality \cref{eq:Harris:1} and then \cref{obs:mu:helping}, we get
\begin{align}
\nonumber\m\left(\ST^\bone\left(T\left(\ur^{(n)},l^{(n)},n\right)\right)\right)\ge{}&q^{O(W)}\prod_{j,m'}\cH_{C^2}\left(S_{j,m'}\right)\\
\label{eq:mu:ST:bound:iso2}\ge{}&q^{O(W)}\left(1-e^{-q^\a2^m/O(\e)}\right)^{O(2^m/\e)}\\
\nonumber\ge{}&q^{O(W)}\begin{cases}
\left(q^\a2^{m-1}\right)^{C2^m/\e}&2^m\le 1/q^{\a}\\
\exp\left(-2^m\exp\left(-q^\a2^m\right)\right)&2^m> 1/q^{\a},
\end{cases}
\end{align}
where the product runs over the segments $S_{j,m'}$ appearing in \cref{def:traversability} for the event $\ST^\bone(T(\ur^{(n)},l^{(n)},n))=\cT^\bone(T(\ur^{(n)},l^{(n)},n))$ (the last equality holds, since $\cU$ is isotropic). Plugging \cref{eq:mu:ST:bound:iso2} into \cref{eq:mu:ST:bound:iso} and iterating, we get
\begin{equation}
\label{eq:mu:SG:iso}
\m\left(\SG^\bone\left(\L^{(n)}\right)\right)\ge \begin{cases}
\exp\left(-1/\left(\log^{C-2}(1/q)q^\a\right)\right)&2^m\le 1/\left(\log^C(1/q)q^\a\right)\\
\exp\left(-1/\left(q^\a\e^2\right)\right)&2^m> 1/\left(\log^C(1/q)q^\a\right)
\end{cases}\end{equation}
since $\Nmp\le O(C)\log(1/q)$. This proves \cref{eq:mu:SG:LNm:iso}.

Recalling \cref{def:bSG:iso}, as in the proof of \cref{lem:iso:contraction:rate}, we have that for any $n\in[2k,\Nmp]$
\begin{multline*}\m\left(\bSG\left(\L_2^{(n)}\right)\right)=\m(\cW)\m\left(\SG^\bone\left(\L^{(n-2k)}\right)\right)\\
\begin{aligned}\times{}&\prod_{\xi=0}^1\m\left(\ST^\bone\left(T\left(\ur^{(n-2k)},l^{(n-2k)}/2-\l_r,r+2k\xi\right)\right)\right)\\
\times{}& \prod_{\xi=0}^1\prod_{i=1}^{2k-1}\m\left(\ST_W^\bone\left(T\left(\ur^{(n-2k+i)}-\l_r\left(\uv_r+\uv_{r+2k}\right),l^{(n-2k)}/2,r+2k\xi\right)\right)\right)
,\end{aligned}\end{multline*}
where $\cW$ is the event from the last item of \cref{def:bSG:iso} and $r=n-2km$. As in the proof of \cref{lem:iso:contraction:rate}, we have $\m(\cW)\ge q^{W^{O(1)}}$, while the factors in the products can be bounded exactly as in \cref{eq:mu:ST:bound:iso2}, entailing \cref{eq:mu:bSG:iso,eq:mu:bSG:iso:cond}, since we already have \cref{eq:mu:SG:iso}.
\end{proof}
\begin{proof}[Proof of \cref{th:isotropic}]
    The bound on $\m(\SG^\bone(\L^{(n)}))$ was proved in \cref{eq:mu:SG:LNm:iso,eq:SG:bound:small:scales}. The one on $\g(\L^{(\Nmp)})$ follows by plugging \cref{eq:mu:bSG:iso:cond,eq:mu:bSG:iso} into \cref{eq:gLnmp:iso}.
\end{proof}

\subsection{CBSEP global dynamics}
\label{subsec:global:CBSEP}
For the global dynamics we need to recall the global CBSEP mechanism introduced in \cite{Hartarsky23FA}. It is useful not only for class \ref{iso}, but also other unrooted models---classes \ref{log2} and \ref{loglog}.

Let $\Lmm$ and $\Lmp$ be droplets with side lengths $\Theta(\lmm)$ and $\Theta(\lmp)$ respectively (recall \cref{subsec:scales}). Consider a tiling of $\bbR^2$ with square boxes $Q_{i,j}=[0,\lm)\times[0,\lm)+\lm(i,j)$ for $(i,j)\in\bbZ^2$. 
\begin{defn}[Good and super good boxes]
\label{def:global:CBSEP}
We say that the box $Q_{i,j}$ is \emph{good} if for every segment $S\subset Q_{i,j}$, perpendicular to some $u\in\hS$ of length at least $\e\lmm$, $\cH^W(S)$ occurs (recall \cref{def:HW}). We denote the corresponding event by $\cG_{i,j}$. We further say that $\cG(\Lmp)$ occurs if for every segment $S\subset \Lmp$, perpendicular to some $u\in\hS$ of length at least $3\e\lmm$, the event $\cH^W(S)$ occurs.

Let $\SG^\bone(\Lmm)\subset\O_{\Lmp}$ be a nonempty translation invariant event. We say that $Q_{i,j}$ is \emph{super good} if it is good and $\SG^\bone(x+\Lmm)$ occurs for some $x\in\bbZ^2$ such that $x+\Lmm\subset Q_{i,j}$. We denote the corresponding event by $\SG_{i,j}$.
\end{defn}
In words, good boxes $Q_{i,j}$ and droplets $\Lmp$ contain $W$-helping sets in sufficient supply for a SG translate of $\Lmm$ to be able to move inside the box or droplet containing it. Our choice of $\lmm$ makes being good so likely that we are able to assume that all boxes and droplets are good at all times. Finally, a box is SG, if it also contains a SG translate of $\Lmm$ that we wish to move around. Thus, when looking at SG boxes, we essentially see a two-dimensional CBSEP dynamics, which leads to the following bound.

\begin{prop}[Global CBSEP relaxation]
\label{prop:global:CBSEP}
Let $\cU$ be unrooted (classes \ref{log2}, \ref{loglog} and \ref{iso}). Let $T=\exp(\log^4(1/q)/q^\a)$. Assume that $\SG^\bone(\Lmp)$ and $\SG^\bone(\Lmm)$ are nonempty translation invariant decreasing events such that the following conditions hold:
\begin{enumerate}
\item \label{condition:CBSEP:1} $(1-\m(\SG^\bone(\Lmm)))^TT^4=o(1)$;
\item \label{condition:CBSEP:2}for all $x\in\bbZ^2$ such that $x+\Lmm\subset \Lmp$ we have \[\SG^\bone(x+\Lmm)\cap \cG(\Lmp)\subset \SG^\bone(\Lmp).\]
\end{enumerate}Then
\[\Et\le \g\left(\Lmp\right)\frac{\log(1/\m(\SG^\bone(\Lmm)))}{q^{O(C)}}.\]
\end{prop}
We omit the proof, which is identical to \cite{Hartarsky23FA}*{Section 5}, given \cref{def:global:CBSEP},\footnote{Due to the difference between \cref{eq:def:gamma} and \cite{Hartarsky23FA}*{Eq.\ (4.5)}, the factor $\m_{\L_{i,j}}(\SG(\L_{i,j}))$ in \cite{Hartarsky23FA}*{last display of Section 5} cancels out with $\pi(\cS_1)^{-1}$ in \cite{Hartarsky23FA}*{Eq.\ (5.11)} up to a $q^{O(C)}$ factor, rather than compensating the conditioning in \cite{Hartarsky23FA}*{last display of Section 5}, which is absent in our setting.} and turn to the proof of \cref{th:main} for the isotropic class \ref{iso}.
\begin{proof}[Proof of \cref{th:main}\ref{iso}]
Let $\cU$ be isotropic. Recall the droplets $\L^{(n)}$ from \cref{subsec:iso}. Set $\Lmp=\L^{(\Nmp)}$, $\Nmm=2k\lceil\log(\e \lmm)/\log 2\rceil$ and $\Lmm=\L^{(\Nmm)}$. Thus, the side lengths of $\Lmm$ and $\Lmp$ are indeed $\Theta(\lmm)$ and $\Theta(\lmp)$ respectively by \cref{eq:def:sin:iso}. By \cref{th:isotropic}, condition \ref{condition:CBSEP:1} of \cref{prop:global:CBSEP} is satisfied:
\begin{align*}
(1-\m(\SG^\bone(\Lmm)))^TT^4&{}\le (1-e^{-1/(q^\a\e^2)})^TT^4\le T^4e^{-e^{\log^4(1/q)/q^\a-1/(q^\a\e^2)}}\\
&{}\le e^{4\log^4(1/q)/q^\a-e^{\log^4(1/q)/(2q^\a)}}=o(1).\end{align*}

We next seek to verify condition \ref{condition:CBSEP:2}. Proceeding by induction on $n\in[\Nmm,\Nmp]$, it suffices to show that for any $n\in[\Nmm,\Nmp)$ and $x,y\in\bbZ^2$ such that $x+\L^{(n)}\subset y+\L^{(n+1)}\subset \Lmp$, we have
\begin{equation}
\label{eq:cond:2:verification}\cG(\Lmp)\cap\SG^\bone(x+\L^{(n)})\subset \SG^\bone(y+\L^{(n+1)}).\end{equation}
Recalling \cref{def:SG:iso,def:extension:CBSEP}, we see that it suffices to show that for any tube $T$ of the form $z+T(\ur^{(n)},l,j)$ for some $l>0$, $j\in[4k]$ and $z\in\bbZ^2$ satisfying $T\subset y+\L^{(n+1)}$ also verifies $\cG(\Lmp)\subset\ST^\bone(T)$. Further recalling \cref{def:traversability}, we see that it suffices to show that on $\cG(\Lmp)$, each segment of length $\min_{j\in[4k]}s_j^{(n)}-C^2-O(1)$ perpendicular to $u_j$ for some $j\in[4k]$ contains an infected $W$-helping set (recall from \cref{subsubsec:segment:helping} that $\cH^W_d(S)\subset\cH^\o_d(S)$). Hence, \cref{eq:cond:2:verification} follows from \cref{def:global:CBSEP}, since 
\[\min_{j\in[4k]}s_j^{(n)}-C^2-O(1)=\Theta(\lmm)\ge 3\e\lmm.\]

Thus, we may apply \cref{prop:global:CBSEP}. Further plugging the bounds from \cref{th:isotropic}, we recover
\begin{align*}\Et&{}\le\frac{\exp(1/(\log^{C/2}(1/q)q^\a))}{\m(\SG^\bone(\L^{(\Nmp)}))}\frac{1}{q^\a\e^2q^{O(C)}}\\
&{}\le \frac{\exp(1/(\log^{C/3}(1/q)q^\a))}{\m(\SG^\bone(\L^{(\Nmp)}))}\le \exp\left(\frac{1+o(1)}{\e^2q^\a}\right),\end{align*}
concluding the proof.
\end{proof}

\section{Unbalanced unrooted models}
\label{sec:log2}
In this section we assume $\cU$ is unbalanced unrooted (class \ref{log2}). We deal with the internal, mesoscopic and global dynamics separately. The internal dynamics is very simple and already known since \cite{Hartarsky21a}. The mesoscopic and global ones are similar to the ones of \cref{sec:iso} with some adaptations needed for the mesoscopic one. 
\subsection{Unbalanced internal dynamics}
\label{subsubsec:unbal:internal}
For unbalanced unrooted $\cU$ (class \ref{log2}) the SG event on to the internal scale consists simply in having an infected ring of thickness $W$ (see \cref{fig:bSG:log2}). Recall $\li$ from \cref{subsec:scales}.
\begin{defn}[Unbalanced unrooted internal SG]
\label{def:SG:unbalanced:internal}
Assume $\cU$ is unbalanced unrooted. Let $\L^{(0)}=\L(\ur^{(0)})$ be a droplet with side lengths $s_j^{(0)}=2\l_j\lceil\li/(2\l_j)\rceil$ for $j\in[4k]$. We say that $\L^{(0)}$, is \emph{super good} ($\SG^\bone(\L^{(0)})$ occurs) if all sites in $\L^{(0)}\setminus\L(\ur^{(0)}-W\uone)$ are infected.
\end{defn}
The following result was proved in \cite{Hartarsky21a}*{Lemma 4.10} and provides the main contribution to the scaling for this class (see \cref{tab:mechanisms}).
\begin{prop}
\label{prop:unbal:internal}
For unbalanced unrooted $\cU$ (class \ref{log2}) we have
\[\max\left(\g\left(\L^{(0)}\right),\m^{-1}\left(\SG^\bone\left(\L^{(0)}\right)\right)\right)\le q^{-O(W\li)}\le\exp\left(C^3\log^2(1/q)/q^\a\right).\]
\end{prop}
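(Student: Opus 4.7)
The bound on $\mu^{-1}(\SG^{\bone}(\L^{(0)}))$ is essentially immediate. By \cref{def:SG:unbalanced:internal}, the event $\SG^{\bone}(\L^{(0)})$ requires that every site in the ``frame'' $\L^{(0)}\setminus\L(\ur^{(0)}-W\uone)$ be infected. Since each side has length $\Theta(\li)$ and the frame has width $W$ in every direction, this region has cardinality $O(W\li)$. As $\mu$ is product Bernoulli, $\mu(\SG^{\bone}(\L^{(0)}))\ge q^{O(W\li)}$, which gives $\mu^{-1}(\SG^{\bone}(\L^{(0)}))\le q^{-O(W\li)}=\exp(O(WC^2)\log^2(1/q)/q^\a)$, and the latter is at most $\exp(C^3\log^2(1/q)/q^\a)$ because $C\gg W$.

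For $\g(\L^{(0)})$, the plan is to exploit the fact that, conditionally on $\SG^{\bone}(\L^{(0)})$ with the boundary condition $\bone$ outside $\L^{(0)}$, the entire region $\L^{(0)}\setminus I$ is infected, where $I=\L(\ur^{(0)}-W\uone)$ denotes the interior. Hence the Poincar\'e inequality \eqref{eq:def:gamma} reduces to a constrained Poincar\'e inequality for the KCM inside $I$ with the frame acting as a fully infected boundary. I will slice $I$ into $N=\Theta(\li)$ consecutive strips (``columns'') $C_1,\dots,C_N$ of width $W$ perpendicular to some non-hard direction, each of length $\Theta(\li)$, and run an auxiliary chain dynamics on the block spins $\sigma_j=\uparrow$ iff $C_j$ is fully infected. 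The critical ingredient is that, since $\cU$ is unbalanced unrooted, all directions have finite difficulty, and $W>\a$ ensures that a fully infected $C_{j-1}$ or $C_{j+1}$ together with the frame provides $W$-helping sets that allow the KCM restricted to $C_j$ to infect all of $C_j$ (\cite{Bollobas15}*{Lemma~5.2}). This yields an effective CBSEP structure on the $\sigma_j$'s with $\uparrow$-density $\rho_C\ge q^{O(W\li)}$.

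The bound is then assembled via a standard two-block argument (as in \cite{Hartarsky20FA}*{Section~4}): one factor coming from the Poincar\'e constant of the auxiliary CBSEP chain on $N$ sites with density $\rho_C$, which is $O(N^2)$, and one factor $T_{\mathrm{col}}$ bounding the cost of resampling a single column $C_j$ when the two neighbouring columns (or the frame, at the endpoints) are fully infected. For the column cost I will iterate the same idea one dimension down: inside $C_j$, divide it further into $\Theta(\li/W)$ sub-blocks of size $O(W)\times O(W)$ perpendicular to the chosen direction, and invoke a one-dimensional East-type Poincar\'e bound, which pays $q^{-O(W)}$ per block times $\Theta(\li)$ blocks. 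This gives $T_{\mathrm{col}}\le q^{-O(W\li)}$, and multiplying the two factors yields $\g(\L^{(0)})\le \mathrm{poly}(\li)\cdot q^{-O(W\li)}=q^{-O(W\li)}$.

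The main technical obstacle is verifying the column relaxation $T_{\mathrm{col}}\le q^{-O(W\li)}$ from within the constrained KCM: one must ensure that the $W$-helping sets supplied by the infected frame and neighbouring infected column actually permit single-site resampling at every site in $C_j$ via a sequence of legal KCM updates of bounded length. This is precisely the content of \cite{Hartarsky21a}*{Lemma~4.10} for the analogous rectangular frame geometry, and the argument adapts verbatim to our polygonal droplets $\L^{(0)}$ because the set $\hS$ of quasi-stable directions contains all boundary normals and every non-opposite pair of directions in $\hS$ admits a rule contained in the corresponding closed half-planes (\cref{subsec:directions}), so infection can turn the corners of the interior $I$.
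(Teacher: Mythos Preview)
Your proposal is correct and follows the same approach the paper itself indicates. The paper does not reprove this proposition but cites \cite{Hartarsky21a}*{Lemma 4.10} directly; its heuristic in \cref{subsubsec:unbal:int:outline} is precisely the column-CBSEP mechanism you describe: slice the interior into $O(1)$-wide columns, let $\uparrow$ mean ``fully infected column,'' use that the infected frame supplies boundary $\uparrow$'s on both ends, and pay a polynomial block-chain cost times the column-resampling cost. Your closing paragraph even invokes the same lemma of \cite{Hartarsky21a}, so the two arguments are essentially identical. A couple of minor slips do not affect the outcome: each $W\times W$ sub-block costs $q^{-O(W^2)}$ rather than $q^{-O(W)}$, and there are $\Theta(\li/W)$ of them rather than $\Theta(\li)$, but the product is still $q^{-O(W\li)}$; likewise the block-chain relaxation time carries a $1/\rho_C$ factor beyond $N^2$, which is again absorbed into $q^{-O(W\li)}$.
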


\subsection{CBSEP mesoscopic dynamics}
\label{subsec:log2:meso}
Since $\cU$ is unbalanced unrooted, we may assume w.l.o.g.\ that $\a(u_j)\le\a$ for all $j\in[4k]\setminus\{k,-k\}$. We only use $4k$ scales for the mesoscopic dynamics. Recall \cref{subsec:geometry,subsec:scales}. For $i\in[0,2k]$ let $\L^{(i)}=\L(\ur^{(i)})$ be the symmetric droplet centered at $0$ with $\ur^{(i)}$ such that its associated side lengths are
\[s_j^{(i)}=s_{j+2k}^{(i)}=
\begin{cases}
2\l_j\lceil\li/(2\l_j)\rceil&i-k\le j< k\\
2\l_j\lceil\lmm/(2\l_j)\rceil&-k\le j<i-k.
\end{cases}
\]
For $i\in(2k,4k]$, we define $\L^{(i)}$ similarly by
\begin{equation}
\label{eq:def:si:2k:4k}
s_j^{(i)}=s_{j+2k}^{(i)}=
\begin{cases}
2\l_j\lceil\lmm/(2\l_j)\rceil&i-3k\le j< k\\
2\l_j\lceil\lmp/(2\l_j)\rceil&-k\le j<i-3k.
\end{cases}
\end{equation}
These droplets are exactly as in \cref{subfig:iso:SG}, except that the extensions are much longer. More precisely, we have $\L^{(i+1)}=\L(\ur^{(i)}+l^{(i)}(\uv_i+\uv_{i+2k})/2)$ with $l^{(i)}=s_{i+k}^{(i+1)}-s_{i+k}^{(i)}$, so that $l^{(i)}=(1-q^{C-\a+o(1)})\lmm$ if $i\in[2k]$ and $l^{(i)}=(1-O(\d))\lmp$ if $i\in[2k,4k)$. In particular, the droplets $\L^{(n)}$ for $n\in[4k+1]$ are nested in such a way that allows us to define their SG events by extension, as in \cref{def:SG:iso} (also recall \cref{def:SG:unbalanced:internal} for $\SG^\bone(\L^{(0)})$ and \cref{def:extension:CBSEP,subfig:extension:CBSEP} for CBSEP-extensions).
\begin{defn}[Unbalanced unrooted mesoscopic SG]
\label{def:SG:log2}
Let $\cU$ be unbalanced unrooted. For $n\in[4k]$ we define $\SG^\bone(\L^{(n+1)})$ by CBSEP-extending $\L^{(n)}$ by $l^{(n)}$ in direction $u_n$.
\end{defn}
With this definition we aim to prove the following (recall $\g(\L^{(4k)})$ from \cref{subsec:poincare}).
\begin{thm}
\label{th:log2}
Let $\cU$ be unbalanced unrooted (class \ref{log2}). Then
\[
\max\left(\g\left(\L^{(4k)}\right),\m^{-1}\left(\SG^\bone\left(\L^{(2k)}\right)\right)\right)\le\exp\left(\frac{\log^2(1/q)}{\d q^\a}\right).\]
\end{thm}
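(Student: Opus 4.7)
The plan is to mirror the recursive strategy used for \cref{th:isotropic} in the isotropic case, but starting from the frame-shaped internal $\SG^\bone(\L^{(0)})$ of \cref{prop:unbal:internal} rather than a trivial singleton base. A useful preliminary observation is that since $\cU$ is unbalanced unrooted with the two hard directions $u_{\pm k}$ mutually opposite, no index $j\in[4k]$ satisfies $\a(u_j)\le\a<\a(u_{j+2k})$, so $\ST^\o_d=\cT^\o_d$ in every tube arising below, simplifying the analysis relative to \cref{sec:iso}.

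First I would iterate \cref{cor:CBSEP:reduction} across the $4k$ CBSEP-extensions producing $\L^{(n+1)}$ out of $\L^{(n)}$. To set this up I need contracted events $\bSG(\L_2^{(n)})$ and $\bcT(\L_{1,3}^{(n)})$ satisfying \cref{eq:cor:CBSEP:reduction:condition}, imitating \cref{def:bSG:iso}: I would take $\bSG(\L_2^{(n)})$ to be $\SG^\bone(\L^{(n-1)})$ (or the infected frame of \cref{def:SG:unbalanced:internal} when $n=0$) intersected with the $\ST_W$-events on the two tubes of the $(n-1)$-st CBSEP-extension re-interpreted as emanating from $\L^{(n-1)}$ in direction $u_n$, contracted by $\l_n$; and $\bcT(\L_{1,3}^{(n)})$ to require $W$-helping sets on the boundedly many segments perpendicular to each $u_j$ with $j\in(n-k,n+k)$ needed to complete the $\ST$-event of the tube adjoining $\L^{(n-1)}$ at the shifted position. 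Verification of \cref{eq:cor:CBSEP:reduction:condition} then proceeds as in \cref{lem:SG:condition:iso}, using $\cH^W\subset\cH^\o$ together with the $W$-thinning of $\ST_W$ to absorb the $O(1)$ geometric mismatch between the tube we have and the tube we need.

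Next I would bound the conditional probabilities appearing in \cref{cor:CBSEP:reduction}. By \cref{lem:T:ratio} the core contraction contributes $q^{-O(C)}$ per step; by \cref{cor:perturbation} applied with $\D=C^2$ the tube perturbations contribute $q^{-O(W)}$ together with a factor $\exp(-O(\lmp)q^{1-o(1)})=1-o(1)$, since $\lmp q^{1-o(1)}\le q^{-C+1-o(1)}/\sqrt{\d}$ is tiny. The probability ratios $\m(\SG^\bone(\L^{(n)}))/\m(\SG^\bone(\L^{(n+1)}))$ telescope, leaving only $\m^{-1}(\SG^\bone(\L^{(4k)}))\cdot\m(\SG^\bone(\L^{(0)}))$ after multiplying the $4k$ bounds. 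Combined with $\g(\L^{(0)})\le\m^{-1}(\SG^\bone(\L^{(0)}))\le q^{-O(W\li)}$ from \cref{prop:unbal:internal}, this yields
\[
\g(\L^{(4k)})\le q^{-O(W\li)-O(C)}e^{O(C^2)\log^2(1/q)}\le \exp\!\left(\frac{\log^2(1/q)}{\d q^\a}\right),
\]
using $WC^2\ll 1/\d$. A parallel, easier computation yields the same bound for $\m^{-1}(\SG^\bone(\L^{(2k)}))$ by expanding each CBSEP-extension via translation invariance and \cref{eq:symmetry:tubes}, then invoking \cref{lem:traversability:boundary} and \cref{obs:mu:helping} for the $\ST$-probabilities.

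The main obstacle is the precise specification of $\bSG$ and $\bcT$ at the regime change $n=2k$, where the extended side length jumps from $\li$ to $\lmm$ and, for the first time, a hard direction is being extended, so that \cref{eq:cor:CBSEP:reduction:condition} is satisfied without inflating the conditional probabilities beyond $q^{-O(C)}$. This is tractable because in the unbalanced unrooted setting at most one hard side is extended at a time (the two hard directions being opposite), so the $W$-helping sets required by $\bcT$ can be placed inside $\L_{1,3}^{(n)}$ without interfering with the nested $\ST_W$ structure that defines $\bSG(\L_2^{(n)})$.
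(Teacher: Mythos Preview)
Your overall architecture is right: define contracted events, check \cref{eq:cor:CBSEP:reduction:condition}, iterate \cref{cor:CBSEP:reduction} across the $4k$ CBSEP-extensions. Your observation that $\ST=\cT$ for unbalanced unrooted families is also correct and useful. However, there is a concrete error in how you bound the conditional probabilities.

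You invoke \cref{cor:perturbation} with $\D=C^2$ and claim the resulting factor $\exp(-O(\lmp)q^{1-o(1)})$ equals $1-o(1)$ because ``$\lmp q^{1-o(1)}\le q^{-C+1-o(1)}/\sqrt{\d}$ is tiny.'' This is backwards: since $C\gg 1$ and $q\to 0$, the quantity $q^{-(C-1)-o(1)}$ diverges, so the factor $(1-q^{1-o(1)})^{O(\lmp)}$ collapses to $0$, not $1$. Moreover, \cref{cor:perturbation} requires all relevant side lengths to be $q^{-\a+o(1)}$; here the sides range up to $\lmp=q^{-C}/\sqrt\d$, so the lemma does not even apply. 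In short, the perturbation machinery of \cref{subsec:conditional}, which is calibrated for the \emph{critical} window of the isotropic internal dynamics, cannot be used at mesoscopic scales.

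The paper's fix is much simpler and is the point of this class: because there are only $4k=O(1)$ extensions, one can afford to drop the conditioning entirely via Harris, obtaining
\[
\g\left(\L^{(4k)}\right)\le\frac{\g(\L^{(0)})e^{O(C^{2})\log^2(1/q)}}{\prod_{i\in[4k]}\m(\SG^\bone(\L^{(i+1)}))\m(\bcT(\L^{(i)}_1))\m(\bSG(\L_2^{(i)}))\m(\bcT(\L^{(i)}_3))},
\]
and then bound each unconditional factor separately by $\exp(-C^{O(1)}\log^2(1/q)/q^\a)$. Each event requires at most $O(W\li)$ fixed infections (from the frame), $W^{O(1)}$ $W$-helping sets, and $O(1)$ tube traversabilities, the latter having probability $q^{O(W)}$ because $q^\a\li\gg\log(1/q)$ and $q^W\lmm\gg\log(1/q)$. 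No telescoping or conditional estimates are needed.

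Two smaller remarks. First, your contracted core should be $\L^{(n-2)}$ rather than $\L^{(n-1)}$: going back only one step does not guarantee the core sits inside $\L_2^{(n)}$ (the $(n-1)$-st extension changes the $u_n$-radius by $\Theta(l^{(n-1)}\langle u_{n-1},u_n\rangle)$, which may be comparable to or dominated by the $\l_n$ contraction). Second, the delicate cases in defining $\bSG$ are $n=0,1$, where one must interface with the infected-ring structure of $\SG^\bone(\L^{(0)})$, not $n=2k$; the $\li\to\lmm$ transition is harmless once Harris is used.
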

The remainder of \cref{subsec:log2:meso} is dedicated to the proof of \cref{th:log2}. Naturally, \cref{th:log2} results from $4k$ applications of \cref{cor:CBSEP:reduction} and using \cref{prop:unbal:internal} as initial input. The second step is somewhat special (see \cref{subfig:bSG:log2}), since there we need to take into account the exact structure of $\SG^\bone(\L^{(0)})$ from \cref{def:SG:unbalanced:internal} in the definition of the contracted events appearing in \cref{cor:CBSEP:reduction}. For the remaining steps the reasoning is identical to the proof of \cref{th:isotropic}, but computations are simpler, since there are only boundedly many scales. Following the proof of \cref{th:isotropic}, we start by defining our contracted events (cf.\ \cref{def:bSG:iso}).
\begin{figure}
    \centering
\begin{subfigure}{\textwidth}
\centering
\begin{tikzpicture}[line cap=round,line join=round,>=triangle 45,x=0.8cm,y=0.8cm]
\begin{scope}[shift={(0,0.05)},scale={0.95}]
\fill[fill=black,fill opacity=1.0] (-0.65,1) -- (0.65,1) -- (1.1,0.45) -- (1.1,-0.45) -- (0.45,-1.1) -- (-0.45,-1.1) -- (-1.1,-0.45) -- (-1.1,0.45) -- cycle;
\fill[color=black,fill=white,fill opacity=1.0] (-0.21,-0.5) -- (0.21,-0.5) -- (0.5,-0.21) -- (0.5,0.21) -- (0.21,0.5) -- (-0.21,0.5) -- (-0.5,0.21) -- (-0.5,-0.21) -- cycle;
\end{scope}
\fill[fill=black,fill opacity=0.5] (-0.34,0.93) -- (-3.04,0.93) -- (-3.63,0.34) -- (-3.63,-0.48) -- (-3.11,-1) -- (-0.41,-1) -- (-0.93,-0.48) -- (-0.93,0.34) -- cycle;
\fill[fill=black,fill opacity=0.5] (0.48,0.93) -- (1,0.41) -- (1,-0.41) -- (0.41,-1) -- (3.11,-1) -- (3.7,-0.41) -- (3.7,0.41) -- (3.18,0.93) -- cycle;
\draw (-0.34,0.93)-- (0.48,0.93);
\draw (0.48,0.93)-- (1,0.41);
\draw (1,0.41)-- (1,-0.41);
\draw (1,-0.41)-- (0.41,-1);
\draw (0.41,-1)-- (-0.41,-1);
\draw (-0.41,-1)-- (-0.93,-0.48);
\draw (-0.93,-0.48)-- (-0.93,0.34);
\draw (-0.93,0.34)-- (-0.34,0.93);
\draw (-3.7,-0.41)-- (-3.7,0.41);
\draw (-3.7,0.41)-- (-3.11,1);
\draw [thick] (-3.63,-0.48)-- (-3.7,-0.41);
\draw [thick] (-3.7,-0.41)-- (-3.7,0.41);
\draw [thick] (-3.7,0.41)-- (-3.11,1);
\draw [thick] (-3.11,1)-- (3.11,1);
\draw [thick] (3.11,1)-- (3.18,0.93);
\draw [thick] (3.18,0.93)-- (-3.04,0.93);
\draw [thick] (-3.04,0.93)-- (-3.63,0.34);
\draw [thick] (-3.63,0.34)-- (-3.63,-0.48);
\draw (-0.34,0.93)-- (-3.04,0.93);
\draw (-3.04,0.93)-- (-3.63,0.34);
\draw (-3.63,0.34)-- (-3.63,-0.48);
\draw (-3.63,-0.48)-- (-3.11,-1);
\draw (-3.11,-1)-- (-0.41,-1);
\draw (-0.41,-1)-- (-0.93,-0.48);
\draw (-0.93,-0.48)-- (-0.93,0.34);
\draw (-0.93,0.34)-- (-0.34,0.93);
\draw (0.48,0.93)-- (1,0.41);
\draw (1,0.41)-- (1,-0.41);
\draw (1,-0.41)-- (0.41,-1);
\draw (0.41,-1)-- (3.11,-1);
\draw (3.11,-1)-- (3.7,-0.41);
\draw (3.7,-0.41)-- (3.7,0.41);
\draw (3.7,0.41)-- (3.18,0.93);
\draw (3.18,0.93)-- (0.48,0.93);
\end{tikzpicture}
\caption{Case $n=1$. The tube $\L_3^{(n)}$ contains $W$-helping sets close to its boundaries except the one perpendicular to $u_k$.}
\label{subfig:bSG:log2}
\end{subfigure}
\begin{subfigure}{\textwidth}
\centering
    \begin{tikzpicture}[line cap=round,line join=round,>=triangle 45,x=0.8cm,y=0.8cm]
\fill[fill=black,fill opacity=1.0] (1,0.41) -- (0.41,1) -- (-0.41,1) -- (-1,0.41) -- (-1,-0.41) -- (-0.41,-1) -- (0.41,-1) -- (1,-0.41) -- cycle;
\fill[fill=black,fill opacity=0.3] (-2.91,1) -- (-3.5,0.41) -- (-3.5,-0.41) -- (-2.91,-1) -- (2.91,-1) -- (3.5,-0.41) -- (3.5,0.41) -- (2.91,1) -- cycle;
\fill[fill=black,fill opacity=0.3] (-4.68,2.77) -- (-5.27,2.18) -- (-5.27,1.35) -- (-1.15,-2.77) -- (4.68,-2.77) -- (5.27,-2.18) -- (5.27,-1.35) -- (1.15,2.77) -- cycle;
\fill[color=black,fill=white,fill opacity=1.0] (-0.25,-0.6) -- (0.25,-0.6) -- (0.6,-0.25) -- (0.6,0.25) -- (0.25,0.6) -- (-0.25,0.6) -- (-0.6,0.25) -- (-0.6,-0.25) -- cycle;
\draw (1,0.41)-- (0.41,1);
\draw (0.41,1)-- (-0.41,1);
\draw (-0.41,1)-- (-1,0.41);
\draw (-1,0.41)-- (-1,-0.41);
\draw (-1,-0.41)-- (-0.41,-1);
\draw (-0.41,-1)-- (0.41,-1);
\draw (0.41,-1)-- (1,-0.41);
\draw (1,-0.41)-- (1,0.41);
\draw (-2.91,1)-- (-3.5,0.41);
\draw (-3.5,0.41)-- (-3.5,-0.41);
\draw (-3.5,-0.41)-- (-2.91,-1);
\draw (-2.91,-1)-- (2.91,-1);
\draw (2.91,-1)-- (3.5,-0.41);
\draw (3.5,-0.41)-- (3.5,0.41);
\draw (3.5,0.41)-- (2.91,1);
\draw (2.91,1)-- (-2.91,1);
\draw (-4.68,2.77)-- (-5.27,2.18);
\draw (-5.27,2.18)-- (-5.27,1.35);
\draw (-5.27,1.35)-- (-1.15,-2.77);
\draw (-1.15,-2.77)-- (4.68,-2.77);
\draw (4.68,-2.77)-- (5.27,-2.18);
\draw (5.27,-2.18)-- (5.27,-1.35);
\draw (5.27,-1.35)-- (1.15,2.77);
\draw (1.15,2.77)-- (-4.68,2.77);
\draw [thick] (-5.02,2.91)-- (-5.61,2.32);
\draw [thick] (-5.61,2.32)-- (-5.61,2.22);
\draw [thick] (-5.61,2.22)-- (-5.02,2.81);
\draw [thick] (-5.02,2.81)-- (1.21,2.81);
\draw [thick] (1.21,2.81)-- (5.61,-1.59);
\draw [thick] (5.61,-1.59)-- (5.61,-1.49);
\draw [thick] (5.61,-1.49)-- (1.21,2.91);
\draw [thick] (1.21,2.91)-- (-5.02,2.91);
\draw (-5.61,1.49)-- (-5.61,2.32);
\draw (-5.61,2.32)-- (-5.02,2.91);
\draw (-5.02,2.91)-- (1.21,2.91);
\draw (1.21,2.91)-- (5.61,-1.49);
\draw (5.61,-1.49)-- (5.61,-2.32);
\draw (5.61,-2.32)-- (5.02,-2.91);
\draw (5.02,-2.91)-- (-1.21,-2.91);
\draw (-5.61,1.49)-- (-1.21,-2.91);

\end{tikzpicture}
\caption{Case $n=2$. Regions around all boundaries contain $W$-helping sets.}
\label{subfig:bSG:log2:2}
\end{subfigure}
\caption{The events $\bSG(\L^{(n)}_2)$ and $\bcT(\L_3^{(n)})$ of \cref{def:bSG:log2}. $\L_3^{(n)}$ is thickened. Black regions are entirely infected. Shaded tubes are $(\bone,W)$-symmetrically traversable.\label{fig:bSG:log2}}
\end{figure}

\begin{defn}[Contracted unbalanced unrooted events]
\label{def:bSG:log2}
For $n=2km+r\in[4k+1]$ and $r\in[2k]$, define $\L_1^{(n)},\L^{(n)}_2,\L_3^{(n)}$ by \cref{def:L123}.

Let $\bcT(\L^{(0)}_1)$ (resp.\ $\bcT(\L^{(0)}_3)$) be the events that $\L^{(0)}_1$ (resp.\ $\L^{(0)}_3$) is fully infected and $\bSG(\L^{(0)}_2)$ be the event that $\L^{(0)}_2\setminus\L(\ur^{(0)}-2W\uone)$ is fully infected.

Let $\bSG(\L^{(1)}_2)$ occur if the following all hold (see \cref{subfig:bSG:log2}):\textsuperscript{\ref{foot:STW}}
\begin{itemize}
    \item $\ST^\bone_W(T(\ur^{(0)}-\l_1\uv_1,l^{(0)}/2,0))$ occurs,
    \item $(\L(\ur^{(0)}+W\uone)\setminus\L(\ur^{(0)}-2W\uone))\cap\L_2^{(1)}$ is fully infected,
    \item $\ST^\bone_W(T(\ur^{(0)}-\l_1\uv_1,l^{(0)}/2,2k))$ occurs,
    \item for all $j\neq\pm k$ and segment $S\subset \L_2^{(1)}$, perpendicular to $u_j$ at distance at most $W$ from the $u_j$-side of $\L_2^{(1)}$ and of length $\li/W$, the event $\cH^W(S)$ occurs.
\end{itemize}
Further let $\bcT(\L_1^{(1)})$ occur if the following both hold (see \cref{subfig:bSG:log2}):
\begin{itemize}
\item $\L(\ur^{(0)}+W\uone)\cap\L_1^{(1)}$ is fully infected,
\item for all $j\neq\pm k$ and segment $S\subset \L_1^{(1)}$ perpendicular to $u_j$ of length $\li/W$ the event $\cH^W(S)$ occurs.
\end{itemize}
We define $\bcT(\L_3^{(1)})$ analogously.

Let $i\in[2,4k)$. We say that $\bcT(\L^{(i)}_1)$ occurs (see \cref{subfig:bSG:log2:2}) if for all $j\in[4k]$ and $m\in\{i-1,i\}$ every segment $S\subset\L^{(i)}_1$ perpendicular to $u_j$ of length $s_j^{(m)}/W$ at distance at most $W$ from the $u_j$-side (parallel to $S$) of $\L^{(m)}$, the event $\cH^W(S)$ occurs. We define $\bcT(\L^{(i)}_3)$ similarly. Let $\bSG(\L^{(i)}_2)$ occur if the following all hold (see \cref{subfig:bSG:log2:2}):
\begin{itemize}
    \item $\SG^\bone(\L^{(i-2)})$ occurs;
    \item for each $m\in\{0,2k\}$ the following occurs
    \begin{multline*}
    \ST^\bone_W\left(T\left(\ur^{(i-2)},l^{(i-2)}/2-\sqrt{W},i-2+m\right)\right)\\
    \cap\ST^\bone_W\left(T\left(\ur^{(i-1)}-\sqrt{W}\left(\uv_i+\uv_{i+2k}\right),l^{(i-1)}/2-\sqrt{W},i-1+m\right)\right);\end{multline*}
    \item for all $j\in[4k]$, $m\in\{i-2,i-1,i\}$ and segment $S\subset\L_2^{(i)}$, perpendicular to $u_j$ of length $s_j^{(m)}/W$ at distance at most $W$ from the $u_j$-side of $\L^{(m)}$, the event $\cH^W(S)$ holds.
\end{itemize}
\end{defn}
Before moving on, let us make a few comments on how \cref{def:bSG:log2} of $\bSG(\L^{(n)}_1)$ and $\bcT(\L^{(n)}_3)$ is devised. Recall that our goal is to satisfy \cref{eq:cor:CBSEP:reduction:condition}, that is, $\bSG(\L_2^{(n)})\times\bcT(\L_3^{(n)})\subset\SG^\bone(\L^{(n)})$, so as to apply \cref{cor:CBSEP:reduction}. For that reason, for the various values of $n$, we have required the (more than) parts of the event $\SG^\bone(\L^{(n)})$ which can be witnessed in each of $\L_2^{(n)}$ and $\L_3^{(n)}$. Since $\SG^\bone(\L^{(0)})$ corresponds to an infected ring of width roughly $W$ and radius being fully infected (see \cref{def:SG:unbalanced:internal}), we have required for $n\in\{0,1\}$
a ring of the same radius, but three times thicker to be infected. Similarly to \cref{def:bSG:iso}, we have slightly reduced the length of traversable tubes present in (recall \cref{def:SG:log2}), but thinned the corresponding parallelograms in \cref{subfig:extension:CBSEP}. We have further asked for $W$-helping sets around all boundaries so as to compensate for the shortening of the tubes. The construction takes advantage of the fact that for $n\ge 2$ the droplet $\L^{(n-2)}$ is far from the boundaries of $\L^{(n)}$ (see \cref{subfig:bSG:log2:2}), so the event $\SG^\bone(\L^{(n-2)})$ can be directly incorporated into $\bSG(\L^{(n)}_2)$, rather than being decomposed into one part in $\L_2^{(n)}$ and one in $\L_3^{(n)}$.

\begin{lem}[CBSEP-extension relaxation condition]
\label{lem:log2:condition}
For all $n\in[4k]$ we have $\bSG(\L_2^{(n)})\times\bcT(\L_3^{(n)})\subset\SG^\bone(\L_2^{(n)}\cup\L_3^{(n)})$ and similarly for $\L_1^{(n)}$ instead of $\L_3^{(n)}$.
\end{lem}
\begin{proof}
The proof for $n\ge 2$ is essentially identical to the one of \cref{lem:SG:condition:iso} and $n=0$ is immediate from \cref{def:bSG:log2,def:SG:unbalanced:internal}. We therefore focus on the case $n=1$ and on $\L_3^{(1)}$, since $\L_1^{(1)}$ is treated analogously. Assume $\bSG(\L_2^{(1)})$ and $\bcT(\L_3^{(1)})$ occur. Recalling \cref{def:extension:CBSEP}, it suffices to prove that $\SG^\bone_{l^{(0)}/2}(\L^{(1)})$ occurs.

Firstly, note that \begin{align*}
\ST^\bone\left(T\left(\ur^{(0)},l^{(0)}/2,2k\right)\right)\supset\ST^\bone_W\left(T\left(r^{(0)}-\l_1\uv_1,l^{(0)}/2,2k\right)\right),
\end{align*}
recalling from \cref{eq:def:uvi} that $\<\uv_1,\ue_j\>=0$ for all $j\in\{k+1,\dots,3k-1\}$ and $\<\uv_1,\ue_j\>\le O(1)\ll W$ for $j\in\{k,3k\}$. Similarly, for any $\eta\in\O_{\L^{(1)}}$ we have
\[\eta\in\ST^\bone_W\left(T\left(\ur^{(0)}-\l_1\uv_1,l^{(0)}/2,0\right)\right)\Rightarrow\eta\in \ST^{\eta_{\L^{(1)}\setminus T}\cdot\bone_{\bbZ^2\setminus\L^{(1)}}}(T),\]
where $T=(\ur^{(0)},l^{(0)}/2-\l_1/\<u_1,u_0\>,0)$. Furthermore, the fourth condition in the definition of $\bSG(\L_2^{(1)})$ and the second condition in the definition of $\bcT(\L_3^{(1)})$ (see \cref{def:bSG:log2}) imply the occurrence of $\ST^\bone(u_0(l^{(0)}/2-\l_1/\<u_1,u_0\>)+T(\ur^{(0)},\l_1/\<u_1,u_0\>,0))$. Using \cref{lem:tube:decomposition} to combine these two facts, we obtain that $\ST^\bone(\ur^{(0)},l^{(0)}/2,0)$ occurs.

Thus, it remains to show that $\SG^\bone(\L^{(0)})$ occurs. But, in view of \cref{def:SG:unbalanced:internal}, this is the case by the second condition in the definition of $\bSG(\L_2^{(1)})$ and the first condition of $\bcT(\L_3^{(1)})$ (see \cref{def:bSG:log2}).
\end{proof}

\begin{proof}[Proof of \cref{th:log2}]
By \cref{lem:log2:condition}, \cref{eq:cor:CBSEP:reduction:condition} holds, so we may apply \cref{cor:CBSEP:reduction}. Together with the Harris inequality \cref{eq:Harris:2}, this gives
\begin{equation}
\label{eq:log2:gamma}
\g\left(\L^{(4k)}\right)\le\frac{\g(\L^{(0)})\exp(O(C^{2})\log^2(1/q))}{\prod_{i\in[4k]}\m(\SG^\bone(\L^{(i+1)}))\m(\bcT(\L^{(i)}_1))\m(\bSG(\L_2^{(i)}))\m(\bcT(\L^{(i)}_3))}.
\end{equation}
In view of \cref{prop:unbal:internal}, in order to prove \cref{th:log2}, it suffices to prove that each of the terms in the denominator of \cref{eq:log2:gamma} is at least $\exp(-C^{O(1)}\log^2(1/q)/q^\a)$.

Inspecting \cref{def:bSG:log2,def:SG:log2}, we see that each $\SG$, $\bSG$ and $\bcT$ event in \cref{eq:log2:gamma} requires at most $C\li$ fixed infections, $W^{O(1)}$ $W$-helping sets and $O(1)$ $(\bone,W)$-symmetrically traversable tubes. We claim that the probability of each tube being $(\bone,W)$-symmetrically traversable is $q^{O(W)}$. Assuming this, the Harris inequality \cref{eq:Harris:1} and the above give that, for all $i\in[4k+1]$,
\[\m\left(\SG^\bone\left(\L^{(i)}\right)\right)\ge q^{C\li}q^{W^{O(1)}}q^{O(W)}=\exp\left(-C^{O(1)}\log^2(1/q)/q^\a\right)\]
and similarly for the other events.

To prove the claim, let us consider for concreteness and notational convenience the event 
\[\cE=\ST_W^\bone\left(T\left(\ur^{(1)},l^{(1)},1\right)\right),\]
all tubes being treated identically. As in \cref{eq:mu:ST:bound:iso2}, applying \cref{def:traversability,lem:traversability:boundary,obs:mu:helping}, we get 
\begin{equation}
\label{eq:mu:E:bound}\m(\cE)\ge q^{O(W)}\left(1-e^{-q^\a\li/O(1)}\right)^{O(l^{(1)})}\left(1-e^{-q^W\lmm/O(W)}\right)^{O(l^{(1)})}.\end{equation}
Here we noted that in directions $i\in(-k+2,k-1)$ symmetric traversability only requires helping sets (since the only hard directions are assumed to be $u_k$ and $u_{-k}$) and the corresponding side lengths of $\L^{(1)}$ are $\li+O(1)$, while for $i=k$ it requires $W$-helping sets, but the $u_k$-side of $\L^{(1)}$ has length $\lmm+O(1)$. Recalling \cref{subsec:scales} and the fact that $l^{(1)}=\Theta(\lmm)$, \cref{eq:mu:E:bound} becomes $\m(\cE)\ge q^{O(W)}$,
as claimed.
\end{proof}

\subsection{CBSEP global dynamics}
\label{subsec:log2:global}
With \cref{th:log2} established, we are ready to conclude the proof of \cref{th:main}\ref{log2} as in \cref{subsec:global:CBSEP}.
\begin{proof}[Proof of \cref{th:main}\ref{log2}]
Let $\cU$ be unbalanced unrooted. Recall the droplets $\L^{(n)}$ from \cref{subsec:log2:meso}. Set $\Lmp=\L^{(4k)}$ and $\Lmm=\L^{(2k)}$. Condition \ref{condition:CBSEP:1} of \cref{prop:global:CBSEP} is satisfied by \cref{th:log2}, while condition \ref{condition:CBSEP:2} is verified as in \cref{subsec:global:CBSEP}.

Thus, \cref{prop:global:CBSEP} applies and, together with \cref{th:log2}, it yields
\[\Et\le\exp\left(\frac{\log^2(1/q)}{\e q^\a}\right),\]
concluding the proof.
\end{proof}

\section{Semi-directed models}
\label{sec:loglog}
In this section we aim to treat semi-directed update families $\cU$ (class \ref{loglog}). The internal dynamics (\cref{subsec:loglog:internal}) based on East extensions is the most delicate. The mesoscopic and global dynamics (\cref{subsec:loglog:meso,subsec:loglog:global}) use the CBSEP mechanism along the same lines as in \cref{sec:iso,sec:log2}.

\subsection{East internal dynamics}
\label{subsec:loglog:internal}
In view of \cref{rem:logloglog}, in \cref{subsec:loglog:internal} we work not only with semi-directed models (class \ref{loglog}), but slightly more generally, in order to also treat balanced rooted models with finite number of stable directions (class \ref{log1}), whose update rules are contained in the axes of the lattice (in which case $k=1$---recall \cref{subsec:directions}). In either case we have that $\a(u_j)\le\a$ for all $j\in[4k]\setminus\{3k-1,3k\}$ and this is the only assumption on $\cU$ we use. 

Recalling \cref{subsec:scales}, set
\begin{align}
\Nc&{}=\min\{n:W^n\ge q^{-\a}\}=\lceil\a\log(1/q)/\log W\rceil,\nonumber\\
\Ni&{}=\min\left\{n:\left\lceil W^{\exp(n-\Nc)}/q^\a\right\rceil\ge\li\e\right\},\nonumber\\
&{}=\Nc+\log\log\log (1/q)+O(\log\log W),
\label{eq:def:ln:internal:East}\\
\ell^{(n)}&{}=\begin{cases}
W^n&n\le \Nc,\\
\left\lceil W^{\exp(n-\Nc)}/q^\a\right\rceil&\Nc<n\le \Ni.
\end{cases}\nonumber
\end{align}
\begin{rem}
\label{rem:scales}
Note that despite the extremely fast divergence of $\ell^{(n)}q^\a$, for $n\in(\Nc,\Ni]$ it holds that $W\le\ell^{(n+1)}/\ell^{(n)}<(\ell^{(n)}q^\a)^2<\log^4(1/q)$. The sharp divergence ensures that some error terms below sum to the largest one. This prevents additional factors of the order of $\Ni-\Nc$ in the final answer, particularly for the semi-directed class \ref{loglog} (recall \cref{subsubsec:East:internal:overview}). This technique was introduced in \cite{Hartarsky19}*{Eq.\ (16)}, while the geometrically increasing scale choice relevant for small $n$ originates from \cite{Gravner12}. It should be noted that this divergence can be further amplified up to a tower of exponentials of height linear in $n-\Nc$. In that case the $\log\log\log(1/q)$ error term in \cref{th:internal:East,eq:rem:logloglog} below becomes $\log_*(1/q)$, but is, alas, still divergent.
\end{rem}
Recall \cref{subsec:geometry}. Let $\ur^{(0)}=(r^{(0)}_j)_{j\in[4k]}$ be a symmetric sequence of radii such that $\ur=\Theta(1/\e)$, the vertices of $\L(\ur^{(0)})$ are in $2\bbZ^2$ and the corresponding side lengths $\us^{(0)}$ are also $\Theta(1/\e)$. For $n\in\bbN$ and $j\in[4k]$, we define $s_j^{(n)}=s_j^{(0)}\ell^{(n)}$. We denote $\L^{(n)}=\L(\ur^{(n)})$, where $\ur^{(n)}$ is the sequence of radii corresponding to $\us^{(n)}$ such that $r^{(n)}_{3k}=r_{3k}^{(0)}$ and $r^{(n)}_{3k-1}=r^{(0)}_{3k-1}$ (see \cref{fig:East:internal:loglog}).

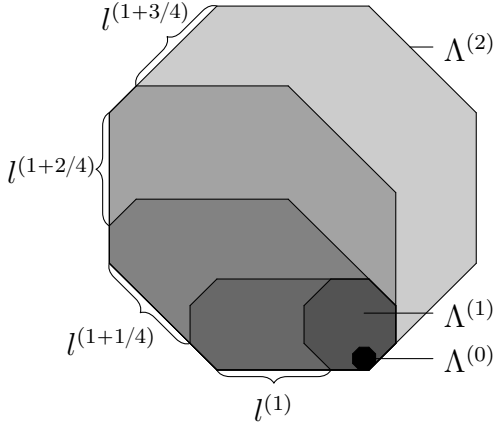
\begin{figure}
\floatbox[{\capbeside\thisfloatsetup{capbesideposition={right,top}}}]{figure}[\FBwidth]
{\centering
\begin{tikzpicture}[line cap=round,line join=round,>=triangle 45,x=0.022\textwidth,y=0.022\textwidth]
\fill[fill=black,fill opacity=1.0] (-0.21,0.5) -- (0.21,0.5) -- (0.5,0.21) -- (0.5,-0.21) -- (0.21,-0.5) -- (-0.21,-0.5) -- (-0.5,-0.21) -- (-0.5,0.21) -- cycle;
\fill[fill=black,fill opacity=0.2] (-1.45,3.5) -- (0.21,3.5) -- (1.38,2.33) -- (1.38,0.67) -- (0.21,-0.5) -- (-1.45,-0.5) -- (-2.62,0.67) -- (-2.62,2.33) -- cycle;
\fill[fill=black,fill opacity=0.2] (-6.42,15.5) -- (0.21,15.5) -- (4.89,10.81) -- (4.89,4.19) -- (0.21,-0.5) -- (-6.42,-0.5) -- (-11.11,4.19) -- (-11.11,10.81) -- cycle;
\fill[fill=black,fill opacity=0.2] (-6.42,-0.5) -- (-7.59,0.67) -- (-7.59,2.33) -- (-6.42,3.5) -- (0.21,3.5) -- (1.38,2.33) -- (1.38,0.67) -- (0.21,-0.5) -- cycle;
\fill[fill=black,fill opacity=0.2] (-11.11,4.19) -- (-11.11,5.84) -- (-9.94,7.01) -- (-3.31,7.01) -- (1.38,2.33) -- (1.38,0.67) -- (0.21,-0.5) -- (-6.42,-0.5) -- cycle;
\fill[fill=black,fill opacity=0.2] (-9.94,11.99) -- (-3.31,11.99) -- (1.38,7.3) -- (1.38,0.67) -- (0.21,-0.5) -- (-6.42,-0.5) -- (-11.11,4.19) -- (-11.11,10.81) -- cycle;
\draw (-0.21,0.5)-- (0.21,0.5);
\draw (0.21,0.5)-- (0.5,0.21);
\draw (0.5,0.21)-- (0.5,-0.21);
\draw (0.5,-0.21)-- (0.21,-0.5);
\draw (0.21,-0.5)-- (-0.21,-0.5);
\draw (-0.21,-0.5)-- (-0.5,-0.21);
\draw (-0.5,-0.21)-- (-0.5,0.21);
\draw (-0.5,0.21)-- (-0.21,0.5);
\draw (-1.45,3.5)-- (0.21,3.5);
\draw (0.21,3.5)-- (1.38,2.33);
\draw (1.38,2.33)-- (1.38,0.67);
\draw (1.38,0.67)-- (0.21,-0.5);
\draw (0.21,-0.5)-- (-1.45,-0.5);
\draw (-1.45,-0.5)-- (-2.62,0.67);
\draw (-2.62,0.67)-- (-2.62,2.33);
\draw (-2.62,2.33)-- (-1.45,3.5);
\draw (-6.42,15.5)-- (0.21,15.5);
\draw (0.21,15.5)-- (4.89,10.81);
\draw (4.89,10.81)-- (4.89,4.19);
\draw (4.89,4.19)-- (0.21,-0.5);
\draw (0.21,-0.5)-- (-6.42,-0.5);
\draw (-6.42,-0.5)-- (-11.11,4.19);
\draw (-11.11,4.19)-- (-11.11,10.81);
\draw (-11.11,10.81)-- (-6.42,15.5);
\draw (-6.42,-0.5)-- (-7.59,0.67);
\draw (-7.59,0.67)-- (-7.59,2.33);
\draw (-7.59,2.33)-- (-6.42,3.5);
\draw (-6.42,3.5)-- (0.21,3.5);
\draw (0.21,3.5)-- (1.38,2.33);
\draw (1.38,2.33)-- (1.38,0.67);
\draw (1.38,0.67)-- (0.21,-0.5);
\draw (0.21,-0.5)-- (-6.42,-0.5);
\draw (-11.11,4.19)-- (-11.11,5.84);
\draw (-11.11,5.84)-- (-9.94,7.01);
\draw (-9.94,7.01)-- (-3.31,7.01);
\draw (-3.31,7.01)-- (1.38,2.33);
\draw (1.38,2.33)-- (1.38,0.67);
\draw (1.38,0.67)-- (0.21,-0.5);
\draw (0.21,-0.5)-- (-6.42,-0.5);
\draw (-6.42,-0.5)-- (-11.11,4.19);
\draw (-9.94,11.99)-- (-3.31,11.99);
\draw (-3.31,11.99)-- (1.38,7.3);
\draw (1.38,7.3)-- (1.38,0.67);
\draw (1.38,0.67)-- (0.21,-0.5);
\draw (0.21,-0.5)-- (-6.42,-0.5);
\draw (-6.42,-0.5)-- (-11.11,4.19);
\draw (-11.11,4.19)-- (-11.11,10.81);
\draw (-11.11,10.81)-- (-9.94,11.99);
\draw [decorate,decoration={brace,amplitude=5pt}] (-1.45,-0.5) -- (-6.42,-0.5) node [midway,yshift= - 0.5cm] {$l^{(1)}$};
\draw [decorate,decoration={brace,amplitude=5pt}] (-7.59,0.67) -- (-11.11,4.19) node [midway,yshift= - 0.5cm,xshift=-0.5cm] {$l^{(1+1/4)}$};
\draw [decorate,decoration={brace,amplitude=5pt}] (-11.11,5.84) -- (-11.11,10.81) node [midway,xshift=-0.75cm] {$l^{(1+2/4)}$};
\draw [decorate,decoration={brace,amplitude=5pt}] (-9.94,11.99) -- (-6.42,15.5) node [midway,xshift=-0.4cm,yshift=0.4cm] {$l^{(1+3/4)}$};
\draw (2,13.7)--(3,13.7) node [right] {$\L^{(2)}$};
\draw (0,2)--(3,2) node [right] {$\L^{(1)}$};
\draw (0,0)--(3,0) node [right] {$\L^{(0)}$};
\end{tikzpicture}}{\caption{Geometry of the nested droplets $\L^{(n)}$ for $k=2$ in the setting of \cref{subsec:loglog:internal}. For $n\in\bbN$ droplets are symmetric and homothetic to the black $\L^{(0)}$. Intermediate ones $\L^{(1+1/4)}$, $\L^{(1+2/4)}$ and $\L^{(1+3/4)}$ obtained by East-extensions (see \cref{subfig:extension:East}) in directions $u_0$, $u_1$ and $u_2$ respectively are drawn in progressive shades of grey.\label{fig:East:internal:loglog}}}
\end{figure}

 For $j\in[2k]$, we write $l^{(n+j/(2k))}=s_{j+k}^{(n+1)}-s_{j+k}^{(n)}=\Theta(\ell^{(n+1)}/\e)$ and set $\ur^{(n+(j+1)/(2k))}=\ur^{(n+j/(2k))}+l^{(n+j/(2k))}\uv_j$, which is consistent with the definition of $\ur^{(n+1)}$ above. Thus, denoting $\L^{(n+j/(2k))}=\L(\ur^{(n+j/(2k))})$ for $n\in\bbN$ and $j\in(0,2k)$ (see \cref{fig:East:internal:loglog}), we may define SG events of these droplets by extension (recall \cref{def:extension:East,subfig:extension:East} for East-extensions).
\begin{defn}[Semi-directed internal SG]
\label{def:SG:internal:loglog}
Let $\cU$ be semi-directed or balanced rooted with finite number of stable directions and $k=1$. We say that $\L^{(0)}$ is SG ($\SG^\bone(\L^{(0)})$ occurs), if all sites in $\L^{(0)}$ are infected. We then recursively define $\SG^\bone(\L^{(n+(j+1)/(2k))})$, for $n\in[\Ni]$ and $j\in[2k]$, by East-extending $\L^{(n+j/(2k))}$ in direction $u_j$ by $l^{(n+j/(2k))}$ (see \cref{fig:East:internal:loglog}).
\end{defn}

As usual, we seek to bound the probability of $\SG^\bone(\L^{(\Ni)})$ and associated $\g(\L^{(\Ni)})$ (recall \cref{subsec:poincare}).
\begin{thm}
\label{th:internal:loglog}
Let $\cU$ be semi-directed (class \ref{loglog}) or balanced rooted with finite number of stable directions  (class \ref{log1}) and $k=1$. Then
\begin{align*}\g\left(\L^{(\Ni)}\right)&{}\le \exp\left(\frac{\log\log(1/q)}{\e^6q^\a}\right),&
\m\left(\SG^\bone\left(\L^{(\Ni)}\right)\right)&{}\ge \exp\left(\frac{-1}{\e^2q^\a}\right).
\end{align*}
\end{thm}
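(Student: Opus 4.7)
I will prove both bounds simultaneously by induction on $n \in [0, N^{\mathrm{i}}]$, iterating the $2k$ East-extensions per integer scale prescribed by \cref{def:SG:internal:loglog}. The base case $n=0$ is trivial since $\SG^\bone(\L^{(0)})$ is the singleton fully infected configuration, so $\g(\L^{(0)})=1$ and $\m(\SG^\bone(\L^{(0)}))\ge q^{|\L^{(0)}|}=q^{O(1/\e^2)}$. At each inductive step I apply \cref{cor:East:reduction} (for $\g$) and the factorisation of $\m(\SG^\bone)$ into the product of tube traversability probabilities given by \cref{def:extension:East} (for the probability bound).

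\textbf{Probability bound.} By induction and \cref{def:extension:East}, $\m(\SG^\bone(\L^{(N^{\mathrm{i}})}))$ equals $\m(\SG^\bone(\L^{(0)}))$ times a product of $2kN^{\mathrm{i}}$ factors $\m(\cT^\bone(T^{(n,j)}))$, one per extension. Using \cref{obs:mu:helping}, each tube factor is bounded using $\m(\cH(S))\ge(1-e^{-q^\a|S|/O(1)})^{O(1)}$ on each line segment composing the tube (of which there are $\Theta(\ell^{(n+1)}/\e)$ perpendicular to $u_j$ of length $\Theta(\ell^{(n)}/\e)$, plus $\Theta(\ell^{(n)}/\e)$ perpendicular to other directions of length $\Theta(\ell^{(n+1)}/\e)$). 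A computation parallel to that of \cref{lem:mu:SG:bound:iso} shows that the dominant contribution to $-\log\m(\SG^\bone(\L^{(N^{\mathrm{i}})}))$ comes from scales near the transition $n\approx N^{\mathrm{c}}$ (where $q^\a\ell^{(n)}=\Theta(1)$) and is at most $O(1/(\e^2 q^\a))$, as needed.

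\textbf{Poincar\'e bound.} Each application of \cref{cor:East:reduction} multiplies $\g$ by at most
\[\max\left(\m^{-1}(\SG^\bone(\text{old})),\g(\text{old})\right)\cdot e^{O(C^2)\log^2(1/q)}\cdot\prod_{m=1}^{M-1}a_m,\]
where $M=O(\log(\ell^{(n+1)}/\e))$ and $a_m=\m^{-1}(\SG^\bone(\L^m+s_mu_j)\mid \SG^\bone(\L^m))$. To bound $a_m$, I will use the fact that $\SG^\bone(\L^m)$ is itself constructed by East-extension as the intersection of an inner SG event at a smaller scale and boundedly many traversability events; applying \cref{cor:perturbation} to each such traversability event with $\Delta=O(s_m)$ bounds each $a_m$ by $\r_D^{-O(1)}$ when $d_m\ge 1/q^\a$ (the inner core being shared between the two droplets up to a perturbation compatible with \cref{cor:perturbation}), while for $d_m<1/q^\a$ the cruder bound $a_m\le \m^{-1}(\SG^\bone(\text{droplet of scale }d_m))$ gives a contribution of only $q^{-O(W)\cdot\log(1/q^\a)}$ in total per extension. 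The total logarithmic cost thus splits as $\sum_{n,j}\sum_m\log a_m$; the small-$d_m$ terms contribute $O(\log^2(1/q))$ across all extensions, while the $d_m\ge 1/q^\a$ terms contribute $\r_D^{-1}$ each, with count $\sum_{n\ge N^{\mathrm{c}}}O(\log(\ell^{(n+1)}q^\a))$. By \cref{eq:def:ln:internal:East} and \cref{rem:scales}, $\log(\ell^{(n+1)}q^\a)=\Theta(\exp(n+1-N^{\mathrm{c}})\log W)$ grows faster than geometrically, so this sum is dominated by its last term $\log(\ell^{(N^{\mathrm{i}})}q^\a)=O(\log\log(1/q))$. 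This yields $\g(\L^{(N^{\mathrm{i}})})\le\exp(O(\log\log(1/q))/q^\a)$ after absorbing the $e^{O(C^2)\log^2(1/q)}$ factors per step (a total of $O(N^{\mathrm{i}}\log^2(1/q))$ of them, which is subdominant).

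\textbf{Main obstacle.} The delicate point is the sharp estimate on $a_m$: naively bounding the conditional by the unconditional probability would overcount by a factor $\m^{-1}(\SG^\bone)^n$ at each extension step, devastating the final bound. Because $\SG^\bone$ is a nested East-extension, one must apply \cref{cor:perturbation} layer by layer, verifying that the geometric perturbations (shifts, offsets and width changes) at each level remain within its tolerance $\Delta\le s/W^2$, so that the conditional cost is controlled by the scale of the perturbation $d_m$ rather than the scale of the full droplet. The other subtle point is that producing the exponent $\log\log(1/q)$ rather than the weaker $(\log\log(1/q))^2$ of the generic East calculation \cref{eq:product:subtle2} relies crucially on the double-exponential choice of scales in \cref{eq:def:ln:internal:East}, which forces the geometric sum $\sum_n\log(\ell^{(n+1)}q^\a)$ to collapse to its last term.
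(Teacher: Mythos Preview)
Your overall strategy—iterating \cref{cor:East:reduction} across the $2kN^{\mathrm i}$ extensions, unfolding each conditional $a_m$ through the nested East-extension structure, applying \cref{cor:perturbation} layer by layer, and collapsing the outer sum via the double-exponential scales of \cref{eq:def:ln:internal:East}—is precisely the paper's approach. You have also correctly identified both delicate points. However, your quantitative treatment of the small-$d_m$ regime contains an error that, while not fatal to the conclusion, leaves a real gap.

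First a minor point: $\SG^\bone(\L^m)$ with $\L^m=\L(\ur^{(n)}+d_m\uv_j)$ unfolds into the core $\SG^\bone(\L^{(0)})$ intersected with $\Theta(n)$ tube events, not ``boundedly many''. More seriously, your claim that the small-$d_m$ product is at most $q^{-O(W)\log(1/q^\a)}$ per extension (hence $O(\log^2(1/q))$ in the log across all extensions) is false. A droplet at scale $(3/2)^m$ with $(3/2)^m$ just below $1/q^\a$ already has $\m(\SG^\bone)^{-1}\approx\exp(\Theta(1)/(\e^2q^\a))$ by \cref{eq:mu:SG:bound:loglog}, so the product $\prod_{m:(3/2)^m<1/q^\a}a_m$ is of order $\exp(\Theta(1)/(\e^2q^\a))$ per extension, not $\exp(O(\log^2(1/q)))$. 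This correct value is still subdominant to $\exp(\log\log(1/q)/(\e^6q^\a))$, so the final bound survives—but your argument as written does not establish it.

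The paper closes this gap via \cref{lem:internal:loglog:probability}, whose refined estimate \cref{eq:anm:bound:loglog} requires: (i) choosing the cutoff $n_m$ at the threshold $\ell^{(n_m)}\approx 1/(q^\a\log^W(1/q))$ rather than at scale $(3/2)^m$; (ii) splitting the perturbation product $\prod_{p=n_m}^{n}$ into the regimes $p\le N^{\mathrm c}$, $p\in(N^{\mathrm c},N^{\mathrm c}+\Psi]$ and $p>N^{\mathrm c}+\Psi$ (the last handled by \cref{eq:conditional:T:trivial}); and (iii) tracking how the $(3/2)^m$ prefactor coming out of \cref{cor:perturbation} interacts with the ratios $\ell^{(p+1)}/\ell^{(p)}$. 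Without this case analysis, the accumulated perturbation factors across the $\Theta(n)$ tubes are not under control, and your sketched bound does not follow.
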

The rest of \cref{subsec:loglog:internal} is dedicated to the proof of \cref{th:internal:loglog}. The probability bound is fairly easy, as in \cref{eq:mu:SG:LNm:iso}, while the relaxation time is bounded by iteratively using \cref{cor:East:reduction} and then carefully estimating the product appearing there with the help of \cref{cor:perturbation}.

Note that $\g(\L^{(0)})=1$, since \cref{eq:def:gamma} is trivial, as $\SG^\bone(\L^{(0)})$ is a singleton. For $n\in 1/(2k)\bbN$, $j\in[2k]$ and $m\ge 1$, such that $n<\Ni$ and $n-j/(2k)\in\bbN$ set
\begin{equation}
\label{eq:def:amn:loglog}a^{(n)}_{m}=\m^{-1}\left(\left.\SG^\bone\left(\L^{(n)}+\left(\left\lfloor (3/2)^{m+1}\right\rfloor-\left\lfloor(3/2)^{m}\right\rfloor\right)\l_j u_j\right)\right|\SG^\bone\left(\L^{(n)}\right)\right).
\end{equation}
We further let
\begin{equation}
\label{eq:def:Mn}M^{(n)}=\min\left\{m:\l_j(3/2)^{m+1}\ge l^{(n)}\right\}=\log l^{(n)}/\log(3/2)+O(1).
\end{equation}
For the sake of simplifying expressions we abusively assume that $l^{(n)}=\l_j\lfloor(3/2)^{M^{(n)}+1}\rfloor$. Without this assumption, one would need to treat the term corresponding to $m=M^{(n)}$ below separately, but identically. 

We next seek to apply \cref{cor:East:reduction} with $\ur=\ur^{(n)}$ and $l=l^{(n)}$. Let us first analyse the term $a_m$ in \cref{eq:def:am:East}. By \cref{def:extension:East} and the Harris inequality \cref{eq:Harris:3}, we have
\begin{equation}\label{eq:loglog:internal:ambound}a_m\le \frac{a_m^{(n)}}{\m(\cT^\bone(T+(\lfloor (3/2)^{m+1}\rfloor-\lfloor(3/2)^{m}\rfloor)\l_ju_j)|\cT^\bone(T))}=\frac{a_m^{(n)}}{b_m^{(n)}},\end{equation}
using \cref{lem:tube:decomposition} in the equality and setting
\begin{align*}
T&{}=T\left(\ur^{(n)},\l_j\lfloor(3/2)^m\rfloor,j\right)\\
b_m^{(n)}&{}=\m\left(\cT^\bone\left(T\left(\ur^{(n)},\left(\left\lfloor (3/2)^{m+1}\right\rfloor-\left\lfloor(3/2)^{m}\right\rfloor\right)\l_j,j\right)\right)\right)\end{align*}
Moreover, by \cref{lem:tube:decomposition,lem:traversability:boundary} we have 
\begin{align}
\nonumber
\prod_{m=1}^{M^{(n)}}b_m^{(n)}&{}=q^{-O(WM^{(n)})}\m\left(\cT^\bone\left(T\left(\ur^{(n)},l^{(n)},j\right)\right)\right)\\
&{}=q^{-O(WM^{(n)})}\frac{\m(\SG^\bone(\L^{(n+1/(2k))}))}{\m(\SG^\bone(\L^{(n)}))},
\label{eq:loglog:internal:prod:tubes}
\end{align}
where the second equality uses \cref{def:SG:internal:loglog,def:extension:East}. 

Applying \cref{cor:East:reduction} successively and using \cref{eq:loglog:internal:ambound,eq:def:Mn}, we get
\begin{align}
\nonumber\g\left(\L^{(\Ni)}\right)&{}\le\max_{n\le\Ni}\m^{-1}\left(\SG^\bone\left(\L^{(n)}\right)\right) \prod_{n=0}^{\Ni-1/(2k)}e^{O(C^2)\log^{2}(1/q)}\prod_{m=1}^{M^{(n)}}\frac{a_m^{(n)}}{b_m^{(n)}}\\
\nonumber&{}\le \frac{\m(\SG^\bone(\L^{(0)}))e^{O(C^2)\Ni\log^2(1/q)}}{\m^2(\SG^\bone(\L^{(\Ni)}))}\prod_{n=0}^{\Ni-1/(2k)}q^{-O(WM^{(n)})}\prod_{m=1}^{M^{(n)}}a_m^{(n)}\\
&{}\le \frac{\exp(\log^{O(1)}(1/q))}{\m^2(\SG^\bone(\L^{(\Ni)}))}\prod_{n=0}^{\Ni-1/(2k)}\prod_{m=1}^{M^{(n)}}a_m^{(n)},
    \label{eq:gLni:decomposition:loglog}
    \end{align}
where in the second inequality we used \cref{eq:loglog:internal:prod:tubes} and the fact that $\m(\SG^\bone(\L^{(n)}))$ is non-increasing in $n$ (recall \cref{def:SG:internal:loglog,def:extension:East}); in the third inequality we used $\Ni\le \log(1/q)$ by \cref{eq:def:ln:internal:East} and $M^{(n)}\le O(\log(1/q))$ by \cref{eq:def:Mn,eq:def:ln:internal:East}. Note that in \cref{eq:gLni:decomposition:loglog} and below products on $n$ run over $1/(2k)\bbN$.

To evaluate the r.h.s.\ of \cref{eq:gLni:decomposition:loglog} we need the following lemma.
\begin{lem}
\label{lem:internal:loglog:probability}
Let $n\in 1/(2k)\bbN$ be such that $n\le\Ni$ and $m\ge 1$. Then
\begin{equation}
\label{eq:mu:SG:bound:loglog}
a_m^{(n)}\le\m^{-1}\left(\SG^\bone\left(\L^{(n)}\right)\right)\le\min\left((\d q^\a W^n)^{-W^n/\e^2},e^{1/(\e^2q^\a)}\right).
\end{equation}
Moreover, if
\begin{align}
\label{eq:conditions:internal:loglog:probability}
\ell^{(\lfloor n\rfloor)}&{}\ge 1/\left(q^{\a}\log^W(1/q)\right),&
M^{(n)}{}&\ge m+W,&
(3/2)^m&{}\le 1/q^\a,
\end{align}
setting
\begin{equation}
\label{eq:def:nm}
n_m=\min\left\{n'\in\bbN:\ell^{(n')}\ge1/\left(q^\a\log^W(1/q)\right),M^{(n')}\ge m+W\right\}\le n,
\end{equation}
the following improvements hold
\begin{align}
\label{eq:anm:bound:loglog}
a_m^{(n)}\le{}& \exp\left(\frac{(3/2)^m}{\e^4}\left((\Nc-n_m)^2+\1_{n\ge \Nc}\log^{2/3}\log(1/q)\right)\right)\\
&\times\begin{cases}
\exp\left(1/\left(q^\a\log^{W-O(1)}(1/q)\right)\right)&m\le \frac{\log(1/(q^\a\log^W(1/q)))}{\log(3/2)}\\
\exp\left(1/\left(q^\a\log^{W-O(1)}\log(1/q)\right)\right)&m> \frac{\log(1/(q^\a\log^W(1/q)))}{\log(3/2)}.
\end{cases}\nonumber
\end{align}
\end{lem}
Let us finish the proof of \cref{th:internal:loglog} before proving \cref{lem:internal:loglog:probability}. 
\begin{proof}[Proof of \cref{th:internal:loglog}]
The second inequality in \cref{th:internal:loglog} is contained in \cref{eq:mu:SG:bound:loglog}, so we focus on $\g(\L^{(\Ni)})$ based on \cref{eq:gLni:decomposition:loglog}. Set \begin{equation}
\label{eq:def:Ma}M_\alpha=\left\lceil \log(1/q^\a)/\log(3/2)\right\rceil.\end{equation}
Using the trivial bound $a_m^{(n)}\le\exp(1/(\e^2q^\a))$ from \cref{eq:mu:SG:bound:loglog} and then \cref{eq:def:ln:internal:East,eq:def:Mn}, we get
\begin{align}
\nonumber\prod_{n=\Nc-\lceil 1/\e\rceil}^{\Ni-1/(2k)}\prod_{m=M_\alpha}^{M^{(n)}}a_m^{(n)}&{}\le \exp\left(\frac{2}{\e^3q^\a}\sum_{n=\Nc}^{\Ni-1/(2k)}\left(M^{(n)}-M_\alpha+1\right)\right)\\
\nonumber&{}\le\exp\left(\frac{\sum_{n=\Nc}^{\Ni-1/(2k)} O(1+\log (\ell^{(\lceil n+1/(2k)\rceil)}q^\a/\e))}{\e^3q^\a}\right)\\
\nonumber&{}\le\exp\left(\frac{\sum_{n=\Nc}^{\Ni}e^{n+1-\Nc}}{\e^4q^\a}\right)\\
&{}\le\exp\left(\frac{\log\log(1/q)}{\e^5q^\a}\right),
\label{eq:gLni:part2:loglog}
\end{align}
which is the main contribution. Note that by \cref{eq:def:Mn,eq:def:ln:internal:East}, $n<\Nc-1/\e$ implies $M^{(n)}<M_\alpha$, so \cref{eq:gLni:part2:loglog} exhausts the terms in \cref{eq:gLni:decomposition:loglog} with $m\ge M_\a$.

Next set 
\begin{equation}
    \label{eq:def:NW}
    N_W=\left\lceil-\log\left(q^\a\log^W(1/q)\right)/\log W\right\rceil.
\end{equation}
Using the first bound on $a_m^{(n)}$ from \cref{eq:mu:SG:bound:loglog} and \cref{eq:def:Mn}, we obtain
\begin{align}
\label{eq:gLni:part2.5:loglog}
\nonumber\prod_{n=0}^{N_W}\prod_{m=1}^{M^{(n)}}a_m^{(n)}&{}\le\prod_{n=0}^{N_W}(\d q^\a W^n)^{-O(\log(1/q)W^n/\e^2)}\\
\nonumber&{}\le \exp\left(-\log^{O(1)}(1/q)\sum_{n=0}^{N_W}W^n\right)\\&{}\le\exp\left(1/\left(q^\a\log^{W-O(1)}(1/q)\right)\right).\end{align}

We next turn to the range $N_W\le n<n_m$ with $m<M_\alpha$. Recalling \cref{eq:def:nm,eq:def:Mn,eq:def:ln:internal:East}, we get that $N_W\le n<n_m$ implies $M^{(n)}<m+W$ and therefore $l^{(n)}\le O((3/2)^{m+W})$, so $W^n\le (3/2)^m$. Plugging this into the first bound on $a_m^{(n)}$ from \cref{eq:mu:SG:bound:loglog}, we get
\begin{align}
\prod^{M_\a-1}_{m=1}\prod_{n=N_W}^{n_m-1/(2k)}a_m^{(n)}&{}\le \exp\left(-\sum^{M_\a}_{m=1}\frac{(3/2)^m\log(\d q^\a (3/2)^m)}{\e^3}\right)\le e^{1/(q^\a\e^4)}.\label{eq:gLni:part3:loglog}
\end{align}

It remains to treat the range $n_m\le n<\Ni$ with $m< M_\a$. Note that by \cref{eq:def:NW,eq:def:nm,eq:def:ln:internal:East} $N_W\le n_m$ for any $m$ and set 
\begin{equation}
\label{eq:def:MW}
M_W=\left\lfloor \log\left(1/\left(q^\a\log^W(1/q)\right)\right)/\log(3/2)\right\rfloor.
\end{equation}
Then \cref{eq:anm:bound:loglog} gives 
\begin{align}
\nonumber\sum^{M_\a-1}_{m=1}\sum_{n=n_m}^{\Ni-1/(2k)}\log a_m^{(n)}\le{}& \frac{2k}{\e^4}\sum_{m=1}^{M_\a-1}(3/2)^m(\Nc-n_m)^2(\Ni-\Nc+\Nc-n_m)\\
\nonumber&+\frac{2k}{\e^4}(\Ni-\Nc)\log^{2/3}\log(1/q)\sum_{m=1}^{M_\a-1}(3/2)^m\\
\nonumber&+\frac{2kM_\a\Ni}{q^\a\log^{W-O(1)}(1/q)}+\frac{2k(M_\a-M_W)(\Ni-N_W)}{q^\a\log^{W-O(1)}\log (1/q)}\\
\nonumber\le{}& \frac{8k}{\e^4}\log\log\log(1/q)\sum_{m=1}^{M_\a-1}(3/2)^m(\Nc-n_m)^3\\
\nonumber&+\frac{\log^{2/3}\log(1/q)\log\log\log(1/q)}{\e^5q^\a}\\
&+\frac{1}{q^\a\log^{W-O(1)}(1/q)}+\frac{1}{q^\a\log^{W-O(1)}\log(1/q)},
\label{eq:loglog:huge}
\end{align}
where we used that $\Ni-\Nc\le 2\log\log\log(1/q)$ by \cref{eq:def:ln:internal:East}, $M_\a\le \log^{O(1)}(1/q)$ by \cref{eq:def:Ma}, $\Ni\le \log^{O(1)}(1/q)$ by \cref{eq:def:ln:internal:East}, $M_\a-M_W\le \log^{O(1)}\log(1/q)$ by \cref{eq:def:Ma,eq:def:MW} and $\Ni-N_W\le \log^{O(1)}\log(1/q)$ by \cref{eq:def:ln:internal:East,eq:def:NW}. In order to bound the last sum in \cref{eq:loglog:huge}, we note that by \cref{eq:def:MW,eq:def:nm,eq:def:ln:internal:East,eq:def:Ma}, for any $m\in[M_W,M_\a)$ we have $\Nc-n_m\le (M_\a-m)/\e$. Plugging this back into \cref{eq:loglog:huge}, we get
\begin{align*}
\sum^{M_\a-1}_{m=1}\sum_{n=n_m}^{\Ni-1/(2k)}\log a_m^{(n)}\le{}&\frac{\log\log\log(1/q)}{\e^{O(1)}}\left(M_\a^4(3/2)^{M_W}+(3/2)^{M_\a}\right)\\
&+\frac{\log^{3/4}\log(1/q)}{2q^\a}\\
\le{}&\frac{\log^{3/4}\log(1/q)}{q^\a}.\end{align*}
Plugging the last result and \cref{eq:gLni:part2:loglog,eq:gLni:part2.5:loglog,eq:gLni:part3:loglog} in \cref{eq:gLni:decomposition:loglog}, we conclude the proof of \cref{th:internal:loglog}, since $\m(\SG^\bone(\L^{(\Ni)}))\ge e^{-1/(\e^2q^\a)}$ by \cref{eq:mu:SG:bound:loglog}.
\end{proof}

\begin{proof}[Proof of \cref{lem:internal:loglog:probability}]
Let us fix $m$ and $n$ as in the statement for \cref{eq:mu:SG:bound:loglog}. The bound $a_m^{(n)}\le\m^{-1}(\SG^\bone(\L^{(n)}))$ follows from the Harris inequality \cref{eq:Harris:2}. To upper bound the latter term we note that by \cref{def:SG:internal:loglog,def:extension:East},
\begin{equation}
\label{eq:mu:bound:rec:loglog}
\m\left(\SG^\bone\left(\L^{(n)}\right)\right)=\m\left(\SG^\bone\left(\L^{(0)}\right)\right)\prod_{p=0}^{n-1/(2k)}\m\left(\cT^\bone\left(T\left(\ur^{(p)},l^{(p)},j(p)\right)\right)\right),
\end{equation}
setting $j(p)\in[2k]$ such that $p-j(p)/(2k)\in\bbN$ and letting products on $p$ run over $1/(2k)\bbN$. Clearly,
\begin{equation}
\label{eq:mu:bound:base:loglog}
\m\left(\SG^\bone\left(\L^{(0)}\right)\right)=q^{|\L^{(0)}|}=q^{\Theta(1/\e^2)}.
\end{equation}

Let us fix $p\in1/(2k)\bbN$, $p<\Ni$. Then, 
using \cref{lem:traversability:boundary}, \cref{def:traversability}, \cref{obs:mu:helping},  and the Harris inequality \cref{eq:Harris:1}, we get
\begin{multline}
\label{eq:mu:T:bound:loglog}
\m\left(\cT^\bone\left(T\left(\ur^{(p)},l^{(p)},j(p)\right)\right)\right)\\
\begin{aligned}\ge{}& q^{O(W)}\left(1-e^{-q^\a\ell^{(\lfloor p\rfloor)}/O(\e)}\right)^{O(l^{(p)})}\\
\ge{}&q^{O(W)}
\begin{cases}
(\d q^\a W^p)^{W^p/(\d\e)}&p\le \Nc,\\
\exp\left(-1/\left(q^{\a}\exp\left(W^{\exp(\lfloor p\rfloor-\Nc)}/\d\right)\right)\right)&p> \Nc.\end{cases}
\end{aligned}\end{multline}
In the last inequality we took into account $1/\e\gg1/\d\gg W\gg 1$, $\ell^{(\Nc)}=W^{O(1)}/q^\a$ and the explicit expressions \cref{eq:def:ln:internal:East}. From \cref{eq:mu:bound:base:loglog,eq:mu:bound:rec:loglog,eq:mu:T:bound:loglog} it is not hard to check \cref{eq:mu:SG:bound:loglog} (recalling \cref{subsec:scales}).

We next turn to proving \cref{eq:anm:bound:loglog}, so we fix $n\le\Ni$ and $m\ge 1$ satisfying \cref{eq:conditions:internal:loglog:probability}. Denote $s_m=(\lfloor(3/2)^{m+1}\rfloor-\lfloor(3/2)^{m}\rfloor)\l_j u_j$ for $j=j(n)$, so that \cref{eq:def:amn:loglog} spells
\[a_m^{(n)}=\m^{-1}\left(\left.\SG^\bone\left(\L^{(n)}+s_m\right)\right|\SG^\bone\left(\L^{(n)}\right)\right).\]
By the Harris inequality, \cref{eq:Harris:2,eq:Harris:3}, \cref{def:extension:East,def:SG:internal:loglog} we have
\begin{align}
\label{eq:anm:decomposition:loglog}
    a_m^{(n)}\le{}& \m^{-1}\left(\SG^\bone\left(\L^{(n_m)}\right)\right)\\
    &\times\prod_{p=n_m}^{n-1/(2k)}\m^{-1}\left(\left.\cT^\bone\left(T\left(\ur^{(p)},l^{(p)},j(p)\right)+s_m\right)\right|\cT^\bone\left(T\left(\ur^{(p)},l^{(p)},j(p)\right)\right)\right).\nonumber
\end{align}
Our goal is then to bound the last factor, using \cref{cor:perturbation}, which quantifies the fact that ``small perturbations $s_m$ do not modify traversability much.''

Let us fix $p\in[n_m,n)\cap(1/2k)\bbN$ and denote \begin{align*}
\cT&{}=\cT^\bone\left(T\left(\ur^{(p)},l^{(p)},j(p)\right)\right)&\cT'&{}=\cT^\bone\left(T\left(\ur^{(p)},l^{(p)},j(p)\right)+s_m\right).\end{align*}
In order to apply \cref{cor:perturbation} with $\D=\max(C^2,\|s_m\|)$, we check that $W^3 (3/2)^m\le \ell^{(\lfloor p\rfloor)}/\e$ (so that the sides of $\L^{(p)}$ are large enough). If $\ell^{(\lfloor p\rfloor)}\ge 1/q^\a$, this follows from the assumption $(3/2)^m\le 1/q^\a$ of \cref{lem:internal:loglog:probability}. Otherwise, by \cref{eq:def:nm,eq:def:Mn}
\begin{align*}
W^3(3/2)^m&{}\le (3/2)^{M^{(n_m)}-W/2}\le l^{(n_m)}/e^{\O(W)}=\Theta\left(\ell^{(n_m+1)}\right)/\left(\e e^{\O(W)}\right)\\
&{}\le \ell^{(n_m)}/\e\le \ell^{(\lfloor p\rfloor)}/\e,
\end{align*}
where in the last but one inequality we used that $\ell^{(n_m+1)}\le W^{O(1)}\ell^{(n_m)}$, since $n_m\le p$ and $\ell^{(p)}\le 1/q^\a$ (recall \cref{eq:def:ln:internal:East}). The remaining hypotheses of \cref{cor:perturbation} are immediate to verify. 

For $\|s_m\|=\Theta((3/2)^m)\le C^2$, \cref{cor:perturbation} gives
\[\m\left(\left.\cT'\right|\cT\right)\ge q^{O(C^2)}\left(1-q^{1-o(1)}\right)^{O(l^{(p)})}\ge \exp\left(-q^{-\a+1/2}\right),\]
as $l^{(p)}\le\ell^{(\Ni)}/\e\le q^{-\a-o(1)}$. If, on the contrary, $\|s_m\|\ge C^2$, \cref{cor:perturbation} gives
\begin{align}
\label{eq:conditional:T}\m(\cT'|\cT)\ge{}& q^{O(W)}\times\left(1-(1-q^\a)^{\ell^{(\lfloor p\rfloor)}/O(\e)}\right)^{O((3/2)^m)}\\
\nonumber&\times\left(1-O(W\e)(3/2)^m/\ell^{(\lfloor p\rfloor)}-q^{1-o(1)}\right)^{O(\ell^{(\lfloor p\rfloor+1)}/\e)}
\\
\nonumber\ge{}&q^{O(W)}\times\begin{cases}(\d q^\a W^p)^{O((3/2)^m)}&p\le \Nc\\
\exp\left(-(3/2)^m\exp\left(-W^{\exp(\lfloor p\rfloor-\Nc)}/\d\right)\right)&p>\Nc
\end{cases}\\
&\times\begin{cases}
\exp\left(-q^{-\a+1/2-o(1)}\right)&(3/2)^m\le q^{-\a+1/2-o(1)}\\
\exp\left(-W^2(3/2)^m\frac{\ell^{(\lfloor p\rfloor+1)}}{\ell^{(\lfloor p\rfloor)}}\right)&(3/2)^m> q^{-\a+1/2-o(1)},
\end{cases}\nonumber
\end{align}
in view of \cref{eq:def:ln:internal:East}. Further notice that if $(3/2)^m\le q^{-\a+1/2-o(1)}$ or $p>\Nc$, the third term dominates, while otherwise the second one does. Moreover, if $p\ge\Nc+\lceil\Psi\rceil$ with 
\begin{equation}
\label{eq:def:Delta}
\Psi=\log\frac{\log\log\log(1/q)}{3\log W},\end{equation}
then the Harris inequality \cref{eq:Harris:2}, translation invariance and \cref{eq:mu:T:bound:loglog} directly give the bound
\begin{equation}
\label{eq:conditional:T:trivial}
\m(\cT'|\cT)\ge\m(\cT')=\m(\cT)\ge  \exp\left(-1/\left(q^\a \log^W\log(1/q)\right)\right).
\end{equation}

Finally, we can plug \cref{eq:mu:SG:bound:loglog,eq:conditional:T,eq:conditional:T:trivial} in \cref{eq:anm:decomposition:loglog} to obtain the following bounds.
If $(3/2)^m\le q^{-\a+1/2-o(1)}$, then
\[a_m^{(n)}\le \exp\left(1/\left(q^\a\log^{W}(1/q)\right)\right),\]
because the contribution from \cref{eq:conditional:T} is negligible, since $n\le \Ni\le \log(1/q)$, while by \cref{eq:def:nm,eq:def:ln:internal:East}, $W^{n_m}=\ell^{(n_m)}\le W/(q^\a\log^W(1/q))$. 
If, on the contrary, $(3/2)^m>q^{-\a+1/2-o(1)}$, then
\begin{align*}
a_m^{(n)}\le{}&\begin{cases}
\exp\left(1/\left(q^\a\log^{W-O(1)}(1/q)\right)\right)&(3/2)^m\le 1/\left(q^\a\log^W(1/q)\right)\\
\left(\d q^\a W^{n_m}\right)^{-(3/2)^m/\e^3}&(3/2)^m> 1/\left(q^\a\log^W(1/q)\right)
\end{cases}
\\&\times\prod_{p=n_m}^{\min(n,\Nc)}(\d q^\a W^p)^{-O((3/2)^m)}\\
&{}\times\begin{cases}
1&n\le \Nc\\
\exp\left((3/2)^mW^{2\exp(\Psi)}/\d\right)&n>\Nc
\end{cases}\\
&\times\begin{cases}
\exp\left(1/\left(q^\a\log^{W-O(1)}(1/q)\right)\right)&(3/2)^m\le 1/\left(q^\a\log^W(1/q)\right)\\
\exp\left(1/\left(q^\a\log^{W-O(1)}\log(1/q)\right)\right)&(3/2)^m> 1/\left(q^\a\log^W(1/q)\right),
\end{cases}
\end{align*}
the terms corresponding to $\m^{-1}(\SG^\bone(\L^{(n_m)}))$ and to values of $p$ in the intervals $[n_m,\Nc]$, $(\Nc,\Nc+\lceil\Psi\rceil)$ and $[\Nc+\lceil\Psi\rceil,\Ni)$ respectively. Indeed, in the last term for small $m$ we used \cref{eq:conditional:T}, while for large $m$, we directly applied \cref{eq:conditional:T:trivial}. Observing that the product of the second case for the first term, the second term and the third term can be bounded by
\[\exp\left(\frac{(3/2)^m}{\e^4}\left((\Nc-n_m)^2 +\1_{n\ge \Nc}\log^{2/3}\log(1/q)\right)\right),\]
we obtain the desired \cref{eq:anm:bound:loglog}.
\end{proof}

\subsection{CBSEP mesoscopic dynamics}
\label{subsec:loglog:meso}
In this section we assume that $\cU$ is semi-directed (class \ref{loglog}) and w.l.o.g.\ $\a(u_i)\le \a$ for all $i\in[4k]\setminus\{3k\}$. The approach to the mesoscopic dynamics is very similar to the one of \cref{subsec:log2:meso}, employing a bounded number of CBSEP-extensions to go from the internal to the mesoscopic scale. Once again, the geometry of our droplets is as in \cref{subfig:iso:SG}, but extensions are much longer so that we go from scale $\li$ to $\lmm$ in $2k$ extensions and then to $\lmp$ in another $2k$ extensions.

Recall from \cref{subsec:loglog:internal} that we defined $\L^{(\Ni)}$, a symmetric droplet with side lengths $\us^{(\Ni)}$ equal to $\Theta(\ell^{(\Ni)}/\e)$, as well as $\SG^\bone(\L^{(\Ni)})$ in \cref{def:SG:internal:loglog}. Further recall \cref{subsec:scales}. Following \cref{subsec:log2:meso}, for $i\in[1,2k]$ we define
\[s^{(i+\Ni)}_j=s^{(i+\Ni)}_{j+2k}=\begin{cases}
s_j^{(\Ni)}&i-k\le j<k,\\
2\l_j\lceil\lmm/(2\l_j)\rceil&-k\le j<i-k,
\end{cases}\]
while for $i\in(2k,4k]$, we set
\begin{equation}
s_j^{(i+\Ni)}=s_{j+2k}^{(i+\Ni)}=
\begin{cases}
2\l_j\lceil\lmm/(2\l_j)\rceil&i-3k\le j< k\\
2\l_j\lceil\lmp/(2\l_j)\rceil&-k\le j<i-3k.
\end{cases}
\end{equation}
We then define $\L^{(\Ni+i)}=\L(\ur^{(\Ni+i)})$ with $\ur^{(\Ni+i)}$ the sequence of radii associated to $\us^{(\Ni+i)}$ satisfying 
\begin{align*}
\L\left(\ur^{(N_i+i)}\right)&{}=\L\left(\ur^{(\Ni+i-1)}+l^{(\Ni+i-1)}\left(\uv_{i-1}+\uv_{i+2k-1}\right)/2\right),\\
l^{(\Ni+i-1)}&{}=s^{(\Ni+i)}_{i+k-1}-s^{(\Ni+i-1)}_{i+k-1}=\begin{cases}(1-o(1))\lmm&i\in[1,2k],\\
(1-O(\d))\lmp&i\in(2k,4k].\end{cases}\end{align*}
We then define the corresponding SG events by CBSEP-extension as in \cref{def:SG:log2}.

\begin{defn}[Semi-directed mesoscopic SG] Let $\cU$ be semi-directed. For $i\in[4k]$ we define $\SG^\bone(\L^{(\Ni+i+1)})$ by CBSEP-extending $\L^{(\Ni+i)}$ by $l^{(\Ni+i)}$ in direction $u_i$.
\end{defn}
We then turn to bounding $\g(\L^{(\Ni+4k)})$ (recall \cref{subsec:poincare}).
\begin{thm}
\label{th:loglog:meso}
Let $\cU$ be semi-directed (class \ref{loglog}). Then 
\begin{align*}\g\left(\L^{(\Ni+4k)}\right)&{}\le \exp\left(\frac{\log\log(1/q)}{\e^{O(1)}q^\a}\right),\\
\m\left(\SG^\bone\left(\L^{(\Ni+2k)}\right)\right)&{}\ge \exp\left(\frac{-1}{\e^{O(1)}q^\a}\right).
\end{align*}
\end{thm}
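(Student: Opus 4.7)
The plan is to mirror the proof of \cref{th:log2}, applying \cref{cor:CBSEP:reduction} to each of the $4k$ CBSEP-extensions from the internal scale $\L^{(\Ni)}$ to the mesoscopic scale $\L^{(\Ni+4k)}$, using \cref{th:internal:loglog} as the initial input in place of \cref{prop:unbal:internal}. First I would introduce contracted events $\bSG(\L_2^{(\Ni+n)})$, $\bcT(\L_1^{(\Ni+n)})$, $\bcT(\L_3^{(\Ni+n)})$ for $n\in[4k+1]$ mirroring \cref{def:bSG:log2}. For $n\ge 2$ these copy the $n\ge 2$ case of \cref{def:bSG:log2} verbatim, with $\SG^\bone(\L^{(\Ni+n-2)})$ in place of $\SG^\bone(\L^{(n-2)})$. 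For the boundary cases $n\in\{0,1\}$, $\bSG(\L_2^{(\Ni+n)})$ requires $\SG^\bone(\L^{(\Ni)}+z_n)$ at an appropriate central position $z_n$ with $\L^{(\Ni)}+z_n\subset\L_2^{(\Ni+n)}$, together with $(\bone,W)$-symmetrically traversable tubes filling the gap between $\L^{(\Ni)}+z_n$ and $\L_2^{(\Ni+n)}$, plus $W$-helping sets in an $O(W)$-neighbourhood of the boundary of $\L^{(\Ni)}+z_n$ to compensate for the $O(1)$ mismatch between $\L_2^{(\Ni+n)}\cup\L_{1,3}^{(\Ni+n)}$ and $\L^{(\Ni+n)}$. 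Crucially, $\SG^\bone(\L^{(\Ni)})$ is treated as a black box throughout and never unfolded.

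With these definitions, the analogue of \cref{lem:log2:condition}, namely $\bSG(\L_2^{(\Ni+n)})\times\bcT(\L_3^{(\Ni+n)})\subset\SG^\bone(\L_2^{(\Ni+n)}\cup\L_3^{(\Ni+n)})$ and similarly for $\L_1^{(\Ni+n)}$, follows by the same argument: \cref{lem:tube:decomposition} combines the $W$-thinned symmetrically traversable tubes with the $W$-helping sets near the boundary of $\L^{(\Ni)}+z_n$ to recover the full $\ST^\bone$ events required by \cref{def:extension:CBSEP}. Iterating \cref{cor:CBSEP:reduction} then yields, in analogy with \cref{eq:log2:gamma},
\[\g\left(\L^{(\Ni+4k)}\right)\le\frac{\g(\L^{(\Ni)})\exp(O(C^{2})\log^2(1/q))}{\prod_{i\in[4k]}\m(\SG^\bone(\L^{(\Ni+i+1)}))\m(\bcT(\L_1^{(\Ni+i)}))\m(\bSG(\L_2^{(\Ni+i)}))\m(\bcT(\L_3^{(\Ni+i)}))}.\]
By \cref{th:internal:loglog}, $\g(\L^{(\Ni)})\le\exp(\log\log(1/q)/(\e^6q^\a))$, so it suffices to show that each probability in the denominator is at least $\exp(-1/(\e^{O(1)}q^\a))$. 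Each $\SG$, $\bSG$ or $\bcT$ event involves at most one copy of $\SG^\bone(\L^{(\Ni)})$ (contributing $\exp(-1/(\e^2q^\a))$ by \cref{th:internal:loglog}), $W^{O(1)}$ $W$-helping sets (contributing $q^{W^{O(1)}}$ by \cref{obs:mu:helping}), and $O(1)$ $(\bone,W)$-symmetrically traversable tubes of length at most $\lmp$. By \cref{def:traversability,lem:traversability:boundary,obs:mu:helping} and the Harris inequality, each such tube has probability at least
\[q^{O(W)}\left(1-e^{-q^\a\ell^{(\Ni)}/O(\e)}\right)^{O(\lmp)}\left(1-e^{-q^W\lmm/O(W)}\right)^{O(\lmp)}\ge\exp(-1/(\e^{O(1)}q^\a))\]
in view of $\ell^{(\Ni)}q^\a\ge C\log(1/q)$, $\lmp=q^{-C}/\sqrt\d$ and $C\gg W$. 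The probability bound $\m(\SG^\bone(\L^{(\Ni+2k)}))\ge\exp(-1/(\e^{O(1)}q^\a))$ then follows from $2k$ applications of \cref{def:extension:CBSEP,eq:symmetry:tubes} together with the Harris inequality and the tube bound above, starting from $\m(\SG^\bone(\L^{(\Ni)}))\ge\exp(-1/(\e^2q^\a))$.

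The main obstacle, technical rather than conceptual, lies in the special definition of $\bSG(\L_2^{(\Ni+n)})$ for $n\in\{0,1\}$: one must simultaneously ensure (i) the CBSEP-reduction hypothesis \cref{eq:cor:CBSEP:reduction:condition} and (ii) that the conditional probability of the contracted event given the original $\SG^\bone$ on $\L_1^{(\Ni+n)}\cup\L_2^{(\Ni+n)}$ is at least $q^{O(W)}$. Both are achieved exactly as in the $n=1$ case of \cref{def:bSG:log2,lem:log2:condition}: thinning the parallelograms of \cref{def:traversability} by $W$ costs only a factor $q^{O(W)}$ and is absorbed by the $W$-helping sets near the boundary of $\L^{(\Ni)}+z_n$, while the $O(1)$ offset mismatch is handled by \cref{lem:tube:decomposition}. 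Because we treat $\SG^\bone(\L^{(\Ni)})$ as atomic, no sharper information from \cref{th:internal:loglog} about its internal structure is required, so the rest of the proof is a direct computational adaptation of the proof of \cref{th:log2}.
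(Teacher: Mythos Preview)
Your approach has a genuine geometric obstruction at the steps $n\in\{0,1\}$. You propose to define $\bSG(\L_2^{(\Ni+n)})$ by placing a full translate $\L^{(\Ni)}+z_n$ inside $\L_2^{(\Ni+n)}$, but no such $z_n$ exists. Recall that $\L_2^{(\Ni)}=\L(\ur^{(\Ni)}-\l_0\uv_0)$ is obtained from $\L^{(\Ni)}$ by shortening the sides parallel to $u_0$ by $\l_0$; it is strictly smaller than $\L^{(\Ni)}$ in that direction, so no translate of $\L^{(\Ni)}$ fits. The same happens at $n=1$: the CBSEP-extension from $\L^{(\Ni)}$ to $\L^{(\Ni+1)}$ enlarges only the sides parallel to $u_0$, leaving the sides parallel to $u_1$ at their original length $s^{(\Ni)}$; then $\L_2^{(\Ni+1)}=\L(\ur^{(\Ni+1)}-\l_1\uv_1)$ shortens precisely those sides by $\l_1$, so again $\L^{(\Ni)}$ does not fit. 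In the unbalanced unrooted setting of \cref{th:log2} this problem is avoided because $\SG^\bone(\L^{(0)})$ is an infected ring of width $W$, which can be thickened to $2W$ in $\bSG(\L_2^{(0)})$ and split across $\L_2$ and $\L_{1,3}$; treating $\SG^\bone(\L^{(\Ni)})$ as an atomic black box offers no analogous decomposition.

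This is exactly why the paper does \emph{not} treat $\SG^\bone(\L^{(\Ni)})$ as a black box. Instead (\cref{def:bSG:loglog:0,def:bSG:loglog:1}) it unfolds the full East-extension hierarchy from \cref{def:SG:internal:loglog}: one defines contracted droplets $\L'^{(p)}=\L(\ur^{(p)}-\l_0(\uv_0+\uv_{2k}))$ and $\L''^{(p)}=\L(\ur^{(p)}-\l_1(\uv_1+\uv_{2k+1}))$ for every internal scale $p\le \Ni$, and contracted events $\SG'(\L'^{(\Ni)})$, $\SG''(\L''^{(\Ni)})$ built recursively with $(\bone,W)$-traversability in place of $\bone$-traversability. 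These do fit inside $\L_2^{(\Ni)}$ and $\L_2^{(\Ni+1)}$ respectively, and the missing $O(1)$-width strips are then supplied by $W$-helping sets near \emph{every} internal droplet boundary (the events $\cW'$, $\cW''$, $\cI'$, $\cI''$), costing $W^{O(1)}\Ni=q^{o(1)}$ such sets. The probabilities of $\SG'(\L'^{(\Ni)})$ and $\SG''(\L''^{(\Ni)})$ are bounded by the same computation as for $\SG^\bone(\L^{(\Ni)})$ in \cref{lem:internal:loglog:probability}. For $n\ge 2$ your proposal matches the paper's, and the rest of the argument is as you describe.
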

The rest of \cref{subsec:loglog:meso} is dedicated to the proof of \cref{th:loglog:meso}. The proof proceeds exactly like \cref{th:log2}, except that the first two steps are much more delicate. Namely, they require taking into account the internal structure of $\SG^\bone(\L^{(\Ni)})$ on all scales down to $0$. This structure is, alas, rather complex (recall \cref{fig:East:internal:loglog}) and also not symmetric w.r.t.\ the reflection interchanging $u_0$ and $u_{2k}$. This is not unexpected and is, to some extent, the crux of semi-directed models.

As before, we define $\L_1^{(i)},\L_2^{(i)},\L_3^{(i)}$ by \cref{def:L123} for $i\in[\Ni,\Ni+4k)$. The next definitions are illustrated in \cref{fig:bSG:loglog} and are the analogue of \cref{def:bSG:log2}, but taking into account \cref{def:SG:internal:loglog}. Correspondingly, the intuition behind them is the same, the only difference being that we need to modify traversability events at all scales, because $\L^{(i)}$ touches the boundary of $\L^{(\Ni)}$ for all $i\le\Ni$ (compare \cref{fig:East:internal:loglog,subfig:iso:SG}).

\begin{figure}
    \centering
\begin{subfigure}{\textwidth}
\centering
\begin{tikzpicture}[line cap=round,line join=round,>=triangle 45,x=0.55cm,y=0.55cm]
\fill[fill=black,fill opacity=0.1] (-4.56,11.5) -- (0.21,11.5) -- (3.87,7.84) -- (3.87,2.66) -- (0.21,-1) -- (-4.56,-1) -- (-8.22,2.66) -- (-8.22,7.84) -- cycle;
\fill[fill=black,fill opacity=0.1] (-1.45,1) -- (-2.04,0.41) -- (-2.04,-0.41) -- (-1.45,-1) -- (-0.21,-1) -- (-0.79,-0.41) -- (-0.79,0.41) -- (-0.21,1) -- cycle;
\fill[fill=black,fill opacity=0.1] (-1.45,-1) -- (0.21,-1) -- (0.79,-0.41) -- (0.79,0.41) -- (0.21,1) -- (-1.45,1) -- (-2.04,0.41) -- (-2.04,-0.41) -- cycle;
\fill[fill=black,fill opacity=0.1] (-1.45,-1) -- (-2.91,0.46) -- (-2.91,1.29) -- (-2.33,1.88) -- (-0.67,1.88) -- (0.79,0.41) -- (0.79,-0.41) -- (0.21,-1) -- cycle;
\fill[fill=black,fill opacity=0.1] (-2.91,2.54) -- (-2.91,0.46) -- (-1.45,-1) -- (0.21,-1) -- (0.79,-0.41) -- (0.79,1.66) -- (-0.67,3.12) -- (-2.33,3.12) -- cycle;
\fill[fill=black,fill opacity=0.1] (-1.45,4) -- (-2.91,2.54) -- (-2.91,0.46) -- (-1.45,-1) -- (0.21,-1) -- (0.79,-0.41) -- (0.79,-0.41) -- (1.67,0.46) -- (1.67,2.54) -- (0.21,4) -- cycle;
\fill[fill=black,fill opacity=0.1] (-4.56,4) -- (-6.02,2.54) -- (-6.02,0.46) -- (-4.56,-1) -- (0.21,-1) -- (1.67,0.46) -- (1.67,2.54) -- (0.21,4) -- cycle;
\fill[fill=black,fill opacity=0.1] (-8.22,2.66) -- (-8.22,4.73) -- (-6.75,6.2) -- (-1.99,6.2) -- (1.67,2.54) -- (1.67,0.46) -- (0.21,-1) -- (-4.56,-1) -- cycle;
\fill[fill=black,fill opacity=0.1] (-8.22,7.84) -- (-6.75,9.3) -- (-1.99,9.3) -- (1.67,5.64) -- (1.67,0.46) -- (0.21,-1) -- (-4.56,-1) -- (-8.22,2.66) -- cycle;
\draw (-4.56,11.5)-- (0.21,11.5);
\draw (0.21,11.5)-- (3.87,7.84);
\draw (3.87,7.84)-- (3.87,2.66);
\draw (3.87,2.66)-- (0.21,-1);
\draw (0.21,-1)-- (-4.56,-1);
\draw (-4.56,-1)-- (-8.22,2.66);
\draw (-8.22,2.66)-- (-8.22,7.84);
\draw (-8.22,7.84)-- (-4.56,11.5);
\draw [very thick] (0.41,11.5)-- (0.62,11.5);
\draw [very thick] (0.62,11.5)-- (4.28,7.84);
\draw [very thick] (4.28,7.84)-- (4.28,2.66);
\draw [very thick] (4.28,2.66)-- (0.62,-1);
\draw [very thick] (0.62,-1)-- (0.41,-1);
\draw [very thick] (0.41,-1)-- (4.08,2.66);
\draw [very thick] (4.08,2.66)-- (4.08,7.84);
\draw [very thick] (4.08,7.84)-- (0.41,11.5);
\draw (-0.21,1)-- (-0.79,0.41);
\draw (-0.79,0.41)-- (-0.79,-0.41);
\draw (-0.79,-0.41)-- (-0.21,-1);
\draw (-0.21,-1)-- (0.21,-1);
\draw (0.21,-1)-- (0.79,-0.41);
\draw (0.79,-0.41)-- (0.79,0.41);
\draw (0.79,0.41)-- (0.21,1);
\draw (0.21,1)-- (-0.21,1);
\draw (-1.45,1)-- (-2.04,0.41);
\draw (-2.04,0.41)-- (-2.04,-0.41);
\draw (-2.04,-0.41)-- (-1.45,-1);
\draw (-1.45,-1)-- (-0.21,-1);
\draw (-0.21,-1)-- (-0.79,-0.41);
\draw (-0.79,-0.41)-- (-0.79,0.41);
\draw (-0.79,0.41)-- (-0.21,1);
\draw (-0.21,1)-- (-1.45,1);
\draw (-1.45,-1)-- (0.21,-1);
\draw (0.21,-1)-- (0.79,-0.41);
\draw (0.79,-0.41)-- (0.79,0.41);
\draw (0.79,0.41)-- (0.21,1);
\draw (0.21,1)-- (-1.45,1);
\draw (-1.45,1)-- (-2.04,0.41);
\draw (-2.04,0.41)-- (-2.04,-0.41);
\draw (-2.04,-0.41)-- (-1.45,-1);
\draw (-1.45,-1)-- (-2.91,0.46);
\draw (-2.91,0.46)-- (-2.91,1.29);
\draw (-2.91,1.29)-- (-2.33,1.88);
\draw (-2.33,1.88)-- (-0.67,1.88);
\draw (-0.67,1.88)-- (0.79,0.41);
\draw (0.79,0.41)-- (0.79,-0.41);
\draw (0.79,-0.41)-- (0.21,-1);
\draw (0.21,-1)-- (-1.45,-1);
\draw (-2.91,2.54)-- (-2.91,0.46);
\draw (-2.91,0.46)-- (-1.45,-1);
\draw (-1.45,-1)-- (0.21,-1);
\draw (0.21,-1)-- (0.79,-0.41);
\draw (0.79,-0.41)-- (0.79,1.66);
\draw (0.79,1.66)-- (-0.67,3.12);
\draw (-0.67,3.12)-- (-2.33,3.12);
\draw (-2.33,3.12)-- (-2.91,2.54);
\draw (-1.45,4)-- (-2.91,2.54);
\draw (-2.91,2.54)-- (-2.91,0.46);
\draw (-2.91,0.46)-- (-1.45,-1);
\draw (-1.45,-1)-- (0.21,-1);
\draw (0.21,-1)-- (0.79,-0.41);
\draw (0.79,-0.41)-- (0.79,-0.41);
\draw (0.79,-0.41)-- (1.67,0.46);
\draw (1.67,0.46)-- (1.67,2.54);
\draw (1.67,2.54)-- (0.21,4);
\draw (0.21,4)-- (-1.45,4);
\draw (-4.56,4)-- (-6.02,2.54);
\draw (-6.02,2.54)-- (-6.02,0.46);
\draw (-6.02,0.46)-- (-4.56,-1);
\draw (-4.56,-1)-- (0.21,-1);
\draw (0.21,-1)-- (1.67,0.46);
\draw (1.67,0.46)-- (1.67,2.54);
\draw (1.67,2.54)-- (0.21,4);
\draw (0.21,4)-- (-4.56,4);
\draw (-8.22,2.66)-- (-8.22,4.73);
\draw (-8.22,4.73)-- (-6.75,6.2);
\draw (-6.75,6.2)-- (-1.99,6.2);
\draw (-1.99,6.2)-- (1.67,2.54);
\draw (1.67,2.54)-- (1.67,0.46);
\draw (1.67,0.46)-- (0.21,-1);
\draw (0.21,-1)-- (-4.56,-1);
\draw (-4.56,-1)-- (-8.22,2.66);
\draw (-8.22,7.84)-- (-6.75,9.3);
\draw (-6.75,9.3)-- (-1.99,9.3);
\draw (-1.99,9.3)-- (1.67,5.64);
\draw (1.67,5.64)-- (1.67,0.46);
\draw (1.67,0.46)-- (0.21,-1);
\draw (0.21,-1)-- (-4.56,-1);
\draw (-4.56,-1)-- (-8.22,2.66);
\draw (-8.22,2.66)-- (-8.22,7.84);
\draw (0.21,11.5)-- (0.41,11.5);
\draw (0.41,-1)-- (0.21,-1);
\draw [very thick] (-4.56,11.5)-- (-4.76,11.5);
\draw [very thick] (-4.76,11.5)-- (-8.42,7.84);
\draw [very thick] (-8.42,7.84)-- (-8.42,2.66);
\draw [very thick] (-8.42,2.66)-- (-4.76,-1);
\draw [very thick] (-4.76,-1)-- (-4.56,-1);
\draw [very thick] (-4.56,-1)-- (-8.22,2.66);
\draw [very thick] (-8.22,2.66)-- (-8.22,7.84);
\draw [very thick] (-8.22,7.84)-- (-4.56,11.5);
\draw [shift={(0,0)},fill=black,fill opacity=1.0]  (0,0) --  plot[domain=-0.16:3.93,variable=\t]({1*1.41*cos(\t r)+0*1.41*sin(\t r)},{0*1.41*cos(\t r)+1*1.41*sin(\t r)}) -- cycle ;
\fill[fill=black,fill opacity=1.0] (0,0) -- (-1,-1) -- (0.62,-1) -- (1.39,-0.23) -- cycle;
\draw[very thick](-6.59,9.67)--(-6.92,10) node [above left] {$\L_3^{(i)}$};
\draw[very thick] (2.45,0.83)--(2.78,0.5) node [below right] {$\L_1^{(i)}$};
\end{tikzpicture}
\caption{Case $i=\Ni$  of \cref{def:bSG:loglog:0}.}
\label{subfig:bSG:loglog:0}
\end{subfigure}
\begin{subfigure}{\textwidth}
\centering
\begin{tikzpicture}[line cap=round,line join=round,>=triangle 45,x=0.55cm,y=0.55cm]
\fill[fill=black,fill opacity=0.1] (-0.85,0.26) -- (-0.27,0.85) -- (0.56,0.85) -- (0.85,0.56) -- (0.85,-0.27) -- (0.27,-0.85) -- (-0.56,-0.85) -- (-0.85,-0.56) -- cycle;
\fill[fill=black,fill opacity=0.1] (-1.51,0.85) -- (-2.1,0.26) -- (-2.1,-0.56) -- (-1.8,-0.85) -- (0.27,-0.85) -- (0.85,-0.27) -- (0.85,0.56) -- (0.56,0.85) -- cycle;
\fill[fill=black,fill opacity=0.1] (-2.97,0.31) -- (-1.8,-0.85) -- (0.27,-0.85) -- (0.85,-0.27) -- (0.85,0.56) -- (-0.17,1.59) -- (-0.32,1.73) -- (-2.39,1.73) -- (-2.97,1.14) -- cycle;
\fill[fill=black,fill opacity=0.1] (-2.97,2.39) -- (-2.39,2.97) -- (-0.32,2.97) -- (0.85,1.8) -- (0.85,-0.27) -- (0.27,-0.85) -- (-1.8,-0.85) -- (-2.97,0.31) -- cycle;
\fill[fill=black,fill opacity=0.1] (-2.97,2.39) -- (-1.51,3.85) -- (0.56,3.85) -- (1.73,2.68) -- (1.73,0.61) -- (0.27,-0.85) -- (-1.8,-0.85) -- (-2.97,0.31) -- cycle;
\fill[fill=black,fill opacity=0.1] (-4.62,3.85) -- (-6.08,2.39) -- (-6.08,0.31) -- (-4.91,-0.85) -- (0.27,-0.85) -- (1.73,0.61) -- (1.73,2.68) -- (0.56,3.85) -- cycle;
\fill[fill=black,fill opacity=0.1] (-8.28,2.51) -- (-8.28,4.58) -- (-6.81,6.05) -- (-1.64,6.05) -- (1.73,2.68) -- (1.73,0.61) -- (0.27,-0.85) -- (-4.91,-0.85) -- cycle;
\fill[fill=black,fill opacity=0.1] (-8.28,7.69) -- (-6.81,9.15) -- (-1.64,9.15) -- (1.73,5.79) -- (1.73,0.61) -- (0.27,-0.85) -- (-4.91,-0.85) -- (-8.28,2.51) -- cycle;
\fill[fill=black,fill opacity=0.1] (-8.28,7.69) -- (-4.62,11.35) -- (0.56,11.35) -- (3.93,7.99) -- (3.93,2.81) -- (0.27,-0.85) -- (-4.91,-0.85) -- (-8.28,2.51) -- cycle;
\fill[fill=black,fill opacity=0.1] (-9.79,11.35) -- (-13.46,7.69) -- (-13.46,2.51) -- (-10.09,-0.85) -- (5.45,-0.85) -- (9.11,2.81) -- (9.11,7.99) -- (5.74,11.35) -- cycle;
\fill[fill=black,fill opacity=1.0] (0,0) -- (-0.83,-1.15) -- (0.83,-1.15) -- cycle;
\draw (-0.85,0.26)-- (-0.27,0.85);
\draw (-0.27,0.85)-- (0.56,0.85);
\draw (0.56,0.85)-- (0.85,0.56);
\draw (0.85,0.56)-- (0.85,-0.27);
\draw (0.85,-0.27)-- (0.27,-0.85);
\draw (0.27,-0.85)-- (-0.56,-0.85);
\draw (-0.56,-0.85)-- (-0.85,-0.56);
\draw (-0.85,-0.56)-- (-0.85,0.26);
\draw (-1.51,0.85)-- (-2.1,0.26);
\draw (-2.1,0.26)-- (-2.1,-0.56);
\draw (-2.1,-0.56)-- (-1.8,-0.85);
\draw (-1.8,-0.85)-- (0.27,-0.85);
\draw (0.27,-0.85)-- (0.85,-0.27);
\draw (0.85,-0.27)-- (0.85,0.56);
\draw (0.85,0.56)-- (0.56,0.85);
\draw (0.56,0.85)-- (-1.51,0.85);
\draw (-2.97,0.31)-- (-1.8,-0.85);
\draw (-1.8,-0.85)-- (0.27,-0.85);
\draw (0.27,-0.85)-- (0.85,-0.27);
\draw (0.85,-0.27)-- (0.85,0.56);
\draw (0.85,0.56)-- (-0.17,1.59);
\draw (-0.17,1.59)-- (-0.32,1.73);
\draw (-0.32,1.73)-- (-2.39,1.73);
\draw (-2.39,1.73)-- (-2.97,1.14);
\draw (-2.97,1.14)-- (-2.97,0.31);
\draw (-2.97,2.39)-- (-2.39,2.97);
\draw (-2.39,2.97)-- (-0.32,2.97);
\draw (-0.32,2.97)-- (0.85,1.8);
\draw (0.85,1.8)-- (0.85,-0.27);
\draw (0.85,-0.27)-- (0.27,-0.85);
\draw (0.27,-0.85)-- (-1.8,-0.85);
\draw (-1.8,-0.85)-- (-2.97,0.31);
\draw (-2.97,0.31)-- (-2.97,2.39);
\draw (-2.97,2.39)-- (-1.51,3.85);
\draw (-1.51,3.85)-- (0.56,3.85);
\draw (0.56,3.85)-- (1.73,2.68);
\draw (1.73,2.68)-- (1.73,0.61);
\draw (1.73,0.61)-- (0.27,-0.85);
\draw (0.27,-0.85)-- (-1.8,-0.85);
\draw (-1.8,-0.85)-- (-2.97,0.31);
\draw (-2.97,0.31)-- (-2.97,2.39);
\draw (-4.62,3.85)-- (-6.08,2.39);
\draw (-6.08,2.39)-- (-6.08,0.31);
\draw (-6.08,0.31)-- (-4.91,-0.85);
\draw (-4.91,-0.85)-- (0.27,-0.85);
\draw (0.27,-0.85)-- (1.73,0.61);
\draw (1.73,0.61)-- (1.73,2.68);
\draw (1.73,2.68)-- (0.56,3.85);
\draw (0.56,3.85)-- (-4.62,3.85);
\draw (-8.28,2.51)-- (-8.28,4.58);
\draw (-8.28,4.58)-- (-6.81,6.05);
\draw (-6.81,6.05)-- (-1.64,6.05);
\draw (-1.64,6.05)-- (1.73,2.68);
\draw (1.73,2.68)-- (1.73,0.61);
\draw (1.73,0.61)-- (0.27,-0.85);
\draw (0.27,-0.85)-- (-4.91,-0.85);
\draw (-4.91,-0.85)-- (-8.28,2.51);
\draw (-8.28,7.69)-- (-6.81,9.15);
\draw (-6.81,9.15)-- (-1.64,9.15);
\draw (-1.64,9.15)-- (1.73,5.79);
\draw (1.73,5.79)-- (1.73,0.61);
\draw (1.73,0.61)-- (0.27,-0.85);
\draw (0.27,-0.85)-- (-4.91,-0.85);
\draw (-4.91,-0.85)-- (-8.28,2.51);
\draw (-8.28,2.51)-- (-8.28,7.69);
\draw (-8.28,7.69)-- (-4.62,11.35);
\draw (-4.62,11.35)-- (0.56,11.35);
\draw (0.56,11.35)-- (3.93,7.99);
\draw (3.93,7.99)-- (3.93,2.81);
\draw (3.93,2.81)-- (0.27,-0.85);
\draw (0.27,-0.85)-- (-4.91,-0.85);
\draw (-4.91,-0.85)-- (-8.28,2.51);
\draw (-8.28,2.51)-- (-8.28,7.69);
\draw (-9.79,11.35)-- (-13.46,7.69);
\draw (-13.46,7.69)-- (-13.46,2.51);
\draw (-13.46,2.51)-- (-10.09,-0.85);
\draw (-10.09,-0.85)-- (5.45,-0.85);
\draw (5.45,-0.85)-- (9.11,2.81);
\draw (9.11,2.81)-- (9.11,7.99);
\draw (9.11,7.99)-- (5.74,11.35);
\draw (5.74,11.35)-- (-9.79,11.35);
\draw [very thick] (-13.46,2.51)-- (-13.6,2.66);
\draw [very thick] (-13.6,2.66)-- (-13.6,7.84);
\draw [very thick] (-13.6,7.84)-- (-9.94,11.5);
\draw [very thick] (-9.94,11.5)-- (5.59,11.5);
\draw [very thick] (5.59,11.5)-- (5.74,11.35);
\draw [very thick] (5.74,11.35)-- (-9.79,11.35);
\draw [very thick] (-9.79,11.35)-- (-13.46,7.69);
\draw [very thick] (-13.46,7.69)-- (-13.46,2.51);
\draw (-10.09,-0.85)-- (-9.94,-1);
\draw (-9.94,-1)-- (5.59,-1);
\draw (5.59,-1)-- (9.25,2.66);
\draw (9.25,2.66)-- (9.25,7.84);
\draw (9.25,7.84)-- (9.11,7.99);
\draw (9.11,7.99)-- (9.11,2.81);
\draw (9.11,2.81)-- (5.45,-0.85);
\draw (5.45,-0.85)-- (-10.09,-0.85);
\draw [very thick] (-9.94,-1)-- (-9.79,-1.15);
\draw [very thick] (-9.79,-1.15)-- (5.74,-1.15);
\draw [very thick] (5.74,-1.15)-- (9.4,2.51);
\draw [very thick] (9.4,2.51)-- (9.4,7.69);
\draw [very thick] (9.4,7.69)-- (9.25,7.84);
\draw [very thick] (9.25,7.84)-- (9.25,2.66);
\draw [very thick] (9.25,2.66)-- (5.59,-1);
\draw [very thick] (5.59,-1)-- (-9.94,-1);\draw [shift={(0,0)},fill=black,fill opacity=1.0]  (0,0) --  plot[domain=-0.94:4.09,variable=\t]({1*1.41*cos(\t r)+0*1.41*sin(\t r)},{0*1.41*cos(\t r)+1*1.41*sin(\t r)}) -- cycle ;
\draw[very thick] (-11.77,9.67)--(-12.1,10) node [above left] {$\L_3^{(i)}$};
\draw[very thick] (7.57,0.68)--(7.9,0.35) node [below right] {$\L_1^{(i)}$};
\end{tikzpicture}
\caption{Case $i=\Ni+1$ of \cref{def:bSG:loglog:1}.}
\label{subfig:bSG:loglog:1}
\end{subfigure}
\caption{The events $\bcT(\L_1^{(i)})$, $\bSG(\L^{(i)}_2)$ and $\bcT(\L_3^{(i)})$. The microscopic black regions are entirely infected. Shaded tubes are $(\bone,W)$-traversable. $W$-helping sets are required close to all boundaries.\label{fig:bSG:loglog}}
\end{figure}
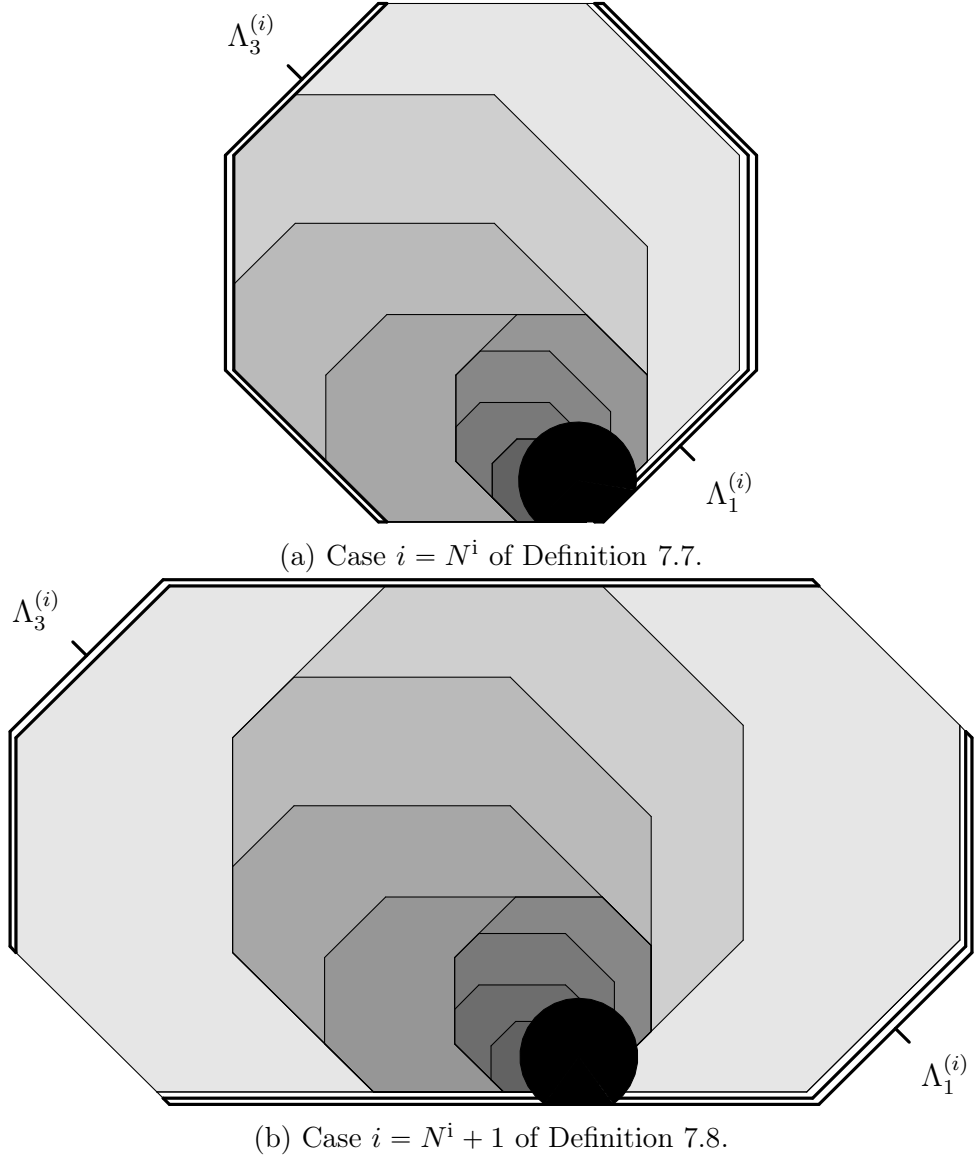

\begin{defn}[Contracted semi-directed events on scale $\Ni$]
\label{def:bSG:loglog:0}
Let us define $\bcT(\L_3^{(\Ni)})$ to be the event that for all $j\in[-k+1,k-1]$ and, for every segment $S\subset \L_3^{(\Ni)}$, perpendicular to $u_j$ of length $s^{(\Ni)}_{j}/W$, the event $\cH^W(S)$ occurs.

Let $\bcT(\L_1^{(\Ni)})$ be the event that for all $j\in[k+1,3k-2]$ every segment $S\subset \L_1^{(\Ni)}$, perpendicular to $u_j$ of length $s^{(\Ni)}_j/W$, the event $\cH^W(S)$ occurs and all sites in $\L_1^{(\Ni)}$ at distance at most  $\sqrt{W}/\e$ from the origin are infected.

For $n\in[0,\Ni]$ such that $2kn\in\bbN$ let $\L'^{(n)}=\L(\ur^{(n)}-\l_0(\uv_0+\uv_{2k}))$. Define $\SG'(\L'^{(n)})$ recursively exactly like $\SG^\bone(\L^{(n)})$ in \cref{def:SG:internal:loglog} with all droplets replaced by their contracted versions $\L'$ and all traversability events required in East-extensions (see \cref{def:extension:East}) replaced by the corresponding $(\bone,W)$-traversability events\textsuperscript{\ref{foot:STW}} ($\cT^\bone_W$, see \cref{def:traversability}). Let $\cW'$ be the event that for every $n\in[0,\Ni]$, $j\in[4k]$ and segment $S\subset \L^{(\Ni)}_2$, perpendicular to $u_j$ of length $s_j^{(n)}/W$ at distance at most $W$ from the $u_j$-side of $\L^{(n)}$, the event $\cH^W(S)$ holds. Let $\cI'$ be the event that all sites in $\L^{(\Ni)}_2$ at distance at most $\sqrt{W}/\e$ from the origin are infected. Finally, set 
\[\bSG\left(\L_2^{(\Ni)}\right) =\SG'\left(\L'^{(\Ni)}\right)\cap\cW'\cap\cI'.\]
\end{defn}

\begin{defn}[Contracted semi-directed events on scale $\Ni+1$]
\label{def:bSG:loglog:1}
We define $\bcT(\L_1^{(\Ni+1)})$ to be the event that for all $j\in[k+2,3k-1]$ and every segment $S\subset\L_1^{(\Ni+1)}$, perpendicular to $u_j$ of length $s_j^{(\Ni)}/W$, the event $\cH^W(S)$ occurs and all sites in $\L_1^{(\Ni+1)}$ at distance at most $\sqrt{W}/\e$ from the origin are infected. 

Let $\bcT(\L_3^{(\Ni+1)})$ be the event that for all $j\in[4k]$, $m\in\{\Ni,\Ni+1\}$ and every segment $S\subset \L_3^{(\Ni+1)}$, perpendicular to $u_j$ of length $s_j^{(m)}/W$ at distance at most $W$ from the $u_j$-side of $\L^{(m)}$, the event $\cH^W(S)$ occurs.

For $n\in[0,\Ni]$ such that $2k n\in\bbN$ let
\[\L''^{(n)}=\L\left(\ur''^{(n)}\right)=\L\left(\ur^{(n)}-\l_1\left(\uv_1+\uv_{2k+1}\right)\right)\]
and define $\SG''(\L''^{(n)})$ like $\SG'(\L'^{(n)})$ in \cref{def:bSG:loglog:0}. Further let
\[\SG''\left(\L''^{(\Ni+1)}\right)=\SG''\left(\L''^{(\Ni)}\right)\cap\bigcap_{j\in\{0,2k\}}\ST^\bone_W\left(T\left(\ur''^{(\Ni)},l^{(\Ni)}/2,j\right)\right).\]
Let $\cW''$ (resp.\ $\cI''$) be defined like $\cW'$ (resp.\ $\cI'$) in \cref{def:bSG:loglog:0} with $\L'$ replaced by $\L''$ and $\Ni$ replaced by $\Ni+1$. Finally, we set
\[\bSG\left(\L_2^{(\Ni+1)}\right)=\SG''\left(\L''^{(\Ni+1)}\right)\cap \cW''\cap\cI''.\]
\end{defn}

Notice that \cref{def:bSG:log2} for $i\in[2,4k)$ does not inspect the internal structure of $\SG^\bone(\L^{(0)})$ (see \cref{subfig:bSG:log2:2}). Thus, we may use the exact same definition for $\bcT(\L_1^{(\Ni+i)})$, $\bSG(\L_2^{(\Ni+i)})$ and $\bcT(\L_3^{(\Ni+i)})$ with $i\in[2,4k)$.

We may now turn to the analogue of \cref{lem:log2:condition}.
\begin{lem}
\label{lem:loglog:condition}
For all $n\in[\Ni,\Ni+4k)$ we have $\bSG(\L_2^{(n)})\times\bcT(\L_3^{(n)})\subset \SG^\bone(\L_2^{(n)}\cup\L_3^{(n)})$ and similarly for $\L_1^{(n)}$ instead of $\L_3^{(n)}$.
\end{lem}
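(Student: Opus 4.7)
The plan is to split the proof into three regimes: $n\geq\Ni+2$, $n=\Ni+1$ and $n=\Ni$. In the first regime, the definitions of $\bSG(\L_2^{(n)})$, $\bcT(\L_1^{(n)})$ and $\bcT(\L_3^{(n)})$ coincide verbatim (up to a shift of the scale index by $\Ni$) with those of \cref{def:bSG:log2} for $i\in[2,4k)$. The internal droplet $\L^{(n-2)}$ lies at distance much larger than $W$ from both $\L_1^{(n)}$ and $\L_3^{(n)}$, and its $\SG^\bone$ event is directly built into $\bSG(\L_2^{(n)})$, so the argument of \cref{lem:log2:condition} (itself modelled on \cref{lem:SG:condition:iso}) transfers without modification.

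The genuine content lies in the base cases $n\in\{\Ni,\Ni+1\}$. I shall concentrate on $n=\Ni$ with $\L_3^{(\Ni)}$; the $\L_1^{(\Ni)}$ version is analogous (the fully infected disc of radius $\sqrt{W}/\e$ around the origin required by $\bcT(\L_1^{(\Ni)})$ supplies the part of $\L^{(0)}$ that now crosses into $\L_1^{(\Ni)}$), and $n=\Ni+1$ reduces to $n=\Ni$ plus one CBSEP-extension step treated below. Since $\L_2^{(\Ni)}\cup\L_3^{(\Ni)}=\L^{(\Ni)}$ and $\SG^\bone(\L^{(\Ni)})$ is obtained by iterated East-extensions from $\L^{(0)}$ through all intermediate $\L^{(p)}$, $p\in(1/(2k))\bbN\cap[0,\Ni]$, the natural approach is induction on $p$: for every $\eta\in\bSG(\L_2^{(\Ni)})\cap\bcT(\L_3^{(\Ni)})$ I will show that $\eta_{\L^{(p)}}\in\SG^\bone(\L^{(p)})$.

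The base $p=0$ is immediate, since $\L^{(0)}$ has diameter $O(1/\e)\ll\sqrt{W}/\e$ and lies inside $\L_2^{(\Ni)}$, so that the event $\cI'\subset\bSG(\L_2^{(\Ni)})$ forces $\L^{(0)}$ to be fully infected. For the inductive step from $p-1/(2k)$ to $p$, unfolding $\SG'(\L'^{(\Ni)})\subset\bSG(\L_2^{(\Ni)})$ through \cref{def:extension:East} (applied to the contracted droplets $\L'^{(\cdot)}$) yields the event $\cT^\bone_W$ for the contracted tube $T(\ur'^{(p-1/(2k))},l'^{(p-1/(2k))},j(p))$. This tube differs from its original counterpart $T(\ur^{(p-1/(2k))},l^{(p-1/(2k))},j(p))$ only by $O(1)$-shifts of the radii and by the $W$-fringe of each parallelogram in the decomposition of \cref{def:traversability}. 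The missing helping sets on these fringes are exactly those lying within distance $W$ of a $u_j$-side of some intermediate $\L^{(m)}$, $m\leq\Ni$, and these are supplied by $\cW'$. Combining the resulting full $\cT^\bone$ of the original tube with the inductive hypothesis $\SG^\bone(\L^{(p-1/(2k))})$ via \cref{lem:tube:decomposition,def:extension:East} gives $\SG^\bone(\L^{(p)})$.

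For $n=\Ni+1$ the same induction produces $\SG^\bone(\L''^{(\Ni)})$; the additional clause in the definition of $\SG''(\L''^{(\Ni+1)})$ supplies two $(\bone,W)$-symmetrically traversable tubes flanking $\L''^{(\Ni)}$, which are upgraded to full $\ST^\bone$ using the helping sets in $\cW''$ and then combined via \cref{def:extension:CBSEP} at offset $l^{(\Ni)}/2$; the role played by $\bcT(\L_3^{(\Ni+1)})$ is the one played by $\bcT(\L_3^{(2)})$ in the proof of \cref{lem:log2:condition}. The main technical obstacle I foresee is not conceptual but rather the geometric bookkeeping at each inductive step: every segment entering the traversability decomposition of the original tube must be shown to be covered either by a segment of the contracted $(\bone,W)$-traversability event or by a $W$-fringe segment from $\cW'$/$\cW''$. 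Since the discrepancies are $O(1)$ while $\cW'$ and $\cW''$ provide $W$-helping sets along $\Theta(W)$ parallel lines near each relevant side of each intermediate droplet, this covering always succeeds, but checking it carefully for all $4k$ sides of all $\Theta(\Ni)$ intermediate droplets is the longest part of the argument.
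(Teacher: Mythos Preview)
Your proposal is correct and follows essentially the same route as the paper: refer to \cref{lem:log2:condition} for $n\ge\Ni+2$, and for $n\in\{\Ni,\Ni+1\}$ run an induction over all intermediate scales $p\le\Ni$, recovering each original traversability event $\cT^\bone(T(\ur^{(p)},l^{(p)},j(p)))$ from its contracted $(\bone,W)$-version together with $W$-helping sets near the relevant droplet sides. One small imprecision: in the inductive step you attribute all the missing fringe helping sets to $\cW'$, but $\cW'$ only governs segments contained in $\L_2^{(\Ni)}$; the last few segments of the tubes that reach the $u_0$-side of $\L^{(\Ni)}$ lie in $\L_3^{(\Ni)}$ and are supplied instead by $\bcT(\L_3^{(\Ni)})$ (the paper invokes both $\cW'$ and $\bcT(\L_3^{(\Ni)})$ at this point).
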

\begin{proof}
For $n\ge \Ni+2$ the proof is the same as in \cref{lem:log2:condition,lem:SG:condition:iso}.

Assume that $\bSG(\L_2^{(\Ni)})$ and $\bcT(\L_3^{(\Ni)})$ occur. We seek to prove by induction that for all $n\le \Ni$ the event $\SG^\bone(\L^{(n)})$ occurs. For $n=0$ this is true, since $\cI'$ and the corresponding part of $\bcT(\L_3^{(\Ni)})$ in \cref{def:bSG:loglog:0} give that $\L^{(0)}$ is fully infected. By \cref{def:SG:internal:loglog,def:extension:East}, it remains to show that for all $n<\Ni$ the event $\cT=\cT^\bone(T(\ur^{(n)},l^{(n)},j))$ occurs, where $j\in[4k]$ is such that $n-j/(2k)\in\bbN$. But by \cref{def:bSG:loglog:0} the corresponding event $\cT'=\cT^\bone_W(T(\ur'^{(n)},l^{(n)},j)$ occurs, where $\L'^{(n)}=\L(\ur'^{(n)})$. It therefore remains to observe that $\cW'$, the $W$-helping sets in the definition of $\bcT(\L_3^{(\Ni)})$ and $\cT'$ imply $\cT$. Indeed, $W$-helping sets ensure the occurrence of $\cH^\bone_{C^2}(S)$ for the first and last $\Theta(W)$ segments $S$ in \cref{def:traversability} for $\cT$, while the remaining ones are provided by $\cT'$, since $\ur'^{(n)}$ and $\ur^{(n)}$ only differ by $O(1)\ll W$. We omit the details, which are very similar to those in the proof of \cref{lem:log2:condition} (see \cref{subfig:bSG:loglog:0}).

The remaining three cases ($\L_1^{(\Ni)}$ instead of $\L_3^{(\Ni)}$ and/or $\Ni+1$ instead of $\Ni$) are treated analogously (see \cref{fig:bSG:loglog}).
\end{proof}

\begin{proof}[Proof of \cref{th:loglog:meso}]
By \cref{lem:loglog:condition}, \cref{eq:cor:CBSEP:reduction:condition} holds, so we may apply \cref{cor:CBSEP:reduction}. Together with the Harris inequality, \cref{eq:Harris:1,eq:Harris:2}, this gives
\begin{multline}
\label{eq:loglog:gamma}
\g\left(\L^{(\Ni+4k)}\right)\\\le\frac{\g(\L^{(\Ni)})\exp(O(C^2)\log^2(1/q))}{\displaystyle\prod_{i=\Ni}^{\Ni+4k-1}\m(\SG^\bone(\L^{(i+1)}))\m(\bcT(\L^{(i)}_1))\m(\bSG(\L_2^{(i)}))\m(\bcT(\L^{(i)}_3))}.
\end{multline}
In view of \cref{th:internal:loglog}, it remains to bound each of the terms in the denominator by $\exp(-1/(\e^{O(1)}q^\a))$ in order to conclude the proof of \cref{th:loglog:meso}.

Notice that a total of $\e^{-O(1)}$ fixed infections and $W^{O(1)}\Ni=q^{o(1)}$ $W$-helping sets are required in all the events in \cref{eq:loglog:gamma}. This amounts to a negligible factor. The probability of $\SG'(\L'^{(\Ni)})$ and $\SG''(\L''^{(\Ni)})$ can be bounded exactly like $\SG^\bone(\L^{(\Ni)})$ in \cref{lem:internal:loglog:probability}. This yields a contribution of $\exp(1/(\e^{O(1)}q^\a))$. Finally, the remaining bounded number of $\ST^\bone_W$ events are treated as in \cref{th:log2} to give a negligible $q^{-O(W)}$ factor. Hence, the proof of \cref{th:loglog:meso} is complete.
\end{proof}

\subsection{Global CBSEP dynamics}
\label{subsec:loglog:global}
The global dynamics is also based on the CBSEP mechanism and proceeds as in \cref{subsec:global:CBSEP,subsec:log2:global}
\begin{proof}[Proof of \cref{th:main}\ref{loglog}]
Let $\cU$ be semi-directed. Recall the droplets $\L^{(\Ni+i)}$ for $i\in[4k+1]$ from \cref{subsec:loglog:meso}. Set $\Lmp=\L^{(\Ni+4k)}$ and $\Lmm=\L^{(\Ni+2k)}$. Condition \ref{condition:CBSEP:1} of \cref{prop:global:CBSEP} is satisfied by \cref{th:loglog:meso}, while condition \ref{condition:CBSEP:2} is verified as in \cref{subsec:global:CBSEP}.
 
Thus, \cref{prop:global:CBSEP} applies and, together with \cref{th:loglog:meso} it yields
\[\Et\le\exp\left(\frac{\log\log(1/q)}{\e^{O(1)} q^\a}\right),\]
concluding the proof.
\end{proof}

\section{Balanced rooted models with finite number of stable directions}
\label{sec:log1}
In this section we deal with balanced rooted models with finite number of stable directions (class \ref{log1}). The internal dynamics (\cref{subsec:log1:internal}) uses a two-dimensional version of East-extensions. As usual, it requires the most work, but applies directly also to balanced models with infinite number of stable directions (class \ref{log0}). The mesoscopic and global dynamics are imported from \cite{Hartarsky21a} in \cref{subsec:global:FA}.

\subsection{East internal dynamics}
\label{subsec:log1:internal}
In this section we simultaneously treat balanced rooted models (classes \ref{log0} and \ref{log1}). We may therefore assume that $\a(u_j)\le \a$ for all $j\in[-k+1,k]$ and this is the only assumption on $\cU$ we use.

Let us start by motivating the coming two-dimensional East-extension we need. By the above assumption on the difficulties, we are allowed to use East-extensions in directions $u_0$ and $u_1$. Indeed, recalling \cref{def:extension:East}, we see that for these directions the traversability events (recall \cref{def:traversability}) only require helping sets and not $W$-helping sets. In principle, one could alternate East-extensions in these two directions similarly to what we did e.g.\ in \cref{subsec:loglog:internal} for directions $u_0,\dots,u_{2k-1}$. However, this would not work, because extensions in directions $u_0$ and $u_1$ only increase the length of the sides parallel to $u_0$ and $u_1$, while all others remain unchanged (see \cref{subfig:extension:East}). Thus, the traversability events would be too unlikely, since they would require helping sets also for the other sides, e.g.\ the one with outer normal $u_{2-k}$, which are too small. This would make the probability of the SG event too large. Notice that this issue does not arise when $k=1$, as we saw in \cref{subsec:loglog:internal}.

For $k>1$, however, we therefore need to make the $u_j$-sides of our successive droplets grow for all $j\in[-k+1,k]$. A natural way to achieve this is as depicted in \cref{fig:East:internal}. The drawback is that we can no longer achieve this directly with one-directional East-extensions as in \cref{def:extension:East,subfig:extension:East}, so we need some more definitions. However, morally, one such two-dimensional extension can be achieved by two East-extensions in the sense that, East-extending in direction $u_0$ and then $u_1$ yields a droplet which contains the desired droplet as in \cref{fig:East:internal}. Unfortunately, our approach heavily relies on not looking at the configuration outside the droplet itself. For that reason we instead need to find for each point in the droplet appropriate lengths of the East-extensions in directions $u_0$ and $u_1$, so as to cover the point without going outside the target droplet (see \cref{fig:East}).

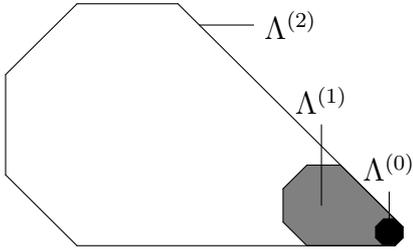
\begin{figure}
\floatbox[{\capbeside\thisfloatsetup{capbesideposition={right,top}}}]{figure}[\FBwidth]
{\centering
\begin{tikzpicture}[line cap=round,line join=round,>=triangle 45,x=0.013\textwidth,y=0.013\textwidth]
\fill[fill=black,fill opacity=1.0] (-1,0.41) -- (-1,-0.41) -- (-0.41,-1) -- (0.41,-1) -- (1,-0.41) -- (1,0.41) -- (0.41,1) -- (-0.41,1) -- cycle;
\fill[fill=black,fill opacity=0.5] (-3.59,5) -- (-6.07,5) -- (-7.83,3.24) -- (-7.83,0.76) -- (-6.07,-1) -- (0.41,-1) -- (1,-0.41) -- (1,0.41) -- cycle;
\draw (-1,0.41)-- (-1,-0.41);
\draw (-1,-0.41)-- (-0.41,-1);
\draw (-0.41,-1)-- (0.41,-1);
\draw (0.41,-1)-- (1,-0.41);
\draw (1,-0.41)-- (1,0.41);
\draw (1,0.41)-- (0.41,1);
\draw (0.41,1)-- (-0.41,1);
\draw (-0.41,1)-- (-1,0.41);
\draw (-15.59,17)-- (-23.04,17);
\draw (-23.04,17)-- (-28.31,11.73);
\draw (-28.31,11.73)-- (-28.31,4.27);
\draw (-28.31,4.27)-- (-23.04,-1);
\draw (-23.04,-1)-- (0.41,-1);
\draw (0.41,-1)-- (1,-0.41);
\draw (1,-0.41)-- (1,0.41);
\draw (1,0.41)-- (-15.59,17);
\draw (-3.59,5)-- (-6.07,5);
\draw (-6.07,5)-- (-7.83,3.24);
\draw (-7.83,3.24)-- (-7.83,0.76);
\draw (-7.83,0.76)-- (-6.07,-1);
\draw (-6.07,-1)-- (0.41,-1);
\draw (0.41,-1)-- (1,-0.41);
\draw (1,-0.41)-- (1,0.41);
\draw (1,0.41)-- (-3.59,5);
\draw (-14,15.42)--(-10,15.42) node [right] {$\L^{(2)}$};
\draw (-5,2)--(-5,8) node [above] {$\L^{(1)}$};
\draw (0,0)--(0,3) node [above] {$\L^{(0)}$};
\end{tikzpicture}}{\caption{Geometry of the droplets used for balanced rooted models in \cref{subsec:log1:internal} in the case $k=2$. The nested black, grey and white polygons are the droplets $\L^{(0)}$, $\L^{(1)}$ and $\L^{(2)}$ respectively.\label{fig:East:internal}}}
\end{figure}

Following \cref{subsec:loglog:internal} we define $\Nc,\Ni,\ell^{(n)}$ by \cref{eq:def:ln:internal:East}. In this section there are no fractional scales, so $n$ is an integer. Further let $\L^{(0)}$ be as in \cref{subsec:loglog:internal} with radii $\ur^{(0)}$ and side lengths $\us^{(0)}$. For $n\in[\Ni]$ set \[s_j^{(n)}=\begin{cases}s_j^{(0)}\ell^{(n)}&-k<j\le k\\
s_j^{(0)}&k+1<j<3k\end{cases}\]
and $s_{-k}^{(n)}$ and $s_{k+1}^{(n)}$ as required for $\us^{(n)}$ to be the side lengths of a droplet. Let $\ur^{(n)}$ be the corresponding radii such that $r_{-k}^{(n)}=r_{-k}^{(0)}$ and $r_{k+1}^{(n)}=r_{k+1}^{(0)}$. Finally, set $\L^{(n)}=\L(\ur^{(n)})$ as usual (see \cref{fig:East:internal}).

Fix $n\in[\Ni]$. Observe that we can cover $\L^{(n+1)}$ with droplets $(D_{\k})_{\k\in[K]}$ so that the following conditions all hold (see \cref{fig:Dk}).
\begin{itemize}
    \item For all $\k\in[K]$, $D_{\k}\subset \L^{(n+1)}$;
    \item $\bigcup_{\k=2}^{K-1}D_\k=\L^{(n+1)}$;
    \item $K=O(\ell^{(n+1)}/\ell^{(n)})$;
    \item any segment of length $\ell^{(n)}/(C\e)$ perpendicular to $u_j$ for some $j\in[4k]$ intersects at most $O(1)$ of the $D_{\k}$;
    \item droplets are assigned a \emph{generation} $g\in\{0,1,2\}$, so that only $D_0=\L^{(n)}$ is of generation $g=0$, only $D_1=\L(\ur^{(n)}+l_1\uv_1)$ is of generation $g=1$, where
    \[l_1=\frac{r^{(n+1)}_k-r^{(n)}_k}{\<u_1,u_k\>},\]
    so that $D_1$ spans the $u_{k+1}$-side of $\L^{(n+1)}$;
    \item if $\k\ge 2$, then $D_{\k}$ is of generation $g=2$, and is of the form \[D_\k=y_{\k}u_1+\L\left(\ur^{(n)}+l_{\k}\uv_0\right)\]
    for certain $l_{\k}\ge0$ and $y_{\k}\in[0,l_1]$ multiple of $\l_1$. 
\end{itemize}
To construct the $D_\k$ of generation $2$, it essentially suffices to increment $y_\k$ by $\Theta(\ell^{(n)}/\e)$ and define $l_\k$ to be the largest possible, so that $D_\k\subset \L^{(n+1)}$. Finally, we add to our collection of droplets the ones with $y_\k$ corresponding to a corner of $\L^{(n+1)}$ and again take $l_\k$ maximal (see \cref{fig:Dk}). Note that one is able to get $K=O(\ell^{(n+1)}/\ell^{(n)})$ thanks to the fact that $s^{(n)}_{-k}$ and $s_{k+1}^{(n)}$ are $\Theta(\ell^{(n)}/\e)$. We direct the interested reader to \cite{BalisterNaNb}*{Appendix E} for the explicit details of a similar construction in arbitrary dimension.

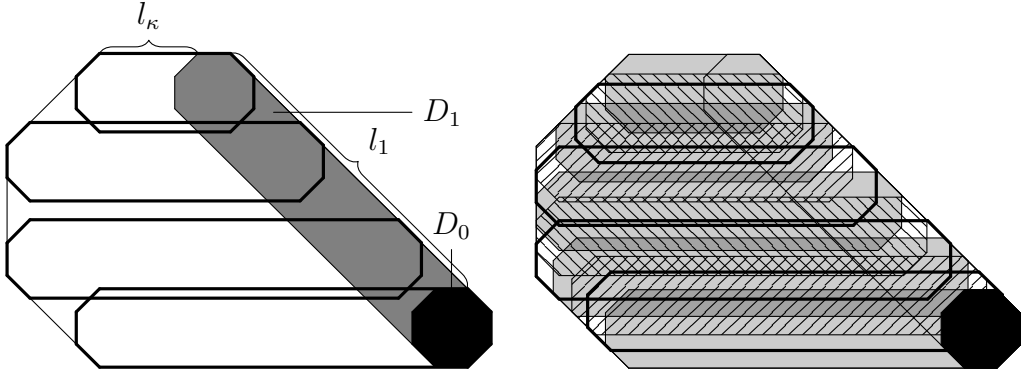
\begin{figure}
    \centering
\subcaptionbox{The droplets $D_\k$ corresponding to corners of $\L^{(n+1)}$. The generation $0$ droplet $D_0$ is given in black, while $D_1$ of generation $1$ is shaded.
\label{subfig:East:internal:corners}}{\centering
\begin{tikzpicture}[line cap=round,line join=round,>=triangle 45,x=0.52cm,y=0.52cm]
\fill[fill=black,fill opacity=1.0] (-0.41,1) -- (0.41,1) -- (1,0.41) -- (1,-0.41) -- (0.41,-1) -- (-0.41,-1) -- (-1,-0.41) -- (-1,0.41) -- cycle;
\fill[fill=black,fill opacity=0.5] (-7,6.41) -- (-6.41,7) -- (-5.59,7) -- (1,0.41) -- (1,-0.41) -- (0.41,-1) -- (-0.41,-1) -- (-7,5.59) -- cycle;
\draw (-0.41,1)-- (0.41,1);
\draw (0.41,1)-- (1,0.41);
\draw (1,0.41)-- (1,-0.41);
\draw (1,-0.41)-- (0.41,-1);
\draw (0.41,-1)-- (-0.41,-1);
\draw (-0.41,-1)-- (-1,-0.41);
\draw (-1,-0.41)-- (-1,0.41);
\draw (-1,0.41)-- (-0.41,1);
\draw (-5.59,7)-- (-8.9,7);
\draw (-8.9,7)-- (-11.24,4.66);
\draw (-11.24,4.66)-- (-11.24,1.34);
\draw (-11.24,1.34)-- (-8.9,-1);
\draw (-8.9,-1)-- (0.41,-1);
\draw (0.41,-1)-- (1,-0.41);
\draw (1,-0.41)-- (1,0.41);
\draw (1,0.41)-- (-5.59,7);
\draw [very thick] (-8.9,-1)-- (-9.49,-0.41);
\draw [very thick] (-9.49,-0.41)-- (-9.49,0.41);
\draw [very thick] (-9.49,0.41)-- (-8.9,1);
\draw [very thick] (-8.9,1)-- (0.41,1);
\draw [very thick] (0.41,1)-- (1,0.41);
\draw [very thick] (1,0.41)-- (1,-0.41);
\draw [very thick] (1,-0.41)-- (0.41,-1);
\draw [very thick] (0.41,-1)-- (-8.9,-1);
\draw [very thick] (-10.66,0.76)-- (-11.24,1.34);
\draw [very thick] (-11.24,1.34)-- (-11.24,2.17);
\draw [very thick] (-11.24,2.17)-- (-10.66,2.76);
\draw [very thick] (-10.66,2.76)-- (-1.34,2.76);
\draw [very thick] (-1.34,2.76)-- (-0.76,2.17);
\draw [very thick] (-0.76,2.17)-- (-0.76,1.34);
\draw [very thick] (-0.76,1.34)-- (-1.34,0.76);
\draw [very thick] (-1.34,0.76)-- (-10.66,0.76);
\draw [very thick] (-11.24,4.66)-- (-11.24,3.83);
\draw [very thick] (-11.24,3.83)-- (-10.66,3.24);
\draw [very thick] (-10.66,3.24)-- (-3.83,3.24);
\draw [very thick] (-3.83,3.24)-- (-3.24,3.83);
\draw [very thick] (-3.24,3.83)-- (-3.24,4.66);
\draw [very thick] (-3.24,4.66)-- (-3.83,5.24);
\draw [very thick] (-3.83,5.24)-- (-10.66,5.24);
\draw [very thick] (-10.66,5.24)-- (-11.24,4.66);
\draw [very thick] (-8.9,5)-- (-9.49,5.59);
\draw [very thick] (-9.49,5.59)-- (-9.49,6.41);
\draw [very thick] (-9.49,6.41)-- (-8.9,7);
\draw [very thick] (-8.9,7)-- (-5.59,7);
\draw [very thick] (-5.59,7)-- (-5,6.41);
\draw [very thick] (-5,6.41)-- (-5,5.59);
\draw [very thick] (-5,5.59)-- (-5.59,5);
\draw [very thick] (-5.59,5)-- (-8.9,5);
\draw (-7,6.41)-- (-6.41,7);
\draw (-6.41,7)-- (-5.59,7);
\draw (-5.59,7)-- (1,0.41);
\draw (1,0.41)-- (1,-0.41);
\draw (1,-0.41)-- (0.41,-1);
\draw (0.41,-1)-- (-0.41,-1);
\draw (-0.41,-1)-- (-7,5.59);
\draw (-7,5.59)-- (-7,6.41);
\draw [decorate,decoration={brace,amplitude=5pt}] (-8.9,7) -- (-6.41,7) node [midway,yshift=  0.5cm] {$l_\k$};
\draw [decorate,decoration={brace,amplitude=5pt}] (-5.59,7) -- (0.41,1) node [midway,yshift= 0.4cm,xshift=0.4cm] {$l_1$};
\draw (-4.5,5.5)--(-1,5.5) node [right] {$D_1$};
\draw (0,0)--(0,2) node [above] {$D_0$};
\end{tikzpicture}
}\quad
\subcaptionbox{All droplets $D_\k$. In the second generation, for visibility, droplets alternate between shaded, thickened and hatched.\label{subfig:East:internal:all}}{\centering
\begin{tikzpicture}[line cap=round,line join=round,>=triangle 45,x=0.52cm,y=0.52cm]
\fill[fill=black,fill opacity=1.0] (-0.41,1) -- (0.41,1) -- (1,0.41) -- (1,-0.41) -- (0.41,-1) -- (-0.41,-1) -- (-1,-0.41) -- (-1,0.41) -- cycle;
\fill[fill=black,fill opacity=0.2] (-8.9,-1) -- (-9.49,-0.41) -- (-9.49,0.41) -- (-8.9,1) -- (0.41,1) -- (1,0.41) -- (1,-0.41) -- (0.41,-1) -- cycle;
\fill[pattern=my north east lines] (-11.24,4.66) -- (-11.24,3.83) -- (-10.66,3.24) -- (-3.83,3.24) -- (-3.24,3.83) -- (-3.24,4.66) -- (-3.83,5.24) -- (-10.66,5.24) -- cycle;
\fill[fill=black,fill opacity=0.2] (-8.9,5) -- (-9.49,5.59) -- (-9.49,6.41) -- (-8.9,7) -- (-5.59,7) -- (-5,6.41) -- (-5,5.59) -- (-5.59,5) -- cycle;
\fill[fill=black,fill opacity=0.2] (-10.22,0.32) -- (-10.81,0.91) -- (-10.81,1.74) -- (-10.22,2.32) -- (-0.91,2.32) -- (-0.32,1.74) -- (-0.32,0.91) -- (-0.91,0.32) -- cycle;
\fill[pattern=my north east lines] (-9.75,-0.15) -- (-10.34,0.44) -- (-10.34,1.27) -- (-9.75,1.85) -- (-0.44,1.85) -- (0.15,1.27) -- (0.15,0.44) -- (-0.44,-0.15) -- cycle;
\fill[pattern=my north west lines] (-11.24,2.77) -- (-10.66,3.36) -- (-1.95,3.36) -- (-1.36,2.77) -- (-1.36,1.95) -- (-1.95,1.36) -- (-10.66,1.36) -- (-11.24,1.95) -- cycle;
\fill[fill=black,fill opacity=0.2] (-11.24,3.42) -- (-10.66,4) -- (-2.59,4) -- (-2,3.42) -- (-2,2.59) -- (-2.59,2) -- (-10.66,2) -- (-11.24,2.59) -- cycle;
\fill[fill=black,fill opacity=0.2] (-10.74,5.16) -- (-10.74,4.33) -- (-10.15,3.75) -- (-4.33,3.75) -- (-3.75,4.33) -- (-3.75,5.16) -- (-4.33,5.75) -- (-10.15,5.75) -- cycle;
\fill[pattern=my north west lines] (-9.4,6.5) -- (-9.98,5.92) -- (-9.98,5.09) -- (-9.4,4.5) -- (-5.09,4.5) -- (-4.5,5.09) -- (-4.5,5.92) -- (-5.09,6.5) -- cycle;
\draw (-0.41,1)-- (0.41,1);
\draw (0.41,1)-- (1,0.41);
\draw (1,0.41)-- (1,-0.41);
\draw (1,-0.41)-- (0.41,-1);
\draw (0.41,-1)-- (-0.41,-1);
\draw (-0.41,-1)-- (-1,-0.41);
\draw (-1,-0.41)-- (-1,0.41);
\draw (-1,0.41)-- (-0.41,1);
\draw (-5.59,7)-- (-8.9,7);
\draw (-8.9,7)-- (-11.24,4.66);
\draw (-11.24,4.66)-- (-11.24,1.34);
\draw (-11.24,1.34)-- (-8.9,-1);
\draw (-8.9,-1)-- (0.41,-1);
\draw (0.41,-1)-- (1,-0.41);
\draw (1,-0.41)-- (1,0.41);
\draw (1,0.41)-- (-5.59,7);
\draw (-8.9,-1)-- (-9.49,-0.41);
\draw (-9.49,-0.41)-- (-9.49,0.41);
\draw (-9.49,0.41)-- (-8.9,1);
\draw (-8.9,1)-- (0.41,1);
\draw (0.41,1)-- (1,0.41);
\draw (1,0.41)-- (1,-0.41);
\draw (1,-0.41)-- (0.41,-1);
\draw (0.41,-1)-- (-8.9,-1);
\draw [very thick] (-10.66,0.76)-- (-11.24,1.34);
\draw [very thick] (-11.24,1.34)-- (-11.24,2.17);
\draw [very thick] (-11.24,2.17)-- (-10.66,2.76);
\draw [very thick] (-10.66,2.76)-- (-1.34,2.76);
\draw [very thick] (-1.34,2.76)-- (-0.76,2.17);
\draw [very thick] (-0.76,2.17)-- (-0.76,1.34);
\draw [very thick] (-0.76,1.34)-- (-1.34,0.76);
\draw [very thick] (-1.34,0.76)-- (-10.66,0.76);
\draw (-11.24,4.66)-- (-11.24,3.83);
\draw (-11.24,3.83)-- (-10.66,3.24);
\draw (-10.66,3.24)-- (-3.83,3.24);
\draw (-3.83,3.24)-- (-3.24,3.83);
\draw (-3.24,3.83)-- (-3.24,4.66);
\draw (-3.24,4.66)-- (-3.83,5.24);
\draw (-3.83,5.24)-- (-10.66,5.24);
\draw (-10.66,5.24)-- (-11.24,4.66);
\draw (-8.9,5)-- (-9.49,5.59);
\draw (-9.49,5.59)-- (-9.49,6.41);
\draw (-9.49,6.41)-- (-8.9,7);
\draw (-8.9,7)-- (-5.59,7);
\draw (-5.59,7)-- (-5,6.41);
\draw (-5,6.41)-- (-5,5.59);
\draw (-5,5.59)-- (-5.59,5);
\draw (-5.59,5)-- (-8.9,5);
\draw [very thick] (-9.35,-0.55)-- (-9.94,0.04);
\draw [very thick] (-9.94,0.04)-- (-9.94,0.87);
\draw [very thick] (-9.94,0.87)-- (-9.35,1.45);
\draw [very thick] (-9.35,1.45)-- (-0.04,1.45);
\draw [very thick] (-0.04,1.45)-- (0.55,0.87);
\draw [very thick] (0.55,0.87)-- (0.55,0.04);
\draw [very thick] (0.55,0.04)-- (-0.04,-0.55);
\draw [very thick] (-0.04,-0.55)-- (-9.35,-0.55);
\draw (-10.22,0.32)-- (-10.81,0.91);
\draw (-10.81,0.91)-- (-10.81,1.74);
\draw (-10.81,1.74)-- (-10.22,2.32);
\draw (-10.22,2.32)-- (-0.91,2.32);
\draw (-0.91,2.32)-- (-0.32,1.74);
\draw (-0.32,1.74)-- (-0.32,0.91);
\draw (-0.32,0.91)-- (-0.91,0.32);
\draw (-0.91,0.32)-- (-10.22,0.32);
\draw (-9.75,-0.15)-- (-10.34,0.44);
\draw (-10.34,0.44)-- (-10.34,1.27);
\draw (-10.34,1.27)-- (-9.75,1.85);
\draw (-9.75,1.85)-- (-0.44,1.85);
\draw (-0.44,1.85)-- (0.15,1.27);
\draw (0.15,1.27)-- (0.15,0.44);
\draw (0.15,0.44)-- (-0.44,-0.15);
\draw (-0.44,-0.15)-- (-9.75,-0.15);
\draw [very thick] (-11.24,4.05)-- (-10.66,4.64);
\draw [very thick] (-10.66,4.64)-- (-3.22,4.64);
\draw [very thick] (-3.22,4.64)-- (-2.64,4.05);
\draw [very thick] (-2.64,4.05)-- (-2.64,3.22);
\draw [very thick] (-2.64,3.22)-- (-3.22,2.64);
\draw [very thick] (-3.22,2.64)-- (-10.66,2.64);
\draw [very thick] (-10.66,2.64)-- (-11.24,3.22);
\draw [very thick] (-11.24,3.22)-- (-11.24,4.05);
\draw (-11.24,2.77)-- (-10.66,3.36);
\draw (-10.66,3.36)-- (-1.95,3.36);
\draw (-1.95,3.36)-- (-1.36,2.77);
\draw (-1.36,2.77)-- (-1.36,1.95);
\draw (-1.36,1.95)-- (-1.95,1.36);
\draw (-1.95,1.36)-- (-10.66,1.36);
\draw (-10.66,1.36)-- (-11.24,1.95);
\draw (-11.24,1.95)-- (-11.24,2.77);
\draw (-11.24,3.42)-- (-10.66,4);
\draw (-10.66,4)-- (-2.59,4);
\draw (-2.59,4)-- (-2,3.42);
\draw (-2,3.42)-- (-2,2.59);
\draw (-2,2.59)-- (-2.59,2);
\draw (-2.59,2)-- (-10.66,2);
\draw (-10.66,2)-- (-11.24,2.59);
\draw (-11.24,2.59)-- (-11.24,3.42);
\draw (-10.74,5.16)-- (-10.74,4.33);
\draw (-10.74,4.33)-- (-10.15,3.75);
\draw (-10.15,3.75)-- (-4.33,3.75);
\draw (-4.33,3.75)-- (-3.75,4.33);
\draw (-3.75,4.33)-- (-3.75,5.16);
\draw (-3.75,5.16)-- (-4.33,5.75);
\draw (-4.33,5.75)-- (-10.15,5.75);
\draw (-10.15,5.75)-- (-10.74,5.16);
\draw (-9.4,6.5)-- (-9.98,5.92);
\draw (-9.98,5.92)-- (-9.98,5.09);
\draw (-9.98,5.09)-- (-9.4,4.5);
\draw (-9.4,4.5)-- (-5.09,4.5);
\draw (-5.09,4.5)-- (-4.5,5.09);
\draw (-4.5,5.09)-- (-4.5,5.92);
\draw (-4.5,5.92)-- (-5.09,6.5);
\draw (-5.09,6.5)-- (-9.4,6.5);
\draw [very thick] (-9.66,6.24)-- (-4.83,6.24);
\draw [very thick] (-4.83,6.24)-- (-4.24,5.66);
\draw [very thick] (-4.24,5.66)-- (-4.24,4.83);
\draw [very thick] (-4.24,4.83)-- (-4.83,4.24);
\draw [very thick] (-4.83,4.24)-- (-9.66,4.24);
\draw [very thick] (-9.66,4.24)-- (-10.24,4.83);
\draw [very thick] (-10.24,4.83)-- (-10.24,5.66);
\draw [very thick] (-10.24,5.66)-- (-9.66,6.24);
\draw (-6.41,7)-- (-5.59,7);
\draw (-5.59,7)-- (1,0.41);
\draw (1,0.41)-- (1,-0.41);
\draw (1,-0.41)-- (0.41,-1);
\draw (0.41,-1)-- (-0.41,-1);
\draw (-0.41,-1)-- (-7,5.59);
\draw (-7,5.59)-- (-7,6.41);
\draw (-7,6.41)-- (-6.41,7);
\end{tikzpicture}
}
\caption{\label{fig:Dk}Geometry of the droplets $(D_\k)_{\k\in[K]}$ used in the two-dimensional East-extension in \cref{def:extension:East:multid}. Also recall \cref{fig:East:internal}.}
    \label{fig:East}
\end{figure}

\begin{defn}[$n$-traversability]
\label{def:n-traversable}
Fix $n\in[\Ni]$ and let $R\subset\L^{(n+1)}$ be a region of the form
\begin{equation}
\label{eq:R:form}
\bigcup_{I\in\cI}\left(\bigcap_{\k\in I} D_\k\setminus \bigcup_{\k\in[K]\setminus I} D_\k\right)
\end{equation}
for some family $\cI$ of subsets of $[K]$. We say that $R$ is $n$-\emph{traversable} ($\cT_n(R)$ occurs\footnote{The $n$-traversability $\cT_n$ should not be confused with $(\o,d)$-traversability $\cT_d^\o$ from \cref{def:traversability}, which only features with $d=0$ and $\o=\bone$ in the present section.}) if for all $j\in(-k,k)$ and every segment $S\subset R$ perpendicular to $u_j$ of length at least $\d \ell^{(n)}/\e$ the following two conditions hold.
\begin{itemize}
\item If $S$ is at distance at least $W$ from the boundary of all $D_{\k}$, then the event $\cH(S)$ occurs. 
\item If $S$ is at distance at most $W$ from a side of a $D_{\k}$ parallel to $S$ for some $\k\in[K]$, but $S$ does not intersect any non-parallel side of any $D_{\k'}$, then the event $\cH^W(S)$ occurs.
\end{itemize}
\end{defn}
Roughly speaking, $R$ must be one of the polygonal pieces into which the boundaries of all $D_\k$ cut $\L^{(n+1)}$. It is $n$-traversable, if segments of the size slightly smaller than $\L^{(n)}$ contain helping sets for the directions in $(-k,k)$. However, we only require this slightly away from the boundaries of $D_\k$ and instead add $W$-helping sets close to boundaries, so that we can still cross them but keep the following independence.
\begin{rem}
\label{rem:product}
Note that $n$-traversability events are product over the disjoint regions into which all the boundaries of $(D_\k)_{\k\in[K]}$ partition $\L^{(n+1)}$.
\end{rem}
\begin{defn}[Two-dimensional East-extension]
\label{def:extension:East:multid}
For $n\in[\Ni]$ we say that we \emph{East-extend} $\L^{(n)}$ to $\L^{(n+1)}$ if $\SG^\bone(D_1)$ is defined by East-extending $\L^{(n)}$ by $l_1$ in direction $u_1$ and $\SG^\bone(\L^{(n+1)})=\SG^\bone(D_1)\cap\cT_n(\L^{(n+1)}\setminus D_1)$.
\end{defn} 
Indeed, \cref{def:n-traversable} gives $\cT_n(\L^{(n+1)}\setminus D_1)$, since \cref{eq:R:form} is satisfied:
\[\L^{(n+1)}\setminus D_1=\bigcup_{\k\in[K]}D_\k\setminus D_1=\bigcup_{I\subset[K]\setminus\{0,1\}}\left(\bigcap_{\k\in I}D_\k\setminus\bigcup_{\k\not\in I}D_\k\right).\]
Armed with this notion, we are ready to define our SG events up to the internal scale for our models of interest.
\begin{defn}[Balanced rooted internal SG]
\label{def:SG:balanced:internal}
Let $\cU$ be balanced rooted. We say that $\L^{(0)}$ is SG ($\SG^\bone(\L^{(0)}$ occurs), if all sites in $\L^{(0)}$ are infected. We then recursively define $\SG^\bone(\L^{(n+1)})$ for $n\in[\Ni]$ by East-extending $\L^{(n)}$ to $\L^{(n+1)}$ (see \cref{def:extension:East:multid}).
\end{defn}

We are now ready to state our bound on the probability of $\SG^\bone(\L^{(\Ni)})$ and $\g(\L^{(\Ni)})$ (recall \cref{subsec:poincare}).
\begin{thm}
\label{th:internal:East}
Let $\cU$ be balanced rooted (classes \ref{log0} and \ref{log1}). Then
\begin{align*}
\g\left(\L^{(\Ni)}\right)&{}\le \exp\left(\frac{\log(1/q)\log\log\log(1/q)}{\e^3q^\a}\right),\\\m\left(\SG^\bone\left(\L^{(\Ni)}\right)\right)&{}\ge\exp\left(\frac{-1}{\e^2q^\a}\right).
\end{align*}
\end{thm}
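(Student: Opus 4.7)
My plan is to follow the blueprint of Theorem \ref{th:internal:loglog}, replacing one-directional East-extensions with the two-dimensional extensions of Definition \ref{def:extension:East:multid}.

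The probability bound $\m(\SG^\bone(\L^{(\Ni)}))\ge\exp(-1/(\e^2q^\a))$ is the easier part. I would induct on $n$: by Definition \ref{def:extension:East:multid}, $\SG^\bone(\L^{(n+1)})=\SG^\bone(D_1)\cap\cT_n(\L^{(n+1)}\setminus D_1)$ is a product event on the disjoint regions $D_1$ and $\L^{(n+1)}\setminus D_1$, so the probabilities multiply. The factor $\m(\SG^\bone(D_1))$ is the probability for the one-directional East-extension of $\L^{(n)}$ in direction $u_1$, which is handled by the exact same recursion \eqref{eq:mu:bound:rec:loglog}--\eqref{eq:mu:T:bound:loglog} from Lemma \ref{lem:internal:loglog:probability}. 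For $\m(\cT_n(\L^{(n+1)}\setminus D_1))$, Remark \ref{rem:product} gives a product over $O(\ell^{(n+1)}/\ell^{(n)})$ disjoint regions, each demanding $O(\ell^{(n+1)}/\ell^{(n)})$ helping sets on segments of length at least $\d\ell^{(n)}/\e$. Observation \ref{obs:mu:helping} then produces a bound of the same form as \eqref{eq:mu:T:bound:loglog}, so plugging in the scales of \eqref{eq:def:ln:internal:East} and iterating gives the claim.

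For the Poincar\'e bound, I would split each scale increment $\L^{(n)}\mapsto\L^{(n+1)}$ into two substeps. Substep (a): apply Corollary \ref{cor:East:reduction} to the East-extension $\L^{(n)}\to D_1$ in direction $u_1$, producing the conditional factors $a_m^{(n)}$ as in \eqref{eq:def:amn:loglog} (with $j=1$). Substep (b): pass from $\g(D_1)$ to $\g(\L^{(n+1)})$. For (b), $\SG^\bone(D_1)$ and $\cT_n(\L^{(n+1)}\setminus D_1)$ are supported on disjoint sets of variables and $\cT_n$ itself factorises over the disjoint regions of Remark \ref{rem:product}. A standard tensorisation argument for the constrained Poincar\'e inequality \eqref{eq:def:gamma} (as in \cite{Hartarsky20FA}, and importantly relying on the non-standard conditioning of Remark \ref{rem:deconditioning}) then shows that moving from $\g(D_1)$ to $\g(\L^{(n+1)})$ costs at most a factor $e^{O(C^2)\log^2(1/q)}/\m(\cT_n(\L^{(n+1)}\setminus D_1))$, which by the previous paragraph is negligible.

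Substep (c): bound each $a_m^{(n)}$. Here the analysis of Lemma \ref{lem:internal:loglog:probability} transports verbatim, since the only ingredients used were translation invariance, the Harris inequality, and Corollary \ref{cor:perturbation}; the direction $u_1$ plays exactly the role that $u_j$ played there because $\a(u_j)\le\a$ for all $j\in(-k,k]$. Consequently, setting $M^{(n)}=\Theta(\log l_1)$ and carrying out the computation leading to \eqref{eq:gLni:part2:loglog}--\eqref{eq:gLni:part3:loglog} gives, for each fixed $n\ge\Nc$, the bound $\prod_{m=1}^{M^{(n)}}a_m^{(n)}\le\exp(\log\log\log(1/q)/(\e^{O(1)}q^\a))$.

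Iterating substeps (a), (b), (c) over $n\in[\Ni]$, each scale contributes (i) the product over $m$, estimated above, and (ii) an extra $\m^{-1}(\SG^\bone(\L^{(n)}))$ coming from the maximum in Corollary \ref{cor:East:reduction}, which by the already-proved probability bound is at most $\exp(1/(\e^2q^\a))$. The dominant contribution is therefore $\Ni=O(\log(1/q)/\log W)$ times $\exp(\log\log\log(1/q)/(\e^{O(1)}q^\a))$, yielding the claimed $\exp(\log(1/q)\log\log\log(1/q)/(\e^3q^\a))$. The principal technical obstacle is substep (b): tensorising the constrained Poincar\'e inequality across $D_1$ and the product-structured complement $\L^{(n+1)}\setminus D_1$ while respecting the definition \eqref{eq:def:gamma} and not losing the factor $\m(\SG^\bone(\L^{(n+1)}))$, which is precisely why the choice of definition emphasised in Remark \ref{rem:deconditioning} is essential.
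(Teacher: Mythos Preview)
Your substep (b) contains a genuine gap. Tensorisation as in \cref{eq:var:decomposition:AB} does split $\var_{\L^{(n+1)}}(f|\SG^\bone(\L^{(n+1)}))$ into a $D_1$ part, controlled by $\g(D_1)$, and a term $\var_{\L^{(n+1)}\setminus D_1}(f|\cT_n)$. But the latter cannot be bounded by the constrained Dirichlet form at the cost you claim: the region $\L^{(n+1)}\setminus D_1$ contains no super good droplet, only sparse helping sets, so the KCM restricted to it (with boundary condition $\bone$ outside $\L^{(n+1)}$ and a generic $\SG^\bone(D_1)$ configuration on $D_1$, which is \emph{not} fully infected) has no mechanism to flip spins far from $D_1$. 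Paying $\m^{-1}(\cT_n)$ to drop the conditioning via \cref{eq:variance:conditioning:enlarge} still leaves an unconditioned variance that must be compared to $\sum_x\m(c_x^{\L^{(n+1)},\bone}\var_x(f))$, and that comparison fails because the constraints $c_x$ vanish on large parts of the complement. The product structure of $\cT_n$ from \cref{rem:product} does not help here, since each piece has the same problem.

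The paper's recursion (\cref{lem:internal:East:step}) addresses exactly this by covering $\L^{(n+1)}\setminus D_1$ with the droplets $D_\k$ of \cref{fig:Dk} and, for each $\k$, running a two-block chain (\cref{lem:aux:east}) on the $\dashv$-shaped region $V=D_1\cup D_\k$: one resamples $D_1$ until the facilitating event $\cE=\SG^\bone(\L^{(n)}+y_\k u_1)\cap\cT_n(D_1\cap D_\k)$ occurs, at which point $\cE\cap\cT_n(V\setminus D_1)\subset\SG^\bone(D_\k)$ by \cref{eq:ETn:SG}, so $D_\k$ relaxes via \cref{cor:East:reduction} applied to the East-extension of $\L^{(n)}+y_\k u_1$ in direction $u_0$. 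This is why the paper's $a_m^{(n)}$ in \cref{eq:def:amn} takes the maximum over $j\in\{0,1\}$, not just $j=1$: both one-directional extensions are needed, and the two-block step couples them. Your substep (a) handles $u_1$ only and leaves the $u_0$ extensions, which carry the bulk of the work, unaccounted for.

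A secondary remark: the refined bounds on $a_m^{(n)}$ via \cref{cor:perturbation} that you propose in substep (c) are unnecessary for \cref{th:internal:East} as stated. The crude estimate $a_m^{(n)}\le\m^{-1}(\SG^\bone(\L^{(n)}))$ of \cref{eq:mu:SG:bound} suffices; the $\log\log\log(1/q)$ factor arises simply because there are $\Ni-\Nc\approx\log\log\log(1/q)$ scales each contributing $M^{(n)}=O(\log(1/q))$ factors of $\exp(O(1)/(\e^2q^\a))$. Implementing your refined bounds is precisely the content of \cref{app:logloglog}, which removes that surplus.
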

The rest of \cref{subsec:log1:internal} is dedicated to the proof of \cref{th:internal:East}. As usual, the probability bound is not hard (see \cref{lem:internal:East:probability} below), while the relaxation time is bounded recursively. However, we need to obtain such a recursive relation, using \cref{cor:East:reduction} twice (see \cref{lem:internal:East:step} below). Yet, thanks to the additional $\log(1/q)$ factor as compared to \cref{th:internal:loglog} (and the $\log\log\log(1/q)$ one, see \cref{rem:logloglog}), the computations need not be as precise and, in particular, do not rely on \cref{cor:perturbation}.

Note that $\g(\L^{(0)})=1$, since \cref{eq:def:gamma} is trivial, as $\SG^\bone(\L^{(0)})$ is a singleton. For $m\ge 1$ and $n\in [\Ni]$ denote
\begin{equation}
\label{eq:def:amn}a^{(n)}_{m}=\max_{j\in\{0,1\}}\m^{-1}\left(\left.\SG^\bone\left(\L^{(n)}+\left(\lfloor (3/2)^{m+1}\rfloor-\lfloor(3/2)^{m}\rfloor\right)\l_j u_j\right)\right|\SG^\bone\left(\L^{(n)}\right)\right).
\end{equation}
For the sake of simplifying expressions we abusively assume that for all $\k\in[K]$ the length $l_\k$ is of the form $\l_0\lfloor(3/2)^m\rfloor$ with integer $m$. Without this assumption, one would need to treat the term corresponding to $m=M-1$ in \cref{cor:East:reduction} separately, but identically. We next deduce \cref{th:internal:East} from the following two lemmas.
\begin{lem}
\label{lem:internal:East:step}
For $n<\Ni$ we have
\[\g\left(\L^{(n+1)}\right)\le \frac{\g(\L^{(n)})e^{O(C^2)\log^2(1/q)}}{(\m(\SG^\bone(\L^{(n+1)}))\m(\cT_n(\L^{(n+1)})))^{O(1)}}\prod_{m=1}^{M^{(n)}} a_m^{(n)}
,\]
where $M^{(n)}=\lceil1/\e\rceil+\lceil \log \ell^{(n+1)}/\log(3/2)\rceil$.
\end{lem}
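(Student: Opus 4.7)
The plan is to apply Proposition \ref{cor:East:reduction} successively in the two directions $u_1$ and $u_0$ involved in the two-dimensional East-extension of Definition \ref{def:extension:East:multid}, and glue the two stages using the product structure of $\cT_n$ recorded in Remark \ref{rem:product}.

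\textbf{Stage 1: the $u_1$-extension.} First I apply Proposition \ref{cor:East:reduction} to $\L^{(n)}$ with direction $u_1$ and extension length $l_1$. Since $D_1=\L(\ur^{(n)}+l_1\uv_1)$ is precisely the East-extension of $\L^{(n)}$ in direction $u_1$ by $l_1$, and the event $\SG^\bone(D_1)$ coming from Definition \ref{def:extension:East:multid} agrees with the East-extension $\SG$ of Definition \ref{def:extension:East}, all hypotheses are met. This yields
\[\g(D_1)\le\max\bigl(\g(\L^{(n)}),\m^{-1}(\SG^\bone(\L^{(n)}))\bigr)\,e^{O(C^2)\log^2(1/q)}\prod_{m=1}^{M_1-1}a_m^{(n,1)},\]
where $M_1=O(\log l_1)\le M^{(n)}$ and $a_m^{(n,1)}$ is the $u_1$-shift conditional probability appearing in \cref{eq:def:am:East}. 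By the definition \cref{eq:def:amn} of $a_m^{(n)}$ as a max over $j\in\{0,1\}$, one has $a_m^{(n,1)}\le a_m^{(n)}$.

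\textbf{Stage 2: gluing in the $D_\kappa$ for $\kappa\ge 2$.} Then I have to account for $\cT_n(\L^{(n+1)}\setminus D_1)$. Each $D_\kappa=y_\kappa u_1+\L(\ur^{(n)}+l_\kappa\uv_0)$ for $\kappa\ge 2$ is an East-extension in direction $u_0$ of a translate of $\L^{(n)}$ anchored along the $u_{k+1}$-side of $D_1$, and by Remark \ref{rem:product} the $\cT_n$ event factorises over the disjoint polygonal pieces cut out by the boundaries of the $D_\kappa$. I can therefore iterate (the bisection argument underlying) Proposition \ref{cor:East:reduction} in parallel over all $\kappa\ge 2$ with direction $u_0$. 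The crucial point is that the resulting cost collapses into a \emph{single} product $\prod_{m=1}^{M^{(n)}}a_m^{(n,0)}$ rather than one product per $\kappa$: at each bisection scale $(3/2)^m$, only the currently largest unresolved $D_\kappa$ contributes, while the others are already handled at a strictly smaller scale and have their contributions absorbed into the corresponding $\cT_n$ factor. The gluing along the $D_1$-boundary and between adjacent $D_\kappa$ produces the $\m^{-1}(\cT_n(\L^{(n+1)}))^{O(1)}$ factor, plus an extra $e^{O(C^2)\log^2(1/q)}$ from the microscopic dynamics. This step yields
\[\g(\L^{(n+1)})\le\g(D_1)\cdot\frac{e^{O(C^2)\log^2(1/q)}}{\m(\SG^\bone(\L^{(n+1)}))^{O(1)}\,\m(\cT_n(\L^{(n+1)}))^{O(1)}}\prod_{m=1}^{M^{(n)}}a_m^{(n,0)},\]
where $\m(\SG^\bone(\L^{(n+1)}))^{O(1)}$ comes from converting between the un-normalised Poincar\'e constant of \cref{eq:def:gamma} (cf.\ Remark \ref{rem:deconditioning}) at the two scales.

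\textbf{Conclusion.} Combining the two stages and using $a_m^{(n,j)}\le a_m^{(n)}$ for $j\in\{0,1\}$ folds the two partial products into the single product $\prod_{m=1}^{M^{(n)}}a_m^{(n)}$; the $\max(\g(\L^{(n)}),\m^{-1}(\SG^\bone(\L^{(n)})))$ factor is absorbed into $\g(\L^{(n)})/\m(\SG^\bone(\L^{(n+1)}))^{O(1)}$ via $\SG^\bone(\L^{(n+1)})\subset\SG^\bone(D_1)$ and translation invariance.

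\textbf{Main obstacle.} The delicate point is Stage 2: with $K=O(\ell^{(n+1)}/\ell^{(n)})$ droplets $D_\kappa$, a naive application of Proposition \ref{cor:East:reduction} for each $\kappa$ would produce $K$ separate products of $a_m^{(n,0)}$, devastating the bound. Showing that these collapse into a single product of length $M^{(n)}=O(\log\ell^{(n+1)})$ relies critically on the product structure of Remark \ref{rem:product}: different $D_\kappa$ interact only along $W$-thin boundary strips whose helping sets are accounted for once in $\m^{-1}(\cT_n(\L^{(n+1)}))^{O(1)}$. Verifying this carefully, and checking that the boundary-gluing in the bisection recursion never forces us to re-pay the same $a_m^{(n)}$ across different $\kappa$, is where the real work lies.
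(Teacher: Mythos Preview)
Your proposal misidentifies the main obstacle. You worry that treating the $K$ droplets $D_\kappa$ separately would produce $K$ multiplicative copies of $\prod_m a_m^{(n)}$, and you propose a ``collapsing'' mechanism to avoid this. But in the paper the $D_\kappa$ enter \emph{additively}: the law-of-total-variance decomposition (applied thanks to the product structure of Remark~\ref{rem:product}) yields
\[
\var_{\L^{(n+1)}}\bigl(f\,\big|\,\SG^\bone(\L^{(n+1)})\bigr)\le\sum_{\kappa=1}^{K-1}\m_{\L^{(n+1)}}\bigl(\var_{D_\kappa\cup D_1}(f\mid\SG^\bone(D_1),\cT_n(D_\kappa\setminus D_1))\,\big|\,\SG^\bone(\L^{(n+1)})\bigr),
\]
so each $\kappa$ contributes the \emph{same} bound $\prod_{m=1}^{M^{(n)}}a_m^{(n)}$ (times the SG/$\cT_n$ probability losses), and summing just costs an extra factor $K=O(\ell^{(n+1)}/\ell^{(n)})\le\log^{O(1)}(1/q)$, which is swallowed by $e^{O(C^2)\log^2(1/q)}$. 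Your collapsing argument (``only the currently largest unresolved $D_\kappa$ contributes at scale $(3/2)^m$'') is not needed and, as stated, is not justified.

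The actual work you are missing is the bridge between the conditioning $\SG^\bone(D_1)\cap\cT_n(D_\kappa\setminus D_1)$ and the East-extended event $\SG^\bone(D_\kappa)$ to which Proposition~\ref{cor:East:reduction} applies. These are different events on overlapping regions, so one cannot simply ``apply Proposition~\ref{cor:East:reduction} in direction $u_0$''. The paper handles this for each $\kappa$ via a two-block auxiliary chain (Lemma~\ref{lem:aux:east}) on $V=D_1\cup D_\kappa$: the facilitating event is $\cE=\SG^\bone(\L^{(n)}+y_\kappa u_1)\cap\cT_n(D_1\cap D_\kappa)\subset\O_{D_1}$, and the key inclusion $\cE\cap\cT_n(V\setminus D_1)\subset\SG^\bone(D_\kappa)$ lets one enlarge the conditioning (via \cref{eq:variance:conditioning:enlarge}) at the cost of $\m^{-O(1)}(\cT_n(\L^{(n+1)}))\m^{-O(1)}(\SG^\bone(\L^{(n+1)}))$. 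Only then does Proposition~\ref{cor:East:reduction} apply to $D_\kappa$, producing a single $\prod_m a_m^{(n)}$. Your Stage~2 sketch does not supply this step.
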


\begin{lem}
\label{lem:internal:East:probability}
For any $n\le\Ni$ and $m\ge1$ we have
\begin{align}
\nonumber
a_m^{(n)}\le{}&\m^{-1}\left(\SG^\bone\left(\L^{(n)}\right)\right)\le \m^{-1}\left(\SG^\bone\left(\L^{(n)}\right)\right)\m^{-1}\left(\cT_{n-1}\left(\L^{(n)}\right)\right)\\
\label{eq:mu:SG:bound}\le{}&\min\left(\left(\d q^\a W^n\right)^{-W^n/\e^2},e^{1/(\e^2q^\a)}\right).
\end{align}
\end{lem}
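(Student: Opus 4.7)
The first inequality follows from the Harris inequality \cref{eq:Harris:2} together with the translation invariance and decreasing character of $\SG^\bone(\L^{(n)})$: for each $j\in\{0,1\}$ and the translate $s=(\lfloor(3/2)^{m+1}\rfloor-\lfloor(3/2)^m\rfloor)\l_ju_j$ appearing in \cref{eq:def:amn}, we have
\[\m\left(\left.\SG^\bone\left(\L^{(n)}+s\right)\right|\SG^\bone\left(\L^{(n)}\right)\right)\ge\m\left(\SG^\bone\left(\L^{(n)}+s\right)\right)=\m\left(\SG^\bone\left(\L^{(n)}\right)\right),\]
so $a_m^{(n)}\le\m^{-1}(\SG^\bone(\L^{(n)}))$. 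The middle inequality is trivial upon adopting the convention $\cT_{-1}(\L^{(0)})=\O_{\L^{(0)}}$ in the base case, since $\m(\cT_{n-1}(\L^{(n)}))\le 1$ in general.

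The final upper bound will be proved by induction on $n$. The base case $n=0$ reduces to $\m(\SG^\bone(\L^{(0)}))=q^{|\L^{(0)}|}=q^{\Theta(1/\e^2)}$, which matches the claimed form. For the inductive step, \cref{def:extension:East:multid} combined with \cref{def:extension:East} gives the decomposition
\[\SG^\bone\left(\L^{(n+1)}\right)=\SG^\bone\left(\L^{(n)}\right)\cap\cT^\bone\left(T\left(\ur^{(n)},l_1,1\right)\right)\cap\cT_n\left(\L^{(n+1)}\setminus D_1\right),\]
whose three factors are supported on essentially disjoint regions of $\bbZ^2$, and \cref{rem:product} factorises $\cT_n(\L^{(n+1)})$ over the polygonal pieces cut out by the boundaries of $(D_\k)_{\k\in[K]}$. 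The Harris inequality \cref{eq:Harris:1} therefore reduces the estimate of $\m(\SG^\bone(\L^{(n+1)}))\m(\cT_n(\L^{(n+1)}))$ to a product of lower bounds, each associated to a single smaller droplet $\L^{(n)}$ and to a single traversability event on either a tube or a polygonal piece inside $\L^{(n+1)}$.

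Every such traversability factor is then bounded from below using \cref{obs:mu:helping} and \cref{lem:traversability:boundary}, in exactly the same way as in \cref{eq:mu:T:bound:loglog} from the proof of \cref{lem:internal:loglog:probability}. Since $\cU$ is balanced rooted, $\a(u_j)\le\a$ holds for all $j\in(-k,k]$, so all relevant directions are favourable; the sides involved have length at least $\O(\d\ell^{(n)}/\e)$, and there are $K=O(\ell^{(n+1)}/\ell^{(n)})$ pieces to account for at each step, giving a contribution of the same exponential form as \cref{eq:mu:T:bound:loglog}. Iterating this recursion over $n\le \Ni\le O(\log(1/q))$ steps, starting from the base $q^{\Theta(1/\e^2)}$, routine computation parallel to the end of the proof of \cref{lem:internal:loglog:probability} yields the two claimed bounds: $(\d q^\a W^n)^{-W^n/\e^2}$ is the sharpest as long as $W^n\le 1/q^\a$, i.e.\ $n\le\Nc$, and the saturation value $\exp(1/(\e^2q^\a))$ takes over once $\ell^{(n)}$ has passed the critical scale $1/q^\a$. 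Unlike for semi-directed models, no application of the perturbation estimate \cref{cor:perturbation} is required here---the present lemma is the direct analogue only of the unconditional first half of \cref{lem:internal:loglog:probability}, which is precisely what explains the cleaner statement at the price of the extra $\log(1/q)\log\log\log(1/q)$ factor ultimately appearing in \cref{th:internal:East}.
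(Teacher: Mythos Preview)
Your proof is correct and follows essentially the same approach as the paper: the first two inequalities via Harris and triviality, then an inductive lower bound on $\m(\SG^\bone(\L^{(n)}))$ using the decomposition from \cref{def:extension:East:multid,def:extension:East} and bounding the traversability factors via \cref{obs:mu:helping}. The only presentational difference is that the paper carries out the counting for $\m(\cT_n(\L^{(n+1)}))$ explicitly (yielding \cref{eq:mu:T:bound}), whereas you defer to the analogous computation \cref{eq:mu:T:bound:loglog}; both lead to the same conclusion.
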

\begin{proof}[Proof of \cref{th:internal:East}]From \cref{lem:internal:East:step,lem:internal:East:probability} and the explicit expressions \cref{eq:def:ln:internal:East}, we get
\begin{align*}
\label{eq:gLni:decomposition}
\g\left(\L^{(\Ni)}\right)&{}\le e^{\log^{O(1)}(1/q)}\prod_{n=0}^{\Ni-1}\left(\m\left(\SG^\bone\left(\L^{(n+1)}\right)\right)\m\left(\cT_n\left(\L^{(n+1)}\right)\right)\right)^{-O(1)}\prod_{m=1}^{M^{(n)}}a_m^{(n)}\\
&{}\le e^{\log^{O(1)}(1/q)}\prod_{n=0}^{\Ni-1}\left(\m\left(\SG^\bone\left(\L^{(n+1)}\right)\right)\m\left(\cT_n\left(\L^{(n+1)}\right)\right)\right)^{-O(\log(1/q))}\\
&{}\le\exp\left(\frac{\log(1/q)\log\log\log(1/q)}{\e^3q^\a}\right).
\end{align*}
Since the second inequality in \cref{th:internal:East} is contained in \cref{lem:internal:East:probability}, this concludes the proof of the theorem modulo \cref{lem:internal:East:probability,lem:internal:East:step}.    
\end{proof}

\begin{proof}[Proof of \cref{lem:internal:East:step}]
Let us start by recalling a general fact about product measures. Consider two disjoint regions $A,B\subset\bbZ^2$ and a product measure $\n$ on $\O_{A}\times\O_B$. The law of total variance and convexity give
\begin{equation}
\label{eq:var:decomposition:AB}
\var_{\n_{A\cup B}}(f)=\n_{B}\left(\var_{\n_A}(f)\right)+\var_{\n_{B}}\left(\n_{A}(f)\right)\le \n(\var_{\n_A}(f)+\var_{\n_B}(f)).
\end{equation}

Fix $n\in[\Ni]$. Applying \cref{eq:var:decomposition:AB} several times (in view of \cref{rem:product,def:extension:East:multid}), we obtain
\begin{align}
\label{eq:cover}&\var_{\L^{(n+1)}}\left(f|\SG^\bone\left(\L^{(n+1)}\right)\right)\\
\nonumber&{}\le \m_{\L^{(n+1)}}\left(\left.\var_{D_1}\left(f|\SG^\bone(D_1)\right)+\sum_{\k=2}^{K-1}\var_{R_\k}\left(f|\cT_n\left(R_\k\right)\right)\right|\SG^\bone\left(\L^{(n+1)}\right)\right)\\
\nonumber&{}\le \sum_{\k=1}^{K-1}\m_{\L^{(n+1)}}\left(\left.\var_{D_\k\cup D_1}\left(f|\SG^\bone(D_1),\cT_n(D_\k\setminus D_1)\right)\right|\SG^\bone\left(\L^{(n+1)}\right)\right),
\end{align}
where $R_\k=D_\k\setminus\bigcup_{\k'=1}^{\k-1}D_{\k'}$. Since the terms above are treated identically (except $\k=1$, which is actually simpler), without loss of generality we focus on $\k=2$.

Recall from \cref{def:extension:East:multid} that $\SG^\bone(D_1)$ was defined by East-extending $D_0$ in direction $u_1$. Further East-extend $D_0$ by $l_2$ (recall that $D_2=y_2u_1+\L(\ur^{(n)}+l_2\uv_0)$) in direction $u_0$, so that $\SG^\bone(D_2)$ is also defined. Let $V=D_1\cup D_2$ (that is a $\dashv$ shaped region in \cref{fig:Dk}) and
\begin{equation}
\label{eq:def:SG:V}\SG^\bone(V)=\SG^\bone(D_1)\cap \cT_n(D_2\setminus D_1).
\end{equation}

Using a two-block dynamics (see e.g.\ \cref{lem:aux:east}), we have
\begin{multline}
\label{eq:varvg}
\var_{V}(f|\SG^\bone(V))\\\le\frac{\m_{V}(\var_{D_1}(f|\SG^\bone(D_1))+\1_{\cE}\var_{V\setminus D_1}(f|\cT_n(V\setminus D_1))|\SG^\bone(V))}{\O(\m(\cE|\SG^\bone(V)))},
\end{multline}
where
\begin{equation}
\label{eq:def:cE}\cE=\SG^\bone\left(\L^{(n)}+y_2u_1\right)\cap\cT_n\left(D_1\cap D_2\right)\subset\O_{D_1}.\end{equation}
Recalling \cref{def:traversability,def:n-traversable,def:extension:East:multid}, \cref{eq:def:cE} and the fact that each segment of length $\ell^{(n)}/(\e C)\gg \d \ell^{(n)}/\e$ intersects at most $O(1)$ droplets, we see that
\begin{align}
\nonumber\cE\cap\cT_n(V\setminus D_1)&{}\subset\SG^\bone\left(\L^{(n)}+y_2u_1\right)\cap\cT^\bone\left(D_2\setminus \left(\L^{(n)}+y_2u_1\right)\right)\\
\label{eq:ETn:SG}&{}= \SG^\bone(D_2).
\end{align}
By \cref{eq:ETn:SG} and convexity of the variance, we obtain
\begin{multline}
\label{eq:mVgE}
\m_{V}\left(\left.\1_{\cE}\var_{V\setminus D_1}(f|\cT_n(V\setminus D_1))\right|\SG^\bone(V)\right)\\
\begin{aligned}
&{}\le\frac{\m(\cE)}{\m(\SG^\bone(V))}\m_{V}\left(\var_{D_2}\left(f|\cE\cap\cT_n\left(V\setminus D_1\right)\right)\right)\\
&{}\le\frac{\m(\cE)\m(\SG^\bone(D_2))\m_{V}(\var_{D_2}(f|\SG^\bone(D_2)))}{\m(\SG^\bone(V))\m(\cE\cap\cT_n(V\setminus D_1))}\\
&{}\le \frac{\m_{V}(\var_{D_2}(f|\SG^\bone(D_2)))}{\m^2(\cT_n(\L^{(n+1)}))}.
\end{aligned}
\end{multline}
Indeed, in the last line we recalled the definitions of $\SG^\bone(D_2)$, $\SG^\bone(V)$ and $\cE$ (see \cref{def:extension:East,eq:def:cE,eq:def:SG:V}), while in the second one we took into account that for any events $\cA\subset\cB$ with $\m(\cA)>0$ it holds that 
\begin{equation}
\label{eq:variance:conditioning:enlarge}
\var(f|\cA)=\min_{c\in\bbR}\m\left(\left.(f-c)^2\right|\cA\right)\le \frac{\m((f-\m(f|\cB))^2\1_{\cA})}{\m(\cA)}\le\frac{\m(\cB)}{\m(\cA)}\var(f|\cB)
\end{equation}
and \cref{eq:ETn:SG}.

We plug \cref{eq:mVgE} in \cref{eq:varvg} and note that by the Harris inequality, \cref{eq:Harris:1,eq:Harris:2}, $\m(\cE|\SG^\bone(V))\ge \m(\cE)\ge \m(\SG^\bone(\L^{(n)}))\m(\cT_n(\L^{(n+1)}))$. This yields
\begin{align}
\nonumber\var_{V}\left(f|\SG^\bone(V)\right)&{}\le\frac{O(1)\m_{V}(\var_{D_1}(f|\SG^\bone(D_1))+\var_{D_2}(f|\SG^\bone(D_2)))}{\m(\SG^\bone(\L^{(n)}))\m(\SG^\bone(V))\m^3(\cT_n(\L^{(n+1)}))}\\
&{}\le\frac{O(1)\m_{V}(\var_{D_1}(f|\SG^\bone(D_1))+\var_{D_2}(f|\SG^\bone(D_2)))}{\m^2(\SG^\bone(\L^{(n+1)}))\m^3(\cT_n(\L^{(n+1)}))}
\label{eq:varvg2}
\end{align}
where the second inequality uses \cref{eq:def:SG:V,def:extension:East:multid}.

As in \cref{eq:loglog:internal:ambound,eq:loglog:internal:prod:tubes}, \cref{cor:East:reduction} gives
\begin{multline}
\label{eq:gDk}\g(D_2)\le \max\left(\g\left(\L^{(n)}\right),\m^{-1}\left(\SG^\bone\left(\L^{(n)}\right)\right)\right)e^{O(C^2)\log^2(1/q)}q^{-O(WM)}\\
\times\frac{\m(\SG^\bone(\L^{(n)}))}{\m(\SG^\bone(D_2))}\prod_{m=1}^{M}a_m^{(n)}
\end{multline}
with $M=\min\{m:\l_0(3/2)^{m+1}\ge l_2\}\le M^{(n)}$. Plugging \cref{eq:gDk,eq:def:gamma} (and their analogues for $D_1$) into \cref{eq:varvg2}, we obtain
\begin{align*}\g(V)&{}\le\frac{\g(\L^{(n)})e^{O(C^2)\log^2(1/q)}\prod_{m=1}^{M^{(n)}}a_m^{(n)}}{\m^3(\SG^\bone(\L^{(n+1)}))\m^3(\cT_n(\L^{(n+1)}))\min_\k\m(\SG^\bone(D_\k))}\\
&{}\le \frac{\g\left(\L^{(n)}\right)e^{O(C^2)\log^2(1/q)}\prod_{m=1}^{M^{(n)}}a_m^{(n)}}{\m^4\left(\SG^\bone\left(\L^{(n+1)}\right)\right)\m^4(\cT_n(\L^{(n+1)}))},\end{align*}
where the last inequality uses \cref{eq:ETn:SG} and that $\SG^\bone(D_1)\supset\SG^\bone(\L^{(n+1)})$ by \cref{def:extension:East:multid}. Plugging this into \cref{eq:cover}, concludes the proof of \cref{lem:internal:East:step}, since $K=O(\ell^{(n+1)}/\ell^{(n)})\le O(\log^4(1/q))$, as noted in \cref{rem:scales}.
\end{proof}

\begin{proof}[Proof of \cref{lem:internal:East:probability}]
The first inequality in \cref{eq:mu:SG:bound} follows from the Harris inequality \cref{eq:Harris:2}, while the second one is trivial. Therefore, we turn to the last one and fix $n\in[\Ni]$. Note that by \cref{def:extension:East,def:n-traversable,def:extension:East:multid}
\begin{equation}
\label{eq:mu:bound:rec}
\m\left(\SG^\bone\left(\L^{(n+1)}\right)\right)\ge\m\left(\SG^\bone\left(\L^{(n)}\right)\right)\m\left(\cT_n\left(\L^{(n+1)}\right)\right)\m\left(\cT^\bone\left(D_1\setminus D_0\right)\right).
\end{equation}
We therefore proceed by induction starting with 
\begin{equation}
\label{eq:mu:bound:base}
\m\left(\SG^\bone\left(\L^{(0)}\right)\right)=q^{|\L^{(0)}|}=q^{\Theta(1/\e^2)}.\end{equation}

We observe that from \cref{def:n-traversable}, in order to ensure the occurrence of $\cT_n(\L^{(n+1)})$, it suffices to have $O(WK\ell^{(n+1)})/(\ell^{(n)}\d)$ well-placed $W$-helping sets and $O((\ell^{(n+1)})^2)/(\ell^{(n)}\d\e)$ helping sets for segments of length $\d\ell^{(n)}/(3\e)$. Indeed, we may split lines perpendicular to each $u_j$ for $j\in(-k,k)$ into successive disjoint segments of length $\d\ell^{(n)}/(3\e)$ with a possible smaller leftover. It is then sufficient to place $W$-helping sets or helping sets depending on whether the segment under consideration is close to a parallel boundary of one of the $D_\k$ or not. Note that here we crucially use the assumption that each segment of length $\ell^{(n)}/(C\e)\gg \d\ell^{(n)}/\e$ intersects only $O(1)$ droplets.

Recall that $1/\e\gg1/\d\gg W\gg 1$, $\ell^{(\Nc)}=W^{O(1)}q^\a$, $K=O(\ell^{(n+1)}/\ell^{(n)})\le\log^{O(1)}(1/q)$, the explicit expressions \cref{eq:def:ln:internal:East} and \cref{obs:mu:helping}. Then the Harris inequality \cref{eq:Harris:1}, yields
\begin{multline}\m\left(\cT_n\left(\L^{(n+1)}\right)\right)\\
\begin{aligned}\ge{}& 
q^{O(W^2K\ell^{(n+1)})/(\ell^{(n)}\d)}\left(1-e^{-q^\a\d\ell^{(n)}/O(\e)}\right)^{O((\ell^{(n+1)})^2/(\ell^{(n)}\d\e))}\\
\ge{}&e^{-\log^{O(1)}(1/q)}\times
\begin{cases}
\left(\d q^\a W^n\right)^{W^n/(\d^2\e)}&n\le \Nc\\
\exp\left(-1/\left(q^{\a}\exp\left(W^{\exp(n-\Nc)}\right)\right)\right)&n> \Nc.
\end{cases}
\end{aligned}\label{eq:mu:T:bound}\end{multline}
Essentially the same computation leads to the same bound for $\m(\cT^\bone(D_1\setminus D_0))$ (see \cref{eq:mu:T:bound:loglog}). The only difference is that only $O(1)$ $W$-helping sets and $O(\ell^{(n+1)}/\e)$ helping sets are needed. Further recalling \cref{eq:mu:bound:base,eq:mu:bound:rec}, it is not hard to check \cref{eq:mu:SG:bound}.
\end{proof}

\subsection{FA-1f global dynamics}
\label{subsec:global:FA}
We next import the global FA-1f dynamics together with much of the mesoscopic multi-directional East one simultaneously from \cite{Hartarsky21a}.

\begin{prop}
\label{prop:global:FA}
Let $\cU$ have a finite number of stable directions, $T=\exp(\log^4(1/q)/q^\a)$ and $\uri$ be such that the associated side lengths satisfy $C\le \si_j\le O(\li)$ for all $j\in[4k]$. Assume that for all $l\in[0,\lm]$ multiple of $\l_0$ the event $\SG^\bone(\L(\uri+l\uv_0))$ is nonempty, decreasing, translation invariant and satisfies
\[\left(1-\m\left(\SG^\bone\left(\L\left(\uri+l\uv_0\right)\right)\right)\right)^T T^W=o(1).\] Then,
\[\Et\le \frac{\max_{l\in[0,\lm]}\g(\L(\uri+l\uv_0))}{(q^{1/\d}\min_{l\in[0,\lm]}\m(\SG^\bone(\L(\uri+l\uv_0))))^{\log(1/q)/\d}}.\]
\end{prop}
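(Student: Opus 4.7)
The plan is to follow the strategy of \cite{Hartarsky21a}, combining a snail-style mesoscopic motion in the horizontal direction with an FA-1f-type renormalized global dynamics. First I would tile $\bbR^2$ by axis-aligned boxes $Q_{i,j}$ of side length $\lm$ and, for each box, define a \emph{super good box} event by requiring that inside $Q_{i,j}$ one can find a translate of some droplet $\L(\uri+l\uv_0)$ with $l\in[0,\lm]$ on which $\SG^\bone(\L(\uri+l\uv_0))$ holds, together with auxiliary $W$-helping sets along sufficiently many horizontal and vertical boundary segments of width $\ge \e\lm$ inside $Q_{i,j}$. Thanks to the translation invariance and the hypothesis $(1-\m(\SG^\bone(\cdot)))^T T^W=o(1)$, a super good box appears at density at least $\rd:=\min_l \m(\SG^\bone(\L(\uri+l\uv_0)))$ on the relevant scale $T$, and $W$-helping sets along the boundaries are overwhelmingly likely.

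Next I would establish a constrained Poincaré inequality for the renormalized chain on boxes. The key ergodic property is that a super good box $Q_{i,j}$, once present, can produce a super good configuration in an adjacent box $Q_{i\pm 1,j}$ or $Q_{i,j\pm 1}$: horizontally, this uses the assumed $\SG^\bone(\L(\uri+l\uv_0))$ events and the fact that they are obtained by East-extending in direction $u_0$, so a droplet sitting at one end of a box can be slid across to produce a droplet in the next box with cost $\m^{-1}(\SG^\bone)q^{-O(C)}$; vertically, the finite-difficulty of $u_{\pm k}$ together with the abundant $W$-helping sets required inside good boxes allows the same mechanism after boundedly many auxiliary extensions. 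These ``moves'' of super good boxes give rise to an effective East-like dynamics on the renormalized lattice with density of facilitators $\rd$. Its relaxation time on any volume polynomial in $1/\rd$ is at most $\rd^{-O(\log(1/\rd))}$, which after plugging $\rd=\min_l\m(\SG^\bone(\L(\uri+l\uv_0)))$ produces the $\log(1/q)/\d$ exponent in the claim.

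The reduction from the renormalized chain back to the original $\cU$-KCM is a standard two-block decomposition, iterated over the bisection scales analogous to \cref{cor:East:reduction}: at each scale, a variance on a pair of adjacent renormalized boxes is bounded by the sum of the variance on each of them conditioned on their individual super good event, plus the cost of a single East move connecting them. The per-step cost factorises into (i) the internal relaxation cost $\max_l\g(\L(\uri+l\uv_0))$, (ii) the conditional probability of creating a correctly positioned super good droplet in the neighbouring box given one in the current box, which is at least $q^{O(C)}\min_l\m(\SG^\bone(\L(\uri+l\uv_0)))$, and (iii) a microscopic East/CBSEP term $e^{O(\log^2(1/q))}$ absorbed into the $q^{1/\d}$ correction. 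Multiplying over the $O(\log(1/q)/\d)$ bisection scales yields exactly the displayed bound.

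The main obstacle is the combinatorial orchestration of the snail mesoscopic moves: the hypothesis only provides $\SG^\bone$ on droplets obtained by extending $\L(\uri)$ in direction $u_0$, so vertical moves must be produced by composing several horizontal moves with the helping sets provided by the good box event, while remaining strictly inside one renormalized box at a time so that \cref{eq:def:gamma} applies. Because this part is rather technical but follows the identical template as \cite{Hartarsky21a}, and because the resulting bound is not the sharpest available (one could replace the FA-1f/East global mechanism by the sharper CBSEP one of \cref{subsec:global:CBSEP} whenever the family has a finite number of stable directions), we content ourselves with importing \cite{Hartarsky21a} wholesale rather than rewriting the argument.
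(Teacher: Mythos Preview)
Your bottom line---import \cite{Hartarsky21a} wholesale with only cosmetic substitutions (replace the snail base by $\Lm=\L(\uri+\l_0\lceil\lm/\l_0\rceil\uv_0)$, replace their internal Poincar\'e bound by the hypothesis on $\g$)---is exactly what the paper does.

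However, your narrative of the mechanism contains a quantitative slip that, if taken literally, would destroy the bound. You describe the renormalised dynamics on boxes as ``East-like'' with relaxation time $\rd^{-O(\log(1/\rd))}$ and then say this produces the $\log(1/q)/\d$ exponent. But $\log(1/\rd)$ is of order $1/q^\a$, not $\log(1/q)$; an East dynamics on the global scale would cost $\exp(\Theta(q^{-2\a}))$, far above the claim. The proposition actually packages \emph{two} distinct mechanisms from \cite{Hartarsky21a}: (i) the mesoscopic snail, an East-type iteration over $O(\log\lm)=O(C\log(1/q))$ bisection scales bridging the internal and mesoscopic lengths---this is where the $(\,\cdot\,)^{\log(1/q)/\d}$ exponent genuinely comes from; and (ii) the global dynamics on boxes of side $\lm$, which in \cite{Hartarsky21a} is FA-1f (hence the section title), not East, and whose cost is only polynomial in $1/\rd$. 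Your third paragraph, with its ``$O(\log(1/q)/\d)$ bisection scales,'' is closer to the right accounting, but you should attribute those scales to the mesoscopic snail step, not to the global box dynamics.
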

The proof is as in \cite{Hartarsky21a}, up to the following minor modifications. Firstly, one needs to replace the base of the snail by $\Lm=\L(\uri+\l_0\lceil\lm/\l_0\rceil \uv_0)$, which has a similar shape by hypothesis. Secondly, the event that the base is super good on \cite{Hartarsky21a} should be replaced by $\SG^\bone(\Lm)$. Finally, \cite{Hartarsky21a}*{Proposition 4.9} is substituted by the definition \cref{eq:def:gamma} of $\g(\Lm)$. As \cref{prop:global:FA} is essentially the entire content of \cite{Hartarsky21a} (see particularly Proposition 4.12 and Remark 4.8 there), we refer the reader to that work for the details.

\begin{proof}[Proof of \cref{th:main}\ref{log1}]
Let $\cU$ be balanced rooted with finite number of stable directions. Recall $\L^{(\Ni)}=\L(\ur^{(\Ni)})$ with $\ur^{(\Ni)}=:\uri$ from \cref{subsec:loglog:internal} if $k=1$ and from \cref{subsec:log1:internal} if $k\ge 2$. Fix $l\in[0,\lm]$ multiple of $\l_0$ and East-extend $\L^{(\Ni)}$ by $l$ in direction $u_0$. It is not hard to check from \cref{def:extension:East,obs:mu:helping} that \[\frac{\m(\SG^\bone(\L(\uri+l\uv_0)))}{\m(\SG^\bone(\L(\uri)))}=\m\left(\cT^\bone\left(T\left(\uri,l,0\right)\right)\right)=q^{O(W)}\]
(see \cref{eq:mu:E:bound}).
Then, by \cref{cor:East:reduction}, \cref{th:internal:East,th:internal:loglog} and the Harris inequality \cref{eq:Harris:1}, we obtain
\begin{align*}
\m\left(\SG^\bone\left(\L\left(\uri+l\uv_0\right)\right)\right)&{}\ge\exp\left(\frac{-2}{\e^2q^\a}\right)\\
\g\left(\L\left(\uri+l\uv_0\right)\right)&{}\le \begin{cases}
\exp\left(\frac{\log(1/q)}{\e^3q^\a}\right)&k=1,\\
\exp\left(\frac{2\log(1/q)\log\log\log(1/q)}{\e^3q^\a}\right)&k\ge 2.\end{cases}
\end{align*}
Plugging this in \cref{prop:global:FA}, we obtain
\begin{equation}
\label{eq:poluted}\Et\le\begin{cases}
\exp\left(\frac{2\log(1/q)}{\e^3q^\a}\right)&k=1,\\
\exp\left(\frac{3\log(1/q)\log\log\log(1/q)}{\e^3q^\a}\right)&k\ge 2,\end{cases}\end{equation}
which concludes the proof of \cref{th:main}\ref{log1} in the case $k=1$ and of \cref{eq:rem:logloglog} for $k\ge 2$. The full result of \cref{th:main}\ref{log1} for $k\ge 2$ is proved identically, replacing \cref{th:internal:East} by the stronger \cref{th:internal:East:improved}.
\end{proof}

\section{Balanced models with infinite number of stable directions}
\label{sec:log0}
We finally turn to balanced models with infinite number of stable directions (class \ref{log0}). The internal dynamics was already handled in \cref{subsec:log1:internal}. The mesoscopic one (\cref{subsec:log0:meso}) is essentially the same as the the internal one, using two-dimensional East-extensions. The global dynamics (\cref{subsec:global:East}) also uses an East mechanism analogous to the FA-1f one from \cite{Hartarsky21a} used in \cref{subsec:global:FA}.

\subsection{East mesoscopic dynamics}
\label{subsec:log0:meso}
Given that the bound we are aiming for in \cref{th:main}\ref{log0} is much larger than those in previous sections, there is a lot of margin and our reasoning is far from tight for the sake of simplicity.

Recall $\Ni$ and $\ell^{(n)}$ for $n\le\Ni$ from \cref{eq:def:ln:internal:East}, the droplets $\L^{(n)}$ from \cref{subsec:log1:internal}, their SG events from \cref{def:SG:balanced:internal}. For $n>\Ni$, we set $\ell^{(n)}=W^{n-\Ni}\ell^{(\Ni)}$ and define $\us^{(n)},\ur^{(n)},\L^{(n)}$ as in \cref{subsec:log1:internal}. Recall \cref{subsec:scales}. Further let $\Nm=\inf\{n:\ell^{(n)}/\e\ge\lm=q^{-C}\}=\Theta(C\log(1/q)/\log W)$ and assume for simplicity that $\ell^{(\Nm)}=q^{-C}\e$. We are only be interested in $n\le \Nm$ and extend \cref{def:n-traversable,def:SG:balanced:internal,def:extension:East:multid} to such $n$ without change. With these conventions, our goal is the following.
\begin{thm}
\label{prop:meso:east}
Let $\cU$ be a balanced model with infinitely many stable directions (class \ref{log0}). Then
\begin{align*}
\g\left(\L^{(\Nm)}\right)&{}\le\exp\left(\frac{\log^{2}(1/q)}{\e^3q^\a}\right),&\m\left(\SG^\bone\left(\L^{(\Nm)}\right)\right)&{}\ge\exp\left(\frac{-2}{\e^2q^\a}\right).
\end{align*}
\end{thm}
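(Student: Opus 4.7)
The plan is to iterate the two-dimensional East extension machinery of \cref{subsec:log1:internal} through the additional scales $n\in[\Ni,\Nm]$, taking advantage of the fact that past the internal scale all relevant probabilities have essentially saturated. In particular, since $\ell^{(n)}\ge\ell^{(\Ni)}\ge \e\li=C^2\e\log(1/q)/q^\a$ for $n\ge\Ni$, every line segment of length $\d\ell^{(n)}/\e$ that arises in the definition of $\cT_n$ is long enough so that the probability of containing a helping set is $1-q^{\O(\d C^2)}$. I will use \cref{th:internal:East} as the initial input and re-run \cref{lem:internal:East:step} and \cref{lem:internal:East:probability} without modification, only noting that the bounds they produce are much better in this regime.

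\textbf{Step 1 (probability bound).} For each $n\in[\Ni,\Nm)$ the inequality \cref{eq:mu:bound:rec} still holds, as it follows directly from \cref{def:extension:East:multid} and the Harris inequality. Since $\ell^{(n)}/\e\ge \li$, the estimates leading to \cref{eq:mu:T:bound} now yield
\[
\m\left(\cT_n\left(\L^{(n+1)}\right)\right)\m\left(\cT^\bone(D_1\setminus D_0)\right)\ge q^{O(W^2 K\ell^{(n+1)}/(\ell^{(n)}\d))}\left(1-e^{-q^\a\d\ell^{(n)}/O(\e)}\right)^{O((\ell^{(n+1)})^2/(\ell^{(n)}\d\e))},
\]
which is at least $\exp(-1/q^{\a+1})$ as soon as $C$ is large enough. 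Iterating \cref{eq:mu:bound:rec} over $n\in[\Ni,\Nm)$, whose cardinality is $O(C\log(1/q)/\log W)$, and combining with the bound $\m(\SG^\bone(\L^{(\Ni)}))\ge\exp(-1/(\e^2q^\a))$ from \cref{th:internal:East}, yields the required
\[
\m\left(\SG^\bone\left(\L^{(\Nm)}\right)\right)\ge\exp\left(-2/(\e^2q^\a)\right).
\]

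\textbf{Step 2 (Poincaré constant).} The proof of \cref{lem:internal:East:step} used nothing specific about $n<\Ni$; it only required the geometric cover $(D_\k)_{\k\in[K]}$ of $\L^{(n+1)}$ with the four bulleted properties of \cref{subsec:log1:internal}. These properties carry over verbatim for $n\in[\Ni,\Nm)$, since the side lengths of $\L^{(n)}$ are $\Theta(\ell^{(n)}/\e)$ and $K=O(\ell^{(n+1)}/\ell^{(n)})=O(W)$. Hence iterating \cref{lem:internal:East:step} from $\Ni$ to $\Nm-1$ gives
\[
\g\left(\L^{(\Nm)}\right)\le \g\left(\L^{(\Ni)}\right)\prod_{n=\Ni}^{\Nm-1}\frac{e^{O(C^2)\log^2(1/q)}}{\left(\m(\SG^\bone(\L^{(n+1)}))\m(\cT_n(\L^{(n+1)}))\right)^{O(1)}}\prod_{m=1}^{M^{(n)}}a_m^{(n)}.
\]
By the Harris inequality, $a_m^{(n)}\le\m^{-1}(\SG^\bone(\L^{(n)}))\le\exp(2/(\e^2q^\a))$ thanks to Step~1. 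Step~1 also gives that each denominator in the outer product is at least $\exp(-O(1)/(\e^2q^\a))$.

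\textbf{Step 3 (collating).} Using $\g(\L^{(\Ni)})\le\exp(\log(1/q)\log\log\log(1/q)/(\e^3q^\a))$ from \cref{th:internal:East}, $\Nm-\Ni=O(C\log(1/q)/\log W)$, $M^{(n)}=O(C\log(1/q))$, and plugging the estimates from Step~2, we obtain
\[
\g\left(\L^{(\Nm)}\right)\le \exp\!\left(\frac{O(C^{2})\log^{2}(1/q)}{\log W\cdot \e^2 q^\a}\right)\le \exp\!\left(\frac{\log^{2}(1/q)}{\e^3 q^\a}\right),
\]
where the last inequality uses the hierarchy $1/\e\gg1/\d\gg C\gg W$ so that $C^{2}\e/\log W\ll 1$. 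This completes the proof.

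The only substantive point is the verification that the geometric cover construction and \cref{lem:internal:East:step} remain valid beyond the internal scale; everything else is a crude but sufficient bookkeeping, which is possible precisely because the target bound is much looser than in the internal regime, reflecting the fact that class~\ref{log0} is dominated by the East global dynamics rather than by the mesoscopic extension.
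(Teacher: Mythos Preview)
Your approach is essentially identical to the paper's: iterate \cref{lem:internal:East:step} through the mesoscopic scales $n\in[\Ni,\Nm)$, bound each $a_m^{(n)}$ by $\m^{-1}(\SG^\bone(\L^{(n)}))$ via Harris, and control the probability of $\SG^\bone(\L^{(\Nm)})$ by iterating \cref{eq:mu:bound:rec}. Steps~2 and~3 are fine and match the paper's computation.

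There is, however, a genuine quantitative slip in Step~1. Your per-scale bound
\[
\m\left(\cT_n(\L^{(n+1)})\right)\m\left(\cT^\bone(D_1\setminus D_0)\right)\ge \exp\!\left(-1/q^{\alpha+1}\right)
\]
is far too crude to iterate: multiplying it over the $\Theta(C\log(1/q)/\log W)$ scales yields $\exp(-\Theta(\log(1/q))/q^{\alpha+1})$, and since $\log(1/q)/q^{\alpha+1}\gg 1/(\e^2 q^\alpha)$ for $q$ small, this does \emph{not} give $\m(\SG^\bone(\L^{(\Nm)}))\ge\exp(-2/(\e^2 q^\alpha))$. What actually holds (and what the paper uses) is much stronger: for $n\ge\Ni$ one has $q^\alpha\d\ell^{(n)}/O(\e)\ge \d C^2\log(1/q)/O(1)$, so
\[
\m\left(\cT_n(\L^{(n+1)})\right)\ge q^{O(W^3/\d)}\exp\!\left(-e^{-q^\alpha\d\ell^{(n)}/O(\e)}\,O\!\left(W^2\ell^{(n)}/(\d\e)\right)\right)
\]
and the exponential factor is at least $\exp(-q^{c}/q^\alpha)$ for a fixed (tiny) $c=\d C^2/O(1)>0$. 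Iterating this over $O(C\log(1/q))$ scales gives a contribution $\exp(-\log^{O(1)}(1/q))$, which is negligible next to $\exp(-1/(\e^2 q^\alpha))$. In short, replace $\exp(-1/q^{\alpha+1})$ by the sharper estimate above and your Step~1 goes through; everything else is correct.
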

\begin{proof}[Proof of \cref{prop:meso:east}]
The proof is essentially identical to the one of \cref{th:internal:East}, so we only indicate the necessary changes. To start with, \cref{lem:internal:East:step} applies without change for $n\in[\Ni,\Nm)$. Also, the Harris inequality \cref{eq:Harris:2} still implies that $a_m^{(n)}\le \m^{-1}(\SG^\bone(\L^{(n)}))\le \m^{-1}(\SG^\bone(\L^{(\Nm)}))$. Therefore, 
\[\g\left(\L^{(N^m)}\right)\le\frac{\g(\L^{(\Ni)})e^{\log^{O(1)}(1/q)}}{(\m(\SG^\bone(\L^{(\Nm)}))\min_{n\in[\Nm]}\m(\cT_n(\L^{n+1})))^{O(\Nm M^{(\Nm-1)})}}.\]
Recalling the bound on $\g(\L^{(\Ni)})$ established in \cref{th:internal:East}, together with the fact that $\Nm\le C\log(1/q)$ and $M^{(\Nm-1)}\le O(C\log(1/q))$, it suffices to prove that
\begin{equation}
\label{eq:SG:bound:meso:East}
\m\left(\SG^\bone\left(\L^{(\Nm)}\right)\right)\min_{n\in[\Nm]}\m\left(\cT_n\left(\L^{n+1}\right)\right)\ge \exp\left(-2/\left(\e^2q^\a\right)\right),
\end{equation}
in order to conclude the proof of \cref{prop:meso:east}.

Once again, the proof of \cref{eq:SG:bound:meso:East} proceeds similarly to the one of \cref{eq:mu:SG:bound} in \cref{lem:internal:East:probability}. Indeed, the same computation as \cref{eq:mu:T:bound} in the present setting gives that for $n\in[\Ni, \Nm)$ we have
\begin{equation}
\label{eq:mu:T:bound:meso:East}\m\left(\cT_n\left(\L^{(n+1)}\right)\right)\ge q^{O(W^3/\d)}\exp\left(-e^{-q^\a\d\ell^{(n)}/O(\e)}O\left(W^2\ell^{(n)}/(\d\e)\right)\right)
\end{equation}
and similarly for $\m(\cT^\bone(D_1\setminus D_0))$ (as in the proof of \cref{lem:internal:East:probability}, also see \cref{eq:mu:T:bound:loglog}). From \cref{eq:mu:bound:rec} it follows that
\begin{multline*}\m\left(\SG^\bone\left(\L^{(\Nm)}\right)\right)\ge\m\left(\SG^\bone\left(\L^{(\Ni)}\right)\right)\\
\times\prod_{n=\Ni}^{\Nm-1}\m\left(\cT^\bone(D_1\setminus D_0)\right)\m\left(\cT_n\left(\L^{(n+1)}\setminus\L^{(n)}\right)\right).\end{multline*}
Plugging \cref{eq:mu:SG:bound,eq:mu:T:bound:meso:East} in the r.h.s., this yields \cref{eq:SG:bound:meso:East} as desired.
\end{proof}

\subsection{East global dynamics}
\label{subsec:global:East}
For the global dynamics we use a simpler version of the procedure of \cite{Hartarsky21a}*{Section 5} with East dynamics instead of FA-1f.

\begin{figure}
    \centering\begin{tikzpicture}[line cap=round,line join=round,>=triangle 45,x=0.20cm,y=0.20cm]
\fill[fill opacity=0.5] (-7.59,9) -- (-11.73,9) -- (-14.66,6.07) -- (-14.66,1.93) -- (-11.73,-1) -- (0.41,-1) -- (1,-0.41) -- (1,0.41) -- cycle;
\draw (1,-2) -- (1,21);
\draw [domain=-48:3] plot(\x,{(--1-0*\x)/-1});
\draw (-7.59,9)-- (-11.73,9);
\draw (-11.73,9)-- (-14.66,6.07);
\draw (-14.66,6.07)-- (-14.66,1.93);
\draw (-14.66,1.93)-- (-11.73,-1);
\draw (-11.73,-1)-- (0.41,-1);
\draw (0.41,-1)-- (1,-0.41);
\draw (1,-0.41)-- (1,0.41);
\draw (1,0.41)-- (-7.59,9);
\draw [domain=-48:3] plot(\x,{(--37.28-0*\x)/4.14});
\draw (-14.66,-2) -- (-14.66,21);
\draw (-30.31,-2) -- (-30.31,21);
\draw (-45.97,-2) -- (-45.97,21);
\draw [domain=-48:3] plot(\x,{(--78.7-0*\x)/4.14});
\draw [very thick] (-43.04,9)-- (-45.97,6.07);
\draw [very thick] (-45.97,6.07)-- (-45.97,1.93);
\draw [very thick] (-45.97,1.93)-- (-43.04,-1);
\draw [very thick] (-43.04,-1)-- (0.41,-1);
\draw [very thick] (0.41,-1)-- (1,-0.41);
\draw [very thick] (1,-0.41)-- (1,0.41);
\draw [very thick] (1,0.41)-- (-7.59,9);
\draw [very thick] (-7.59,9)-- (-43.04,9);
\draw (-38.14,4) node {$Q_{i+1}$};
\draw (-22.49,4) node {$Q_{i}$};
\draw (-2.13,7) node {$Q_{i-1}$};
\draw (-9.66,4) node {$\Lm$};
\draw [decorate,decoration={brace,amplitude=5pt}] (-30.31,9) -- (-14.66,9) node [midway,yshift= 0.4cm] {$l=\Theta(\lm)$};
\draw (-44.5,7.5)--(-44.5,5.5) node [below] {$\L$};
\end{tikzpicture}
    \caption{Illustration of the East global dynamics (\cref{subsec:global:East}). The shaded droplet $\Lm$ inscribed in the box $Q$ is extended by $2l$ to the thickened one $\L$.}
    \label{fig:global:east}
\end{figure}
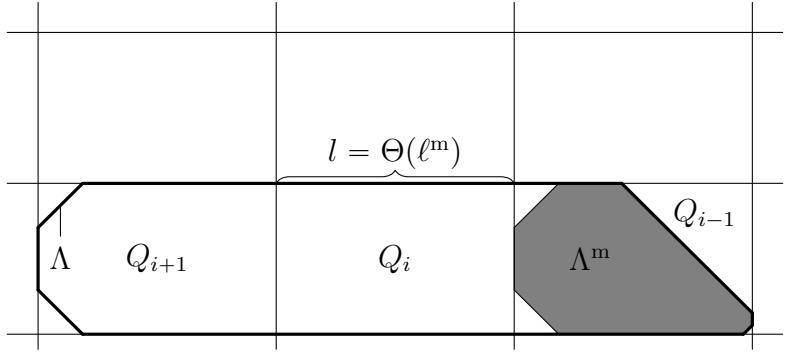

\begin{proof}[Proof of \cref{th:main}\ref{log0}]
Let $\cU$ be balanced with infinite number of stable directions and recall \cref{subsec:log0:meso}. Set $T=\exp(1/q^{3\a})$, $\usm=\us^{(\Nm)}$, $\urm=\ur^{(\Nm)}$ and $\Lm=\L^{(\Nm)}$. In particular, $\sm_j=\Theta(\lm)$ for $j\in[-k, k+1]$ and $\sm_j=\Theta(1/\e)$ for $j\in[k+2,3k-1]$. We East-extend $\Lm$ by $2l=2(\l_0+\rme_0+\rme_{2k})$ in direction $u_0$ to obtain $\L=\L(\urm+2l\uv_0)$. \Cref{cor:East:reduction,prop:meso:east,def:extension:East}, the Harris inequality \cref{eq:Harris:2} and the simple fact that $\m(\cT^\bone(T(\urm,2l,0)))=q^{O(W)}$ (by \cref{obs:mu:helping,lem:traversability:boundary} as usual) give
\begin{align}
\label{eq:log0:global}
\g(\L)\le{}& \exp\left(\frac{\log^2(1/q)}{\e^{O(1)}q^\a}\right),&\m\left(\SG^\bone(\L)\right)\ge{}&\exp\left(\frac{-3}{\e^2q^\a}\right).
\end{align}

A similar argument to the rest of the proof was already discussed thoroughly in \cite{Hartarsky21a}*{Section 5} and then in \cite{Hartarsky23FA}*{Section 5}, so we only provide a sketch. The adapted approach of \cite{Hartarsky21a}*{Section 5} proceeds as follows.
\begin{enumerate}
    \item Denoting $t_*=\exp(-1/(\e^W q^{2\a}))$, by the main result of \cite{Martinelli19a} it suffices to show that $T\bbP_{\m}(\t_0>t_*)=o(1)$, in order to deduce $\Et\le t_*+o(1)$.
    \item By finite speed of propagation we may work with the $\cU$-KCM on a large discrete torus of size $T\gg t_*$.
    \item\label{item:events} We partition the torus into strips and the strips into translates of the box $Q=\bbH_{u_0}(\l_0+\rme_0)\cap\bbH_{u_k}(\r_k+\rme_k)\cap\Hb_{u_{-k}}(\rme_{-k})\cap \Hb_{u_{2k}}(\rme_{2k})$ as shown in \cref{fig:global:east}. We say $Q$ is \emph{good} ($\cG(Q)$ occurs) if for each segment $S\subset Q$ perpendicular to some $u\in\hS$ of length $\e\lm$ the event $\cH^W(S)$ occurs. Further define $\SG(Q)$ to occur if the only (integer) translate of $\Lm$ contained in $Q$ is SG. We say that \emph{the environment is good} ($\cE$ occurs) if all boxes are good and in each strip at least one box is super good. The sizes are chosen so that it is sufficiently likely for this event to always occur up to time $t_*$. Indeed, we have $(1-\m(\SG^\bone(\Lm)))^TT^W=o(1)$ by \cref{prop:meso:east} and $(1-\m_Q(\cG))T^W=o(1)$ by \cref{obs:mu:helping}.
    \item By a standard variational technique it then suffices to prove a Poincar\'e inequality, bounding the variance of a function conditionally on $\cE$ by the Dirichlet form on the torus. Moreover, since $\m$ and $\cE$ are product w.r.t.\ the partition of \cref{fig:global:east}, it suffices to prove this inequality on a single strip.
    \item\label{item:conclusion} Finally, we prove such a bound, using an auxiliary East dynamics for the boxes and the definition of $\g$ to reproduce the resampling of the state of a box by moves of the original $\cU$-KCM.
\end{enumerate}
Let us explain the last step above in more detail, as it is the only one that genuinely differs from \cite{Hartarsky21a}.

Let $Q_i=Q+ilu_0$ and $\bbT=\bigcup_{i\in[T]}Q_i$ be our strip of interest (indices are considered modulo $T$, since the strip is on the torus). As explained above, our goal is to prove that for all $f:\O_\bbT\to\bbR$ it holds that
\begin{equation}
\label{eq:log0:goal}
\var_{\bbT}(f|\cE)\le \exp\left(1/\left(\e^{O(1)}q^{2\a}\right)\right)\sum_{x\in\bbT}\m_{\bbT}\left(c_x^{\bbT,\bone}\var_x(f)\right),
\end{equation}
where $c_x^{\bbT,\bone}$ takes into account the periodic geometry of $\bbT$. 

By \cite{Martinelli19a}*{Proposition 3.4} on the generalised East chain we have
\begin{equation}
\label{eq:log0:final}
\var_\bbT(f|\cE)\le \exp\left(1/\left(\e^5q^{2\a}\right)\right)\sum_{i\in[T]}\m_{\bbT}\left(\left.\1_{\SG(Q_{i-1})}\var_{Q_i}\left(f|\cG\left(Q_i\right)\right)\right|\cE\right),
\end{equation}
since \cref{prop:meso:east} and the Harris inequality \cref{eq:Harris:2} give $\m(\SG(Q)|\cG(Q))\ge \exp(-2/(\e^2q^\a))$.\footnote{Strictly speaking \cite{Martinelli19a} does not deal with the torus conditioned on having an infection, but this issue is easily dealt with by the method of \cite{Blondel13}.}

Next observe that $\L_i\supset Q_i$, where $\L_i=\L+(i-1)lu_0$ (see \cref{fig:global:east}). Let $\cG(\L_i\setminus Q_i)\subset\cG(Q_{i+1})\cap\cG(Q_{i-1})$ be the event that $\cH^W(S)$ holds for all segments $S\subset \L_i\setminus Q_i$ of length $2\e\lm$ perpendicular to some $u\in\hS$. Hence, by convexity of the variance and the fact that $\m(\cE)=1-o(1)$ we have 
\begin{multline*}
\m_\bbT\left(\left.\1_{\SG(Q_{i-1})}\var_{Q_i}(f|\cG(Q_i))\right|\cE\right)\\
\begin{aligned}
\le{}&(1+o(1))\m_\bbT\left(\var_{\L_{i}}(f|\SG(Q_{i-1})\cap\cG(Q_i)\cap\cG(\L_i\setminus Q_i))\right),\\
\le{}&(1+o(1))\m_\bbT\left(\var_{\L_i}\left(f|\SG^\bone(\L_i)\right)\right).
\end{aligned}
\end{multline*}
Here we used \cref{eq:variance:conditioning:enlarge} and $\SG(Q_{i-1})\cap\cG(Q_i)\cap\cG(\L_i\setminus Q_i)\subset\SG^\bone(\L_i)$ (recall \cref{def:extension:East}) for the second inequality. Finally, recalling \cref{eq:def:gamma,eq:log0:global,eq:log0:final}, we obtain \cref{eq:log0:goal} as desired.
\end{proof}

As already noted, all lower bounds in \cref{th:main} are known from \cite{Hartarsky22univlower} and the upper ones for classes \ref{log4} and \ref{log3} were proved in \cite{Martinelli19a} and \cite{Hartarsky21a} respectively. Thus, the proof of \cref{th:main} is complete.

\section*{Acknowledgements}
This work is supported by ERC Starting Grant 680275 ``MALIG'' and Austrian Science Fund (FWF) P35428-N. We thank Fabio Martinelli and Cristina Toninelli for their involvement in early stages of this project (among other things), Laure Mar\^ech\'e for interesting discussions about the universality of critical KCM and anonymous referees for careful reading and helpful comments on the presentation.

\appendix

\section{Extensions}
\label{app:extensions}
This appendix aims to prove our main building blocks---\cref{cor:East:reduction,cor:CBSEP:reduction} for the East- and CBSEP-extensions.
\subsection{Auxiliary two-block chain}
\label{app:three:block}
We begin with a non-product variant of the standard two-block technique for the purposes of the proof of the East-extension \cref{cor:East:reduction}. Let $(\O_1,\pi_1)$ and $(\O_2,\pi_2)$ be finite positive probability spaces, $(\O,\pi)$ denote the associated product space and $\n=\p(\cdot|\cH)$ for some event $\cH\subset\O$. For $\omega\in\O$ we write $\o_i\in \O_i$ for its $i$\textsuperscript{th} coordinate. Consider an event $\cF\subset\O_1$ and set
\[\cD(f) =\n\left(\var_\n(f|
   \o_2) + \1_{\cF}\var_\n(f|\o_1)\right)\]
for any $f:\cH\to \bbR$. Observe that $\cD$ is the Dirichlet form of the continuous time Markov chain on $\cH$ in which $\o_1$ is resampled at rate one from
$\n(\cdot | \o_2)$ and, if $\o_1\in \cF$, then $\o_2$ is resampled with rate one from
$\n(\cdot |\o_1)$. This chain is
reversible w.r.t.\ $\n$.

\begin{lem}
\label{lem:aux:east}
Assume that $\cF\times\O_2\subset\cH$. Then, for all $f:\cH\to \bbR$ we have
\[\var_\n(f)\le O(1)\max_{\o_2\in \O_2}\n^{-1}(\cF|\o_2)\cD(f).\]
\end{lem}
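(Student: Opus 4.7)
The plan is a three-step telescoping coupling through an intermediate configuration in $\cF\times\O_2\subset\cH$. Starting from $2\var_\n(f)=\iint (f(\o)-f(\o'))^2\,d\n(\o)\,d\n(\o')$, I would connect $\o=(\o_1,\o_2)$ to $\o'=(\o'_1,\o'_2)$ via the intermediates $(\tilde\o_1,\o_2)$ and $(\tilde\o_1,\o'_2)$, with $\tilde\o_1$ drawn independently from $\p_1(\cdot|\cF)$. The hypothesis $\cF\times\O_2\subset\cH$ guarantees that both intermediates lie in the support of $\n$, so each of the four values of $f$ is well defined. Expanding via $(a+b+c)^2\le 3(a^2+b^2+c^2)$ splits the estimate into three pieces $T_1,T_2,T_3$, with $T_1$ and $T_3$ interchangeable by symmetry.

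For the endpoint pieces I would fix $\o_2$ and observe that on $\cF$ the Radon--Nikodym derivative of $\p_1(\cdot|\cF)$ with respect to $\n(\cdot|\o_2)$ equals $1/\n(\cF|\o_2)$, a direct consequence of $\cF\times\{\o_2\}\subset\cH$. A one-line change of measure then yields $T_1\le 2K\,\n(\var_\n(f|\o_2))\le 2K\cD(f)$, where $K=\max_{\o_2\in\O_2}\n^{-1}(\cF|\o_2)$, and identically $T_3\le 2K\cD(f)$.

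The main obstacle is the middle piece $T_2$: since $\o_1$ and $\o'_1$ are integrated out, the relevant law of $\o_2$ is the marginal $\n_{\O_2}$, whereas for fixed $\tilde\o_1\in\cF$ we have $\n(\cdot|\tilde\o_1)=\p_2$. Two naive changes of measure would each cost a factor $K$, producing $K^2$ rather than the advertised $K$. The fix is to compute both densities explicitly. On one hand, $d\n_{\O_2}/d\p_2(\o_2)=\n(\cF)/\n(\cF|\o_2)$, so $\|d\n_{\O_2}/d\p_2\|_\infty=\n(\cF)\cdot K$. On the other hand, for $\tilde\o_1\in\cF$ one has $\n(\tilde\o_1)=\p_1(\tilde\o_1)/\p(\cH)$, whence $\p_1(\tilde\o_1|\cF)=\n(\tilde\o_1)/\n(\cF)$. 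The two factors of $\n(\cF)$ cancel, and a standard comparison of variances gives $T_2\le 2K\,\n(\1_\cF\var_\n(f|\o_1))\le 2K\cD(f)$. Summing the three pieces closes the proof with a bound of the form $18K\cD(f)=O(1)K\cD(f)$.
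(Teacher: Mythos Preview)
Your proof is correct. The crucial point in the middle piece $T_2$ is the one you flag: by centering $f(\tilde\o_1,\cdot)$ at its $\pi_2$-mean $\nu(f\mid\tilde\o_1)$ before changing measure (the variational characterisation of variance), you only pay a single factor $\|d\nu_{\O_2}/d\pi_2\|_\infty=\nu(\cF)K$, which then cancels against the $1/\nu(\cF)$ from the $\tilde\o_1$ change of measure, leaving exactly $K$. The minor arithmetic in your constants (you appear to absorb the factor~3 from Cauchy--Schwarz into the final $18K$) is immaterial to the $O(1)$ conclusion.

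Your route is genuinely different from the paper's. The paper gives a probabilistic argument: it builds an explicit coupling of two copies of the chain (using the same clocks, maximally coupling the Bernoulli variables deciding whether $\o_1$ lands in $\cF$, and then coupling the $\o_2$ updates), shows that the chains coalesce in a unit time window with probability $\Omega(1/K)$, deduces that the mixing time is $O(K)$, and converts this to a spectral-gap bound. Your approach is a direct Poincar\'e inequality via a three-step path and Radon--Nikodym bookkeeping; it is more self-contained, gives an explicit constant, and avoids the detour through mixing time. The paper's approach, on the other hand, makes the relaxation mechanism transparent (wait for $\cF$, then resample~$\o_2$) and is closer in spirit to the way the lemma is used downstream.
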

\begin{proof}
We follow \cite{Hartarsky23FA}*{Proposition 3.5}. Consider the Markov chain $(\o(t))_{t\ge 0}$ described above. Given two arbitrary initial conditions $\o(0)$ an $\o'(0)$ we construct a coupling of two of such chains with these initial conditions such that with probability $\O(1)$ we have $\o(t)=\o'(t)$ for $t>T=\max_{\o_2\in \O_2}\n^{-1}(\cF|\o_2)$. Standard arguments \cite{Levin09} then prove that the mixing time of the chain is $O(T)$ and the lemma follows.

To construct our coupling, we use the following representation of the Markov chain. We are given two independent Poisson clocks with rate one and the chain transitions occur only at the clock rings. When the first clock rings, a Bernoulli variable
$\xi$ with probability of success $\n(\cF|\o_2)$
is sampled. If
$\xi=1$, then $\o_1$ is resampled w.r.t.\ the measure $\pi(\cdot|
\cF)=\n(\cdot|\cF,\o_2)$, while if $\xi=0$, then $\o_1$ is
resampled w.r.t.\ the measure $\n(\cdot|
\cF^c,\o_2)$. Clearly, in doing so 
$\o_1$ is resampled w.r.t.\ $\n(\cdot|
\o_2)$. If the second clock rings,
we resample $\o_2$ from $\p_2$ if $\o_1\in\cF$ and ignore the ring otherwise.

Both chains use the same clocks. When the first clock rings and the current couple of configurations is
$(\o,\o')$, we first maximally couple the two Bernoulli variables $\xi,\xi'$ corresponding to $\o,\o'$ respectively. Then:
\begin{itemize}
\item if $\xi=\xi'=1$, we update both $\o_1$ and $\o'_1$ to the \emph{same} $\eta_1\in \cF$ with probability $\pi(\eta_1| \cF)$;
\item otherwise, we resample $\o_1$ and $\o'_1$ independently from their respective laws, given $\xi,\xi'$.
\end{itemize}
When the second clock rings, the two chains attempt to update to two maximally coupled configurations with the corresponding distributions. 
 
Suppose now that two consecutive rings occur at times $t_1<t_2$ at the first and second clocks respectively and the Bernoulli variables at time $t_1$ are both $1$. Then the two configurations are clearly identical at $t_2$. To conclude the proof, observe that for any time interval $\D$ of length one the probability that  there exist $t_1<t_2$ in $\D$ as above is at least $1/(4T)$.
\end{proof}

\subsection{Microscopic dynamics}
\label{sec:micro}
We next turn to the microscopic dynamics (recall \cref{subsec:micro}).

Recall \cref{def:uihelping}. Let $\L=\L(\ur)$ be a droplet with side lengths at least $C^3$. Given $\o\in\O_{\bbZ^2\setminus\L}$ and $i\in[4k]$, we define $\L\subset \L_i^\o\subset\L(\ur+O(1)\uv_i)$ by $\L_i^\o=\L$ if $\a(u_i)=0$ or $\a(u_i)>\a$. If $\a(u_i)\in(0,\a]$, we rather set
\[\L_i^\o=\L\cup\bigcup_{x}\left(\left(\left[Z_i\cup\bbH_{u_i}\right]_\cU\setminus\Hb_{u_i}\right)+x\right)\setminus \left\{y\in\bbZ^2\setminus\L:\o_y=0\right\},\]
the union being over $x\in\L$ such that $\o_{(x+Z_i)\setminus\L}=\bzero$ and $x$ is at distance at least $C$ from all sides of $\L$ except the $u_i$-side. In words, we essentially look at pieces of $u_i$-helping sets for the last few lines of the droplet sticking out of it and add to $\L$ the sites which each piece can infect. The reason for introducing this is that helping sets may need to infect a few sites outside $\L$ before creating their periodic infections on the corresponding line and it is those sites that we wish to include in $\L_i^\o$. We set $\L_I^\o=\bigcup_{i\in I}\L_i^\o$ for $I\subset[4k]$.
Let $i\in[4k]$ be such that $\a(u_j)<\infty$ for all $j\in I=\{i-k+1,\dots,i+k-1\}$. Fix $\L=\L(\ur)$ with side lengths at least $C^3$ and at most $q^{-O(C)}$. Let $l\in[0,O(1)]$ be a multiple of $\l_i$, $\o\in\O_{\bbZ^2\setminus\L(\ur+l\uv_i)}$, $\L^+=(\L(\ur+l\uv_i))_I^\o$ and $T=T(\ur,l,i)$. Our goal is to provide a relaxation mechanism for an East-extension of bounded length.
\begin{lem}
\label{lem:paths}
In the above setting we have
\begin{equation}
\label{eq:paths}\m_{\L^+\setminus\L}\left(\var_{T}\left(f|\cT^\o(T)\right)\right)\le e^{O(\log^2(1/q))}\sum_{x\in \L^+\setminus\L}\m_{\L^+\setminus\L}\left(c_x^{\L^+\setminus\L,\bzero_\L\cdot\o_{\bbZ^2\setminus\L^+}}\var_x(f)\right)\end{equation}
and the same holds for $\ST$ instead of $\cT$.
\end{lem}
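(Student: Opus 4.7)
The strategy is to reduce this quasi-two-dimensional Poincaré inequality on the tube $T$ to a one-dimensional KCM Poincaré inequality of the type proved in \cite{Hartarsky21b}. Two features of the setup are crucial: since $l=O(1)$, the tube has bounded thickness in direction $u_i$ and is essentially a narrow band of total length at most $q^{-O(C)}$ wrapping around the $u_i$-portion of the boundary of $\L$; and the constraints $c_x^{\L^+\setminus\L,\bzero_\L\cdot\o_{\bbZ^2\setminus\L^+}}$ on the right-hand side of \cref{eq:paths} treat $\L$ as fully infected, so $\L$ plays the role of an infinite reservoir of infection along the inner boundary of $T$, while $\o$ is frozen outside $\L^+$.

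First I would partition $T$ into $O(1)$ parallelogram-shaped pieces $(T_j)_{j\in I}$, where $T_j$ lies against the $u_j$-side of $\L$ and has length $\Theta(s_j)\le q^{-O(C)}$ in direction $u_{j+k}$. For each $T_j$ I would chop it into $N_j = q^{-O(C)}$ consecutive blocks of bounded diameter along $u_{j+k}$ and identify this with a one-dimensional East-type KCM whose legal move at block $B_m$ requires block $B_{m-1}$ (the one closer to an appropriately chosen corner) to contain an infected $u_j$-helping set. By \cref{lem:helping:sets,def:uihelping} and the construction of $\L^\o_i$ (which enlarges $\L$ to absorb the few sites outside $\L$ that a helping set must first infect), such a helping set together with the infected $\L$ suffices, via an $O(1)$-long canonical path of single-site 2D KCM moves, to infect the entire next block $B_m$. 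Hence each 1D East move can be replayed inside the 2D KCM at constant cost $q^{-O(1)}$.

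Applying the 1D KCM Poincaré inequality of \cite{Hartarsky21b} to the resulting East-type chain on $N_j = q^{-O(C)}$ blocks with effective ``helping-set'' density $\Theta(q^\a)$ yields a Poincaré constant of order $q^{-O(\log N_j)} = q^{-O(\log(1/q))} = e^{O(\log^2(1/q))}$, which is exactly the target factor. The corners of $T$, i.e.\ the transitions between neighbouring pieces $T_j$ and $T_{j+1}$, contribute only $O(1)$ multiplicative cost, because by our choice of $\hat\cS$ in \cref{subsec:directions} there is a rule $X\in\cU$ contained in $\Hb_{u_j}\cap\Hb_{u_{j+1}}$ which lets infection turn the corner (cf.\ \cite{Bollobas15}*{Lemma~5.5}). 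The bound for $\ST$ instead of $\cT$ is immediate from the inclusion $\ST^\o(T)\subset\cT^\o(T)$ combined with variance monotonicity under enlarging the conditioning event.

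The principal technical obstacle will be the quantitative 2D-to-1D comparison of Dirichlet forms: for each resampling move of a 1D block, one must exhibit a canonical path of single-site 2D KCM moves whose probability ratio with respect to $\m$ is at most $q^{-O(1)}$, and the resulting canonical-path bound must produce precisely the Dirichlet form on the right-hand side of \cref{eq:paths}, with constraints $c_x^{\L^+\setminus\L,\bzero_\L\cdot\o_{\bbZ^2\setminus\L^+}}$ rather than weaker or stronger ones. It is here that the technical strength of \cite{Hartarsky21b} and the periodic structure encoded by $Q$ in \cref{def:uihelping} are genuinely needed.
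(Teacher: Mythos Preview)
Your strategy is correct and would work, but it is considerably more laborious than the paper's. The paper does not partition $T$ into parallelograms, build a block East chain, or handle corners separately. Instead it makes two clean moves. First, it passes from the conditioning event $\cT^\o(T)$ to the larger event $\cE^\o$ that the $\cU$-KCM on $\L^+\setminus\L$ with boundary condition $\bzero_\L\cdot\o_{\bbZ^2\setminus\L^+}$ can infect all of $\L^+\setminus\L$; since $\cT^\o(T)\subset\cE^\o$ and $\m(\cT^\o(T))\ge q^{O(W)}$ (via \cref{lem:traversability:boundary}), the variance comparison \cref{eq:variance:conditioning:enlarge} costs only a $q^{-O(W)}$ factor. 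Second, because $\L^+\setminus\L$ has bounded thickness, its sites can be enumerated so that neighbours stay at bounded distance, making the restricted $\cU$-KCM literally an instance of the one-dimensional \emph{general} KCM of \cite{Hartarsky21b}; Theorem~1 there then gives the Poincar\'e inequality conditioned on $\cE^\o$ directly, with the stated $e^{O(\log^2(1/q))}$ constant. The paper in fact remarks that a canonical-path argument of the sort you outline is possible but less elegant.

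Two small issues with your write-up. Your treatment of $\ST$ via ``variance monotonicity under enlarging the conditioning event'' is misphrased: enlarging the conditioning event does not decrease the conditional variance; you need \cref{eq:variance:conditioning:enlarge}, which introduces the ratio $\m(\cT^\o(T))/\m(\ST^\o(T))=q^{-O(W)}$. This is harmless for the final bound but is not monotonicity. The paper sidesteps this entirely by noting that $\ST^\o(T)\subset\cE^\o$ as well, so the same argument applies verbatim. Also, your block-level use of \cite{Hartarsky21b} followed by a canonical-path replay of each block move is doing by hand part of what that reference already provides at the site level; invoking it directly on $\L^+\setminus\L$ avoids the intermediate coarse-graining and the corner analysis altogether.
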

Though it is possible to prove this directly via canonical paths, we rather deduce it from the main result of \cite{Hartarsky21b} proved much more elegantly. That work was developed for the purpose of its present application.
\begin{proof}
We only treat $\cT$, the proof for $\ST$ being identical. Let us denote by $\cE^\o$ the event that the $\cU$-KCM restricted to $\L^+\setminus\L$ with boundary condition $\bzero_\L\cdot\o_{\bbZ^2\setminus\L^+}$ is able to fully infect $\L^+\setminus\L$. As in the proof of \cref{lem:closure}, we see that $\cT^\o(T)\subset\cE^\o$. Moreover, recalling \cref{lem:traversability:boundary}, we have $\m(\cT^\o(T))\ge \m(\cW(T))\ge q^{O(W)}$, since $T$ has bounded length. Hence, by \cref{eq:var:decomposition:AB,eq:variance:conditioning:enlarge},
\[\m_{\L^+\setminus\L}\left(\var_T\left(f|\cT^\o(T)\right)\right)\le \var_{\L^+\setminus\L}\left(f|\cT^\o(T)\right)\le q^{-O(W)}\var_{\L^+\setminus\L}\left(f|\cE^\o\right).\]

We next observe that the process defining $\cE^\o$ is in fact a one-dimensional inhomogeneous KCM of the type considered in \cite{Hartarsky21b} and called general KCM there (enumerate the sites of $\L^+\setminus\L$ so that neighbouring sites remain at bounded distance, e.g.\ in lexicographical order for $(\<\cdot,u_{i+k}\>,\<\cdot,u_{i}\>)$). Therefore, \cite{Hartarsky21b}*{Theorem 1} yields \cref{eq:paths} as desired, taking into account that $\m(\cE^\o)\ge \m(\cT^\o(T))\ge q^{O(W)}$.
\end{proof}

\begin{cor}
\label{cor:single:line}
In the same setting as above, we have
\begin{multline}
\m_{\L^+}\left(\left.\var_{T}(f|\cT^\o(T))\right|\SG^\bone(\L)\right)\\\le e^{O(\log^2(1/q))}\max\left(\g(\L),\m^{-1}\left(\SG^\bone(\L)\right)\right)\sum_{x\in\L^+}\m_{\L^+}\left(c_x^{\L^+,\o}\var_x(f)\right)\label{eq:cor:single:line}
\end{multline}
and the same holds with $\ST$ instead of $\cT$.
\end{cor}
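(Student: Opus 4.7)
The plan is to combine Lemma \ref{lem:paths}, a microscopic Poincar\'e-type bound on $B:=\L^+\setminus\L$ valid only with the artificial fully-infected boundary $\bzero_\L$, with the constrained Poincar\'e inequality \eqref{eq:def:gamma} encoded by $\g(\L)$, which governs $\cU$-moves inside $\L$ on the event $\SG^\bone(\L)$. Since $\SG^\bone(\L)$ is $\eta_\L$-measurable, the conditional measure factorises as $\m_{\L^+}(\cdot|\SG^\bone(\L))=\m_\L(\cdot|\SG^\bone(\L))\otimes\m_B$, so
\[
\m_{\L^+}\!\left(\left.\var_T(f|\cT^\o(T))\right|\SG^\bone(\L)\right)=\m_\L\!\left(\left.\m_B\!\left(\var_T(f|\cT^\o(T))\right)\right|\SG^\bone(\L)\right).
\]
Neither $\m_B$ nor the event $\cT^\o(T)$ depends on $\eta_\L$, so Lemma \ref{lem:paths} applies fiberwise for each $\eta_\L$ and, after integrating, yields
\[
\m_{\L^+}\!\left(\left.\var_T(f|\cT^\o(T))\right|\SG^\bone(\L)\right)\le e^{O(\log^2(1/q))}\!\sum_{x\in B}\!\m_{\L^+}\!\left(\left.c_x^{B,\bzero_\L\cdot\o}\var_x(f)\right|\SG^\bone(\L)\right).
\]

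The heart of the argument is to trade the artificial constraint $c_x^{B,\bzero_\L\cdot\o}$ for the genuine $c_x^{\L^+,\o}$ and to enlarge the sum from $B$ to $\L^+$. Whenever $c_x^{B,\bzero_\L\cdot\o}(\eta_B)=1$, an update rule $U\in\cU$ witnesses it and its ``missing zeros'' lie in the $O(1)$-sized set $Y:=(x+U)\cap\L$. I would execute the trade via the two-block chain of Lemma \ref{lem:aux:east}, with $\O_1=\O_\L$, $\O_2=\O_B$, conditioning event $\cH=\SG^\bone(\L)\times\O_B$, and event $\cF=\SG^\bone(\L)\cap\{\eta_Y=\bzero\}$; by \eqref{eq:Harris:1}, $\nu(\cF|\eta_B)\ge q^{|Y|}=q^{O(1)}$, so the resulting $T$-factor is harmless. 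This bound reproduces $c_x^{\L^+,\o}\var_x(f)$ on the right-hand side, while the two-block $\L$-variance is controlled by $\g(\L)$ through \eqref{eq:def:gamma}; the ensuing Dirichlet terms $\sum_{y\in\L}c_y^{\L,\bone}\var_y(f)$ are in turn bounded by $\sum_{y\in\L^+}c_y^{\L^+,\o}\var_y(f)$, because any rule witnessing $c_y^{\L,\bone}=1$ lies inside $\L\subset\L^+$ and therefore also witnesses $c_y^{\L^+,\o}=1$. The cheaper alternative, when $\SG^\bone(\L)$ is not too rare, exploits positivity of $\var_T(f|\cT^\o(T))$ to peel off the conditioning at cost $\m^{-1}(\SG^\bone(\L))$ and then transfers the constraint via Harris; taking the better of the two routes produces the $\max(\g(\L),\m^{-1}(\SG^\bone(\L)))$ in the statement. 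The $\ST$-version is identical since Lemma \ref{lem:paths} already covers it.

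The main obstacle is this transfer of constraints: the inequality $c_x^{B,\bzero_\L\cdot\o}\ge c_x^{\L^+,\o}$ runs in the wrong direction, so the comparison is inherently dynamical and must dissipate the excess onto $\L$-moves controlled by $\g(\L)$. A secondary technicality is the $x$-dependence of the witnessing rule $U$, and thus of the event $\cF$; handling it uniformly requires choosing $U$ as a canonical function of $\eta_B$, but only affects the two-block $T$-factor by a multiplicative $q^{-O(1)}$ that is absorbed into the microscopic exponential. This is also the reason for having introduced the enlargement $\L^+\supset\L$ at the beginning of the subsection: it guarantees that the canonical helping sets needed to witness $c_y^{\L,\bone}$-legal moves inside $\L$ can always be realised from the configuration on $\L^+$ alone, without recourse to hypothetical infections outside $\L^+$.
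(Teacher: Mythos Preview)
Your opening reduction via Lemma~\ref{lem:paths} is exactly what the paper does, and you correctly identify that the crux is replacing $c_x^{B,\bzero_\L\cdot\o}$ by $c_x^{\L^+,\o}$. However, your execution of the two-block step and your explanation of the $\max$ are both off.

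For the two-block, your choice $\O_2=\O_B$ does not produce the term you claim: Lemma~\ref{lem:aux:east} with that setup bounds $\var_{\L^+}(f|\SG^\bone(\L))$ and puts $\1_\cF\var_B(f)$ on the right, not $c_x^{\L^+,\o}\var_x(f)$. The paper instead works site by site. For each fixed $x\in B$ it first uses \eqref{eq:var:decomposition:AB} (convexity of the variance) to enlarge
\[
\m_{\L^+}\!\left(\left.c_x^{B,\bzero_\L\cdot\o}\var_x(f)\right|\SG^\bone(\L)\right)\le \m_{\L^+}\!\left(c_x^{B,\bzero_\L\cdot\o}\var_{\L\cup\{x\}}(f|\SG^\bone(\L))\right),
\]
and only then applies Lemma~\ref{lem:aux:east} with $\O_1=\O_\L$, $\O_2=\O_{\{x\}}$ and $\cF=\cI$ the event that all sites of $\L$ within $O(1)$ of $x$ are infected. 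This yields $\var_\L(f|\SG^\bone(\L))$ and $\1_\cI\var_x(f)$ on the right, and now $c_x^{B,\bzero_\L\cdot\o}\1_\cI\le c_x^{\L^+,\o}$ gives the genuine constraint. Taking $\cI$ independent of any chosen rule also sidesteps your ``canonical $U$'' issue.

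Regarding the $\max$: it is not ``the better of two routes''. In the paper's single argument the two-block produces two terms, one handled by $\g(\L)$ via \eqref{eq:def:gamma} (the $\var_\L$ part) and one requiring removal of the conditioning on $\SG^\bone(\L)$ at cost $\m^{-1}(\SG^\bone(\L))$ (the $\var_x$ part with the true constraint). Both costs occur, hence the $\max$. Your ``cheaper alternative'' of peeling off the conditioning first and then invoking Harris does not work: after peeling you still face $c_x^{B,\bzero_\L\cdot\o}\ge c_x^{\L^+,\o}$, and Harris cannot reverse that inequality. Finally, the role of $\L^+$ is not to witness $c_y^{\L,\bone}$-moves (those use only sites in $\L$), but to accommodate the extra sites that helping sets infect in the microscopic dynamics of Lemma~\ref{lem:paths}.
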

\begin{proof}
By \cref{lem:paths}, it suffices to bound
\[\m_{\L^+}\left(\left.c_x^{\L^+\setminus\L,\bzero_\L\cdot\o_{\bbZ^2\setminus\L^+}}\var_x(f)\right|\SG^\bone(\L)\right)\]
from above by the r.h.s.\ of \cref{eq:cor:single:line} for any $x\in\L^+\setminus\L$. By \cref{eq:var:decomposition:AB} this is at most
\[\m_{\L^+}\left(c_x^{\L^+\setminus\L,\bzero_\L\cdot\o_{\bbZ^2\setminus\L^+}}\var_{\L\cup\{x\}}\left(f|\SG^\bone(\L)\right)\right).\]

By the two-block \cref{lem:aux:east} we have
\begin{multline*}\var_{\L\cup\{x\}}\left(f|\SG^\bone(\L)\right)\\\le q^{-O(1)}\m_{\L\cup\{x\}}\left(\left.\var_\L\left(f|\SG^\bone(\L)\right)+\1_{\cI}\var_x(f)\right|\SG^\bone(\L)\right),\end{multline*}
where $\cI$ is the event that all sites in $\L$ at distance at most some large constant from $x$ are infected. Putting this together and observing that
$\1_\cI \cdot c_x^{\L^+\setminus\L,\bzero_\L\cdot\o_{\bbZ^2\setminus\L^+}}\le c_x^{\L^+,\o}$, we get
\begin{multline*}\m_{\L^+}(\var_T(f|\cT^\o(T))|\SG^\bone(\L))\le e^{O(\log^2(1/q))}\times\\
\left(|\L^+\setminus\L|\m_{\L^+}\left(\var_\L\left(f|\SG^\bone(\L)\right)\right)+\sum_{x\in\L^+\setminus\L}\m_{\L^+}\left(\left.c_x^{\L^+,\o}\var_x(f)\right|\SG^\bone(\L)\right)\right).\end{multline*}
Finally, recalling \cref{eq:def:gamma} and $|\L^+|\le q^{-O(C)}$, we recover \cref{eq:cor:single:line}.
\end{proof}

\subsection{Proofs of the one-directional extensions}
\label{app:subsec:extensions}
We require a more technical version of \cref{eq:def:gamma} accounting for a boundary condition. For a droplet $\L=\L(\ur)$, boundary condition $\o\in\O_{\bbZ^2\setminus\L}$, nonempty event $\SG^\o(\L)\subset\O_\L$ and set of directions $I\subset [4k]$, let $\g_I^\o(\L)$ be the smallest constant $\g\in[1,\infty]$ such that
\begin{equation}
\label{eq:def:gamma:boundary}\m_{\L^\o_I}\left(\var_{\L}\left(f|\SG^\o\left(\L\right)\right)\right)\le \g\sum_{x\in\L_I^\o}\m_{\L^\o_I}\left(c_x^{\L^\o_I,\o}\var_x(f)\right).
\end{equation}
holds for all $f:\O\to \bbR$. 

For the rest of the section we recall the following notation from \cref{cor:East:reduction}. Let $i\in[4k]$ be such that $\a(u_j)<\infty$ for all $j\in(i-k,i+k)$. Let $\L=\L(\ur)$ be a droplet with $\ur=q^{-O(C)}$ and side lengths at least $C^3$. Let $l\in(0,\lmp]$ be a multiple of $\l_i$. Let $d_m=\l_i\lfloor(3/2)^{m}\rfloor$ for $m\in[1,M)$ and $M=\min\{m:\l_i(3/2)^m\ge l\}$. Let $d_M=l$, $\L^m=\L(\ur+d_m\uv_i)$ for $m\in[1,M]$ and $s_{m-1}=d_{m}-d_{m-1}$ for $m\in[2,M]$.

\begin{lem}
\label{prop:east:reduction}
Set $I=\{i-k+1,\dots,i+k-1\}$. Let $\SG^\bone(\L(\ur))$ be a nonempty translation invariant decreasing event. Assume that we East-extend $\L(\ur)$ by $l$ in direction $u_i$. Then
\[\g\left(\L^M\right)\le \max_{\o}\g_I^{\o}\left(\L^1\right)\prod_{m=1}^{M-1} \frac{a_m}{q^{O(W)}}\]
where $a_m$ is defined in \cref{eq:def:am:East}.
\end{lem}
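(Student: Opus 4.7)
The bound will be established by induction on $m$, proving the statement
\[\g(\L^m) \le \max_{\o}\g_I^{\o}(\L^1) \prod_{m'=1}^{m-1}\frac{a_{m'}}{q^{O(W)}} \qquad (\ast)\]
for every $m\in[1,M]$. The base case $m=1$ is immediate: for $\o=\bone$ one has $\L_I^{\bone}=\L^1$ and the constraint $c_x^{\L^1,\bone}$ coincides with the one appearing in the definition of $\g(\L^1)$, so $\g(\L^1)=\g_I^{\bone}(\L^1)\le\max_\o\g_I^\o(\L^1)$. The induction is closed by the one-step recursion
\[\g(\L^m)\le\frac{a_{m-1}}{q^{O(W)}}\,\g(\L^{m-1}).\qquad(\ast\ast)\]

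To prove $(\ast\ast)$, write $\L^m=\L^{m-1}\sqcup T^{m-1}$ with the new tube $T^{m-1}=T(\ur+d_{m-1}\uv_i,s_{m-1},i)$, and observe that the East-extension structure gives $\SG^\bone(\L^m)=\SG^\bone(\L^{m-1})\cap\cT^\bone(T^{m-1})$, which is a product of decreasing events on the disjoint regions $\L^{m-1}$ and $T^{m-1}$. The core idea, mimicking the classical bisection recursion for the East chain, is to apply the auxiliary two-block chain \cref{lem:aux:east} to the decomposition of $\L^m$ into the shifted small droplet $\L^{m-1}+s_{m-1}u_i$ (which sits flush at the $u_i$-end of $\L^m$ because $\uv_i$ has no $u_{i\pm k}$ component) and its complement in $\L^m$. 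The role of the auxiliary event $\cF$ is played by the occurrence of $\SG^\bone(\L^{m-1}+s_{m-1}u_i)$: this shifted SG droplet acts as a new East-style anchor, so that conditionally on $\cF$ the relaxation on the complementary block can be carried out using the inductive bound $\g(\L^{m-1})$ together with an East-type rearrangement of helping sets along the interface, which is handled via the microscopic estimate \cref{cor:single:line} at cost $1/q^{O(W)}$. The crucial probabilistic input is that, by translation invariance and the Harris inequality \cref{eq:Harris:2} applied to the decreasing events $\SG^\bone(\L^{m-1}+s_{m-1}u_i)$ and $\cT^\bone(T^{m-1})$, one has
\[\m\bigl(\cF\,\big|\,\SG^\bone(\L^m)\bigr)\ \ge\ \m\bigl(\SG^\bone(\L^{m-1}+s_{m-1}u_i)\,\big|\,\SG^\bone(\L^{m-1})\bigr)\ =\ \frac{1}{a_{m-1}},\]
which is exactly the factor $\nu(\cF|\o_2)^{-1}=a_{m-1}$ appearing in \cref{lem:aux:east}.

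The main obstacle is organising the block decomposition so that the hypothesis $\cF\times\O_2\subset\cH$ of \cref{lem:aux:east} is satisfied despite the nested structure of the SG events: since $\cH=\SG^\bone(\L^m)$ forces an SG configuration on the original anchor $\L^{m-1}$, one must arrange the two blocks so that $\cF$ (the presence of the shifted anchor) together with any configuration on the second block always reconstitutes $\cH$ after microscopic rearrangement. This is the reason why the base of the induction features $\max_\o\g_I^\o(\L^1)$ rather than simply $\g(\L^1)$: the microscopic dynamics used to reshuffle helping sets along the interface between the two blocks, provided by \cref{cor:single:line}, in general produces nontrivial boundary conditions on the smallest scale droplet and, through the definition \cref{eq:def:gamma:boundary}, enlarges the relevant support to $\L_I^\o$. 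The $q^{O(W)}$ loss per scale comes precisely from the $W$-helping sets needed by \cref{cor:single:line} to glue together successive scales, exactly as in the quantitative bound of \cref{lem:traversability:boundary}.
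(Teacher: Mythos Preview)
Your high-level plan (induction via the two-block \cref{lem:aux:east}, with a shifted SG anchor playing the role of $\cF$ and producing the factor $a_{m-1}$) matches the paper's, but two concrete choices in your sketch do not work, and they are exactly the places where the paper's argument differs.

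\textbf{Wrong inductive quantity.} Your recursion $(\ast\ast)$ is on $\g(\L^m)$ alone. In the two-block lemma applied to the correct decomposition (first block $\L^{m-1}$, second block the new tube $V_3=T^{m-1}$ restricted to $\cT^\bone$), the term $\var_\nu(f\mid\o_2)$ is the variance on $\L^{m-1}$ with the configuration $\eta_{V_3}$ on the tube \emph{fixed}. By \cref{def:extension:East} and \cref{lem:tube:decomposition} this conditional event is $\SG^{\eta_{V_3}\cdot\bone}(\L^{m-1})$, not $\SG^\bone(\L^{m-1})$: the boundary $\eta_{V_3}$ enters through the last few segments of the traversability event. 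Since $\SG^\bone\subset\SG^{\eta_{V_3}\cdot\bone}$, the inequality \cref{eq:variance:conditioning:enlarge} goes the wrong way, so you cannot bound this by $\g(\L^{m-1})$. The paper therefore inducts on $\max_{\o'}\g_I^{\o'}(\L^m)$, proving
\[
\g_I^{\o}(\L^{m+1})\le \max_{\o'}\g_I^{\o'}(\L^m)\,\frac{a_m}{q^{O(W)}},
\]
and only at the very end uses $\g(\L^M)=\g_I^{\bone}(\L^M)$. This is precisely why the final bound features $\max_\o\g_I^\o(\L^1)$ rather than $\g(\L^1)$, and it is not merely a microscopic artefact as you suggest.

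\textbf{Wrong block decomposition.} Your blocks $A=\L^{m-1}+s_{m-1}u_i$ and $B=\L^m\setminus A$ with $\cF=\SG^\bone(A)$ do not satisfy $\cF\times\O_2\subset\cH$: taking $\eta_B=\bone$ leaves the back tube (and in particular part of the original $\L(\ur)$) uninfected, so $\SG^\bone(\L^m)$ fails. Saying $\cH$ is recovered ``after microscopic rearrangement'' is not enough---\cref{lem:aux:east} requires a genuine inclusion. The paper instead takes $\O_1=\O_{\L^{m-1}}$, $\O_2=\cT^\bone(V_3)$, and sets $\cF=\SG^\bone(\L^{m-1})\cap\SG^\bone(V_2)$ where $V_2$ is the overlap $\L^{m-1}\cap(\L^{m-1}+s_{m-1}u_i)$. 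The first factor of $\cF$ guarantees $\cF\times\O_2\subset\cH$; the second, combined with $\cT^\bone(V_3)$, reconstitutes the shifted SG droplet needed to relax on $V_3$ via translation invariance and \cref{eq:def:gamma:boundary}. Your lower bound on $\m(\cF\mid\cH)$ via Harris is essentially right and matches the paper's computation, but it only becomes usable once $\cF$ is chosen this way.
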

\begin{proof}
We loosely follow \cite{Hartarsky23FA}*{Eq.\ (4.10)}. Note that by \cref{eq:def:gamma,eq:def:gamma:boundary} $\g_I^\bone(\L^M)=\g(\L^M)$. Proceeding by induction it then suffices to prove that for any $m\in[1,M)$ and $\o\in\O_{\bbZ^2\setminus\L^{m+1}}$
\begin{equation}
\label{eq:East:extension:step}
\g_I^{\o}\left(\L^{m+1}\right)\le \max_{\o'\in\O_{\bbZ^2\setminus\L^m}}\g_I^{\o'}\left(\L^m\right)\frac{a_{m}}{q^{O(W)}}.
\end{equation}

Fix such $m$ and $\o$ and partition $\L^{m+1}=V_1\sqcup V_2\sqcup V_3$ so that
\begin{align*}
V_1\cup V_2&{}=\L^m,&V_2\cup V_3&{}=\L^m+s_{m}u_i.
\end{align*}
That is, set $V_1=s_mu_i+T(\ur,s_m,i+2k)$, $V_2=s_m+\L(\ur+(d_m-s_m)\uv_i)$ and $V_3=d_m u_i+T(\ur,s_m,i)$.

In order to apply \cref{lem:aux:east}, we define $\O_1=\O_{\L^m}$, $\O_2=\cT^\o(V_3)$ and equip them with $\p_1=\m_{\L^m}$ and $\p_2=\m_{V_3}(\cdot|\cT^\o(V_3))$ respectively. We set $\cH=\SG^\o(\L^{m+1})$ and $\cF=\SG^\bone(\L^m)\cap \SG^\bone(V_2)$. Note that these these $\SG$ events were defined when East-extending $\L(\ur)$ by $l$ in direction $u_i$, since $0\le d_m-s_m\le d_m\le d_{m+1}\le d_M=l$ (for $V_2$ we also use translation invariance). Notice that $\cF\times\O_2\subset\cH$, since, by \cref{def:extension:East}, $\SG^\bone(\L^m)=\SG^\bone(\L(\ur))\cap\cT^\bone(T(\ur,d_m,i))$ and 
\[\cT^\bone(T(\ur,d_m,i))\cap\cT^\o(V_3)\subset \cT^\o(T(\ur,d_{m+1},i))\]
by \cref{lem:tube:decomposition}. We may therefore apply \cref{lem:aux:east} to get
\begin{multline}
\label{eq:East:ext:proof:1}\var_{\L^{m+1}}\left(f|\SG^\o\left(\L^{m+1}\right)\right)\le O(1)\max_{\eta_{V_3}\in\cT^\o(V_3)}\m^{-1}\left(\cF|\SG^\o(\L^{m+1}),\eta_{V_3}\right)\\
\times\m_{\L^{m+1}}\left(\left.\var_{\L^m}(f|\cH,\eta_{V_3})+\1_{\cF}\var_{V_3}\left(f|\cH,\eta_{\L^m}\right)\right|\SG^\o\left(\L^{m+1}\right)\right).
\end{multline}

Note that by \cref{def:extension:East} for any $\eta_{V_3}\in\cT^\o(V_3)$ we have 
\[\eta_{\L^m}\cdot\eta_{V_3}\in \SG^\o\left(\L^{m+1}\right)\Leftrightarrow \eta_{\L^m}\in\SG^{\eta_{V_3}\cdot\o}(\L^m),\]
which implies that
\begin{align*}\var_{\L^m}\left(f|\cH,\eta_{V_3}\right)&{}=\var_{\L^m}\left(f|\SG^{\h_{V_3}\cdot\o}(\L^m)\right)\\
\m\left(\cF|\SG^\o\left(\L^{m+1}\right),\h_{V_3}\right)&{}=\m\left(\cF|\SG^{\h_{V_3}\cdot\o}(\L^m)\right).\end{align*}
Further note that by \cref{def:extension:East,def:traversability},
\begin{multline*}
\cF=\SG^\bone\left(s_mu_i+\L(\ur)\right)\cap \cT^\bone\left(s_mu_i+T\left(\ur,d_m-s_m,i\right)\right)\\\cap\SG^{\bone}(\L(\ur))\cap\cT^{\h_{s_mu_i+T(\ur,d_m-s_m,i)}\cdot\bone}\left(T\left(\ur,s_m,i\right)\right),\end{multline*}
the second SG event being implied by $\SG^{\eta_{V_3}\cdot\o}(\L^m)$ again by \cref{def:extension:East}. Applying \cref{lem:traversability:boundary,eq:Harris:3}, we get that for any $\o'\in\O_{\bbZ^2\setminus \L^m}$ 
\begin{align*}&{}\m(\cF|\SG^{\o'}(\L^m))\\
&{}\ge \begin{multlined}[t]\m\Big(\SG^\bone(V_2)\cap\cT^{\h_{s_mu_i+T(\ur,d_m-s_m,i)}\cdot\bone}\left(T(\ur,s_m,i)\right)\\
\cap\cW\left(T\left(\ur,s_m,i\right)\right)\big|\SG^{\o'}(\L^m)\Big)\end{multlined}\\
&{}= \m\left(\left.\SG^\bone(V_2)\cap\cT^{\bzero}\left(T(\ur,s_m,i)\right)\cap\cW\left(T(\ur,s_m,i)\right)\right|\SG^{\o'}(\L^m)\right)\\
&{}= \m\left(\left.\SG^\bone(V_2)\cap\cW\left(T(\ur,s_m,i)\right)\right|\SG^{\o'}(\L^m)\right)\\
&{}\ge q^{O(W)}\m\left(\left.\SG^\bone(V_2)\right|\SG^{\o'}(\L^m)\right),\end{align*}
where in the second inequality we used that $\cT^\bzero(T(\ur,s_m,i))\supset\SG^{\o'}(\L^m)$, using \cref{def:extension:East} and $s_m\le d_m$. Moreover, since $\cF\times\O_2\subset\cH$ and $\cF\subset \SG^\bone(V_2)$, we have
\[\1_{\cF}\var_{V_3}\left(f|\cH,\eta_{\L^m}\right)\le \1_{\SG^\bone(V_2)}\var_{V_3}\left(f|\cT^\o(V_3)\right).\]

Plugging the above back into \cref{eq:East:ext:proof:1} yields
\begin{align}
\label{eq:East:ext:proof}
\var_{\L^{m+1}}\left(f|\SG^\o\left(\L^{m+1}\right)\right)\le{}& q^{-O(W)}\max_{\o'}\m^{-1}\left(\left.\SG^\bone(V_2)\right|\SG^{\o'}(\L^m)\right)\\
\nonumber&\times\m_{\L^{m+1}}\big(\var_{\L^m}\left(f|\SG^{\h_{V_3}\cdot\o}\left(\L^{m+1}\right)\right)\\&+\1_{\SG^\bone(V_2)}\var_{V_3}\left(f|\cT^\o(V_3)\right)\big|\SG^\o\left(\L^{m+1}\right)\big).\nonumber
\end{align}
From \cref{eq:def:gamma:boundary} we have
\begin{multline*}
\m_{(\L^{(m+1)})_I^\o}\left(\var_{\L^m}\left(f|\SG^{\h_{V_3}\cdot\o}\left(\L^{m+1}\right)\right)\right)\\
\le \max_{\o'} \g^{\o'}_I\left(\L^{m}\right)\sum_{x\in(\L^{m+1})^\o_I}\m_{(\L^{m+1})_I^\o}\left(c_x^{(\L^{m+1})_I^\o,\o}\var_x(f)\right).
\end{multline*}
On the other hand, recalling by \cref{def:extension:East} that $\SG^\o(\L^{m+1})\subset\cT^\o(V_3)$,
\begin{multline*}\m_{(\L^{(m+1)})_I^\o}\left(\left.\1_{\SG(V_2)}\var_{V_3}(f|\cT^\o(V_3))\right|\SG^\o\left(\L^{m+1}\right)\right)\\
\begin{aligned}
\le{}&\frac{\m_{(\L^{m+1})_I^\o}(\1_{\SG^\bone(V_2)}\1_{\cT^\o(V_3)}\var_{V_3}(f|\cT^\o(V_3)))}{\SG^\o(\L^{m+1})}\\
={}&\frac{\m(\SG^\bone(V_2)\cap\cT^\o(V_3))}{\SG^\o(\L^{m+1})}\m_{(\L^{m+1})_I^\o}\left(\left.\var_{V_3}(f|\cT^\o(V_3))\right|\SG^\bone(V_2)\cap\cT^\o(V_3)\right)\\
\le{}&\frac{\m(\SG^\o(s_mu_i+\L^m))\m_{(\L^{m+1})_I^\o}(\var_{\L^m+s_mu_i}(f|\SG^\bone(V_2)\cap\cT^\o(V_3)))}{\m(\SG^\bone(\L(\ur)))\m(\cT^\bone(T(\ur,s_m,i)))\m(\cT^\o(s_mu_i+T(\ur,d_m,i)))}\\
={}&\frac{\m_{(\L^{m+1})_I^\o}(\var_{\L^m+s_mu_i}(f|\SG^\bone(V_2)\cap\cT^\o(V_3)))}{\m(\cT^\bone(V_3))}\\
\le{}&\frac{\m(\cT^\o(s_mu_i+T(\ur,d_m,i)))\m_{(\L^{m+1})_I^\o}(\var_{\L^m+s_mu_i}(f|\SG^\o(s_mu_i+\L^m)))}{\m(\cT^\bone(s_mu_i+T(\ur,d_m-s_m,i)))\m(\cT^\o(d_mu_i+T(\ur,s_m,i)))\m(\cT^\bone(V_3))}\\
\le{}&\frac{\g^\o_I(\L^m)}{\m(\cT^\bone(V_3))q^{O(W)}}\sum_{x\in(\L^m+s_mu_i)_I^\o}\m_{(\L^{m+1})_I^\o}\left(c_x^{(\L^m+s_mu_i)^\o_I,\o}\var_x(f)\right),
\end{aligned}
\end{multline*}
where we used \cref{def:extension:East,lem:tube:decomposition,eq:var:decomposition:AB} in the second inequality; translation invariance and \cref{def:extension:East} in the second equality; \cref{eq:variance:conditioning:enlarge,def:extension:East,lem:tube:decomposition} in the third inequality; and \cref{lem:tube:decomposition,lem:traversability:boundary,eq:def:gamma:boundary} in the last one. Plugging these bounds into \cref{eq:East:ext:proof}, we obtain
\[\g^\o_I\left(\L^{m+1}\right)\le \frac{\max_{\o'}\g^{\o'}_I(\L^m)}{q^{O(W)}\m(\cT^\bone(V_3))\min_{\o'}\m(\SG^\bone(V_2)|\SG^{\o'}(\L^m))}.\]

It remains to transform the denominator in the last expression, fixing some $\o'$. Note that
\begin{multline*}\m\left(\cT^\bone(V_3)\cap\SG^\bone(V_2)\cap\SG^{\o'}\left(\L^m\right)\right)\\
\begin{aligned}\ge{}&\m\left(\cT^\bone(V_3)\cap\SG^\bone(V_2)\cap\cW\left(s_mu_i+T\left(\ur,d_m-s_m,i\right)\right)\cap\SG^{\bone}\left(\L^m\right)\right),\\
\ge{}& \m\left(\SG^\bone\left(s_mu_i+\L^m\right)\cap\cW\left(s_mu_i+T\left(\ur,d_m-s_m,i\right)\right)\cap\SG^{\bone}\left(\L^m\right)\right)\\
\ge{}& q^{O(W)}\m\left(\SG^\bone\left(s_mu_i+\L^m\right)\cap\SG^{\bone}\left(\L^m\right)\right)\end{aligned}\end{multline*}
using that $\SG$ is decreasing in the boundary condition, then \cref{lem:tube:decomposition,lem:traversability:boundary,def:extension:East} and finally \cref{lem:traversability:boundary} and the Harris inequality \cref{eq:Harris:1}. Moreover, by \cref{def:extension:East,lem:traversability:boundary},
\[\m\left(\SG^{\o'}(\L^m)\right)\le q^{-O(W)}\m\left(\SG^\bone(\L^m)\right),\]
so that we recover
\[\m\left(\cT^\bone(V_3)\right)\m\left(\left.\SG^\bone(V_2)\right|\SG^{\o'}\left(\L^m\right)\right)\ge q^{O(W)}/a_m\]
completing the proof of \cref{eq:East:extension:step} and \cref{prop:east:reduction}.
\end{proof}

\begin{proof}[Proof of \cref{cor:East:reduction}]
The fact that $\SG(\L(\ur+l\uv_i))$ is nonempty, translation invariant and decreasing follows directly from \cref{def:extension:East}. By \cref{prop:east:reduction} it suffices to relate $\max_{\o}\g^\o_I(\L^1)$ and $\g(\L(\ur))$, using \cref{cor:single:line}. Notice that by \cref{def:extension:East} we have \begin{equation}
\label{eq:SG:decomposition:East}\SG^\o\left(\L^1\right)=\SG^\bone(\L(\ur))\times\cT^{\o}(T(\ur,\l_i,i)).
\end{equation}
Therefore, (see e.g.\ \cite{Hartarsky21a}*{Lemma 3.9} or \cref{eq:var:decomposition:AB})
\begin{multline}
\var_{\L^1}\left(f|\SG^\o\left(\L^1\right)\right)\le \m_{\L(\ur)}\left(\left.\var_{T(\ur,\l_i,i)}\left(f|\cT^\o\left(T(\ur,\l_i,i)\right)\right)\right|\SG^\bone\left(\L\left(\ur\right)\right)\right)\\
+\m_{T(\ur,\l_i,i)}\left(\left.\var_{\L(\ur)}(f|\SG^\bone\left(\L(\ur)\right)\right|\cT^\o\left(T(\ur,\l_i,i)\right)\right).
\label{eq:varL1}
\end{multline}
The former term is treated by \cref{cor:single:line}, which gives 
\begin{multline*}
\m_{\L^+}\left(\left.\var_{T(\ur,\l_i,i)}\left(f|\cT^\o\left(T(\ur,\l_i,i)\right)\right)\right|\SG^\bone\left(\L\left(\ur\right)\right)\right)\\\le e^{O(\log^2(1/q))}\max\left(\g(\L(\ur)),\m^{-1}\left(\SG^\bone(\L(\ur))\right)\right)\sum_{x\in\L^+}\m_{\L^+}\left(c^{\L^+,\o}_x\var_x(f)\right),\end{multline*}
where $\L^+=(\L(\ur+\l_i\uv_i)_I^\o)$. For the second term in \cref{eq:varL1}, \cref{eq:def:gamma} and $\m_{T(\ur,\l_i,i)}(\cT^\o(T(\ur,\l_i,i)))\ge q^{O(W)}$ (see the proof of \cref{lem:traversability:boundary}) give
\begin{multline*}\m_{T(\ur,\l_i,i)}\left(\left.\var_{\L(\ur)}(f|\SG^\bone\left(\L(\ur)\right)\right|\cT^\o\left(T(\ur,\l_i,i)\right)\right)\\
\le q^{-O(W)}\g(\L(\ur))\sum_{x\in\L(\ur)}\m_{\L(\ur+\l_i\uv_i)}\left(c_x^{\L(\ur),\bone}\var_x(f)\right).\end{multline*}
Plugging these into \cref{eq:varL1} and recalling \cref{eq:def:gamma:boundary}, we get 
\[\g^\o\left(\L^1\right)\le e^{O(\log^2(1/q))}\max\left(\g(\L(\ur)),\m^{-1}\left(\SG^\bone(\L(\ur))\right)\right),\]
which concludes the proof of \cref{cor:East:reduction} together with \cref{prop:east:reduction}, since $M=O(\log(\lmp))\le O(C)\log(1/q)$.
\end{proof}
We next turn to CBSEP-extensions.
\begin{lem}
\label{prop:CBSEP:reduction}
Assume that $\cU$ has a finite number of stable directions. Set $J=[4k]\setminus\{i+k,i-k\}$. Let $\SG^\bone(\L(\ur))$ be a nonempty translation invariant decreasing event. Assume we CBSEP-extend $\L(\ur)$ by $l$ in direction $u_i$. Then
\[\g\left(\L(\ur+l\uv_i)\right)\le \max_{\o} \g_J^{\o}\left(\L(\ur+\l_i\uv_i)\right)\frac{\m(\SG^\bone(\L(\ur+\l_i\uv_i)))}{\m(\SG^\bone(\L(\ur+l\uv_i)))}e^{O(C^2)\log^2(1/q)}.\]
\end{lem}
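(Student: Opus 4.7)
The strategy is to analyse the CBSEP-extended droplet by viewing the offset $x$ of the super good ``core'' (the copy of $\L(\ur)$ inside $\L(\ur+l\uv_i)$, see \cref{def:extension:CBSEP}) as the state of an auxiliary long-range CBSEP chain, and then to simulate each elementary move of this chain by blocks of the original $\cU$-KCM dynamics. Recalling that $\SG^\bone(\L(\ur+l\uv_i))=\bigcup_{x}\SG^\bone_x(\L(\ur+l\uv_i))$ with the union running over multiples of $\l_i$ in $[0,l]$, I would first write down the total-variance decomposition
\[
\var_{\L(\ur+l\uv_i)}\left(f\tc\SG^\bone\right)\le 2\m_{\L(\ur+l\uv_i)}\left(\var_x(\m_x(f))\tc\SG^\bone\right)+2\m\left(\var_{x}(f|x)\tc\SG^\bone\right),
\]
where $\m_x$ is the conditional law of the core position. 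The second term is immediately $\g_J^\o(\L(\ur+\l_i\uv_i))$ times the Dirichlet form, once one uses the shift invariance of the $\SG^\bone_x$ events and the fact that at fixed $x$ the conditional law is a shifted copy of the law on $\L(\ur+\l_i\uv_i)$ with some boundary condition $\o$ on the remainder of the big tube.

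For the ``chain'' term, which encodes the cost of moving the core, the plan is to follow the bisection argument of \cite{Hartarsky20FA} adapted to our setting. Set scales $d_m=\l_i\lfloor(3/2)^m\rfloor$ and, iteratively, bound the Poincar\'e constant at scale $d_{m+1}$ by the one at scale $d_m$ times a multiplicative cost corresponding to one CBSEP swap between two overlapping windows of length $d_m$. The crucial ingredient here is \cref{lem:T:ratio}, which guarantees that $\m(\SG^\bone_x|\SG^\bone)\ge q^{O(C)}/l^*$ where $l^*=l/\l_i$, so that the generalised long-range CBSEP chain on the offsets has a spectral gap of order $q^{O(C)}/l^*$ (this is essentially \cite{Hartarsky22CBSEP}, applied conditionally on the presence of a single $\uparrow$). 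Each application of the recursive step will absorb a factor of $q^{-O(W)}$ coming from \cref{lem:traversability:boundary} to swap $\bone$ for general boundary conditions on the shared part of the overlapping windows.

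The base case of the bisection, where the window size is $2\l_i$, is the only place where the true KCM dynamics intervenes: one has to realise a single-lattice-step displacement of the core using legal flips. I would carry this out as in \cref{prop:east:reduction}, by applying the two-block \cref{lem:aux:east} with $\O_1$ the configuration on $\L(\ur+\l_i\uv_i)$, $\O_2$ the configuration on the ``thin'' tube $T(\ur,\l_i,i+2k)+\l_iu_i$, and the good event $\cF$ being $\SG^\bone(\L(\ur))\cap\SG^\bone(\L(\ur)+\l_iu_i)$ on the overlap; the base-case bound then follows from \cref{cor:single:line} applied to the thin tube, which contributes the factor $e^{O(C^2)\log^2(1/q)}$ in the final bound. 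The ratio $\m(\SG^\bone(\L(\ur+\l_i\uv_i)))/\m(\SG^\bone(\L(\ur+l\uv_i)))$ naturally appears when descending from the conditional variance on the large droplet to that on the single-step one, via the partition $\SG^\bone=\bigsqcup_x\SG^\bone_x$ (up to disjointification) and the translation invariance of $\SG^\bone(\L(\ur))$.

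The main obstacle will be controlling the arbitrary boundary conditions $\o$ that appear at intermediate stages of the bisection: each time one conditions on the configuration on one half of a window, the other half inherits a new, rough $\o$. The rescue is that the constrained Poincar\'e constant $\g_J^\o$ is defined to absorb exactly such worst-case $\o$, and that passing from one $\o$ to another in the \emph{probability} estimates costs only $q^{-O(W)}$ by \cref{lem:traversability:boundary} combined with the decreasing nature of $\SG^\bone$. Summing the $O(\log l)\le O(\log(1/q))$ bisection steps, all the multiplicative losses collapse into the single factor $e^{O(C^2)\log^2(1/q)}$ in the statement, concluding the proof.
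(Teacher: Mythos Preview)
Your proposal has the right ingredients (bisection at scales $d_m$, \cref{lem:T:ratio} for offset uniformity, $q^{-O(W)}$ boundary-condition corrections via \cref{lem:traversability:boundary}), and these match the paper's route. However, the opening total-variance decomposition in terms of the core offset $x$ is both unnecessary and, as written, incorrect. Your claim about the ``second term'' is the real gap: at fixed offset $x$ the conditional law is \emph{not} a shifted copy of the law on $\L(\ur+\l_i\uv_i)$. Conditioning on $\SG^\bone_x$ still leaves the configuration on the full droplet $\L(\ur+l\uv_i)$, including two long $\ST$ tubes on either side of the core; resampling those is not controlled by $\g_J^\o(\L(\ur+\l_i\uv_i))$, which only concerns a window of length $\l_i$. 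If instead you define $x$ as, say, the minimal good offset, the conditioning becomes non-product and even less tractable.

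The paper avoids this by applying the bisection directly to the quantities $\g_J^\o(\L^m)$, iterating from $m=M$ down to $m=1$: each step gives $\g_J^\o(\L^{m+1})\le \max_{\o'}\g_J^{\o'}(\L^m)\cdot b_m\cdot q^{-O(W)}$ with $b_m\le q^{-O(C)}$ by \cref{lem:T:ratio}. The probability ratio $\m(\SG^\bone(\L^1))/\m(\SG^\bone(\L^M))$ arises not from any offset partition but from the deconditioning step (\cref{eq:deconditioning}), needed because the Dirichlet side of \cref{eq:def:gamma} is unconditioned (\cref{rem:deconditioning}). Note also that the base case of \emph{this} lemma is simply $\g_J^\o(\L^1)$: your ``single-lattice-step displacement'' via \cref{lem:aux:east} and \cref{cor:single:line} is the passage from $\g_J^\o(\L^1)$ to $\g(\L(\ur))$, which is the content of \cref{cor:CBSEP:reduction}, not of the present statement.
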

\begin{proof}
As in \cite{Hartarsky23FA}*{Eq.\ (4.10)} (with minor amendments as in \cref{prop:east:reduction}), we have
\begin{equation}
\label{eq:CBSEP:reduction:deconditioning}
\g\left(\L^M\right)\le \max_{\o}\g_J^{\o}\left(\L^1\right)\frac{\m(\SG^\bone(\L^1))}{\m(\SG^\bone(\L^M))q^{O(MW)}}\prod_{m=1}^{M-1}b_m\end{equation}
with 
\[
b_m=\max_\o\m^{-2}_{\L^{m+1}}\left(\left.\SG^\bone_{s_m}\right|\SG^\o\right)\max_\o\m^{-1}_{\L^{m+1}}\left(\left.\SG_0^\o\right|\SG^\o\right).\]
Let us mention that the only difference of \cref{eq:CBSEP:reduction:deconditioning} w.r.t.\ \cite{Hartarsky23FA} is the fraction in the r.h.s. It comes from the absence of the conditioning in the r.h.s.\ of \cref{eq:def:gamma} as compared to \cite{Hartarsky23FA}*{Eq.\ (4.5)} pointed out in \cref{rem:deconditioning}. This leads to \cite{Hartarsky23FA}*{Eq.\ (4.16)} being slightly simpler in our setting. Namely, there one should use the fact that for any finite $A\subset B\subset \bbZ^2$, $\cA\subset\O_A$, $\cB\subset\O_B$ and $f:\O_B\to[0,\infty)$ we have
\begin{equation}
\label{eq:deconditioning}\m_B(\1_\cA\m_A(f)|\cB)=\frac{\m_B(\1_\cA\1_\cB\m_A(f))}{\m(\cB)}\le \frac{\m_B(\1_{\cA}\m_A(f))}{\m(\cB)}=\frac{\m(\cA)}{\m(\cB)}\m_B(f).\end{equation}
Using this yields 
\[\prod_{m=1}^{M-1}\frac{\SG^\bone(\L^m)}{\SG^\bzero(\L^{m+1})}\le \prod_{m=1}^{M-1}\frac{\m(\SG^\bone(\L^m))}{\m(\SG^\bone(\L^{m+1}))q^{O(W)}}=\frac{\m(\SG^\bone(\L^1))}{\m(\SG^\bone(\L^M))q^{O(MW)}},\]
using \cref{lem:traversability:boundary}. Up to this modification the proof is the same as in \cite{Hartarsky23FA}, so we do not repeat it.

Given \cref{eq:CBSEP:reduction:deconditioning}, we are left with proving $b_m\le q^{-O(C)}$ for all $m$. The last statement is simply \cref{lem:T:ratio}---the analogue of \cite{Hartarsky23FA}*{Corollary A.3}, so we are done.
\end{proof}

\begin{proof}[Proof of \cref{cor:CBSEP:reduction}]
The fact that $\SG(\L(\ur+l\uv_i))$ is nonempty, translation invariant and decreasing follows directly from \cref{def:extension:East}. By \cref{prop:CBSEP:reduction} it suffices to relate $\g_J^\o(\L^1)$ and $\g\left(\L(\ur)\right)$. This is done exactly as in \cite{Hartarsky23FA}*{Lemma\ 4.10} (see particularly Eqs.\ (4.20) and (4.22) there), replacing \cite{Hartarsky23FA}*{Eq.\ (4.30)} by \cref{cor:single:line} and using \cref{eq:deconditioning,lem:traversability:boundary} as in the proof of \cref{cor:East:reduction}.

\end{proof}

\section{Conditional probabilities}
\label{app:proba}

The goal of this appendix is to prove \cref{lem:T:ratio,cor:perturbation}. Recall \cref{subsec:conditional} and \cref{def:extension:CBSEP}. \begin{proof}[Proof of \cref{lem:T:ratio}]
We prove that for all $s,s'\in[0,l]$  divisible by $\l_i$ and $\o,\o'\in\O_{\bbZ^2\setminus\L}$ we have
\begin{equation}
\label{eq:T:ratio}
\frac{\m(\SG^\o_s(\L))}{\m(\SG^{\o'}_{s'}(\L))}=q^{O(W)}.
\end{equation}
Once this is established, we note that by \cref{def:extension:CBSEP},
\begin{align*}
\max_{s'} \m\left(\SG^{\o'}_{s'}(\L)\right)&{}\le \m\left(\SG^{\o'}(\L)\right)=\m\left(\bigcup_{s'}\SG^{\o'}_{s'}(\L)\right)\\
&{}\le O(l)\max_{s'}\m\left(\SG^{\o'}_{s'}(\L)\right).\end{align*}
Further recalling from \cref{subsec:scales}, that $l\le\lmp\le q^{-O(C)}$ and $W\ll C$, we get
\[\m\left(\left.\SG^\o_s(\L)\right|\SG^{\o'}(\L)\right)\ge\frac{\m(\SG^\bone_s(\L))}{\m(\SG^{\o'}(\L))}\ge q^{-O(C)},\]
since $\SG^\bone_s(\L)\subset\SG^\bone(\L)\subset \SG^{\o'}(\L)$. Thus, it remains to prove \cref{eq:T:ratio}. Moreover, it clearly suffices to establish \cref{eq:T:ratio} for $s'=0$ and $\o'=\bone$.

To prove \cref{eq:T:ratio} in that case, let us first observe that by translation invariance, \cref{def:extension:CBSEP,eq:symmetry:tubes},\begin{equation}
    \label{eq:T:ratio:1}
    \frac{\m(\SG^\o_s(\L))}{\m(\SG^{\bone}_{0}(\L))}=\frac{\m(\ST^{\o_s}(T_s))\m(\ST^{\o_{l-s}}(T_{l-s}))}{\m(\ST^{\bone}(T_l))},
\end{equation}
where for $x\in\{s,l-s,l\}$, $T_x=T(\ur,x,i)$ and the $\o_x$ is a boundary condition that can be expressed in terms of $\o$ and $x$. Applying \cref{lem:traversability:boundary,lem:tube:decomposition} to \cref{eq:T:ratio:1}, we obtain \cref{eq:T:ratio} as desired.
\end{proof}

Our next goal is to treat certain perturbations of traversability events. To do that we not only require the Harris inequality but also the van den Berg--Kesten \cite{BK85} one. We should note that the BK inequality is not natural to use for an upper bound in our setting and has not been employed to this purpose until now. Nevertheless, since we aim to bound conditional probabilities, it will prove useful.
\begin{defn}[Disjoint occurrence]
Given $\L\subset \bbZ^2$ and two decreasing events $\cA,\cB\subset\O_\L$, we say that $\cA$ and $\cB$ \emph{occur disjointly} in $\o\in\O_\L$ if there exist disjoint sets $X,Y\subset \L$, such that $\o_{X\cup Y}=\bzero$; $\o'_X=\bzero$ implies $\o'\in \cA$ for $\o'\in\O_\L$; and $\o'_Y=\bzero$ implies $\o'\in \cB$ for $\o'\in\O_\L$.
\end{defn}
\begin{prop}[BK inequality]
\label{prop:BK}
For any decreasing events $\cA,\cB\subset\O_\L$,
\[\m(\text{$\cA$ and $\cB$ occur disjointly})\le \m(\cA)\m(\cB).\]
\end{prop}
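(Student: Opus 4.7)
My plan is to treat \cref{prop:BK} essentially as a citation of the classical van den Berg--Kesten result \cite{BK85}, since its proof does not rely on any KCM-specific input. That said, the statement here is for decreasing events on a finite Bernoulli product space, which is the original setting of \cite{BK85} (indeed the easier half --- no need for Reimer's general theorem), so the argument is self-contained and short, and I would include a brief sketch for completeness.

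The proof I would write proceeds by induction on $|\L|$. The base case $|\L|=1$ is immediate: with only one site, any disjoint pair of witnesses $(X,Y)$ must have $X=\varnothing$ or $Y=\varnothing$, and any decreasing event having $\varnothing$ as a witness is either the whole sample space or requires no infection at all, so the bound $\m(\cA\square\cB)\le\m(\cA)\m(\cB)$ reduces to the trivial inequalities $\m(\cB)\le 1$ or $\m(\cA)\le 1$. For the inductive step, fix a site $x\in\L$, denote $\L'=\L\setminus\{x\}$, and for each $i\in\{0,1\}$ set
\[\cA^{(i)}=\{\eta\in\O_{\L'}:(\eta,i)\in\cA\},\qquad\cB^{(i)}=\{\eta\in\O_{\L'}:(\eta,i)\in\cB\},\]
where $(\eta,i)$ denotes the configuration equal to $\eta$ on $\L'$ and to $i$ at $x$. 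Since $\cA$ and $\cB$ are decreasing, $\cA^{(1)}\subset\cA^{(0)}$ and $\cB^{(1)}\subset\cB^{(0)}$. The key combinatorial observation is that a disjoint-occurrence witness $(X,Y)$ for $\cA\square\cB$ on $\L$ falls into one of three classes according to whether $x\in X$, $x\in Y$, or $x\notin X\cup Y$; in the first case the restriction $\eta_{\L'}$ must lie in $\cA^{(0)}\square\cB^{(1)}$ (viewed as a disjoint-occurrence event on $\L'$), in the second in $\cA^{(1)}\square\cB^{(0)}$, and in the third in $\cA^{(1)}\square\cB^{(1)}$ regardless of $\eta_x$. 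Conditioning on $\eta_x$ and combining the three cases gives
\[\m(\cA\square\cB)\le q\,\m'(\cA^{(0)}\square\cB^{(1)})+q\,\m'(\cA^{(1)}\square\cB^{(0)})+\m'(\cA^{(1)}\square\cB^{(1)}),\]
where $\m'$ denotes the product measure on $\O_{\L'}$. Applying the inductive hypothesis to each term on the right and regrouping via $\m(\cA)=q\,\m'(\cA^{(0)})+(1-q)\m'(\cA^{(1)})$ and similarly for $\cB$ yields $\m(\cA\square\cB)\le\m(\cA)\m(\cB)$ after the standard algebraic rearrangement exploiting $\m'(\cA^{(1)})\le\m'(\cA^{(0)})$ and $\m'(\cB^{(1)})\le\m'(\cB^{(0)})$.

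The only subtle point --- and the one I would double check carefully --- is verifying that the partition of witnesses described above really does force the claimed containments, i.e.\ that when $x\in X$ the restriction of $Y$ is still a valid witness for $\cB$ even after we force $\eta_x=1$. This is where the monotonicity of $\cB$ is used, and symmetrically for the $x\in Y$ case. Once that is in place, the algebraic step is routine. Since the result and this exact argument are standard textbook material (see e.g.\ \cite{Grimmett99}), in the actual write-up I would keep the statement but delegate the proof to a reference, which is what I expect the author does here.
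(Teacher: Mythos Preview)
Your expectation is correct: the paper does not prove \cref{prop:BK} at all; it simply states the result with the reference to \cite{BK85} and moves on. So your practical recommendation to delegate to a citation matches exactly what the author does.

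That said, the sketch you offer for completeness has a genuine gap. The displayed inequality
\[
\m(\cA\square\cB)\le q\,\m'(\cA^{(0)}\square\cB^{(1)})+q\,\m'(\cA^{(1)}\square\cB^{(0)})+\m'(\cA^{(1)}\square\cB^{(1)})
\]
is obtained via a union bound over the three witness cases, and this overcounts. After applying the inductive hypothesis you would need
\[
q\,a_0 b_1 + q\,a_1 b_0 + a_1 b_1 \le (q a_0 + (1-q)a_1)(q b_0 + (1-q)b_1),
\]
with $a_i=\m'(\cA^{(i)})$, $b_i=\m'(\cB^{(i)})$, and this fails already for $\cA=\cB=\O_\L$ (take $a_0=a_1=b_0=b_1=1$: the left side is $1+2q$, the right side is $1$). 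The ``standard algebraic rearrangement'' you allude to does not exist for this particular bound. The actual inductive proof (as in \cite{BK85} or Grimmett's book) does not use a three-case union bound but a site-duplication trick: one introduces two independent copies $x_A,x_B$ of the site $x$, couples $\cA\square\cB$ on $\L$ with an event on the enlarged space in which the witness for $\cA$ may only use $x_A$ and that for $\cB$ only $x_B$, and then reduces to the inductive hypothesis on $\L'$ without loss. Since you plan to cite rather than prove, this is moot for the write-up, but the sketch as it stands should not be included.
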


We may now start building conditional probability bounds up progressively for segments, parallelograms and, eventually, tubes. For segments, recall \cref{subsec:helping:sets}.
\begin{lem}[Perturbing a segment]
\label{lem:shrinking:ratio:line}
Fix $i\in[4k]$ such that $\a(u_i)\le\a$. Let $S$ be a discrete segment perpendicular to $u_i$ and $S',S''\subset S$ be discrete segments partitioning $S$. Assume that $|S|\ge \O(W)|S''|$ and $|S|=q^{-\a+o(1)}$. Then
\[\m\left(\cH(S')|\cH(S)\right)\ge 1-\frac{W^{1/3}|S''|}{|S|}-q^{1-o(1)}.\]
\end{lem}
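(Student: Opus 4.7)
The plan is to reduce $\m(\cH(S')|\cH(S))$ to a ratio and then bound its complement by splitting the helping sets by type. Since any helping set fitting in the parallelogram associated with $S'$ also fits in that of $S$, we have $\cH(S')\subset\cH(S)$, so $\m(\cH(S')|\cH(S))=\m(\cH(S'))/\m(\cH(S))$ and it suffices to upper bound $\m(\cH(S)\setminus\cH(S'))/\m(\cH(S))$ by the stated quantity. Decomposing $\cH(S)=\cH^\alpha(S)\cup\cH^W(S)$ according to whether the witnessing helping set is an $\alpha$- or a $W$-helping set, we observe that $\cH(S)\setminus\cH(S')\subset(\cH^\alpha(S)\setminus\cH^\alpha(S'))\cup(\cH^W(S)\setminus\cH^W(S'))$ and treat the two terms separately.

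The main step is the $\alpha$-helping contribution. The key observation is that by \cref{def:uihelping} a $u_i$-helping set is a disjoint union over sub-positions $j\in[Q]$ of translates $Z_i+j\l_{i+k}u_{i+k}+k_jx_i$, whose site sets for distinct $(j,k_j)$ are pairwise disjoint (since $\mathrm{diam}(Z_i)\ll Q$). Hence ``a $u_i$-helping set lies in the parallelogram of $T$'' factors as $\bigcap_{j\in[Q]}\cH_j^{u_i}(T)$, an intersection of independent events with $\m(\cH_j^{u_i}(T))=1-(1-q^\alpha)^{N_j(T)}$ and $N_j(T)=\Theta(|T|/(Q\l_{i+k}))$. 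When $\a(u_{i+2k})\le\a$ (Case B of \cref{def:ahelping}), the same decomposition applies to $-u_{i+2k}$-helping sets, and Harris handles any residual correlation between the two families. By the telescoping inequality $\prod_j a_j-\prod_j b_j\le(\prod_j a_j)\sum_j(1-b_j/a_j)$ valid for $0\le b_j\le a_j\le 1$,
\[\m(\cH^\alpha(S)\setminus\cH^\alpha(S'))\le\m(\cH^\alpha(S))\sum_j\left(1-\frac{\m(\cH_j(S'))}{\m(\cH_j(S))}\right).\]
A direct Taylor-expansion analysis of $(1-(1-q^\alpha)^{N_j-d})/(1-(1-q^\alpha)^{N_j})$ with $d=\Theta(|S''|/(Q\l_{i+k}))$ yields $1-\m(\cH_j(S'))/\m(\cH_j(S))\le O(|S''|/|S|)$ uniformly across the regimes $q^\alpha N_j\ll 1$ and $q^\alpha N_j\gg 1$. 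Summing the $O(Q)$ factors, dividing by $\m(\cH(S))\ge\m(\cH^\alpha(S))$, and invoking $Q\ll W^{1/3}$ from \cref{subsec:scales} yields the main bound $W^{1/3}|S''|/|S|$.

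For the $W$-helping contribution, a direct union bound over the $O(|S''|)$ positions of $W$-helping sets in $S$ that fail to fit in $S'$ gives $\m(\cH^W(S)\setminus\cH^W(S'))\le O(|S''|)q^W$. Combined with $\m(\cH(S))\ge\m(\cH^\alpha(S))\ge q^{o(1)}$ (from \cref{obs:mu:helping} applied with $|S|=q^{-\alpha+o(1)}$), this gives $\m(\cH^W(S)\setminus\cH^W(S'))/\m(\cH(S))\le|S''|q^{W-o(1)}\le q^{W-\alpha+o(1)}\le q^{1-o(1)}$, using $W\gg\alpha\ge 1$ and $|S''|\le|S|/\Omega(W)$. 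Summing the two contributions proves the lemma.

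The main obstacle will be the uniform Taylor-expansion estimate in the middle paragraph: since $|S|q^\alpha=q^{o(1)}$ is not pinned in magnitude, the analysis must handle both the small-$q^\alpha N_j$ regime (where $\m(\cH_j(S))\sim q^\alpha N_j$ and the ratio is essentially $1-d/N_j=1-O(|S''|/|S|)$) and the large-$q^\alpha N_j$ regime (where the naive bound $\m(\cH_j(S)\setminus\cH_j(S'))\le O(q^\alpha|S''|)$ must be converted into $O(|S''|/|S|)$ via the normalisation $q^\alpha|S|=\Theta(1)$, with any $q^{\pm o(1)}$ slack absorbed into the constant factor $W^{1/3}\gg Q$).
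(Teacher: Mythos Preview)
Your overall architecture is sound, and the treatment of the $W$-helping contribution and of the ratio $\m(\cH_j(S'))/\m(\cH_j(S))$ is essentially correct (the paper uses a cleaner exchangeability argument for the latter, but your Taylor analysis works). The genuine gap is the independence claim. The assertion that the translates $Z_i+j\l_{i+k}u_{i+k}+k_jx_i$ are pairwise disjoint across $(j,k_j)$ is false in general: the minimal nonzero shift between two such translates is $\l_{i+k}$ (take $j-j'=1$, $k_j=k_{j'}$), and $\mathrm{diam}(Z_i)\ll Q$ does \emph{not} imply $\mathrm{diam}(Z_i)<\l_{i+k}$. Thus $Z_i$ and $Z_i+\l_{i+k}u_{i+k}$ may well overlap, and the events $\cH_j^{u_i}(S)$ for different $j\in[Q]$ are dependent. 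Your telescoping step $\prod_j a_j-\prod_j b_j$ collapses without the exact product formula $\m(\cH^\a(S))=\prod_j\m(\cH_j(S))$; Harris only gives the $\ge$ direction, which is useless here, and your remark that ``Harris handles any residual correlation'' for the $\a(u_{i+2k})\le\a$ case suffers from the same defect.

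What is needed is an \emph{upper} bound $\m(\cH(S))\le(1+q^{1-o(1)})\prod_j\m(\cH_j(S))$, and this is exactly what the paper obtains via the van den Berg--Kesten inequality. The observation is that if $\cH(S)$ holds but there is no cluster of $\a+1$ infections within distance $O(1)$ of each other (an event of probability $q^{1-o(1)}$ since $|S|=q^{-\a+o(1)}$), then any two fully infected translates of $Z_i$ must be disjoint---otherwise their union would give $\ge\a+1$ close infections. Hence the $\cH_{(r)}(S)$ occur disjointly, and BK yields the product upper bound. Once you have this, your telescoping argument goes through with an extra $q^{1-o(1)}$ error term, which is already present in the target inequality.
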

\begin{proof}
Let us note that a stronger version of this result can be proved more easily by counting circular shifts of the configuration in a $O(1)$ neighbourhood of $S$ such that a given helping set remains at distance at least some constant from $S''$ (see the proof of \cite{Duminil-Copin23}*{Proposition 3.2(3)} for a subsequent implementation of this technique). We prefer to give the proof below as a preparation for \cref{lem:shrinking:ratio}.

By translation invariance, we may assume that $S$ is of the form in \cref{eq:S:form}. In view of \cref{def:ahelping}, we need to distinguish cases, depending on whether $\alpha(u_{i+2k})>\alpha$. We first assume that $\a(u_{i+2k})>\a$. Thus, helping sets are just $u_i$-helping sets or $W$-helping sets. By \cref{def:uihelping}, if $\alpha(u_i)=0$, there is nothing to prove, since $u_i$-helping sets are empty, so $\cH(S')$ always occurs. We therefore assume that $\alpha(u_i)>0$. We further assume $S''\neq\varnothing$, since otherwise the statement is trivial.

Recall from \cref{def:uihelping} that a $u_i$-helping set is composed of $Q$ translates of the set $Z_i$. For $r\in[Q]$ we denote by $\cH_{(r)}(S)$ the event that there is an infected translate of $Z_i$ by a vector of the form $(r+k_rQ)\l_{i+k}u_{i+k}$ with $k_r\in\bbZ$ satisfying \cref{eq:helping:set:domain} (for $d=0$). Similarly define $\cH_{(r)}(S')$. In words, we look for the part of the helping set with a specified reminder $r$ modulo $Q$. In particular, by \cref{def:HdS,def:HW}, we have
\begin{equation}
\label{eq:HS:decomposition}\cH(S)=\cH^W(S)\cup\bigcap_{r\in[Q]}\cH_{(r)}(S)\end{equation}
and similarly for $S'$.

Since $|S|=q^{-\a+o(1)}$, the probability that there are $\a+1$ infected sites at distance $O(1)$ from each other and from $S$ is $q^{1-o(1)}$. Furthermore, if this does not happen, but $\cH(S)$ occurs, then all $\cH_{(r)}(S)$ for $r\in[Q]$ occur disjointly. Therefore, by the BK inequality \cref{prop:BK},
\begin{equation}
\label{eq:muHS}
\m(\cH(S))\le q^{1-o(1)}+\prod_{r\in[Q]}\m\left(\cH_{(r)}(S)\right)\le \left(1+q^{1-o(1)}\right)\prod_{r\in[Q]}\m\left(\cH_{(r)}(S)\right),\end{equation}
since, as in \cref{obs:mu:helping}, we have
\begin{equation}
\label{eq:muHrS}
\m(\cH_{(r)}(S))\ge 1-\left(1-q^{\a}\right)^{\O(|S|)}\ge q^{o(1)}.
\end{equation}
Using \cref{eq:muHS,eq:HS:decomposition} and applying the Harris inequality \cref{eq:Harris:1}, we get
\[\frac{\m(\cH(S'))}{\m(\cH(S))}\ge \frac{\m(\bigcap_{r\in[Q]}\cH_{(r)}(S'))}{(1+q^{1-o(1)})\prod_{r\in[Q]}\m(\cH_{(r)}(S))}\ge\left(1-q^{1-o(1)}\right)\prod_{r\in[Q]}\frac{\m(\cH_{(r)}(S'))}{\m(\cH_{(r)}(S))}.\]

For $r\in[Q]$ and $j\in\bbZ$, let us denote by $I_{(r)}^{j}$ the indicator of the event that $Z_i+(r+jQ)\l_{i+k}u_{i+k}$ is fully infected and denote by $J_{(r)}(S)$ the set of values of $j$ such that this set satisfies \cref{eq:helping:set:domain}. Since $Z_i$ has diameter (much) smaller than $Q$, for all $r\in[Q]$, the random variables $I^j_{(r)}$ are i.i.d.\ for $j\in\bbZ$ (and therefore exchangeable). Further noting that $J_{(r)}(S)\supset J_{(r)}(S')$, and setting $\Sigma=\sum_{j\in J_{(r)}(S)}I^j_{(r)}$, we obtain
\begin{align*}
\frac{\m(\cH_{(r)}(S'))}{\m(\cH_{(r)}(S))}&{}=\m\left(\left.\sum_{j\in J_{(r)}(S')}I_{(r)}^j\ge1\right|\Sigma\ge 1\right)\\
&{}=\sum_{s=1}^{|J_{(r)}(S)|}\frac{\m(\Sigma=s)}{\m(\Sigma\ge 1)}\left(1-\prod_{l=0}^{s-1}\frac{|J_{(r)}(S)\setminus J_{(r)}(S')|-l}{|J_{(r)}(S)|-l}\right)\\
&{}\ge \frac{|J_{(r)}(S')|}{|J_{(r)}(S)|}\sum_{s=1}^{|J_{(r)}(S)|}\frac{\m(\Sigma=s)}{\m(\Sigma\ge 1)}=\frac{|J_{(r)}(S')|}{|J_{(r)}(S)|}\ge \frac{|S'|-O(1)}{|S|}.\end{align*}
Recalling that $|S|\ge \O(W)|S''|$ and $W\gg Q=O(1)$, this entails that
\begin{align*}
\m(\cH(S')|\cH(S))&{}=\frac{\cH(S')}{\m(\cH(S))}\ge \left(1-q^{1-o(1)}\right)\left(1-\frac{|S''|+O(1)}{|S|}\right)^Q
\\&{}\ge 1-q^{1-o(1)}-\frac{O(Q)|S''|}{|S|},\end{align*}
concluding the proof for the case $\alpha(u_{i+2k})>\alpha$.

Turning to the case, $\alpha(u_{i+2k})\le\alpha$, there is little to change. Firstly, if $\alpha(u_{i+2k})=0$, the proof above applies, since $\alpha$-helping sets in direction $u_i$ are the same (since $u_{i+2k}$-helping sets are empty). Moreover, if $-Z_{i+2k}=Z_i+x\l_{i+k}u_{i+k}$ for some $x\in\bbZ$, there is nothing more to prove either, since $\alpha$-helping sets and $u_i$-helping sets coincide again. We may therefore assume this is not the case. If $\alpha(u_i)=0$, then the proof proceeds as above, but with $Z_i$ replaced by $-Z_{i+2k}$. Finally, if $1\le\alpha(u_i),\alpha(u_{i+2k})\le \alpha$, then the proof proceeds as above, but one needs to consider not only $\cH_{(r)}(S)$, but also their analogues with $Z_i$ replaced by $-Z_{i+2k}$.
\end{proof}

In the next lemma, we next focus on a parallelogram, which plays the role of one of the hatched ones in \cref{fig:perturbation}. Informally, the statement is as follows. The $u_i$-side of the parallelogram is of critical size, so that each segment $S_{i,m}$, into which it is decomposed in \cref{def:traversability}, is also of critical size, allowing us to apply \cref{lem:shrinking:ratio:line} to it. The other dimension of the parallelogram is left unconstrained. The lemma provides a bound on the probability that a parallelogram of slightly smaller $u_i$-side is traversable (has helping sets for each segment $S_{i,m}$, given that the original one is.
\begin{lem}[Perturbing a parallelogram]
\label{lem:shrinking:ratio}
Let $i,j\in[4k]$ be such that $j\not\in\{i,i+2k\}$ and $\a(u_i)\le \a$. Consider the parallelogram 
\[R=R(l,h)=\Hb_{u_i}(l)\cap\Hb_{u_j}(h)\cap\Hb_{u_{j+2k}}(0)\cap\Hb_{u_{i+2k}}(0)\]
for $l\in[\r_i,e^{q^{-o(1)}}]$ and $h=q^{-\a+o(1)}$. We say that $R$ is \emph{traversable} in direction $u_i$ ($\cT(R)$ occurs), if for each nonempty segment of the form
\[S=\bbZ^2\cap R\cap \Hb_{u_i}(h')\setminus\bbH_{u_i}(h')\]
the event $\cH_{C^2}^{\bone_{\bbZ^2\setminus R(l+W,h)}}(S)$ occurs. Let $R'=R(l,h')$ with $1\ge h'/h\ge 1-1/W$. Then
\[\m\left(\cT(R')|\cT(R)\right)\ge \left(1-\sqrt W\left(1-\frac{h'}{h}\right)-q^{1-o(1)}\right)^{O(l)}.\]
\end{lem}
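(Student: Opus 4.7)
The proof will adapt the line-shrinking argument of \cref{lem:shrinking:ratio:line} to the parallelogram by iterating over the $M=\lceil l/\r_i\rceil=O(l)$ parallel lines $S_m\subset R$ perpendicular to $u_i$, each of length $|S_m|=\Theta(h)=q^{-\a+o(1)}$, which foliate $R$ (respectively $R'$) into segments with $S'_m\subset S_m$ shortened by $|S_m\setminus S'_m|/|S_m|\le 1-h'/h\le 1/W$. The goal is to combine the per-line bound of \cref{lem:shrinking:ratio:line} into a product estimate across all $M$ lines.

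Following the structure of the proof of \cref{lem:shrinking:ratio:line}, I will introduce for every line $m$ and residue class $r\in[Q]$ the event $\cH_{(r)}(S_m)$ that a translate of $Z_i$ (or $-Z_{i+2k}$) of residue $r$ sits at an admissible position on $S_m$, and handle the $W$-helping set alternative separately. The per-line ``bad event'' $\cB_m$ that more than $\a$ infections cluster within $O(1)$ near line $m$ satisfies $\m(\cB_m)\le q^{1-o(1)}$. Outside $\bigcup_m\cB_m$, the sub-events $(\cH_{(r)}(S_m))_{r\in[Q]}$ occur disjointly within each line, and BK (\cref{prop:BK}) applied line by line with an iterative accounting of the localized bad events gives
\[\m(\cT(R))\le \left(1+q^{1-o(1)}\right)^{O(l)}\prod_{m,r}\m(\cH_{(r)}(S_m)),\]
while Harris on the decreasing events $\cH_{(r)}(S'_m)$ furnishes the matching lower bound $\m(\cT(R'))\ge\prod_{m,r}\m(\cH_{(r)}(S'_m))$. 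Dividing, the ratio telescopes into a product of per-$(m,r)$ ratios, each controlled by the exchangeability computation at the end of the proof of \cref{lem:shrinking:ratio:line}, yielding $\m(\cH_{(r)}(S'_m))/\m(\cH_{(r)}(S_m))\ge|S'_m|/|S_m|-O(1/|S_m|)\ge 1-(1-h'/h)-q^{1-o(1)}$. Collecting $Q\le \sqrt W$ residues per line and $M=O(l)$ lines, and combining with the BK correction $(1-q^{1-o(1)})^{O(l)}$, produces the claimed bound $(1-\sqrt W(1-h'/h)-q^{1-o(1)})^{O(l)}$.

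The main obstacle will be the BK step: a naive application with a single global bad event produces a correction of order $Mq^{1-o(1)}$ (polynomial in the number of lines), whereas the claimed conclusion requires the per-line form $(1+q^{1-o(1)})^{O(l)}$ inside the exponent, which forces an iterative BK argument keyed to the localized $\cB_m$. A secondary difficulty is that helping sets on lines $m,m'$ with $|m-m'|=O(1)$ may share sites in the $u_i$-direction, since the extent of $Z_i$ there is $O(1)\ll Q$; absorbing this residual overlap into the error is precisely what forces the loss from the $W^{1/3}$ of \cref{lem:shrinking:ratio:line} to the $\sqrt W$ of the present statement. The $W$-helping set alternative contributes only a multiplicative factor $q^{-O(W)}$ that is absorbed into the $q^{1-o(1)}$ error.
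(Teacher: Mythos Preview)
Your overall strategy is on the right track, but the proposal leaves the crucial step---what you call the ``iterative BK argument keyed to the localized $\cB_m$''---unspecified. The paper resolves this cleanly by \emph{bisection of the parallelogram in the $u_i$-direction}: write $R=R_1\cup R_2$ as two halves of $\lceil M/2\rceil$ and $\lfloor M/2\rfloor$ lines. If $\cT(R)$ holds then either $\cT(R_1)$ and $\cT(R_2)$ occur disjointly (so BK applies), or some minimal witness for a line in $R_1$ overlaps one for a line in $R_2$; since the $u_i$-extent of $Z_i$ is $O(1)$, such an overlap forces a cluster of at least $\a+1$ infections within $O(1)$ of the common boundary (probability $q^{1-o(1)}$, as there are $\Theta(h)=q^{-\a+o(1)}$ positions), while the parallelograms $R_1',R_2'$ obtained by stripping $Q$ boundary lines are still traversable and independent of the cluster. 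This yields $\m(\cT(R))\le(1+q^{1-o(1)})\m(\cT(R_1))\m(\cT(R_2))$, and iterating down to single lines gives the factorisation $\m(\cT(R))\le(1+q^{1-o(1)})^{O(l)}\prod_m\m(\cH_m)$. Your proposal gestures at this but does not supply the mechanism; a line-by-line peeling would also work but is messier.

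Two further points. First, there is no need to descend to the residue events $\cH_{(r)}(S_m)$: once the bisection gives $\prod_m\m(\cH_m)$, you can invoke \cref{lem:shrinking:ratio:line} as a black box for each ratio $\m(\cH'_m)/\m(\cH_m)$, which is exactly what the paper does. Your finer decomposition just re-derives that lemma inside the present proof. Second, your explanation of the $W^{1/3}\to\sqrt W$ loss is incorrect: the cross-line overlap is entirely absorbed into the $q^{1-o(1)}$ correction via the bisection, and $\sqrt W$ in the statement is simply a convenient upper bound for the $O(W^{1/3})$ coming directly from \cref{lem:shrinking:ratio:line}. Likewise, the $W$-helping set alternative does not contribute a $q^{-O(W)}$ factor; a $W$-helping set is itself a cluster of $W\ge\a+1$ infections and is therefore already subsumed by the bad event.
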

\begin{proof}
We start by noting that if $\alpha(u_i)=0$, there is nothing to prove, since $\cT(R')$ always occurs, so we assume $\alpha(u_i)>0$. Furthermore, we may assume that $h-h'>\O(1)$, since otherwise either $R\cap\bbZ^2=R'\cap \bbZ^2$ or $R'\cap\bbZ^2=R''\cap\bbZ^2$ for some $R''=R(l,h-\O(1))$. Let $M=1+\lfloor l/\r_i\rfloor$, so that $R$ consists of $M$ segments perpendicular to $u_i$. Let us emphasise that the boundary condition is irrelevant for $\cT(R)$, as it is imposed far from the boundary of the domain concerned. Therefore, this event may also depend on the configuration outside $R$. 

We partition $R$ into its first and second halves $R_1=R(\rho_i\lfloor l/(2\rho_i)\rfloor,h)$ and $R_2=R\setminus R_1$. Thus, $R_1$ and $R_2$ consist of $\lceil M/2\rceil$ and $\lfloor M/2\rfloor$ segments perpendicular to $u_i$ respectively. Recalling \cref{def:uihelping}, we see that if $\cT(R)$ occurs, then one of the following must occur.
\begin{itemize}
    \item The parallelograms $R_1$ and $R_2$ are disjointly traversable.
    \item There is a set of $\a+1$ infections at distance $O(1)$ from each other and from both $R_1$ and $R_2$. Furthermore, the parallelograms $R_1'$ and $R_2'$, formed by removing in each of $R_1$ and $R_2$ the $Q$ lines closest to their common boundary, are both traversable.
\end{itemize}
Using the BK inequality \cref{prop:BK}, this gives
\begin{align}\m(\cT(R))\le{}&\m\left(\cT(R_1))\m(\cT(R_2)\right) + q^{1-o(1)}\m\left(\cT(R_1')\right)\m\left(\cT(R_2')\right)\nonumber\\
={}&\m(\cT(R_1))\m(\cT(R_2))\left(1+q^{1-o(1)}\right).\label{eq:perturbed:parallelogram:iteration}
\end{align}
The last estimate follows as in \cref{eq:muHrS} from the fact that traversing the $O(1)$ lines at the boundary of $R_1$ and $R_2$ happens with probability $q^{o(1)}$ together with the Harris inequality \cref{eq:Harris:1}. 

Let us write simply $\cH_m$ for $\cH_{C^2}^{\bone_{\bbZ^2\setminus R(l+W,h)}}(R\cap\Hb(m\r_i)\setminus\bbH_{u_i}(m\r_i))$ and similarly define $\cH_m'$ for $R'$. Iterating \cref{eq:perturbed:parallelogram:iteration}, we obtain
\[\cT(R)\le \left(1+q^{1-o(1)}\right)\prod_{m\in[M]}\m(\cH_m),\]
since $l=e^{q^{-o(1)}}$. Hence, by the Harris inequality \cref{eq:Harris:1}
\[\frac{\m(\cT(R'))}{\m(\cT(R))}\ge \left(1-q^{1-o(1)}\right)\prod_{m\in[M]}\frac{\m(\cH_m')}{\m(\cH_m)}.\]
The last fraction can be bounded, using \cref{lem:shrinking:ratio:line}, to obtain
\[\m\left(\cT(R')|\cT(R)\right)=\frac{\m(\cT(R'))}{\m(\cT(R))}\ge \left(1-O\left(W^{1/3}\right)\left(1-\frac{h'}{h}\right)-q^{1-o(1)}\right)^{M}.\qedhere\]
\end{proof}

Turning to the proof of \cref{cor:perturbation}, recall \cref{fig:perturbation}. There the regions introduced in the proof below are depicted as follows. The parallelograms $R_j$ are North-West hatched, while $R'_j$ are North-East hatched. Thus, $R''_j$ are double hatched. The shaded parallelograms are $R_j^2$, while $R_j^1$ are the remainder of the area which is North-East but not double hatched.
\begin{proof}[Proof of \cref{cor:perturbation}] Recalling \cref{def:traversability}, it is clear that $\cT^{\o}_d(T)$ is the intersection of $2k-1$ independent traversability events for parallelograms of length $l$ in the sense of \cref{lem:shrinking:ratio}. Let us denote them by $(R_j)_{j=i-k+1}^{i+k-1}$ and, similarly, $(R_j')_{j=i-k+1}^{i+k-1}$ for $T'$ with $R_j$ and $R_j'$ having sides perpendicular to $u_j$ (see \cref{fig:perturbation}). Thus, fixing $j\in(i-k,i+k)$, the role of $i$ and $j$ in \cref{lem:shrinking:ratio} is played by $j$ and $i+k$ here. 

Further set $R''_j=R_j\cap R_j'=R(l-O(\D),s_j-O(\D+C^2))$.\footnote{This equality only holds up to translation, but for simplicity we leave out the translation vector. Note that, although we stated \cref{lem:shrinking:ratio} for parallelograms with an integer point at one of their corners, analogous bounds hold for non-integer translates thereof.} Notice that (see \cref{fig:perturbation}) $R'_j\setminus R_j''$ consists of two disjoint possibly empty parallelograms $R^1_j=R(O(\D),s_j-O(\D+C^2))$ and $R^2_j=R(l-O(\D),O(\D))$ with the notation of \cref{lem:shrinking:ratio} (up to translation). Note that here we used that $R'_j$ has smaller length and height than $R_j$, because $s_j\ge s_j'$, $l\ge l'$ and $d'\ge d$.

By \cref{lem:traversability:boundary,eq:Harris:1} we have
\begin{align*}\m\left(\left.\cT_{d'}^{\o'}(T')\right|\cT_d^\o(T)\right)&{}\ge \frac{\m(\cW(T)\cap\cT_d(T)\cap\cW(T')\cap\cT_{d'}(T'))}{q^{-O(W)}\m(\cT_d(T))}\\
&{}\ge q^{O(W)}\m\left(\left.\cT_{d'}(T')\right|\cT_{d}(T)\right),\end{align*}
where $\cT_d(T)$ denotes the event that $T$ is $(\cdot,d)$-traversable without boundary condition (also depending on the states of sites outside $T$) and similarly for $T'$. Moreover, 
\begin{align*}
\cT_{d'}(T')&{}=\bigcap_j \cT(R'_j)\supset \bigcap_j (\cT(R''_j)\cap\cT(R^1_j))&\cT_d(T)&{}=\bigcap_j\cT(R_j),
\end{align*}
so the Harris inequality \cref{eq:Harris:3} gives
\[\m\left(\left.\cT_{d'}(T')\right|\cT_{d}(T)\right)\ge \prod_j\m\left(\cT\left(R^1_j\right)\right)\m\left(\left.\cT\left(R''_j\right)\right|\cT(R_j)\right).\]
We may then conclude, using \cref{lem:shrinking:ratio} and that by \cref{obs:mu:helping}
\[\m\left(\cT\left(R_j^1\right)\right)\ge \left(1-(1-q^\a)^{\O(s_j)}\right)^{O(\D)}.\qedhere\]
\end{proof}

\section{The surplus factor for balanced rooted models with finite number of stable directions.}
\label{app:logloglog}
To conclude, let us briefly sketch how to remove the $\log\log\log(1/q)$ factor appearing in \cref{th:internal:East}, which would also propagate to pollute \cref{th:main}\ref{log1} (see \cref{eq:poluted}).
\begin{thm}
\label{th:internal:East:improved}
Let $\cU$ be balanced rooted (classes \ref{log0} and \ref{log1}). Let $\L^{(\Ni)}$ be as in \cref{subsec:log1:internal}. Instead of \cref{def:SG:balanced:internal}, one can define $\SG^\bone(\L^{(\Ni)})$ in such a way that
\begin{align*}
\g\left(\L^{(\Ni)}\right)\le{}&\exp\left(\frac{\log^{O(1)}\log(1/q)}{q^\a}\right),&
\m(\SG^\bone(\L^{(N^i)}))\ge{}& \exp\left(\frac{-1}{\e^2q^\a}\right).
\end{align*}
\end{thm}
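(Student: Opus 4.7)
My plan is to port the argument of \cref{subsec:loglog:internal} (which yields the sharp $\exp(\log\log(1/q)/q^\alpha)$ bound of \cref{th:internal:loglog} for semi-directed models) to the two-dimensional droplet geometry of \cref{subsec:log1:internal}. Three ingredients will be essential: first, the faster scale progression of \cref{eq:def:ln:internal:East} (exponential below $\Nc$, double-exponential above), so that the total number of scales above the critical one is bounded by $\Ni - \Nc = \log\log\log(1/q) + O(\log\log W)$; second, a refined iterative bound on $\g(\L^{(n+1)})$ expressed in terms of conditional SG probabilities, in the spirit of \cref{eq:gLni:decomposition:loglog} rather than the crude estimate of \cref{lem:internal:East:step}; third, the perturbation estimate \cref{cor:perturbation} to show that most conditional shift probabilities of SG events are essentially $1$.

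Concretely, I would redefine $\ell^{(n)}$ as in \cref{eq:def:ln:internal:East}, keep the droplets $\L^{(n)}$ and covers $(D_\k^{(n)})$ from \cref{subsec:log1:internal}, and prove by induction on $n$ a bound of the form
\begin{equation*}
\g(\L^{(n+1)}) \le \g(\L^{(n)}) \cdot e^{\log^{O(1)}(1/q)} \cdot \prod_{j\in\{0,1\}} \prod_{m=1}^{M^{(n,j)}} a_m^{(n,j)},
\end{equation*}
where $a_m^{(n,j)}$ is defined analogously to \cref{eq:def:amn:loglog} as the inverse conditional probability of a SG event shifted by $(\lfloor(3/2)^{m+1}\rfloor-\lfloor(3/2)^m\rfloor)\l_j u_j$, given the unshifted SG event. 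To establish this, I would unpack the two-dimensional East-extension of \cref{def:extension:East:multid}, apply \cref{cor:East:reduction} individually to the $u_1$-extension defining $D_1$ and to each $u_0$-extension defining the droplets $D_\k$ for $\k \ge 2$, then combine the resulting Poincar\'e inequalities via \cref{eq:var:decomposition:AB} and the two-block \cref{lem:aux:east} as already done in the proof of \cref{lem:internal:East:step}, but tracking the $a_m^{(n,j)}$ factors sharply instead of collapsing them into a single $(\m(\SG^\bone(\L^{(n+1)})) \m(\cT_n(\L^{(n+1)})))^{-O(\log(1/q))}$ power.

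Next, each $a_m^{(n,j)}$ can be bounded following the scheme of \cref{lem:internal:loglog:probability}, which reduces conditional shifts of SG events to products of conditional probabilities of traversability events for small tubes, then applies \cref{cor:perturbation} to each such tube. This yields the crucial improvement over the trivial $a_m^{(n,j)} \le \m^{-1}(\SG^\bone(\L^{(n)}))$ bound when $(3/2)^m$ is small compared to $\ell^{(n)}$. The resulting product over $m$ and $n$ can then be estimated exactly as in \cref{eq:gLni:part2:loglog,eq:gLni:part2.5:loglog,eq:gLni:part3:loglog} and the subsequent displays; the double-exponential scale choice ensures the dominant contribution comes from the few largest scales near $\Ni$, giving $\g(\L^{(\Ni)}) \le \exp(\log^{O(1)}\log(1/q)/q^\alpha)$. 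The probability bound $\m(\SG^\bone(\L^{(\Ni)})) \ge \exp(-1/(\e^2 q^\alpha))$ follows from a mild adaptation of \cref{lem:internal:East:probability} to the new scales, using \cref{obs:mu:helping} unchanged.

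The hardest step is the refined iterative bound in the second paragraph. In the purely one-dimensional setting of \cref{th:internal:loglog}, \cref{cor:East:reduction} directly delivers the product $\prod_m a_m^{(n,j)}$ sought, but in the two-dimensional setting of \cref{def:extension:East:multid} one must compose East-extensions in directions $u_0$ and $u_1$, which interact non-trivially through the cover $(D_\k)$ of size $K^{(n)} = O(\ell^{(n+1)}/\ell^{(n)}) \le \log^{O(1)}(1/q)$. The current proof of \cref{lem:internal:East:step} handles this interaction at the cost of a factor $(\m(\SG^\bone(\L^{(n+1)}))\m(\cT_n(\L^{(n+1)})))^{-O(1)}$ per covering droplet---over $\Ni$ scales this is just barely affordable, contributing only $\log^{O(1)}\log(1/q)/q^\a$---but it \emph{also} incurs a factor $(\m \cdot \m)^{-O(\log(1/q))}$ per scale due to iterating \cref{cor:East:reduction} internally, which is precisely the source of the surplus $\log(1/q)$ in \cref{th:internal:East}. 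The delicate task will be to rearrange the two-block arguments in \cref{eq:cover,eq:varvg,eq:varvg2} so that only the sharp product $\prod_m a_m^{(n,j)}$ appears, and so that the loss from composing \cref{cor:East:reduction} across the two directions amounts to no more than a polylogarithmic overhead absorbed into $e^{\log^{O(1)}(1/q)}$; this is the bulk of the technical work deferred from the main body.
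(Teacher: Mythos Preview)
Your overall strategy—refine the bound on $a_m^{(n)}$ via \cref{cor:perturbation} as in \cref{lem:internal:loglog:probability}, then sum as in \cref{subsec:loglog:internal}—is the right one, and matches the paper's sketch. However, there is a genuine gap, and it is not where you locate it.

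You write that each $a_m^{(n,j)}$ ``reduces conditional shifts of SG events to products of conditional probabilities of traversability events for small tubes, then applies \cref{cor:perturbation} to each such tube.'' With the SG events of \cref{def:SG:balanced:internal} this reduction fails. Unwinding \cref{def:extension:East:multid}, the conditional probability $a_m^{(n)}$ involves the $n$-traversability events $\cT_p(R)$ of \cref{def:n-traversable} for the polygonal pieces $R$ cut out by the $D_\k$. These events require helping sets for \emph{all} directions $u_j$, $j\in(-k,k)$, on overlapping families of segments; they are not tube traversabilities in the sense of \cref{def:traversability}, and \cref{cor:perturbation} (which treats one direction per parallelogram) does not apply to them. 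Worse, for some update families a single set of $\alpha$ infections can simultaneously be a helping set for several directions, so the different-direction contributions are genuinely entangled and cannot be decoupled by Harris alone. This is precisely why the statement reads ``Instead of \cref{def:SG:balanced:internal}, it is possible to define $\SG^\bone(\L^{(\Ni)})$\ldots'': the paper's fix is to \emph{redefine} $n$-traversability by slicing $\L^{(p+1)}\setminus D_1$ into horizontal strips of width $\ell^{(p)}/(W\e)$, assigning to each strip a \emph{single} direction $u_j$, and asking only for that direction's helping sets there (with $W$-helping sets near the $D_\k$ boundaries as before). After this redefinition the event factors into genuine parallelogram traversabilities, \cref{lem:shrinking:ratio} and hence \cref{cor:perturbation} apply, and the number of parallelograms per scale is $O(W\ell^{(p+1)}/\ell^{(p)})\le\log^{O(1)}\log(1/q)$, which is exactly the overhead appearing in the final bound.

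Your last paragraph also misplaces the source of the surplus. The recursion of \cref{lem:internal:East:step} already delivers the correct product $\prod_m a_m^{(n)}$; no rearrangement of the two-block argument is needed. The factor $(\m\m)^{-O(\log(1/q))}$ you mention is not produced by ``iterating \cref{cor:East:reduction} internally'' but arises only when one bounds each $a_m^{(n)}$ crudely by $\m^{-1}(\SG^\bone(\L^{(n)}))$ as in \cref{lem:internal:East:probability}. The real work is entirely in sharpening that bound, which in turn requires the redefinition above.
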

\begin{proof}[Sketch proof of \cref{th:internal:East:improved}]
To prove this, one should combine the techniques of \cref{subsec:loglog:internal,subsec:log1:internal}. More precisely, a bound on $a_m^{(n)}$ less crude than \cref{eq:mu:SG:bound} should be established along the lines of \cref{eq:anm:bound:loglog}. As in \cref{eq:anm:decomposition:loglog}, we may further decompose $a_m^{(n)}$ into a product over scales $p\le n$. 

The relevant values of the parameters correspond to $m$ such that $(3/2)^m\le 1/(\log^{W}(1/q)q^\a)$, say, and $p\in[\Nc,n]$, as other cases can be dealt with using the crude bound \cref{eq:mu:SG:bound}. Further, as in \cref{eq:conditional:T:trivial}, we can also discard $p\ge \Nc+\Psi$. Hence, we need to focus for the remaining values of $m$ and $p$ on lower bounding
\begin{equation}
\label{eq:app:logloglog:cond:prob}\m\left(\left.\cT_{p}\left(\left(\L^{(p+1)}\setminus D_1\right) +s_m\right)\right|\cT_{p}\left(\L^{(p+1)}\setminus D_1\right)\right)\end{equation}
and $\m(\cT^\bone((D_1\setminus D_0)+s_m)|\cT^\bone(D_1\setminus D_0))$, the latter being treated exactly like $\m(\cT'|\cT)$ in \cref{eq:conditional:T}. \Cref{eq:app:logloglog:cond:prob} can be further decomposed as a product over elementary regions delimited by the boundaries of the $(D_\k)_{\k\in[K]}$ (recall \cref{fig:Dk,rem:product,eq:Harris:3}).

Unfortunately, for such (non-convex) polygonal regions $R$, bounding
\[\m\left(\left.\cT_p\left(R+s_m\right)\right|\cT_p(R)\right)\]
is no easy feat. Indeed, \cref{cor:perturbation} only treats tubes and, more importantly deals, with helping sets for one direction only in each part of the tube (recall \cref{subfig:extension:East}), while $\cT_p(R)$ requires helping sets in various directions, which are all dependent. To make matters worse, for certain families $\cU$ it may happen that a single set of $\a$ infections is simultaneously a helping set for different directions and this would create complex and heavy dependency among different directions, which could, \emph{a priori}, make boundary regions attract such sets.

To deal with this issue, one could further elaborate \cref{def:n-traversable}. Indeed, we may split $\L^{(p+1)}\setminus D_1$ into disjoint horizontal strips (recall \cref{subfig:East:internal:all}) of width $ \ell^{(p)}/(W\e)$. Each strip is assigned a direction $u_j$, $j\in(-k,k)$ and we only ask for helping sets for this direction to be present. These requirements are again cut at a small distance from the boundaries of all $D_\k$ into parallelograms like the ones treated in \cref{lem:shrinking:ratio}. We further demand $W$-helping sets on segments close to the boundaries of the various $D_\k$ as in \cref{def:n-traversable}. Naturally, some leftover regions remain without helping sets as in \cref{def:extension:East:multid}, but they are unimportant like in \cref{subsec:log1:internal}.

By doing this, we make the event $\cT_n(R)$ the intersection of traversability events of parallelograms in the sense of \cref{lem:shrinking:ratio}, so that its result can be applied as in the proof of \cref{cor:perturbation}, leading to a calculation similar to the one in \cref{th:internal:loglog}. The only significant change is that now there are $O(W\ell^{(p+1)}/\ell^{(p)})$ parallelograms instead of a constant number. This is not really a problem. However, if we wish to avoid careful computations, given that we are interested in the range $p\in(\Nc,\Nc+\Psi)$, we can brutally bound $W\ell^{(p+1)}/\ell^{(p)}$ by its maximum, which is $\log^{O(1)}\log(1/q)$ by the definition of $\Psi$, \cref{eq:def:Delta}.
\end{proof}

\let\d\oldd
\let\k\oldk
\let\l\oldl
\let\L\oldL
\let\o\oldo
\let\O\oldO
\let\r\oldr
\let\t\oldt
\let\u\oldu

\bibliographystyle{plain}
\bibliography{Bib}
\end{document}